\documentclass[10pt]{amsart}

\usepackage{amssymb,latexsym, amscd, amsthm, amsmath, upgreek, bm}
\input xypic
\xyoption{all}

\newtheorem{theorem}{Theorem}[section]
\newtheorem{definition}[theorem]{Definition}
\newtheorem{proposition}[theorem]{Proposition}
\newtheorem{lemma}[theorem]{Lemma}

\newtheorem{corollary}[theorem]{Corollary}
\newtheorem{conjecture}[theorem]{Conjecture}

\newtheorem{example}[theorem]{Example}
\newtheorem{remark}[theorem]{Remark}
\newcommand{\bmu}{\boldsymbol{\mu}}
\newcommand{\co}{\mathcal O}
\newcommand{\ck}{\mathcal K}
\newcommand{\cl}{\mathcal L}
\newcommand{\cs}{\mathcal S}
\newcommand{\ct}{\mathcal T}
\newcommand{\fa}{\mathfrak A}
\newcommand{\ca}{\mathcal A}
\newcommand{\cg}{\mathcal G}

\newcommand{\cx}{\mathfrak X}
\newcommand{\cxs}{\mathfrak X_{\cs}}
\newcommand{\z}{\mathbb Z}
\newcommand{\C}{\mathbb C}
\newcommand{\q}{\mathbb Q}
\newcommand{\zp}{\mathbb Z_p}
\newcommand{\qp}{\mathbb Q_p}
\newcommand{\cp}{\mathbb C_p}
\newcommand{\cm}{\mathcal M}

\newcommand{\mk}{\mathcal M^{\ck}_{\cs, \ct}}
\newcommand{\mkp}{\mathcal M^{\ck'}_{\cs', \ct'}}
\newcommand{\kstm}{\ck_{\cs, \ct}^{(m)}}
\newcommand{\kpstm}{\ck_{\cs', \ct'}^{'(m)}}
\newcommand{\kt}{\ck_\ct^\times}
\newcommand{\ktm}{\ck_\ct^{\times m}}
\newcommand{\kpt}{{\ck'}_{\ct'}^\times}
\newcommand{\kptm}{{\ck}_{\ct'}^{'\times m}}
\newcommand{\fp}{\mathbb F_p}
\newcommand{\fit}{\rm{Fit}}
\newcommand{\vu}{\varUpsilon}

\setlength{\parskip}{6pt}

\begin{document}

\mathchardef\smallp="3170

\title[An Equivariant Main Conjecture] {{An Equivariant Main Conjecture in Iwasawa Theory and Applications}}
\author[C. Greither and C. D. Popescu]{Cornelius Greither and Cristian D. Popescu}

\address{Institut f\"ur Theoretische Informatik und Mathematik,
Universit\"at der Bundeswehr, M\"unchen,
85577 Neubiberg, Germany}
\email{cornelius.greither@unibw.de}

\address{Department of Mathematics, University of California, San Diego, La Jolla, CA 92093-0112, USA}
\email{cpopescu@math.ucsd.edu}

\keywords{Global and $p$--adic $L$-functions; Iwasawa theory;
Galois module structure; Fitting ideals; Ideal-class groups; \'Etale cohomology; Quillen $K$--theory.}

\subjclass[2010]{11R23, 11R34, 11R33, 11R42, 11R70, 14G40}

\thanks{The second author was partially supported by NSF Grants DMS-0600905 and DMS-0901447}

\begin{abstract} We construct a new class of Iwasawa modules, which are the number field
analogues of the $p$--adic realizations of the Picard $1$--motives constructed by Deligne in \cite{Deligne-HodgeIII}
and studied extensively from a Galois module structure point of view  in our previous work \cite{GP} and \cite{GP2}. We prove that the new
Iwasawa modules are of projective dimension $1$ over the appropriate profinite group rings. In the abelian case, we prove an
Equivariant Main Conjecture, identifying the first Fitting ideal of the Iwasawa module in question over the appropriate
profinite group ring with the principal ideal generated by a certain equivariant $p$--adic $L$--function. This is an integral,
equivariant refinement of the classical Main Conjecture over totally real number fields proved by Wiles in \cite{Wiles}. Finally,
we use these results and Iwasawa co-descent to prove refinements of the (imprimitive) Brumer-Stark Conjecture and the Coates-Sinnott Conjecture, away from their $2$--primary components,
in the most general number field setting. All of the above is achieved under the assumption that the relevant prime $p$ is odd and that the appropriate classical Iwasawa $\mu$--invariants vanish (as conjectured
by Iwasawa.)
\end{abstract}
\maketitle

\section{\bf Introduction}

Let us consider the data $(\ck, \cs, \ct, p)$, where $p$ is an odd prime, $\ck$ is the cyclotomic $\zp$--extension of a number field $K$ (i.e. $\ck$ is a $\zp$--field, in the terminology of \cite{Iwasawa-RH}), and $\cs$ and
$\ct$ are two finite sets of finite primes in $\ck$. We assume that $\ct\cap(\cs\cup\cs_p)=\emptyset$, where $\cs_p$ is the set of $p$--adic primes in $\ck$. Also, let us assume that Iwasawa's $\mu$--invariant
conjecture holds for $K$ and $p$, i.e. we have $\mu_{K,p}=0$, where $\mu_{K,p}$ is the classical Iwasawa $\mu$--invariant associated to $K$ and $p$.

The main goals of this paper are threefold:

{\it Firstly,} for any data $(\ck,\cs, \ct, p)$ as above, we
construct a new class of Iwasawa modules $T_p(\mk)$ which are
$\zp$--free of finite rank (see \S3 below.) We study the $\zp[[\cg]]$--module structure of these modules, under the assumption that
$\ck$ is of CM-type, $\ck$ is a Galois extension
of a totally real number field $k$ of Galois group $\cg:={\rm Gal}(\ck/k)$, the sets $\cs$ and $\ct$ are
$\cg$--invariant, $\ct\ne\emptyset$ and $\cs$ contains the finite primes which ramify in $\ck/k$. In this context, we prove that
\begin{equation}\label{pd-intro}{\rm pd}_{\zp[[\cg]]^-}T_p(\mk)^-=1,\end{equation}
whenever $T_p(\mk)^-\ne 0$. (See Theorem \ref{projective} below.) If $\cg$ is abelian, then the equality above implies that the first Fitting ideal
${\rm Fit}_{\zp[[\cg]]^-} T_p(\mk)^-$ is principal (see Proposition \ref{fitting-calculation}(1) below.)  
This brings us naturally to our next goal, namely
the construction of a canonical generator of this ideal.

{\it Secondly,} working under the additional assumption that $\cg$ is abelian, we construct an equivariant $p$--adic $L$--function
$\Theta_{S,T}^{(\infty)}\in\zp[[\cg]]^-$ (see \S\ref{equivariant-L-functions} below) and prove the following Equivariant Main Conjecture - type equality.
\begin{equation}\label{emc-intro} {\rm Fit}_{\zp[[\cg]]^-} T_p(\mk)^-=\Theta_{S,T}^{(\infty)}\cdot\zp[[\cg]]^-.\end{equation}
(See Theorem \ref{emc} below.) This refines the classical Main Conjecture proved by Wiles in \cite{Wiles} in the following precise sense.

For simplicity,
let us assume that $\ck$ contains the group $\bmu_{p^\infty}$ of $p$--power roots of unity. Then, under our working assumption that $\mu_{K,p}=0$, Wiles's
main theorem in \cite{Wiles} can be restated as follows.
\begin{equation}\label{Wiles-intro}{\rm Fit}_{\qp(\zp[[\cg]]^-)}(\qp\otimes_{\zp} \cxs^+(-1)^\ast)= \mathfrak G_S^{(\infty)}\cdot\qp(\zp[[\cg]]^-),\end{equation}
where $\qp(\zp[[\cg]]^-):=\qp\otimes_{\zp}\zp[[\cg]]^-$, $\cxs$ is the classical Iwasawa module given by the Galois group of the maximal pro-$p$ abelian extension of $\ck$, unramified away from $\cs\cup\cs_p$,
and $\mathfrak G_S^{(\infty)}$ is a canonical element\footnote{The reader can easily check that $\mathfrak G_S^{(\infty)}=(\iota\circ t_1)(G_S)$, with notations as in \S\ref{equivariant-L-functions}.} in $\qp(\zp[[\cg]]^-)$ constructed out of the $S$--imprimitive $p$--adic $L$--functions
associated to $\ck/k$.

A na\"ive integral refinement of Wiles's result would aim for a calculation of ${\rm Fit}_{\zp[[\cg]]^-} (\cxs^+(-1)^\ast)$ in terms of $p$--adic $L$--functions.
Unfortunately, the $\zp[[\cg]]^-$--module $\cxs^+(-1)^\ast$ is only very rarely of finite projective dimension. Consequently, its Fitting ideal
is not principal and difficult to compute, in general.
However, we prove that there is an exact sequence of $\zp[[\cg]]^-$--modules
\begin{equation}\label{classical-intro}\xymatrix{0\ar[r] &T_p(\Delta_{\ck, \ct})^-/\zp(1)\ar[r] &T_p(\mk)^-\ar[r] &\cxs^+(-1)^\ast
\ar[r] &0}.\end{equation}
(See \S\ref{classical} below for the proof and notations.) So, the new modules $T_p(\mk)^-$ are certain extensions of the classical modules $\cxs^+(-1)^\ast$ by $T_p(\Delta_{\ck, \ct})^-/\zp(1)$. These modules
happen to be of projective dimension $1$ over $\zp[[\cg]]^-$.  Also, a consequence of the definition of the equivariant $p$--adic $L$--function $\Theta_{S,T}^{(\infty)}$ is that
\begin{equation}\label{theta-intro}\Theta_{S,T}^{(\infty)}=\mathfrak G_S^{(\infty)}\cdot\mathfrak D_T^{(\infty)},\end{equation}
where
$\mathfrak D_T^{(\infty)}$ is a non zero--divisor distinguished generator\,\footnote{The reader can easily check that $\mathfrak D_T^{(\infty)}=\delta_T^{(\infty)}/(\iota\circ t_1)(H_S)$, with notations as in \S\ref{equivariant-L-functions}.}
of the Fitting ideal ${\rm Fit}_{\qp(\zp[[\cg]]^-)}(T_p(\Delta_{\ck, \ct})^-/\zp(1)\otimes_{\zp}\qp)$.  Since Fitting ideals over $\qp(\zp[[\cg]]^-)$ are multiplicative in short exact sequences, by way of \eqref{classical-intro}--\eqref{theta-intro} our main theorem \eqref{emc-intro} ``tensored with $\qp$'' (i.e. base-changed from the integral group ring $\zp[[\cg]]^-$ to the rational one
$\qp(\zp[[\cg]]^-)$)
is equivalent to Wiles's main theorem \eqref{Wiles-intro}.

{\it Finally}, we use our main theorems (1) and (2) and Iwasawa co-descent to prove refinements of the (imprimitive) Brumer-Stark Conjecture (see Theorem \ref{refined-BS} below)
and the Coates-Sinnott Conjecture (see Theorem \ref{cs-theorem} and Corollary \ref{cs-corollary} below), away from their $2$--primary component, in the most general number field setting.

The main ideas and motivation behind the construction of the new
Iwasawa modules $T_p(\mk)$ are rooted in Deligne's theory of
$1$--motives (see \cite{Deligne-HodgeIII}) and in our previous
work \cite{GP} on the Galois module structure of $p$--adic
realizations of Picard $1$--motives. Also, we were strongly
motivated and inspired by the Deligne-Tate proof of the
Brumer-Stark Conjecture in function fields  via $p$--adic
realizations of Picard $1$--motives. (See Chapter V of
\cite{Tate-Stark}.)

In \cite{GP}, we consider the Picard $1$--motive $\cm_{\cs, \ct}^X$ associated to a smooth, projective curve $X$ defined
over an arbitrary algebraically closed field $\kappa$ and two finite, disjoint sets of closed points $\cs$ and $\ct$ on $X$. We study the Galois module structure of its $p$--adic realizations $T_p(\cm_{\cs, \ct}^X)$ in finite $G$--Galois
covers $X\to Y$ of smooth, projective curves over $\kappa$, in the case where $\cs$ contains the ramification locus of the cover and $\cs$ and $\ct$ are $G$--invariant. In particular, if $\kappa=\overline{\Bbb F_q}$, for some finite field $\Bbb F_q$, and the data $(X\to Y, \cs, \ct)$ is defined over $\Bbb F_q$, with $Y=Y_0\times_{\Bbb F_q}\overline{\Bbb F_q}$, for a smooth, projective curve $Y_0$ over $\Bbb F_q$, then we prove that
\begin{equation}\label{ff-intro}{\rm pd}_{\zp[[\cg]]} T_p(\cm_{\cs,\ct}^X)=1, \qquad   {\rm Fit}_{\zp[[\cg]]} T_p(\cm_{\cs,\ct}^X)=\zp[[\cg]]\cdot\Theta_{S,T}(\gamma^{-1}),\end{equation}
for all primes $p$, where $\cg:={\rm Gal}(\overline{\Bbb F_q}(X)/\Bbb F_q(Y_0))$, $\gamma$ is the $q$--power arithmetic Frobenius (viewed in $\cg$) and $\Theta_{S,T}(\gamma^{-1})$ is an equivariant $L$--function which is the exact function field analogue of $\Theta_{S,T}^{(\infty)}$. Theorem \ref{projective} (equality \eqref{pd-intro} above) is the number field analogue of the first equality in \eqref{ff-intro}, while Theorem \ref{emc} (equality \eqref{emc-intro} above) is the number field
analogue of the second equality in \eqref{ff-intro}.
Wiles's Theorem (equality \eqref{Wiles-intro} above) is equivalent to the number field analogue of Th\'eor\`eme 2.5 in Ch.V of \cite{Tate-Stark}. The second equality in \eqref{ff-intro} is a natural
integral refinement of Th\'eor\`eme 2.5 in loc.cit. Further, in \cite{GP}, we use the second equality in \eqref{ff-intro} and Galois co-descent with respect to ${\rm Gal}(\overline{\Bbb F_q}/\Bbb F_q)$ to prove refinements
of the Brumer-Stark and Coates-Sinnott Conjectures in function fields. In the number field case, these tasks are achieved with almost identical techniques, in Theorems \ref{refined-BS} and \ref{cs-theorem} below.

In \S\ref{abstract-one-motives} below, we define a category of purely algebraic objects which we call ``abstract $1$--motives''. This category is endowed with covariant functors $T_p$ to the category of free $\zp$--modules
of finite rank, called $p$--adic realizations, for all primes $p$. Two particular examples of abstract $1$--motives are Deligne's Picard $1$--motives $\cm_{\cs, \ct}^X$ and their number field analogues
$\mk$, constructed in \S\ref{the-abstract-one-motive}. The Iwasawa module $T_p(\mk)$ of interest in this paper is the $p$--adic realization of $\mk$, and it is the number field analogue of $T_p(\cm_{\cs, \ct}^X)$.

The paper is organized as follows. In \S\ref{algebra-section}, we define the category of abstract $1$--motives and lay down the number theoretic groundwork necessary for constructing the abstract $1$--motive $\mk$.
In \S\ref{Iwasawa-modules-section}, we construct the abstract $1$--motive $\mk$, show that its $p$--adic realizations $T_p(\mk)$ have a natural Iwasawa module structure and link these to the classical Iwasawa modules $\cxs^+(-1)^\ast$, under the appropriate hypotheses. In \S\ref{ct-section}, we prove the projective dimension result, \eqref{pd-intro} above. In \S\ref{EMC}, we prove the Equivariant Main Conjecture, \eqref{emc-intro} above.
In \S\ref{applications-section}, we use the results obtained in the previous sections to prove refined version of the (imprimitive) Brumer-Stark Conjecture and the Coates-Sinnott Conjecture, away from their $2$--primary components, in the general number field setting. The paper has an Appendix (\S7), where we gather various homological algebra results needed throughout.

In upcoming work, we will give further applications of the results
obtained in this paper and its function field companion \cite{GP}
in the following directions: {\bf I.} An explicit construction of
$\ell$--adic models for Tate sequences in the general case of
global fields; {\bf II.} A proof of the Equivariant Tamagawa
Number Conjecture for Dirichlet motives, in full generality for
function fields, and on the ``minus side'' and away from its
$2$--primary part for CM--extensions of totally real number
fields. A proof of the Gross-Rubin-Stark Conjecture (a vast
refined generalization of the Brumer-Stark Conjecture, see
Conjecture 5.3.4 in \cite{Popescu-PCMI}) will ensue in the cases
listed above; {\bf III.} Finally, we will consider non-abelian
versions of the Equivariant Main Conjectures proved in this paper
and \cite{GP}.

\noindent {\bf Acknowledgement.} The authors would like to thank
their home universities for making mutual visits possible. These visits were funded by the DFG, the NSF, and U. C. San Diego.

\section{\bf Algebraic and number theoretic preliminaries}\label{algebra-section}

\subsection{Abstract $1$--motives}\label{abstract-one-motives}
Assume that $J$ is an arbitrary abelian group, $m$ is a strictly positive integer and $p$ is a prime.
We denote by $J[m]$ the maximal $m$--torsion subgroup of $J$ and let $J[p^\infty]:=\cup_mJ[p^m]$. As usual, $T_p(J)$ will denote the $p$--adic Tate module of $J$. By definition, $T_p(J)$ is the $\Bbb Z_p$--module given by
$$T_p(J):=\underset{n}{\underset{\longleftarrow}\lim}\, J[p^n]\,,$$
where the projective limit is taken with respect to the multiplication--by--$p$ maps. There is an obvious canonical isomorphism
of $\Bbb Z_p$--modules
$$T_p(J)\simeq{\rm Hom}_{\Bbb Z_p}(\Bbb Q_p/\Bbb Z_p, J)={\rm Hom}_{\Bbb Z_p}(\Bbb Q_p/\Bbb Z_p, J[p^\infty])\,.$$

Now, let us assume that $J$ is an abelian, divisible group.  $J$ is said to be {\it of finite local corank} if there exists a positive integer $r_p(J)$ and a $\Bbb Z_p$--module isomorphism
$$J[p^\infty]\simeq\left(\Bbb Q_p/\Bbb Z_p\right)^{r_p(J)},$$
for any prime $p$.
The integer $r_p(J)$ is called the $p$--corank of $J$.  Obviously, in this case, we have $T_p(J)\simeq\Bbb Z_p^{r_p(J)}$, for all
primes $p$. Further, one can use the ``Hom''--description of $T_p(J)$ given above combined with the injectivity of the $\Bbb Z_p$--module $J[p^\infty]$ to establish
canonical $\Bbb Z_p$--module isomorphisms
$$T_p(J)/p^n T_p(J)\simeq J[p^n],$$
for all primes $p$ and all $n\in\Bbb Z_{\geq 1}$. (See Remark 3.8 in \cite{GP} for these isomorphisms.)
\begin{definition}\label{define-abstract-one-motive} An abstract $1$--motive $\mathcal M:=[L\overset{\delta}\longrightarrow J]$ consists of the following.
\begin{itemize}
\item A free $\Bbb Z$--module $L$ of finite rank (also called a lattice in what follows);
\item An abelian, divisible group $J$ of finite local corank;
\item A group morphism $\delta: L\longrightarrow J$.
\end{itemize}
\end{definition}
In many ways, an abstract $1$--motive $\mathcal M$ enjoys the same properties as an abelian, divisible group of finite local corank.
Namely, for any $n\in\Bbb Z_{\geq 1}$ and any prime $p$, one can construct an $n$--torsion group $\mathcal M[n]$,
a $p$--divisible $p$--torsion group of finite corank $\mathcal M[p^\infty]$, and a $p$--adic Tate module (or $p$--adic realization)
$T_p(\mathcal M)$, which is $\zp$--free of finite rank. This is done as follows.

For every $n\in\mathbb Z_{\geq 1}$, we take the fiber-product of groups
$J\times^n_J L$, with respect to the map $L\overset{\delta}\longrightarrow J$ and the
multiplication by $n$ map $J\overset
n\longrightarrow J.$ An element in this fiber
product consists of a pair $(j, \lambda)\in J\times L,$
such that $nj=\delta(\lambda)$.
Since $J$ is a divisible group, the map $J\overset
n\longrightarrow J$ is surjective. Consequently, we have a
commutative diagram (in the category of abelian groups) whose rows
are exact.
$$\xymatrix {
0\ar[r] & J[n]\ar[r]\ar[d]^{=} &J\times^n_{J}L\ar[r]\ar[d] & L\ar[r]\ar[d]^{\delta} &0\\
0\ar[r] & J[n]\ar[r] & J\ar[r]^n & J\ar[r] &0}
$$
\begin{definition}\label{define-n-torsion} The group $\mathcal M[n]$ of $n$--torsion
points of $\mathcal M$ is defined by
$$\mathcal M[n]:=(J\times^n_{J}L)\otimes\mathbb Z/n\mathbb Z\,.$$
\end{definition}

\noindent Since $L$ is a free $\mathbb
Z$--module, the rows of the above diagram stay exact when tensored with $\mathbb Z/n\mathbb Z$ and $\z/m\z$, respectively. Consequently,
we have commutative diagrams with exact rows
\begin{equation}\label{motive-n-torsion}\nonumber \xymatrix {
0\ar[r] & J[m]\ar[r]\ar@{>>}[d]^{m/n} &\mathcal M[m]\ar[r]\ar@{>>}[d] &L\otimes\mathbb Z/m\mathbb Z\ar[r]\ar@{>>}[d] &0\\
0\ar[r] & J[n]\ar[r] &\mathcal M[n]\ar[r] &L \otimes\mathbb Z/n\mathbb Z\ar[r] &0\,,
}\end{equation}
for all $n, m\in\mathbb N$, with $n\mid m$, where the left vertical map is multiplication by $m/n$, the right vertical map is the canonical surjection and the middle vertical
map is the unique morphism which makes the diagram commute. This middle vertical morphism maps $(j, \lambda)\otimes\widehat 1$ to $(m/n\cdot j, \lambda)\otimes\widehat 1$ and, in what follows, we
refer to it as the multiplication--by--$m/n$ map $\mathcal M[m]\overset{m/n}\longrightarrow\mathcal M[n]$.

\begin{definition} For a prime $p$, the $p$--adic Tate module $T_p(\mathcal M)$ of $\mathcal M$
is given by
$$T_p(\mathcal M)=\underset{\underset n\longleftarrow}\lim\, \mathcal M[p^n]\,,$$
where the projective limit is taken with respect to the surjective multiplication--by--$p$ maps described above.
\end{definition}
\noindent This way, for every prime $p$, we obtain exact sequences of free $\mathbb Z_p$--modules
$$0\longrightarrow T_{p}(J)\longrightarrow T_{p}(\mathcal M)\longrightarrow L\otimes\mathbb Z_p\longrightarrow 0\,.$$
Clearly, we have an isomorphism of $\Bbb Z_p$--modules $T_p(\mathcal M)\simeq \mathbb Z_p^{(r_p(J)+{\rm rk}_{\mathbb Z}L)}$, where
$r_p(J)$ is the $p$--corank of $J$ and ${\rm rk}_{\mathbb Z}L$ is the $\mathbb Z$--rank of $L$.
\medskip

For $n, m\in\Bbb Z_{\geq 1}$ with $n\mid m$, one also has commutative diagrams with exact rows
\begin{equation}\nonumber \xymatrix {
0\ar[r] & J[m]\ar[r] &\mathcal M[m]\ar[r] &L\otimes\mathbb Z/m\mathbb Z\ar[r] &0\\
0\ar[r] & J[n]\ar[r]\ar@{_{(}->}[u] &\mathcal M[n]\ar[r]\ar@{_{(}->}[u] &L \otimes\mathbb Z/n\mathbb Z\ar@{_{(}->}[u]\ar[r] &0\,,
}\end{equation}
with injective vertical morphisms described as follows: the left--most morphism is the usual inclusion; the right-most morphism
sends $\lambda\otimes\widehat x$ to $\lambda\otimes\widehat{m/n\cdot x}$; the middle morphism sends $(j, \lambda)\otimes\widehat 1$
to $(j, m/n\cdot\lambda)\otimes\widehat 1$. Now, if $p$ is a prime number, we define
$$\mathcal M[p^\infty]:=\underset{m}{\underset{\longrightarrow}\lim}\,\mathcal M[p^m]\,,$$
where the injective limit is taken with respect to the injective maps described above. Obviously, $\mathcal M[p^\infty]$ is a
torsion, $p$--divisible group which sits  in the middle of an exact sequence
$$0\longrightarrow J[p^\infty]\longrightarrow\mathcal M[p^\infty]\longrightarrow L\otimes\mathbb Q_p/\mathbb Z_p\longrightarrow 0\,,$$
and whose $p$--corank equals $(r_p(J)+{\rm rk}_{\mathbb Z}L)$.
It is not difficult to see that there are canonical $\zp$--module isomorphisms
\begin{equation}\label{Tate-torsion} T_p(\mathcal M)\simeq T_p(\mathcal M[p^\infty]),\qquad  T_p(\mathcal M)/p^n T_p(\mathcal M)\simeq \mathcal M[p^n],
\end{equation}
for all $n\in\z_{\geq 1}$.
\begin{remark}\label{abstract-p-adic-1-motives} For a given prime $p$, one can define an {\rm abstract $p$--adic $1$--motive}
$\cm:=[L\overset\delta\longrightarrow J]$ to consist of a $\zp$--module morphism $\delta$ between a free $\zp$--module of finite rank $L$ and a $p$--divisible
$\zp$--module of finite corank $J$. Obviously, the constructions above lead to a $p$--divisible group $\cm[p^\infty]$ and a $p$--adic Tate module
$T_p(\cm)\simeq T_p(\cm[p^\infty])$, for any abstract $p$--adic $1$--motive $\cm$.
Any abstract $1$--motive $\cm:=[L\overset\delta\longrightarrow J]$ gives an abstract $p$--adic $1$--motive
$$\cm_p:=[L\otimes\zp\overset{\delta\otimes\mathbf 1}\longrightarrow J\otimes\zp],$$
for every prime $p$. It is easy to show that there are canonical isomorphisms
$$\cm[p^n]\simeq\cm_p[p^n],\qquad T_p(\cm)\simeq T_p(\cm_p),$$
for all primes $p$ and all $n\in\z_{\geq 1}$.
\end{remark}

\begin{definition}\label{morphism} A morphism $f: \cm'\to\cm$ between two abstract (respectively, abstract $p$--adic) $1$--motives
$\cm':=[L'\overset{\delta'}\longrightarrow J']$ and $\cm:=[L\overset{\delta}\longrightarrow J]$ is a pair
$f:=(\lambda, \iota)$ of group (respectively, $\zp$--module) morphisms
$\lambda: L'\to L$ and $\iota: J'\to J$, such that $\iota\circ\delta'=\delta\circ\lambda$.
\end{definition}

\noindent Now, it is clear what it is meant by composition of morphisms and the category of abstract (respectively, abstract $p$--adic) $1$--motives.
It is easy to see that any morphism $f=(\lambda, \iota)$ as in the definition above induces
canonical $\z/m\z$--module and $\zp$--module morphisms
$$f[m]: \cm'[m]\to \cm[m], \qquad T_p(f): T_p(\cm')\to T_p(\cm)\,,$$
respectively, for all the appropriate $m$ and $p$. Moreover, the associations $f\to f[m]$ and $f\to T_p(f)$
commute with composition of morphisms, so we obtain functors from the category of abstract ($p$--adic) $1$-motives to that of $\z/m\z$--modules
and $\zp$--modules, respectively, for all $m$ and $p$ as above. A consequence is the following.

\begin{remark}\label{equivariance}If $R$ is a ring, $L$ and $J$ are (say, left) $R$--modules and the map $\delta$ is
$R$--linear, then the groups $\cm[m]$, $T_p(\cm)$, $\cm[p^\infty]$ associated to the abstract ($p$--adic) $1$--motive $\mathcal M:=[L\overset{\delta}\longrightarrow J]$ are naturally endowed with $R$--module
structures. Also, all the above exact sequences can be viewed as exact sequences in the category of $R$--modules. This observation will be of particular interest to us
in the case where $R$ is either a group algebra or a profinite group algebra with coefficients in $\z$ or $\zp$.
\end{remark}

\begin{example}\label{Picard} In the case where $J:=\mathcal A(\kappa)$ is the group of $\kappa$--rational points of a semi-abelian variety $\mathcal A$ defined over an algebraically
closed field $\kappa$ and $L$ is an arbitrary lattice then, for any group morphism $\delta: L\to \ca(\kappa)$, the abstract $1$--motive $\mathcal M:=[L\overset{\delta}\longrightarrow J]$
is precisely what Deligne calls a $1$--motive in \cite{Deligne-HodgeIII}.

In particular, let $X$ be a smooth, connected, projective curve defined over an algebraically closed field $\kappa$, and let $\cs$ and $\ct$ be two finite, disjoint
sets of closed points on $X$. Let $J_{\ct}$ be the generalized Jacobian associated to $(X, \ct)$. We remind the reader that $J_{\ct}$ is a semi-abelian variety (an extension of the Jacobian $J_X$ of $X$
by a torus $\tau_\ct$) defined over $\kappa$,
whose group of $\kappa$--rational points is given by
$$J_{\ct}(\kappa)=\frac{{\rm Div}^0(X\setminus\mathcal T)}{\{{\rm div}(f)\mid
f\in\kappa(X)^\times,\quad f(v)=1, \forall\, v\in \ct\}}.$$
Here, ${\rm Div}^0(X\setminus\ct)$ denotes the set of $X$--divisors of degree $0$ supported away from $\ct$ and ${\rm div}(f)$ is the divisor associated to
any non--zero element $f$ in the field $\kappa(X)$ of $\kappa$--rational functions on $X$. The group $J_{\ct}(\kappa)$ is divisible, as an extension of the divisible
groups $J_X(\kappa)$ and $\tau_{\ct}(\kappa)$. Its local coranks satisfy
$$\text{\rm $p$--corank}\,(J_{\ct}(\kappa))=\left\{
                                      \begin{array}{ll}
                                        2g_X+\mid\ct\mid -1, & \hbox{$p\ne{\rm char}(\kappa)$;} \\
                                        \gamma_X, & \hbox{$p={\rm char}(\kappa)$,}
                                      \end{array}
                                    \right.$$
where $g_X$ and $\gamma_X$ are the genus and Hasse--Witt invariant of $X$, respectively. (See \S3 in \cite{GP} for the definitions and the above equality.)

If ${\rm Div}^0(\cs)$ is the lattice of divisors of degree $0$ on the curve $X$
supported on $\cs$ and $\delta: {\rm Div}^0(\cs)\to J_{\ct}(\kappa)$ is the usual divisor--class map, then the abstract $1$--motive
$$\mathcal M^X_{\cs, \ct}:=[{\rm Div}^0(\cs)\overset\delta\longrightarrow J_{\ct}(\kappa)]$$
is called the Picard $1$--motive associated to $(X, \cs, \ct, \kappa)$. Its $\ell$--adic realizations $T_{\ell}(\mathcal M^X_{\cs, \ct})$, for all primes $\ell$,
were extensively studied in \cite{GP}.
\end{example}

In this paper, we will construct a class of abstract $1$--motives which arises naturally in the context of Iwasawa theory of number fields. These should be
viewed as the number field analogues of the Picard $1$--motives $\mathcal M^X_{\cs, \ct}$ described in the example above. As we will see in the next few sections, their $\ell$--adic realizations
satisfy similar properties to those proved to be satisfied by $T_{\ell}(\mathcal M^X_{\cs, \ct})$ in \cite{GP}.

\subsection{Generalized ideal--class groups.}\label{ideal-class-groups}  Let $p$ be a prime number. In what follows, we borrow Iwasawa's terminology (see \cite{Iwasawa-RH})
and call a field $\ck$ a {\it $\zp$--field} if it is the cyclotomic $\zp$--extension of a number field. If a $\zp$--field $\ck$ contains the group
${\bmu}_{p^\infty}$ of $p$--power roots of unity, then it is called a {\it cyclotomic $\zp$--field.} A $\zp$--field $\ck$ is said to be of CM--type if it is
the cyclotomic $\zp$--extension of a CM--number field $K$. This is equivalent to the existence of a (necessarily unique) involution $j$ of $\ck$ (the so--called complex conjugation automorphism of $\ck$),
such that its fixed subfield $\ck^+:=\ck^{j=\mathbf 1}$ is the cyclotomic $\zp$--extension of a totally real number field. The uniqueness of $j$ with these properties makes it commute with any field automorphism
of $\ck$.
\smallskip

For the moment, $\mathcal K$ will denote either a number field or a $\zp$--field, for some fixed prime $p$. As usual, a (finite) prime in $\ck$ is an equivalence class of valuations
on $\ck$. If $\ck$ is a number field, all these valuations are discrete of rank one (with value group isomorphic to $(\,\z,\, +)$ with the usual order.)  If $\ck$ is a $\zp$--field, then its valuations are
either discrete of rank one, if they extend an $\ell$--adic valuation of $\q$, for some prime $\ell\ne p$, or they have value group isomorphic to $(\,\z[1/p],\, +)$ with the usual order, if they
extend the $p$--adic valuation on $\q$. For every $\ck$ and $v$ as above, we pick a valuation  ${\rm ord}_v$ in the class $v$ as follows.

$\bullet$ If $\ck$ is a number field, then ${\rm ord}_v$ is the unique valuation in the class $v$ whose strict value group $\Gamma_v:={\rm ord}_v(\ck^\times)$ satisfies $\Gamma_v=\z$. Note that
if $\ck/\ck'$ is an extension of number fields and $v$ and $v'$ are finite primes in $\ck$ and $\ck'$, with $v$ dividing $v'$, then we have a commutative diagram
\begin{equation}\label{extending-valuations}\xymatrix{\ck^\times\ar[r]^{{\rm ord}_v} &\Gamma_{v}\\
{\ck'}^\times\ar[r]^{\quad{\rm ord}_{v'}}\ar[u]^{\subseteq} &\Gamma_{v'}\ar[u]_{\times\, e(v/v')},}\end{equation}
where $e(v/v'):=[\Gamma_v\,:\,{\rm ord}_v(\ck'^\times)]$ is the usual ramification index.

$\bullet$ If $\ck$ is a $\zp$--field, then we denote by $v_K$ the prime sitting below $v$ in any number field $K$ contained in $\ck$. We let $\Gamma_v:= {\underrightarrow{\lim}}_K \Gamma_{v_K}$,
where the limit is viewed in the category of ordered groups and is taken with respect to all number fields $K$ contained in $\ck$ and the (ordered group morphisms given by) multiplication
by the ramification index maps in the diagram above. Then, since $\ck^\times={\underrightarrow{\lim}}_K\, K^\times$, where the limit is taken with respect to the inclusion morphisms, we can
define
$${\rm ord}_v: \ck^\times \longrightarrow \Gamma_v, \qquad {\rm ord}_v:={\underrightarrow{\lim}}_K {\rm ord}_{{v_K}}\,.$$
It is easy to check that ${\rm ord}_v$ is a valuation on $\ck$ in the class $v$ and that $\Gamma_v={\rm ord}_v(\ck^\times)$. As a consequence of the existence of a number
field $K$, with $K\subseteq\ck$, such that $\ck/K$ is a $\zp$--extension totally ramified at all $p$--adic primes $v$ in $K$ and unramified everywhere else, there are non-canonical ordered group isomorphisms
$$\Gamma_v\simeq\z[1/p],\qquad \Gamma_v\simeq\z,$$
respectively, depending on whether $v$ sits above the $p$--adic prime in $\q$ or not. Also, for any (necessarily finite) extension $\ck/\ck'$ of $\zp$--fields, we have a commutative diagram
identical to (\ref{extending-valuations}) above.
\smallskip

For any field $\ck$ as above, its (additive) divisor group is given by
$${\mathcal Div}_{\ck}:=\bigoplus_v \Gamma_v\cdot v\,,$$
where the direct sum is taken with respect to all the finite primes in $\ck$. Let $\ct$ be a finite (possibly empty) set of finite primes of $\ck$, which is disjoint from the set $\cs_p$ consisting
of all the primes in $\ck$ extending the $p$--adic valuation of $\q$. We let
$${\mathcal Div}_{\ck, \ct}:=\bigoplus_{v\not\in\ct} \Gamma_v\cdot v\,,\qquad \ck_{\ct}^\times:=\{x\in\ck^\times\mid {\rm ord}_v(x-1)>0\,, \forall\, v\in\ct\}\,.$$
The usual divisor map associated to $\ck$ induces a group morphism
$${div}_{\ck}: \ck_{\ct}^\times\longrightarrow {\mathcal Div}_{\ck, \ct}, \quad {div}_{\ck}(x)=\sum_{v}{\rm ord}_v(x)\cdot v=\sum_{v\not\in\ct}{\rm ord}_v(x)\cdot v\,,$$
whose kernel is the following subgroup of the group $U_{\ck}$ of units in $\ck$.
$$U_{\ck, \ct}:=\{u\in U_{\ck}\mid {\rm ord}_v(u-1)>0\,, \text{ for all } v\in\ct \}.$$
To $\ck$ and $\ct$ as above, we associate the following generalized
ideal--class group
$$C_{\ck, \ct}:= \frac{{\mathcal Div}_{\ck, \ct}}{{div}_{\ck}(\ck_{\ct}^\times)}\,.$$

If $\ck/\ck'$ is a finite extension of fields as above and $\ct'$ is the set of primes in $\ck'$ sitting below primes in $\ct$, then we have an injective group morphism
$${\mathcal Div}_{\ck', \ct'}\longrightarrow {\mathcal Div}_{\ck, \ct},\qquad v'\to \sum_{v\mid v'}e(v/v')\cdot v,$$
where the sum is taken over all the primes $v$ of $\ck$ sitting above the prime $v'$ in $\ck'$.  This morphism is compatible with the divisor maps (see diagram (\ref{extending-valuations}))
and it induces a (not necessarily injective)
group morphism  $C_{\ck', \ct'}\to C_{\ck, \ct}$ at the level of generalized ideal class--groups. It is easy to check that
$$\underset K{\underrightarrow{\lim}}\,{\mathcal Div}_{K, T_K}\simeq {\mathcal Div}_{\ck, \ct},\qquad \underset K{\underrightarrow{\lim}}\,C_{K, T_K}\simeq C_{\ck, T_K}\,,$$
where the limits are taken with respect to all number fields $K\subseteq\ck$ and $T_K$ is the set of primes in $K$ sitting below primes in $\ct$.
In particular, let us assume that $\ck$ is the $\zp$--cyclotomic extension of a number field $K$ and write it as a union of number fields $\ck=\cup_n K_n$, where
$K_n$ is the unique intermediate field $K\subseteq K_n\subseteq \ck$ with $[K_n:K]=p^n$. Then, since the set $\{K_n\mid n\geq 1\}$ is cofinal (with respect to inclusion) in the set of all number fields contained in $\ck$, we have group isomorphisms
\begin{equation}\label{limits}\underset n{\underset \longrightarrow{\lim}}\,{\mathcal Div}_{K_n, T_n}\simeq {\mathcal Div}_{\ck, \ct},\qquad \underset n{\underset \longrightarrow{\lim}}\,C_{K_n, T_n}\simeq C_{\ck, \ct}\,,
\end{equation}
where $T_n$ is the set of primes in $K_n$ sitting below those in $\ct$ and the injective limits are taken with respect to the morphisms described above.
\medskip

If $\ct=\emptyset$, then we drop it from the notation arriving this way at the classical group of units $U_{\ck}$ and ideal--class group $C_{\ck}$ associated to $\ck$. Let
$$\Delta_{\ck, \ct}:=\bigoplus_{v\in\ct}\kappa(v)^\times\,,$$
where $\kappa(v)$ denotes the residue field associated to the prime $v$. It is easy to see that for any $\ct$ as above we have an exact sequence of groups
\begin{equation}\label{t-sequence}\xymatrix{
0\ar[r] &U_{\ck}/U_{\ck, \ct}\ar[r] &\Delta_{\ck, \ct}\ar[r] &C_{\ck, \ct}\ar[r] &C_{\ck}\ar[r] &0.}
\end{equation}
If $\ck$ is a number field, the exact sequence above is described in detail in \cite{Rubin-Stark} or \cite{Popescu-PCMI}, for example. If $\ck$ is the cyclotomic $\zp$--extension
of a number field $K$, then the exact sequence above is obtained by taking the injective limit of the corresponding sequences at the
finite levels $K_n$ with respect to the obvious transition maps. If $\ck$ is a number field, (\ref{t-sequence}) shows that $C_{\ck, \ct}$ is finite.
In that case, the Artin reciprocity map associated to $\ck$ establishes an isomorphism between $C_{\ck, \ct}$ and the Galois group of the maximal abelian extension
of $\ck$ which is unramified away from $\ct$ and at most tamely ramified at primes in $\ct$ (see \cite{Rubin-Stark}.) In the case of a $\zp$--field $\ck$, the second isomorphism
in (\ref{limits}) shows that $C_{\ck, \ct}$ is a torsion group.
\smallskip

Throughout the rest of this section, $\ck$ is a $\zp$--field, for some prime $p$. We let
$$\ca_{\ck}:=C_{\ck}\otimes\zp,\qquad \ca_{\ck, \ct}:=C_{\ck, \ct}\otimes\zp\,,$$
for any $\ct$ as above.
A classical theorem of Iwasawa (see \cite{Iwasawa-RH}, pp. 272-273 and the references therein) shows that there is an isomorphism of
groups
$$\ca_{\ck}\simeq(\qp/\zp)^{\lambda_{\ck}}\oplus\ca',$$
where $\lambda_{\ck}$ is a positive integer which depends only on
$\ck$, called the $\lambda$--invariant of $\ck$ and $\ca'$ is a
torsion $\zp$--module of finite exponent. Iwasawa conjectured that
$\ca'$ is trivial (see loc. cit.). This conjecture is equivalent
to the vanishing of the classical Iwasawa $\mu$--invariants
$\mu_{\ck/K}:=\mu_{K,p}$ associated to $p$ and all number fields
$K$ such that $\ck$ is the cyclotomic $\zp$--extension of $K$.
For a given $\zp$--field $\ck$, although
$\mu_{\ck/K}$ depends on the chosen $K$ with the above
properties, its vanishing is independent of that choice.
The vanishing is known to hold if $\ck$ is the cyclotomic
$\zp$--extension of an abelian number field (see
\cite{Ferrero-Washington}.) In what follows, if $\ck$ is a
$\zp$--field, we write $\mu_{\ck}=0$ to mean that $\mu_{\ck/K}=0$,
for all (one) $K$ as above.
\smallskip

Now, assume that $\ck$ is of CM--type and $j$ is its complex conjugation automorphism. Assume that the prime $p$ is odd. For any $\z$--module $M$ endowed with a $j$--action
(called a $j$--module in what follows) and on which multiplication by $2$ is invertible, we let
$M^\pm:=\frac{1}{2}(1\pm j)\cdot M$ denote the $\pm$--eigenspaces of $j$ on $M$. We have a direct sum decomposition
$M =M^-\oplus M^+\,$ and
the two functors $M\to M^\pm$ are obviously exact.
In particular, if we take $M:=\ca_\ck$, we obtain a direct sum decomposition and $\zp$--module isomorphisms
$$\ca_\ck=\ca_{\ck}^-\oplus\ca_{\ck}^+, \quad \ca_\ck^-\simeq(\qp/\zp)^{\lambda_\ck^-}\oplus\ca^{'-}, \quad \ca_\ck^+\simeq(\qp/\zp)^{\lambda_\ck^+}\oplus\ca^{'+},$$
where $\lambda_\ck^{\pm}$ are positive integers such that $\lambda_\ck=\lambda_\ck^-+\lambda_{\ck}^+$, and $\ca^\pm$ are torsion $\zp$--modules of finite exponent, such
that $\ca'=\ca^{'+}\oplus\ca^{'-}$. Also, if the set of primes $\ct$ above is $j$--invariant, then (\ref{t-sequence}) is as an exact sequence in the category
of $j$--modules. Consequently, we obtain two exact sequences of $\zp$--modules:
\begin{equation}\label{t-sequence-pm}\xymatrix{
0\ar[r] &(U_{\ck}/U_{\ck, \ct}\otimes\zp)^\pm\ar[r] &(\Delta_{\ck, \ct}\otimes\zp)^\pm\ar[r] &\ca_{\ck, \ct}^\pm\ar[r] &\ca^\pm_\ck\ar[r] &0.}
\end{equation}

\begin{lemma}\label{class-group-coranks} Let $\ck$ be a $\zp$--field and $\ct$ a finite, non--empty set of finite primes  in $\ck$, disjoint from the set of $p$--adic primes $\cs_p$. Then the following hold.
\begin{enumerate}\item The module $\Delta_{\ck, \ct}\otimes\zp$ is $p$--torsion, divisible, of finite corank denoted $\delta_{\ck, \ct}$.
\item If $\mu_{\ck}=0$, then the module $\ca_{\ck, \ct}$ is $p$--torsion, divisible, of corank at most $\lambda_{\ck}+\delta_{\ck, \ct}$.
\item If $\ck$ is of CM--type, $p$ is odd, $\ct$ is $j$--invariant, and $\mu_{\ck}=0$, then the module $\ca_{\ck, \ct}^{-}$ is $p$-torsion, divisible, of corank
$$\lambda_{\ck}^-+\delta_{\ck, \ct}^--\delta_{\ck},$$
where $\delta_{\ck, \ct}^\pm:={\rm corank}\, (\Delta_{\ck, \ct}\otimes\zp)^\pm$ and $\delta_{\ck}=1$, if $\ck$ is a cyclotomic $\zp$--field and $\delta_\ck=0$, otherwise.
\end{enumerate}
\end{lemma}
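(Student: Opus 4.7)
The plan is to handle the three claims in sequence: (1) is a direct computation of local coranks, (2) follows formally from (1) and Iwasawa's structure theorem, and (3) requires an additional analysis of the $(-)$-part of the global units.

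For part (1), each $v\in\ct$ satisfies $v\notin\cs_p$, so $\kappa(v)$ has residue characteristic different from $p$. Writing $\ck=\bigcup_n K_n$ and $\kappa(v)=\bigcup_n\kappa(v_n)$ as a rising union of finite fields, the $p$-Sylow of $\kappa(v)^\times$ is the direct limit of the $p$-Sylows of $\kappa(v_n)^\times$, which a lifting-the-exponent computation shows to be either trivial or $\qp/\zp$. Summing over the finite set $\ct$ yields $\Delta_{\ck,\ct}\otimes\zp$ divisible, $p$-torsion, of finite corank $\delta_{\ck,\ct}$.

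For part (2), tensor the exact sequence \eqref{t-sequence} with $\zp$, using flatness of $\zp$ over $\z$. Under $\mu_\ck=0$, Iwasawa's structure theorem gives $\ca_\ck\cong(\qp/\zp)^{\lambda_\ck}$, in particular divisible and $p$-torsion. The image of $\Delta_{\ck,\ct}\otimes\zp$ inside $\ca_{\ck,\ct}$ is a quotient of a divisible torsion module, hence itself divisible torsion. A short exact sequence of abelian groups with divisible torsion kernel and cokernel has divisible torsion middle term (divisible groups being injective as $\z$-modules, the sequence splits), so $\ca_{\ck,\ct}$ is divisible $p$-torsion, and subadditivity of corank yields the bound $\lambda_\ck+\delta_{\ck,\ct}$.

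For part (3), apply the $(-)$-eigenspace functor, exact since $p$ is odd, to the $\zp$-tensored sequence \eqref{t-sequence}, obtaining \eqref{t-sequence-pm}. The same divisibility argument shows $\ca_{\ck,\ct}^-$ divisible $p$-torsion, and additivity of $\zp$-corank along the resulting $4$-term exact sequence reduces everything to the identification
$$\mathrm{corank}\bigl((U_\ck/U_{\ck,\ct}\otimes\zp)^-\bigr)=\delta_\ck.$$
This is the main technical point. At each finite layer $K_n$, Dirichlet's unit theorem forces $\mathrm{rk}_\z U_{K_n^+}=\mathrm{rk}_\z U_{K_n}$ for CM $K_n$, so $U_{K_n}^-$ is torsion and coincides with $\bmu_{K_n}$; passing to the direct limit gives $(U_\ck\otimes\zp)^-\cong\bmu_\ck[p^\infty]$, of corank $\delta_\ck$ by definition. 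Finally, a nontrivial $p$-power root of unity reduces mod any $v\in\ct$ to a nontrivial element of $\kappa(v)^\times$ because $p\ne\mathrm{char}\,\kappa(v)$; hence $U_{\ck,\ct}\cap\bmu_\ck[p^\infty]=\{1\}$, forcing $(U_{\ck,\ct}\otimes\zp)^-=0$, so that the induced surjection $(U_\ck\otimes\zp)^-\twoheadrightarrow(U_\ck/U_{\ck,\ct}\otimes\zp)^-$ is in fact an isomorphism, completing the corank computation.
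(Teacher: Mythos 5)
Your proposal is correct and follows essentially the same route as the paper: part (1) by computing the $p$--primary part of $\kappa(v)^\times$ for $v\notin\cs_p$, part (2) via exact sequence \eqref{t-sequence} and closure of divisible $p$--torsion groups of finite corank under quotients and extensions, and part (3) via \eqref{t-sequence-pm} together with the identification $(U_\ck\otimes\zp)^-\simeq\bmu_\ck[p^\infty]$ (which the paper records as the ``obvious'' equality \eqref{minus-units}, and which you justify by a Dirichlet unit rank count at finite layers) and the vanishing of $(U_{\ck,\ct}\otimes\zp)^-$.
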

\begin{proof} (1) Assume that $\ck$ is the cyclotomic $\zp$--extension of a number field $K$, let $v\in\ct$ and let $v_0$ be the prime in $K$ sitting below $v$.
Since $v\not\in\cs_p$, the prime $v_0$ splits completely up to a finite level $K_n$ in $\ck/K$ and remains inert in $\ck/K_n$. As a consequence, the residue field $\kappa(v)$ associated to $v$ is
a $\zp$--extension of the finite field $\kappa(v_0)$. Consequently, the $p$--primary part $\kappa(v)^\times\otimes\zp$ of the torsion group $\kappa(v)^\times$ either equals the group $\bmu_{p^\infty}$ of all the $p$--power roots of unity or it is trivial, according to whether $\kappa(v_0)^\times$ contains $\bmu_p$ or not. Consequently,
$$\Delta_{\ck, \ct}\otimes\zp\simeq (\qp/\zp)^{\delta_{\ck, \ct}},$$
where $\delta_{\ck, \ct}$ is the number of primes $v\in\ct$, such that $\bmu_p\subseteq\kappa(v)^\times.$ Of course, if $\ck$ is a cyclotomic $\zp$--field, then $\delta_{\ck, \ct}=\mid\ct\mid$. This is because no non--trivial $p$--power root of unity
is congruent to $1$ modulo a non $p$--adic prime $v$, so the reduction mod $v$ maps $\bmu_{p^\infty}\to \kappa(v)$ are injective, for all $v\not\in\cs_p$, in particular for $v\in\ct$.

Part (2) is a consequence of part (1), exact sequence (\ref{t-sequence}), and the fact that the category of $p$--torsion, divisible,
groups of finite corank is closed under quotients and extensions.

Under the hypotheses of part (3), we obviously have
\begin{equation}\label{minus-units}
(U_\ck\otimes\zp)^{-}=\bmu_{p^\infty}\cap \ck^\times\simeq(\qp/\zp)^{\delta_{\ck}}.
\end{equation}
Since $\ct\cap\cs_p=\emptyset$, this implies that $(U_{\ck, \ct}\otimes\zp)^-$ is trivial.
In light of these facts, part (3) is a direct consequence of exact sequence (\ref{t-sequence-pm}).
\end{proof}

Now, assume that $\ck$ is the cyclotomic $\zp$--extension of the number field $K$ and that $\ck$ is of CM--type. Then,
every intermediate field $K_n$ is a CM--number field whose complex conjugation automorphism is the restriction of the complex conjugation $j$
of $\ck$ to $K_n$ and will be denoted by $j$ as well in what follows. For a finite set $\ct$ of finite primes in $\ck$, disjoint from $\cs_p$, we denote by
$T_n$ the set of primes in $K_n$ sitting below primes in $\ct$, for all $n\geq 0$. We let
$$A_{K_n}:=C_{K_n}\otimes \zp, \qquad A_{K_n, T_n}:=C_{K_n, T_n}\otimes\zp\,.$$
If $p$ is odd and the set $\ct$ is $j$--invariant, then we can split these $\zp$--modules
$$A_{K_n}=A_{K_n}^+\oplus A_{K_n}^-, \qquad  A_{K_n, T_n}:=A_{K_n, T_n}^+\oplus A_{K_n, T_n}^- ,$$
into their $\pm$--eigenspaces with respect to the $j$--action.

\begin{lemma}\label{no-capitulation} Under the above hypotheses, the natural maps
$$A_{K_n, T_n}^-\longrightarrow A_{K_{n+1}, T_{n+1}}^-$$
are injective, for all $n\geq 0$.
\end{lemma}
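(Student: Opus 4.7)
The plan is to prove injectivity by the classical Galois-cohomological interpretation of the capitulation kernel, then pass through the exact functor $(\,\cdot\otimes\zp)^{-}$ and reduce everything to the vanishing of $(U_{K_{n+1},T_{n+1}}\otimes\zp)^{-}$.

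Since $\ck/K$ is a $\zp$-extension, $G:=\mathrm{Gal}(K_{n+1}/K_n)$ is cyclic of order $p$, and complex conjugation $j$ is central in $\mathrm{Gal}(\ck/k)$ and therefore commutes with $G$. The first step I would carry out is the construction, via the classical cocycle argument, of an injection
$$\ker\bigl(C_{K_n,T_n}\longrightarrow C_{K_{n+1},T_{n+1}}\bigr)\hookrightarrow\h^{1}(G,\, U_{K_{n+1},T_{n+1}}).$$
Given $D\in\mathcal{Div}_{K_n,T_n}$ whose image $\iota(D)$ equals $\mathrm{div}_{K_{n+1}}(\alpha)$ for some $\alpha\in K_{n+1,T_{n+1}}^{\times}$, one sends $[D]$ to the class of the $1$-cocycle $g\mapsto g\alpha/\alpha\in U_{K_{n+1},T_{n+1}}$; the identifications $(U_{K_{n+1},T_{n+1}})^{G}=U_{K_n,T_n}$, $(K_{n+1,T_{n+1}}^{\times})^{G}=K_{n,T_n}^{\times}$, together with the injectivity of $\iota$, ensure that the resulting assignment is well defined on $C_{K_n,T_n}$ and that the vanishing of the class forces $D\in\mathrm{div}_{K_n}(K_{n,T_n}^{\times})$. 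I expect the bookkeeping for this step to be the main routine obstacle.

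Next, I would apply $\otimes\zp$ (flat) followed by $(\,\cdot\,)^{-}$ (exact since $p$ is odd); both functors commute with cohomology of the finite group $G$, which yields
$$\ker\bigl(A_{K_n,T_n}^{-}\longrightarrow A_{K_{n+1},T_{n+1}}^{-}\bigr)\hookrightarrow \h^{1}\!\bigl(G,\,(U_{K_{n+1},T_{n+1}}\otimes\zp)^{-}\bigr),$$
so it will suffice to show $(U_{K_{n+1},T_{n+1}}\otimes\zp)^{-}=0$. The hypothesis $T_{n+1}\cap\cs_p=\emptyset$ forces each $v''\in T_{n+1}$ to have residue characteristic $\neq p$, making the reduction $\bmu_{p^\infty}(K_{n+1})\hookrightarrow\kappa(v'')^{\times}$ injective (as in the proof of Lemma \ref{class-group-coranks}(1)); hence the $p$-power torsion of $U_{K_{n+1},T_{n+1}}$ is trivial. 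The torsion-free part has $\q$-rank $[K_{n+1}^{+}:\q]-1$, a rank already realized by $U_{K_{n+1}^{+}}\subseteq U_{K_{n+1}}^{+}$, so it lies entirely in the $+$-eigenspace of $j$ and its minus part vanishes after $\otimes\zp$. This is essentially the computation (\ref{minus-units}) from the excerpt, localized to finite level and refined to $T_{n+1}$-units, completing the proof.
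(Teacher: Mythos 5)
Your argument is correct for $\ct\neq\emptyset$ (the only case in which the paper actually uses this lemma), but it follows a genuinely different route. The paper simply quotes Washington, Prop.~13.26 for $\ct=\emptyset$ and bootstraps to general $\ct$ by a snake-lemma argument applied to the minus part of the four-term sequence (\ref{t-sequence-pm}) at levels $K_n$, $K_{n+1}$; you instead recompute the $\ct$-modified capitulation kernel from scratch, embedding it into $\h^1\bigl(G,\,(U_{K_{n+1},T_{n+1}}\otimes\zp)^-\bigr)$ and then killing that group outright by the vanishing $(U_{K_{n+1},T_{n+1}}\otimes\zp)^-=0$. Your route is more self-contained: it bypasses Washington's proposition entirely and makes the source of the vanishing transparent (non-trivial $p$-power roots of unity are units away from $p$, and the torsion-free unit rank sits in the plus part), whereas the paper's one-line proof presupposes the reader already has Washington's result and will supply the snake-lemma bookkeeping. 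Two caveats, both easy to patch. First, the vanishing $(U_{K_{n+1},T_{n+1}}\otimes\zp)^-=0$ genuinely needs $T_{n+1}\neq\emptyset$: for $\ct=\emptyset$ this module is $\bmu_{p^\infty}(K_{n+1})$, which need not vanish and for which $\h^1(G,\cdot)$ need not vanish either, so that case still has to be referred to Washington, as the paper does. Second, the cocycle $g\mapsto g\alpha/\alpha$ takes values in $U_{K_{n+1},T_{n+1}}$ only if $T_{n+1}$ is $G$-stable, which requires $\ct$ to be $\Gamma$-invariant; in the subsection where the lemma is stated only $j$-invariance of $\ct$ is assumed (the $\Gamma$- and $\cg$-invariance hypotheses appear later), though the extra invariance does hold in every application. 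The remaining ingredients you use — the identifications $(K_{n+1,T_{n+1}}^\times)^G=K_{n,T_n}^\times$ and $(U_{K_{n+1},T_{n+1}})^G=U_{K_n,T_n}$, and the commuting of $\otimes\zp$ (flat) and $(-)^-$ (exact, $p$ odd, $j$ central) past $\h^1(G,-)$ — are all sound.
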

\begin{proof} If $\ct=\emptyset$, this is Prop. 13.26 in \cite{Washington}. The general case follows from the case $\ct=\emptyset$  by a snake lemma argument applied to the ``minus''
exact sequence (\ref{t-sequence-pm}).
\end{proof}
\medskip

\section{The relevant Iwasawa modules}\label{Iwasawa-modules-section}

\subsection{An Iwasawa theoretic abstract $1$--motive}\label{the-abstract-one-motive} In this section, we fix a prime $p$, a $\zp$--field $\ck$ and two finite sets
$\cs$ and $\ct$ of finite primes in $\ck$, with the property that $\ct\cap(\cs\cup\cs_p)=\emptyset.$ {\it From this point on, we assume that $\mu_{\ck}=0$.}
To the data $(\ck, \cs, \ct)$ we associate the abstract $1$--motive
$$\cm_{\cs, \ct}^\ck:=[\,{\mathcal Div}_\ck(\cs\setminus\cs_p)\overset{\delta}\longrightarrow\ca_{\ck, \ct}\,]\,,$$
where ${\mathcal Div}_\ck(\cs\setminus \cs_p)$ is the group of divisors of $\ck$ supported on $\cs\setminus\cs_p$ and $\delta$ is the usual divisor--class map
sending the divisor $D$ into $\widehat D\otimes 1$ in $\ca_{\ck, \ct}=C_{\ck, \ct}\otimes\zp$, where $\widehat D$ denotes the class of $D$ in $C_{\ck, \ct}$. Note that, under our current hypotheses, $\rm{Div}_\ck(\cs\setminus\cs_p)$
is a free $\z$--module of rank $d_{\ck, \cs}:=\mid\cs\setminus\cs_p\mid$ and $\ca_{\ck, \ct}$ is torsion, divisible, of finite local corank. This local corank equals  $0$ at primes $\ell\ne p$ and
equals at most $(\lambda_{\ck}+\delta_{\ck, \ct})$ at $p$, as Lemma \ref{class-group-coranks}(2) shows. Consequently, all the requirements in Definition \ref{define-abstract-one-motive} are met.
The $p$--adic realization $T_p(\cm_{\cs, \ct}^\ck)$ of $\cm_{\cs, \ct}^\ck$ is a free $\zp$--module of
rank at most $(\lambda_\ck + \delta_{\ck, \ct}+ d_{\ck, \cs})$, sitting in an exact sequence
$$\xymatrix{ 0\ar[r] &T_p(\ca_{\ck, \ct})\ar[r] &T_p(\cm_{\cs, \ct}^\ck)\ar[r] & {\mathcal Div}_{\ck}(\cs\setminus\cs_p)\otimes\zp\ar[r] &0\,. }$$
Next, we give a new interpretation of the $p^n$--torsion groups $\cm_{\cs, \ct}^\ck[p^n]$ of the abstract $1$--motive above, for all $n\geq 1$.  For that purpose, we need the following.
\begin{definition} For every $p$, $\ck$, $\cs$, $\ct$ as above and every $m:=p^n$, where $n\in\Bbb Z_{\geq 1}$,
we define the following subgroup of $\ck_{\ct}^\times$.
$$\ck_{\cs, \ct}^{(m)}:=\left\{f\in \ck_{\ct}^\times\mid {div}_{\ck}(f)=mD+y\right\},$$
where $D\in{\mathcal Div}_{\ck, \ct}$ and $y\in{\mathcal Div}_{\ck}(\cs\setminus\cs_p)$. (Note that since the group ${\mathcal Div}_{\ck}(\cs_p)$ is $p$--divisible, we could
write $y\in{\mathcal Div}_{\ck}(\cs)$ or $y\in{\mathcal Div}_{\ck}(\cs\cup\cs_p)$  instead of $y\in{\mathcal Div}_{\ck}(\cs\setminus\cs_p)$ and arrive at an equivalent definition for $\kstm$.)
\end{definition}
\noindent Note that we have an inclusion $\ck_\ct^{\times m}\cdot U_{\ck, \ct}\subseteq \ck_{\cs, \ct}^{(m)}$ of subgroups of $\ck_{\ct}^\times$.

\begin{proposition}\label{reinterpret} For every $p$, $\ck$, $\cs$, $\ct$ and $m$ as in the definition above, we have a canonical group isomorphism
$$\cm_{\cs, \ct}^\ck[m]\simeq \ck_{\cs, \ct}^{(m)}/(\ck_\ct^{\times m}\cdot U_{\ck, \ct})\,.$$
\end{proposition}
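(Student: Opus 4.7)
The plan is to build an explicit surjective homomorphism
\[
\phi\colon \kstm \longrightarrow \cm_{\cs,\ct}^{\ck}[m],\qquad f\longmapsto (-\widehat D\otimes 1,\,y)\otimes\widehat 1,
\]
where $\mathrm{div}_{\ck}(f)=mD+y$ is any decomposition with $D\in\mathcal{Div}_{\ck,\ct}$ and $y\in L:=\mathcal{Div}_{\ck}(\cs\setminus\cs_p)$, and $\widehat D$ denotes the class of $D$ in $C_{\ck,\ct}$. The pair lies in the fiber product $\ca_{\ck,\ct}\times^m_{\ca_{\ck,\ct}} L$ since the principality of $mD+y$ in $C_{\ck,\ct}$ gives $m(-\widehat D\otimes 1)=\delta(y)$. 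For well-definedness, note that if $mD+y=mD'+y'$ then $m(D-D')=y'-y$ is supported on $\cs\setminus\cs_p$, and the torsion-freeness of each $\Gamma_v$ ($v\notin\ct$) forces $\mu:=D-D'$ itself to lie in $L$; the difference of the two candidate images is then $(\delta(\mu),m\mu)=m\cdot(j_0,\mu)$ for any $j_0\in\ca_{\ck,\ct}$ with $mj_0=\delta(\mu)$ (such a $j_0$ exists by divisibility of $\ca_{\ck,\ct}$), so it vanishes after $\otimes\z/m\z$. Additivity of $\mathrm{div}_\ck$ then makes $\phi$ a group homomorphism.

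The easy inclusion $\ktm\cdot U_{\ck,\ct}\subseteq\ker\phi$ is checked directly: $u\in U_{\ck,\ct}$ has zero divisor, and for $f=g^m$ the decomposition $D=\mathrm{div}(g)$, $y=0$ yields $\widehat D=0$ in $C_{\ck,\ct}$.

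For the surjectivity and the reverse containment $\ker\phi\subseteq\ktm U_{\ck,\ct}$, the crucial input is that, under $\mu_\ck=0$, the torsion group $C_{\ck,\ct}$ projects onto its $p$-primary part $\ca_{\ck,\ct}=C_{\ck,\ct}\otimes\zp$ with kernel equal to the subgroup of elements of order coprime to $p$; on this $p'$-torsion subgroup, multiplication by $m=p^n$ is a bijection via a B\'ezout relation $am+bN=1$ (with $N$ the relevant $p'$-order). For surjectivity, given $(j,\lambda)$ with $mj=\delta(\lambda)$, I pick $c\in C_{\ck,\ct}$ with $c\otimes 1=j$, solve $me=mc-\widehat\lambda$ for $e$ of $p'$-order (possible since $mc-\widehat\lambda$ has $p'$-order by the fiber-product condition), lift $-c+e$ to $D\in\mathcal{Div}_{\ck,\ct}$, and observe that $mD+\lambda$ has trivial class in $C_{\ck,\ct}$, hence equals $\mathrm{div}_\ck(f)$ for some $f\in\kt$; then $\phi(f)=(j,\lambda)\otimes\widehat 1$ because $e\otimes 1=0$. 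For the reverse kernel containment, $\phi(f)=0$ produces $\lambda_0\in L$ with $y=m\lambda_0$ and $c:=\widehat{D+\lambda_0}$ of $p'$-order $N$; choosing $h\in\kt$ with $\mathrm{div}(h)=N(D+\lambda_0)$ and writing $am+bN=1$, the identity
\[
D+\lambda_0=am(D+\lambda_0)+bN(D+\lambda_0)=\mathrm{div}(f^a h^b)
\]
upgrades ``class of $p'$-order'' to ``principal divisor,'' so $g:=f^a h^b\in\kt$ satisfies $\mathrm{div}(g^m)=\mathrm{div}(f)$, hence $f/g^m\in U_{\ck,\ct}$.

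The main obstacle is precisely this mismatch between $C_{\ck,\ct}$ and its $p$-part $\ca_{\ck,\ct}$: $\phi$ only detects $p$-primary information, so a priori $\phi(f)=0$ only forces a certain divisor class to be of order coprime to $p$, whereas membership in $\ktm U_{\ck,\ct}$ requires that class to vanish in $C_{\ck,\ct}$ itself. The B\'ezout identity $am+bN=1$ is what bridges the gap, and deploying it symmetrically in both directions is essentially the only non-formal ingredient of the proof.
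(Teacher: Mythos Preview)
Your proof is correct. The approach is essentially dual to the paper's: the paper constructs the map in the opposite direction,
\[
\phi\colon \ca_{\ck,\ct}\times^m_{\ca_{\ck,\ct}}\mathcal{Div}_{\ck}(\cs\setminus\cs_p)\longrightarrow \kstm/(\ktm\cdot U_{\ck,\ct}),\qquad (\widehat D,x)\longmapsto \widehat f,
\]
where $mD-x=\mathrm{div}_\ck(f)$, and then shows it is well-defined, surjective, and has kernel exactly $m\cdot(\text{fiber product})$. Both arguments must confront the same obstacle, namely that $\ca_{\ck,\ct}$ is only the $p$-primary part of $C_{\ck,\ct}$, and both resolve it via coprimality of $m=p^n$ with the prime-to-$p$ torsion. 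The paper's version of this trick is slightly slicker: for surjectivity it multiplies by an integer $a$ coprime to $m$ to land in $\ca_{\ck,\ct}$, then uses unique $a$-divisibility of the $m$-torsion target to divide back. Your B\'ezout maneuver is more explicit and has the advantage of being reusable symmetrically for both surjectivity and the kernel computation. One minor slip: in your well-definedness step the difference of the two candidate images is $-(\delta(\mu),m\mu)$ rather than $(\delta(\mu),m\mu)$, but this is harmless since the conclusion is the same modulo $m$. Also, your invocation of $\mu_\ck=0$ for the torsion decomposition of $C_{\ck,\ct}$ is not actually needed there (any torsion abelian group has primary decomposition); the hypothesis $\mu_\ck=0$ enters earlier, to ensure $\ca_{\ck,\ct}$ is divisible so that the abstract $1$-motive is well-defined.
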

\begin{proof} In what follows, in order to simplify notation, we view $\ca_{\ck, \ct}$ as the $p$--Sylow subgroup of the torsion group $C_{\ck, \ct}$. The general element in the fiber--product
$\ca_{\ck, \ct}\times^m_{\ca_{\ck, \ct}}{\mathcal Div}_{\ck}(\cs\setminus\cs_p)$ (see \S\ref{abstract-one-motives} for the definition) consists of a pair
$(\widehat D, x)$, where $D\in{\mathcal Div}_{\ck, \ct}$, such that its class $\widehat D$ in $C_{\ck, \ct}$ lies in $\ca_{\ck, \ct}$, and $x\in{\mathcal Div}_{\ck}(\cs\setminus\cs_p)$
with the property that $mD-x={div}_{\ck}(f)$, for some $f\in\ck_{\ct}^\times$. Note that, by definition, any such $f$ lies in $\ck_{\cs, \ct}^{(m)}$.
We define a map
$$\ca_{\ck, \ct}\times^m_{\ca_{\ck, \ct}}{\mathcal Div}_{\ck}(\cs\setminus\cs_p)\overset{\phi}\longrightarrow \ck_{\cs, \ct}^{(m)}/(\ck_\ct^{\times m}\cdot U_{\ck, \ct}), \qquad \phi(\widehat D, x)=\widehat f, $$
where $D$, $x$ and $f$ are as above, and $\widehat f$ is the class of $f$ in the quotient to the right. We claim that $\phi$ is a well-defined, surjective group morphism. We check this next.
\smallskip

{\bf Step 1. $\phi$ is a well defined group morphism.} Assume that $D, D'\in{\mathcal Div}_{\ck, \ct}$ and $x\in{\mathcal Div}_{\ck}(\cs\setminus\cs_p)$, such that $(\widehat D, x)$ and $(\widehat D', x)$ belong to and are equal in
the fiber product above. This means that
$${div}_{\ck}(f)=mD-x,\qquad {div}_{\ck}(f')=mD'-x, \qquad D-D'={div}_{\ck}(g),$$
for some $f, f', g\in \ck_{\ct}^\times$. Obviously, this implies that ${div}_{\ck, \ct}(f)={div}_{\ck, \ct}(f'g^m)$. Consequently, there exists $u\in U_{\ck, \ct}$, such that
$f=f'\cdot g^mu$. This implies that $\phi(\widehat D, x)=\widehat f=\widehat f'=\phi(\widehat D', x)$, showing that $\phi$ is well-defined as a function.
The fact that $\phi$ is a group morphism is obvious.
\smallskip

{\bf Step 2. $\phi$ is surjective.} Let $f\in\ck_{\cs, \ct}^{(m)}$ and $D\in{\mathcal Div}_{\ck, \ct}$ and $x\in{\mathcal Div}_{\ck}(\cs\setminus\cs_p)$, such that ${div}_{\ck}(f)=mD-x$. Since $C_{\ck, \ct}$ is a torsion group, there exists
a natural number $a$, with $\gcd(a, m)=1$, such that $a\widehat D\in\ca_{\ck, \ct}$. This shows that $(a\widehat D, -ax)\in \ca_{\ck, \ct}\times^m_{\ca_{\ck, \ct}}{\mathcal Div}(\cs\setminus\cs_p)$ and $\phi(a\widehat D, -ax)={\widehat f}^{\,\,a}$. Consequently, ${\widehat f}^{\,\,a}\in{\rm Im}(\phi)$. However, since ${\rm Im}(\phi)$ is a group of exponent dividing $m$ (as a subgroup of $\ck_{\cs, \ct}^{(m)}/\ck_\ct^{\times m}\cdot U_{\ck, \ct}$), it is
uniquely $a$--divisible. This shows that $\widehat f\in {\rm Im}(\phi)$, which concludes the proof of the surjectivity of $\phi$.
\smallskip

{\bf Step 3. The kernel of $\phi$.} Next, we prove that
$$\ker(\phi)=m(\ca_{\ck, \ct}\times^m_{\ca_{\ck, \ct}}{\mathcal Div}_{\ck}(\cs\setminus\cs_p)).$$
Let $(\widehat D, x)\in \ker(\phi)$. This means that there exists $g\in\ck_{\ct}^\times$, such that
$$mD-x={div}_{\ck}(g^m)=m\cdot {div}_{\ck}(g)\,.$$
This implies that $x=mx'$, for some $x'\in{\mathcal Div}_{\ck}(\cs\setminus\cs_p)$. However, since $\ca_{\ck, \ct}$ is $m$--divisible, there exists
$D'\in{\mathcal Div}_{\ck, \ct}$, such that $\widehat D'\in\ca_{\ck, \ct}$ and $\widehat D=m\widehat D'$. This means that
$D-mD'={div}_{\ck}(f')$, for some $f'\in\ck_{\ct}^\times$.
This shows that
$$(\widehat D, x)=m(\widehat D', x'),\qquad mD'-x'={div}_{\ck}(gf')\,.$$
Therefore, $\ker(\phi)\subseteq m(\ca_{\ck, \ct}\times^m_{\ca_{\ck, \ct}}{\mathcal Div}(\cs\setminus\cs_p))$. The opposite inclusion is obvious.
\smallskip

Now, we combine Steps 1--3 above with the definition of $\cm_{\cs, \ct}^\ck[m]$ (see \S\ref{abstract-one-motives}) to conclude that the map $\phi$ factors
through a group isomorphism $\widetilde\phi$
$$\xymatrix{\ca_{\ck, \ct}\times^m_{\ca_{\ck, \ct}}{\mathcal Div}_{\ck}(\cs\setminus\cs_p)\ar@{>>}[r]^{\quad\phi}\ar@{>>}[d] &\ck_{\cs, \ct}^{(m)}/(\ck_\ct^{\times m}\cdot U_{\ck, \ct})\\
\cm_{\cs, \ct}^\ck[m]\ar[ur]^{\widetilde\phi}_{\sim} &
}$$
This concludes the proof of the proposition.
\end{proof}

\begin{remark} If $G$ is a group of field automorphisms of $\ck$ and the sets $\cs$ and $\ct$ are $G$--equivariant, then $T_p(\cm_{\cs, \ct}^\ck)$ and $\cm_{\cs, \ct}^\ck[p^n]$ are endowed with natural
$\zp[G]$--module structures (see Remark \ref{equivariance}.) In this case, it is easily seen that
the canonical isomorphism in Proposition \ref{reinterpret} is $\zp[G]$--linear.

If, in addition, $\ck$ is of CM--type, $p$ is odd, and $\cs$ and $\ct$ are $j$--invariant, where $j$ is the complex conjugation automorphism of $\ck$,
then we can talk about the eigenspaces $T_p(\cm_{\cs, \ct}^\ck)^\pm$ and $\cm_{\cs, \ct}^\ck[p^n]^\pm$, for all $n\geq 1$. Since the actions of $G$ and $j$ commute, these eigenspaces have natural $\zp[G]$--module structures. Moreover, the isomorphism in Proposition \ref{reinterpret} induces two $\zp[G]$--linear isomorphisms
$$\cm_{\cs, \ct}^\ck[m]^\pm\simeq (\ck_{\cs, \ct}^{(m)}/\ck_\ct^{\times m}\cdot U_{\ck, \ct})^\pm\,,$$
for all $m$ which are powers of $p$. This follows immediately from the fact that the action of $j$ and $G$ on $\ck$ commute, for any $G$ as above.
\end{remark}

\begin{corollary}\label{minus-reinterpret} If $\ck$ is of CM--type, $p$ is odd and $\cs$ and $\ct$ are $j$--invariant, then the isomorphism in Proposition \ref{reinterpret} induces isomorphisms
of $\Bbb Z_p$--modules
$$\cm_{\cs, \ct}^\ck[m]^-\simeq (\ck_{\cs, \ct}^{(m)}/\ck_\ct^{\times m}\cdot U_{\ck, \ct})^-\simeq\left(\ck_{\cs, \ct}^{(m)}/\ck_\ct^{\times m}\right)^-\,,$$
for all $m:=p^n$, where $n\geq 1$. If, in addition, $\cs$ and $\ct$ are $G$--invariant, for a group of automorphisms $G$ of $\ck$, then the above
isomorphisms are $\zp[G]$--linear.
\end{corollary}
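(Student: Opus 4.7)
The plan is to first extract the first isomorphism from Proposition \ref{reinterpret} by exactness of the minus-part functor, and then to establish the second isomorphism via a short exact sequence together with the vanishing of $(U_{\ck,\ct}\otimes\zp)^-$ already observed in the proof of Lemma \ref{class-group-coranks}(3).

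For the first isomorphism, since $\cs$ and $\ct$ are $j$--invariant and $j$ commutes with every field automorphism of $\ck$, the isomorphism of Proposition \ref{reinterpret} is $j$--equivariant. Both sides are $p^n$--torsion and $p$ is odd, so multiplication by $2$ is invertible on them, and the minus-part functor $M\mapsto M^-$ is defined and exact on such modules. Applying it to the isomorphism of Proposition \ref{reinterpret} yields
$$\cm_{\cs,\ct}^\ck[m]^-\simeq\bigl(\ck_{\cs,\ct}^{(m)}/(\ck_\ct^{\times m}\cdot U_{\ck,\ct})\bigr)^-.$$

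For the second isomorphism, consider the tautological short exact sequence of $j$--modules
$$0\longrightarrow\frac{U_{\ck,\ct}\cdot\ck_\ct^{\times m}}{\ck_\ct^{\times m}}\longrightarrow\frac{\ck_{\cs,\ct}^{(m)}}{\ck_\ct^{\times m}}\longrightarrow\frac{\ck_{\cs,\ct}^{(m)}}{\ck_\ct^{\times m}\cdot U_{\ck,\ct}}\longrightarrow 0.$$
All three terms are $p^n$--torsion, so the minus-part functor is defined and exact on them, and it suffices to show that the minus part of the leftmost term vanishes. By the second isomorphism theorem, that term is isomorphic to $U_{\ck,\ct}/(U_{\ck,\ct}\cap\ck_\ct^{\times m})$, which is a $p^n$--torsion quotient of $U_{\ck,\ct}$; being $p^n$--torsion, it coincides canonically with its $\zp$--tensor product, hence is a quotient of $U_{\ck,\ct}\otimes\zp$. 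Its minus part is therefore a quotient of $(U_{\ck,\ct}\otimes\zp)^-$, which is trivial by the proof of Lemma \ref{class-group-coranks}(3): the minus part of $U_\ck\otimes\zp$ is $\bmu_{p^\infty}\cap\ck^\times$, and no nontrivial $p$--power root of unity can be congruent to $1$ modulo any prime $v\in\ct$ since $\ct\cap\cs_p=\emptyset$.

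For the $\zp[G]$--linearity, when $\cs$ and $\ct$ are $G$--invariant, all groups appearing above inherit a $\zp[G]$--action which commutes with $j$ (since $j$ commutes with every field automorphism of $\ck$), so the minus-eigenspace decomposition is automatically $\zp[G]$--linear and all maps above respect the $G$--action. The only subtle point is ensuring one is entitled to take minus parts of groups that are a priori not $\z[1/2]$--modules, which is resolved by the observation that every relevant quotient is $p^n$--torsion with $p$ odd; after that, everything reduces to the key vanishing input $(U_{\ck,\ct}\otimes\zp)^-=0$ already available from Lemma \ref{class-group-coranks}(3).
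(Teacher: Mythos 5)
Your overall strategy (apply the minus functor to the isomorphism of Proposition \ref{reinterpret}, then kill the error term coming from $U_{\ck,\ct}$ using a short exact sequence) is exactly the one the paper uses, and the first isomorphism is handled correctly. However, there is a real gap in the justification of the second isomorphism.

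You assert that $(U_{\ck,\ct}\otimes\zp)^-$ is trivial, citing the proof of Lemma \ref{class-group-coranks}(3) and appealing to the fact that no nontrivial $p$-power root of unity is congruent to $1$ modulo a prime in $\ct$. But that argument, and Lemma \ref{class-group-coranks}(3) itself, both presuppose $\ct\ne\emptyset$. Corollary \ref{minus-reinterpret} has no such hypothesis, and in fact the paper deploys it crucially with $\ct=\emptyset$ in \S\ref{classical} (the Kummer pairing argument leading to Lemma \ref{link-classical} involves $\ck_{\cs,\emptyset}^{(p^m)}$). When $\ct=\emptyset$, one has $(U_{\ck}\otimes\zp)^-=\bmu_{p^\infty}\cap\ck^\times$, which is $\bmu_{p^\infty}$ (in particular nontrivial) whenever $\ck$ is a cyclotomic $\zp$-field — which is precisely the situation in \S\ref{classical}. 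So the vanishing you invoke is false in a case that matters.

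The fix is small and is what the paper actually does: observe that in all cases $(U_{\ck,\ct}\otimes\zp)^-$ is $p$-divisible (it is trivial when $\ct\ne\emptyset$, and either trivial or $\bmu_{p^\infty}$ when $\ct=\emptyset$). A $p^n$-torsion quotient of a $p$-divisible group is zero, so the minus part of your leftmost term still vanishes. Equivalently, in the paper's phrasing, $(U_{\ck,\ct}/U_{\ck,\ct}^m)^-\simeq(U_{\ck,\ct}\otimes\zp)^-\otimes\z/m\z=0$. With that correction your argument goes through; without it, your proof silently restricts to $\ct\ne\emptyset$.
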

\begin{proof} Let $m$ be as above. There is an obvious exact sequence of $\zp[\langle j\rangle ]$--modules
$$\xymatrix{
U_{\ck, \ct}/U_{\ck, \ct}^m\ar[r] & \ck_{\cs, \ct}^{(m)}/\ck_\ct^{\times m}\ar[r] & \ck_{\cs, \ct}^{(m)}/(\ck_{\ct}^{\times m}\cdot U_{\ck, \ct})\ar[r] &0.
}$$
Now, (\ref{minus-units}) above shows that $(U_{\ck, \ct}\otimes\zp)^-=\bmu_{p^\infty}\cap U_{\ck, \ct}$. If $\ct\ne\emptyset$, this intersection is trivial, whereas if $\ct=\emptyset$, this intersection
is either equal to $\mu_{p^\infty}$ or trivial, depending on whether $\ck$ is a cyclotomic $\zp$--field or not. In all these cases, $(U_{\ck, \ct}\otimes\zp)^-$ is $p$--divisible.
Consequently, $(U_{\ck, \ct}/U_{\ck, \ct}^m)^-\simeq (U_{\ck, \ct}\otimes\zp)^-\otimes\z/m\z=0$. Consequently, the exact sequence above leads to a group isomorphism
$$(\ck_{\cs, \ct}^{(m)}/\ck_\ct^{\times m})^-\simeq (\ck_{\cs, \ct}^{(m)}/\ck_{\ct}^{\times m}\cdot U_{\ck, \ct})^-.$$
Now, the first part of the corollary is a direct consequence of Proposition \ref{reinterpret}. The $\zp[G]$--linearity is a consequence of the previous Remark combined with
the obvious $\zp[G]$--linearity of the last displayed isomorphism.
\end{proof}

\begin{remark}\label{reinterpret-extensions} Assume that $\ck/\ck'$ is an extension of $\zp$--fields, and $\cs$ and $\ct$ are sets of primes in $\ck$ as above. Let $\cs'$, $\ct'$ and $\cs_p'$ denote
the sets of primes in $\ck'$ sitting below primes in $\cs$, $\ct$ and $\cs_p$, respectively. Then, the inclusion $\ck'\to\ck$ induces natural group morphisms ${\mathcal Div}_{\ck'}(\cs'\setminus\cs_p')\to
{\mathcal Div}_{\ck}(\cs\setminus\cs_p)$ and $\ca_{\ck', \ct'}\to\ca_{\ck, \ct}$. These lead to a morphism of abstract $1$--motives $\mkp\to\mk$. 
Since the isomorphisms constructed in Proposition \ref{reinterpret} are functorial, we get commutative diagrams
$$\xymatrix{\mk[m]\ar[r]^{\sim\qquad } &\kstm/(\ktm\cdot U_{\ck, \ct})\\
\mkp[m]\ar[u]\ar[r]^{\sim\qquad } &\kpstm/(\kptm\cdot U_{\ck', \ct'}).\ar[u]
}
$$
\end{remark}

\medskip

\subsection{The ensuing Iwasawa modules} Let us assume that $\ck$ is the cyclotomic $\zp$--extension of a number field $K$. As usual,
we let $\Gamma:={\rm Gal}(\ck/K)$. We pick two finite sets $\cs$ and $\ct$ of finite primes in $\ck$, such that $\ct\cap(\cs\cup\cs_p)=\emptyset$. We assume that
$\cs$ and $\ct$ are $\Gamma$--invariant.

For all $n\in\z_{\geq 0}$, we let $\Gamma_n:={\rm Gal}(\ck/K_n)$, where $K_n$ is the unique intermediate field $K\subseteq K_n\subseteq\ck$,
such that $[K_n:K]=p^n$. Also, we let
$$\Lambda=\zp[[\Gamma]]:=\underset{n}{\underleftarrow{\lim}}\,\zp[\Gamma/\Gamma_n]$$
denote the $p$--adic profinite group ring associated to $\Gamma$, where the projective limit is taken with
respect to the surjections $\Gamma/\Gamma_{n+1}\twoheadrightarrow\Gamma/\Gamma_n$ induced by Galois restriction.

As in \S\ref{ideal-class-groups}, we denote by $T_n$, $S_n$ and $S_{p,n}$ the sets of primes in $K_n$ which sit below primes in $\cs$, $\ct$, and $\cs_p$,
respectively, for all $n\geq 0$. Obviously, $S_n$, $T_n$ and $S_{p, n}$ are $\Gamma/\Gamma_n$--invariant. Consequently, $A_{K_n, T_n}$ and ${\mathcal Div}_{K_n}(S_n\setminus S_{p, n})\otimes\zp$ have canonical
$\zp[\Gamma/\Gamma_n]$--module structures. Via the ring morphisms $\Lambda\twoheadrightarrow\zp[\Gamma/\Gamma_n]$, they are endowed with canonical $\Lambda$--module structures.
The natural morphisms
$$A_{K_n, T_n}\longrightarrow A_{K_{n+1}, T_{n+1}}, \qquad {\mathcal Div}_{K_n}(S_n\setminus S_{p,n})\otimes\zp\longrightarrow {\mathcal Div}_{K_{n+1}}(S_{n+1}\setminus S_{p,n+1})\otimes\zp$$
are $\Lambda$--linear. This leads to canonical $\Lambda$--module structures for
$$\ca_{\ck, \ct}\simeq \underset{n}{\underrightarrow\lim}\, A_{K_n, T_n}\quad \text{ and }\quad {\mathcal Div}_{\ck}(\cs\setminus\cs_p)\otimes\zp\simeq
\underset{n}{\underrightarrow\lim}\,({\mathcal Div}_{K_n}(S_n\setminus S_{p,n})\otimes\zp),$$
as direct limits of $\Lambda$--modules with respect to $\Lambda$--linear transition maps. This way, $T_p(\ca_{\ck, \ct})$ inherits
a canonical $\Lambda$--module structure as well.

\begin{lemma}\label{lambda-module-lemma} With notations as above, the $\zp$--module $T_p(\mk)$ can be endowed with a natural $\Lambda$--module structure which makes the sequence
$$\xymatrix{ 0\ar[r] &T_p(\ca_{\ck, \ct})\ar[r] &T_p(\cm_{\cs, \ct}^\ck)\ar[r] & {\mathcal Div}_{\ck}(\cs\setminus\cs_p)\otimes\zp\ar[r] &0}$$
exact in the category of $\Lambda$--modules.
\end{lemma}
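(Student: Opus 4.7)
The plan is to lift the $\Gamma$-action already evident on the lattice ${\mathcal Div}_\ck(\cs\setminus\cs_p)$ and on the divisible part $\ca_{\ck,\ct}$ of the $1$-motive $\mk$ to the entire $1$-motive, and then pass to $p$-adic limits to obtain a continuous $\Lambda$-module structure on $T_p(\mk)$ compatible with the already-described $\Lambda$-structures on the outer terms.

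First, I would observe that the divisor-class map $\delta\colon{\mathcal Div}_\ck(\cs\setminus\cs_p)\to\ca_{\ck,\ct}$ is $\Gamma$-equivariant, because $\Gamma$ acts on $\ck^\times$ preserving the $\Gamma$-invariant sets $\cs$ and $\ct$ and the passage to divisor classes commutes with field automorphisms. By Remark \ref{equivariance} applied with $R=\zp[\Gamma]$, the fiber products $\ca_{\ck,\ct}\times^{p^n}_{\ca_{\ck,\ct}}{\mathcal Div}_\ck(\cs\setminus\cs_p)$ are naturally $\zp[\Gamma]$-modules. Since ${\mathcal Div}_\ck(\cs\setminus\cs_p)$ is $\z$-free, tensoring the fiber-product sequence of \S\ref{abstract-one-motives} with $\z/p^n\z$ preserves exactness, giving a $\zp[\Gamma]$-linear short exact sequence
$$0\to\ca_{\ck,\ct}[p^n]\to\cm_{\cs,\ct}^\ck[p^n]\to{\mathcal Div}_\ck(\cs\setminus\cs_p)\otimes\z/p^n\z\to 0$$
for every $n\geq 1$, in which the transition maps between successive levels are $\zp[\Gamma]$-linear as well.

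Second, I would verify that the $\Gamma$-action on each discrete $p^n$-torsion module $\cm_{\cs,\ct}^\ck[p^n]$ is continuous, equivalently, that every element has open stabilizer. By Proposition \ref{reinterpret}, any class in $\cm_{\cs,\ct}^\ck[p^n]$ is represented by some $f\in\ck_{\cs,\ct}^{(p^n)}\subseteq\ck^\times$; choosing $m$ large enough that $f\in K_m^\times$, the open subgroup $\Gamma_m\subseteq\Gamma$ pointwise fixes $f$, hence fixes its class. The $\zp[\Gamma]$-action on $\cm_{\cs,\ct}^\ck[p^n]$ therefore extends uniquely to a $\Lambda=\zp[[\Gamma]]$-action: multiplication of $x\in\cm_{\cs,\ct}^\ck[p^n]$ by $\lambda=\varprojlim_m\lambda_m\in\Lambda$ is defined as $\lambda_m\cdot x$ for any $m$ such that $\Gamma_m$ fixes $x$, independently of the choice of $m$.

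Finally, since the multiplication-by-$p$ transition maps $\cm_{\cs,\ct}^\ck[p^{n+1}]\twoheadrightarrow\cm_{\cs,\ct}^\ck[p^n]$ are $\Lambda$-linear and surjective, taking $\varprojlim_n$ of the displayed short exact sequences remains exact (the left-hand system satisfies Mittag--Leffler trivially) and endows $T_p(\mk)$ with a $\Lambda$-module structure fitting into the claimed $\Lambda$-linear exact sequence. A direct comparison via the direct-limit isomorphisms of \S\ref{ideal-class-groups} shows that the resulting $\Lambda$-structures on $T_p(\ca_{\ck,\ct})$ and ${\mathcal Div}_\ck(\cs\setminus\cs_p)\otimes\zp$ coincide with those described in the paragraphs preceding the lemma. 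The one nontrivial point throughout is the continuity of the $\Gamma$-action on each $\cm_{\cs,\ct}^\ck[p^n]$; this is the only step that uses anything beyond functoriality of the Tate module construction, and it is immediate from Proposition \ref{reinterpret}.
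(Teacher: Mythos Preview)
Your proof is correct and reaches the same conclusion as the paper, but the mechanism for producing the $\Lambda$-action differs slightly. The paper passes to the associated $p$-adic $1$-motive $(\mk)_p$ (Remark~\ref{abstract-p-adic-1-motives}) and writes it as a direct limit of finite-level $p$-adic $1$-motives $[{\mathcal Div}_{K_n}(S_n\setminus S_{p,n})\otimes\zp\overset{\delta_n\otimes 1}\longrightarrow A_{K_n,T_n}]$; since each $\delta_n\otimes 1$ is $\zp[\Gamma/\Gamma_n]$-linear and hence $\Lambda$-linear via the surjection $\Lambda\twoheadrightarrow\zp[\Gamma/\Gamma_n]$, the limit $\delta\otimes 1$ is $\Lambda$-linear and Remark~\ref{equivariance} with $R=\Lambda$ applies directly. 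You instead obtain only a $\zp[\Gamma]$-structure from Remark~\ref{equivariance} and then upgrade to $\Lambda$ by checking continuity of the discrete $\Gamma$-action on each $\mk[p^n]$, invoking Proposition~\ref{reinterpret} to represent classes by elements of some $K_m^\times$. Both arguments are valid; the paper's finite-level decomposition bypasses the continuity verification entirely, while your route is more self-contained at the cost of importing Proposition~\ref{reinterpret} as an additional ingredient.
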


\begin{proof}


Remark \ref{abstract-p-adic-1-motives} shows that in defining the torsion groups $\mk[m]$, with $m$ a power of $p$, and the $p$--adic Tate module $T_p(\mk)$, we may replace the
abstract $1$--motive $\mk$ with the associated abstract $p$--adic $1$--motive
$$(\mk)_p:=[{\mathcal Div}_{\ck}(\cs\setminus\cs_p)\otimes\zp\overset{\delta\otimes 1}\longrightarrow\ca_{\ck, \ct}],$$
where $\delta\otimes 1$ is the extension of $\delta$ by
$\zp$--linearity to ${\mathcal Div}_{\ck}(\cs\setminus\cs_p)\otimes\zp$. (Note that $\ca_{\ck, \ct}\simeq \ca_{\ck, \ct}\otimes\zp$.)  Now, we obviously have
$$[{\mathcal Div}_{\ck}(\cs\setminus\cs_p)\otimes\zp\overset{\delta\otimes 1}\longrightarrow\ca_{\ck, \ct}]=\underset{n}{\underrightarrow\lim}\,[{\mathcal Div}_{K_n}(S_n\setminus S_{p,n})\otimes\zp\overset{\delta_n\otimes 1}\longrightarrow A_{K_n, T_n}]\,$$
where $\delta_n$ is the usual divisor--class map
sending the divisor $D\in {\mathcal Div}_{K_n}(S_n\setminus S_{p,n})$ to $\widehat D\otimes 1$ in $C_{K_n, T_n}\otimes\zp=A_{K_n, T_n}$, with $\widehat D$ denoting the class of $D$ in $C_{K_n, T_n}$. Since the maps
$\delta_n\otimes 1$ are $\zp[\Gamma/\Gamma_n]$--linear, they are $\Lambda$--linear and therefore $\delta\otimes 1$ is $\Lambda$--linear. Remark \ref{equivariance} applied to ${(\mk)_p}$ and $R:=\Lambda$,
implies that $\mk[m]\simeq (\mk)_p[m]$ can be endowed with a natural
$\Lambda$--module structure which makes the sequence
$$\xymatrix{0\ar[r] &\ca_{\ck, \ct}[m]\ar[r] &\mk[m]\ar[r] &{\mathcal Div}_{\ck}(\cs\setminus\cs_p)\otimes\z/m\z\ar[r] & 0
}$$
exact in the category of $\Lambda$--modules, for all $m$ which are powers of $p$.  The statement in the Lemma is now obtained by taking the projective limit
of the exact sequences above for $m=p^a$, with $a\in\z_{\geq 1}$, with respect to the multiplication-by-$p$ maps, which are clearly $\Lambda$--linear.
\end{proof}

\begin{remark}\label{lambdag} Assume that $K/k$ is a Galois extension of number fields, of Galois group $G$. Let $\ck$ and  be the cyclotomic $\zp$--extensions of $K$,
for some prime $p$.  Then, the extension $\ck/k$ is Galois. Let $\cg:={\rm Gal}(\ck/k)$.
Now, let us assume that the sets $\cs$ and $\ct$ in $\ck$ considered above are $\cg$--invariant. Then, just as above, ${\mathcal Div}_{\ck}(\cs\setminus\cs_p)\otimes\zp$ and $\ca_{\ck, \ct}$ are endowed
with obvious canonical $\zp[[\cg]]$--module structures and the map $\delta\otimes 1$ above is $\zp[[\cg]]$--linear. Consequently, $\mk[p^a]$, for $a\geq 1$, and $T_p(\mk)$ are endowed with a canonical $\zp[[\cg]]$--module structure
and the sequence in Lemma \ref{lambda-module-lemma} is exact in the category of $\zp[[\cg]]$--modules.

If, in addition, $K$ is a CM--number field (which makes $\ck$ of CM--type), $\cs$ and $\ct$ are $j$--invariant
and $p$ is odd, then  $\mk[p^a]^\pm$, for $a\geq 1$, and $T_p(\mk)^\pm$ have natural $\zp[[\cg]]$--module structures.
The exact sequence of $\zp[[\cg]]$--modules in Lemma \ref{lambda-module-lemma} can be split into a direct sum of two exact sequences of $\zp[[\cg]]$--modules
\begin{equation}\label{motive-exact-sequence}
\xymatrix{ 0\ar[r] &T_p(\ca_{\ck, \ct})^\pm\ar[r] &T_p(\cm_{\cs, \ct}^\ck)^\pm\ar[r] & {\mathcal Div}_{\ck}(\cs\setminus\cs_p)^\pm\otimes\zp\ar[r] &0}.\end{equation}
\end{remark}

\begin{remark} For every $n\geq 0$, let $M_{S_n, T_n}^{K_n}:=[{\mathcal Div}_{K_n}(S_n\setminus S_{p,n})\overset{\delta_n}\longrightarrow A_{K_n, T_n}]$, where $\delta_n$ is the divisor--class map defined in the proof of the above Lemma. Of course, $M_{S_n, T_n}^{K_n}$ is not an abstract $1$--motive,
unless the finite group $A_{K_n, T_n}$ happens to be trivial. Nevertheless, one can define the $\zp[\Gamma/\Gamma_n]$--modules
$$M_{S_n, T_n}^{K_n}[m]:=(A_{K_n, T_n}\times^m_{A_{K_n, T_n}}{\mathcal Div}_{K_n}(S_n\setminus S_{p,n}))\otimes\z/m\z,$$
for every $m$ which is a power of $p$. For all $m$ and $n$ as above, we have obvious and canonical $\Lambda$--module morphisms
$$M_{S_n, T_n}^{K_n}[m]\to M_{S_{n+1}, T_{n+1}}^{K_{n+1}}[m]\to \mk[m].$$
It is easily proved that these lead to a $\Lambda$--module isomorphism
$$\underset{n}{\underrightarrow\lim}\, M^{K_n}_{S_n, T_n}[m] \simeq \mk[m].$$

\end{remark}

\subsection{Linking $T_p(\cm_{\cs, \ct})^-$ to the classical Iwasawa modules}\label{classical} In this section, we assume that $K/k$ is a Galois extension of number fields,
where $K$ is CM and $k$ is totally real. We fix a prime $p>2$ and
let $\ck$ denote the cyclotomic $\zp$--extension of $K$. We let $G:={\rm Gal}(K/k)$, $\cg:={\rm Gal}(\ck/k)$, $\Gamma:={\rm Gal}(\ck/K)$ and $\Lambda:=\zp[[\Gamma]]$.

Let $\cs$ be a $\cg$--equivariant finite set of finite primes in $\ck$. As in classical Iwasawa theory, we denote by $\cxs$ the Galois group  of the maximal abelian pro--$p$
extension of $\ck$ which is unramified away from $\cs\cup\cs_p$. Then, $\cxs$ is endowed with the usual canonical $\zp[[\cg]]$--module structure (with the $\cg$--action on $\cxs$ given
by lift--and--conjugation.) In particular, $\cxs$ has a canonical $\Lambda$--module structure. Recall Iwasawa's classical theorems stating that
$\cxs$ is a finitely generated $\Lambda$--module of rank $r_2(K)$ (the number of complex infinite primes in $K$) and that $\cxs^+$ is $\Lambda$--torsion and contains no non--trivial
finite $\Lambda$--submodules (see \cite{Iwasawa-zl}.)

Further, let us assume that $\bmu_p\subseteq K$ (i.e. $\bmu_{p^\infty}\subseteq\ck$.) In what follows, we adopt the notations and definitions of \S\ref{appendix-twisting} in the Appendix. In particular, note
that under our current assumptions the Teichm\"uller component $\omega_p$ of the $p$--adic cyclotomic character $c_p:\cg\to\zp^\times$ factors through $G$.


By the definitions of $\cxs$ and $\ck_{\cs, \emptyset}$,  Kummer theory leads to perfect $\zp$--bilinear pairings of $\zp[[\cg]]$--modules
$$\langle \cdot\,,\,\cdot\rangle _m\,:\, \cxs/p^m\cxs \, \times\,  \ck_{\cs, \emptyset}^{(p^m)}/\ck^{\times p^m}\longrightarrow\, \bmu_{p^m}\,,$$
for all $m\in\z_{\geq 1}$, with the $\cg$--equivariance property
\begin{equation}\label{pairing-equivariance}
\langle {}^gx,\, {}^gy\rangle _m=g(\langle x,\,  y\rangle_m),\end{equation}
for all $x\in \cxs/p^m\cxs$,
$y\in \ck_{\cs, \emptyset}^{(p^m)}/\ck^{\times p^m}$ and $g\in \cg$. Consequently, for all $m$ as above,
we obtain perfect $\zp$--bilinear pairings
$$\cxs^+/p^m\cxs^+ \, \times\,  (\ck_{\cs, \emptyset}^{(p^m)}/\ck^{\times p^m})^-\longrightarrow\, \bmu_{p^m}\,,$$
with property (\ref{pairing-equivariance}). Now, we use Corollary \ref{minus-reinterpret} and  pass to a projective limit with respect to $m$ and the obvious transition maps in the pairings above  to obtain a perfect $\zp$--bilinear, continuous pairing of $\zp[[\cg]]$--modules
$$\langle \cdot\,,\,\cdot\rangle \,:\, \cxs^+\, \times\, T_p(\cm^\ck_{\cs, \emptyset})^-\longrightarrow \zp(1),$$
with property (\ref{pairing-equivariance}). Consequently, the following holds.
\begin{lemma}\label{link-classical} The last pairing induces an isomorphism of
$\zp[[\cg]]$--modules
$$T_p(\cm^\ck_{\cs, \emptyset})^-\simeq\, {\rm Hom}_{\zp}(\mathfrak X_S^+,\, \zp(1)), $$
where the right-hand side has the $\zp[[\cg]]$--module structure given by
$$(\lambda\ast f)(x):=f((\iota\circ t_1)(\lambda)\cdot x),$$
for all $f\in{\rm Hom}_{\zp}(\mathfrak X_S^+,\, \zp(1))$,
$x\in \mathfrak X_S^+$ and $\lambda\in \zp[[\cg]]$.
\end{lemma}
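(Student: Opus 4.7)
\medskip

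\noindent\textbf{Proof proposal.} The pairing $\langle\cdot,\cdot\rangle$ has already been constructed and is asserted to be perfect, $\zp$-bilinear, and continuous. My plan is to separate the statement into two parts: \emph{(i)} verify the underlying $\zp$-module isomorphism induced by adjunction, and \emph{(ii)} transport the $\zp[[\cg]]$-action across this isomorphism using the equivariance formula \eqref{pairing-equivariance}, matching it with the twisted action described via $\iota\circ t_1$.

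For \emph{(i)}, perfectness of the pairing immediately supplies the adjoint map
\[
\Phi: T_p(\cm^\ck_{\cs,\emptyset})^- \longrightarrow {\rm Hom}_{\zp}(\cxs^+,\,\zp(1)), \qquad \Phi(y)(x):=\langle x,y\rangle,
\]
which is a $\zp$-module isomorphism. The perfectness itself is inherited from the perfectness of the finite-level Kummer pairings $\langle\cdot,\cdot\rangle_m$ after passing to the $(-)^-$-eigenspace and taking the projective limit (here one uses that $\cxs^+$ is a compact $\zp$-module, which ensures the limit of perfect pairings remains perfect and that continuity translates into the Hom being taken in the continuous/$\zp$-linear sense).

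For \emph{(ii)}, let $g\in\cg$ and $y\in T_p(\cm_{\cs,\emptyset}^\ck)^-$. Using the equivariance \eqref{pairing-equivariance} with $x$ replaced by ${}^{g^{-1}}x$, I compute
\[
\Phi(g\cdot y)(x)=\langle x,{}^g y\rangle = g\cdot\langle {}^{g^{-1}}x,\,y\rangle = c_p(g)\cdot \Phi(y)({}^{g^{-1}}x),
\]
where in the last step I use that the $\cg$-action on $\zp(1)$ is by the $p$-adic cyclotomic character $c_p$. Since $\Phi(y)$ is merely $\zp$-linear, the scalar $c_p(g)$ can be absorbed into the argument, giving
\[
\Phi(g\cdot y)(x)=\Phi(y)\bigl(c_p(g)\cdot g^{-1}\cdot x\bigr)=\Phi(y)\bigl((\iota\circ t_1)(g)\cdot x\bigr),
\]
where in the last equality I use the description of the twisting operator from \S\ref{appendix-twisting}: $t_1$ is the cyclotomic twist $g\mapsto c_p(g)\,g$ and $\iota$ is the canonical involution $g\mapsto g^{-1}$, so $(\iota\circ t_1)(g)=c_p(g)\,g^{-1}$. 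By the definition of the $\ast$-action given in the statement, this is exactly $(g\ast \Phi(y))(x)$. Extending $\zp$-linearly and by continuity, we conclude $\Phi(\lambda\cdot y)=\lambda\ast \Phi(y)$ for every $\lambda\in\zp[[\cg]]$, so $\Phi$ is $\zp[[\cg]]$-linear for the specified module structure on the target.

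The main step requiring care is matching the abstract twisting operator $\iota\circ t_1$ defined in \S\ref{appendix-twisting} with the concrete element $c_p(g)\,g^{-1}$ coming out of the equivariance computation; once this bookkeeping is in place, the rest of the argument is formal. No further obstacle is anticipated, since both the perfectness of the pairing and the continuity needed for the projective-limit step are built into the construction preceding the statement.
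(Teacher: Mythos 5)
Your proof is correct and follows essentially the same route as the paper: both hinge on the equivariance identity \eqref{pairing-equivariance}, the fact that $\cg$ acts on $\zp(1)$ through $c_p$, the observation that $(\iota\circ t_1)(g)=c_p(g)\,g^{-1}$, and then continuity to pass from $\zp[\cg]$-linearity to $\zp[[\cg]]$-linearity. You work directly with $\Phi(g\cdot y)(x)=\langle x,{}^g y\rangle$ while the paper starts from $\langle {}^g x,y\rangle$, but these are the same computation up to a trivial substitution; the additional remarks you make on perfectness being preserved under the projective limit fill in a step the paper treats implicitly.
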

\begin{proof}
For all $x\in \mathfrak X_S^+$, $y\in T_p(\cm^\ck_{\cs, \emptyset})^-$ and $g\in \cg$, we have
$$\langle {}^gx,\, y\rangle =g(\langle x,\, {}^{g^{-1}}y\rangle )=c_p(g)\langle x,\, {}^{g^{-1}}y\rangle =\langle x,\,(\iota\circ t_1)(g)\cdot y\rangle .$$
This shows that the isomorphism above is at least $\zp[\cg]$--linear. However, since it is
continuous, it has to be  $\zp[[\cg]]$--linear, as desired. \end{proof}

\begin{remark}\label{remark-link-classical} With notations as above, let $\ct$ be a finite, nonempty, $\cg$--invariant set of finite primes
in $\ck$, which is disjoint from $\cs\cup\cs_p$. Then, the group morphism $\ca_{\ck, \ct}\twoheadrightarrow\ca_\ck$ induces
an obvious morphism of abstract $1$--motives $\mk\longrightarrow \cm^\ck_{\cs, \emptyset}$. This leads to a morphism  $T_p(\mk)\longrightarrow T_p(\cm^\ck_{\cs, \emptyset})$ of
$\zp[[\cg]]$--modules. As a consequence of (\ref{t-sequence-pm})
this morphism leads to an exact sequence of $\zp[[\cg]]$--modules
\begin{equation}\label{sequence-empty-T}\xymatrix{0\ar[r] &T_p(\Delta_{\ck, \ct})^-/\zp(1)\ar[r] &T_p(\mk)^-\ar[r] &T_p(\cm^\ck_{\cs, \emptyset})^-
\ar[r] &0}\end{equation}
Therefore, Lemma \ref{link-classical} above gives surjective morphisms of $\zp[[\cg]]$--modules
$$T_p(\mk)^-(n-1)\twoheadrightarrow T_p(\cm^\ck_{\cs, \emptyset})^-(n-1))\simeq (\cxs^+)^\ast(n),\qquad \forall\, n\in\z.$$
Above,  $(\cxs^+)^\ast:={\rm Hom}_{\zp}(\cxs^+, \zp)$, endowed with the contravariant $\cg$--action given by ${}^gf(x):=f(g^{-1}\cdot x)$, for
all $f\in(\cxs^+)^\ast$, $g\in \cg$ and $x\in \cxs^+$.
\end{remark}

\section{Cohomological triviality}\label{ct-section}

 Throughout this section, $\ck/\ck'$ will denote a Galois extension of $\zp$--fields, of Galois group $G$. We fix two finite sets $\cs$ and $\ct$
of finite primes in $\ck$, such that
$\ct\cap(\cs_p\cup\cs)=\emptyset$. From now on, we assume that $\ct\ne\emptyset$ and $\cs$
contains the finite ramification locus $\cs_{\rm ram}^{\rm fin}(\ck/\ck')$ of
$\ck/\ck'$. Also, we assume that $\cs$ and $\ct$ are $G$--invariant and
let $\cs'$ and $\ct'$ denote the sets consisting of all primes on
$\ck'$ sitting below primes in $\cs$ and $\ct$, respectively.

If $\ck$ is of CM--type, $j$ will denote, as usual, the complex conjugation automorphism of $\ck$. Since $j$ commutes with any element in $G$ (as automorphisms of $\ck$), it is easy to check that, in this case,
$\ck'$ is either totally real
or of CM--type, depending on whether $\ck'\subseteq\ck^+:=\ck^{j=1}$ or not. In the case where both $\ck$ and $\ck'$ are of CM--type, then
the complex conjugation automorphism of $\ck'$ is the restriction of $j$ to $\ck'$ and will be denoted by $j$ as well. If $\ck$ is of CM--type, we will assume further that $\cs$ and $\ct$ are $j$--invariant as well.

{\it Throughout this section, we assume the vanishing of the Iwasawa $\mu$--invariant of all $\zp$--fields involved.}

For simplicity, we let $\cm:=\mk$ and $\cm':=\mkp$. The natural abstract $1$--motive morphism $\cm'\to\cm$ (see Remark \ref{reinterpret-extensions}) induces $\zp$--module morphisms $\cm'[p^n]\to\cm[p^n]^G$ and
$T_p(\cm')\to T_p(\cm)^G$. The main goal of this section is to use these morphisms in order to study the $\zp[G]$--module structure of
$T_p(\cm)^-$, in the case where $\ck$ is of CM--type and $p$ is odd.
\medskip

\begin{proposition}\label{invariants} Assume that $m$ is a non--trivial power of
$p$. Then,
\begin{enumerate}
  \item The inclusion $\ck'^\times\subseteq\ck^\times$ induces a group isomorphism
  $$\kpstm/\kptm \simeq (\kstm/\ktm)^G\,.$$
  \item If $\ck$ and $\ck'$ are of CM--type and $p$ is odd, then the morphism $\cm'\to\cm$ of abstract $1$--motives
  induces canonical $\zp$--module isomorphisms
  $$\cm'[m]^-\simeq(\cm[m]^{-})^G, \qquad T_p(\cm')^- \simeq (T_p(\cm)^-)^G\,.$$
\end{enumerate}
\end{proposition}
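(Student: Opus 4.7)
For part (1), the plan is to show the natural map $\kpstm/\kptm \to (\kstm/\ktm)^G$ is an isomorphism by analyzing numerator and denominator separately. First I would establish $(\kstm)^G = \kpstm$: an $f \in \ck_\ct^\times$ lies in $\kstm$ iff $m \mid {\rm ord}_v(f)$ for every $v \notin \cs \cup \cs_p \cup \ct$ (the $\cs_p$-condition being automatic since $m$ is a $p$-power and $\Gamma_v = \z[1/p]$ there); for a $G$-invariant $f \in \ck'^\times_{\ct'}$ and $v' \notin \cs' \cup \cs_p' \cup \ct'$, the primes $v \mid v'$ are unramified because $\cs$ contains the finite ramification locus, hence ${\rm ord}_{v'}(f) = {\rm ord}_v(f)$ is divisible by $m$ and $f \in \kpstm$. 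Next I would show $(\ktm)^G = \kptm$: since $\ct \ne \emptyset$ and $\ct \cap \cs_p = \emptyset$, no non-trivial $p$-power root of unity is $\equiv 1$ modulo a prime of $\ct$, so $\bmu_m \cap \ck_\ct^\times = 1$ and the $m$-th power map $\ck_\ct^\times \to \ktm$ is a $G$-equivariant isomorphism; passing to $G$-invariants identifies $\kptm$ with $(\ktm)^G$. These two identifications give injectivity of the natural map.

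The long exact $G$-cohomology sequence for $1 \to \ktm \to \kstm \to \kstm/\ktm \to 1$ then reduces surjectivity to showing that the connecting map $(\kstm/\ktm)^G \to H^1(G, \ktm)$ vanishes. Via the $m$-th power isomorphism above, $H^1(G, \ktm) \simeq H^1(G, \ck_\ct^\times)$, and the plan is to prove this group is itself zero, which I expect to be the main technical obstacle. Using Hilbert 90 ($H^1(G, \ck^\times) = 0$) and the short exact sequence $1 \to \ck_\ct^\times \to \ck^\times \to \ck^\times/\ck_\ct^\times \to 1$, vanishing reduces to the surjectivity of $\ck'^\times \to (\ck^\times/\ck_\ct^\times)^G$. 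By weak approximation applied at each finite level of the $\zp$-field $\ck$, the quotient identifies with $\prod_{v \in \ct} \ck_v^\times/U_v^{(1)}$; since primes in $\ct$ are unramified in $\ck/\ck'$, Shapiro's lemma together with local Hilbert 90 and the standard cohomological triviality of the principal units in unramified extensions identifies the $G$-invariants with $\prod_{v_0 \in \ct'} \ck'^\times_{v_0}/U_{v_0}^{(1)} \simeq \ck'^\times/\ck'^\times_{\ct'}$, and the required surjectivity follows from weak approximation for $\ck'$.

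For part (2), I would invoke Corollary \ref{minus-reinterpret} to identify $\cm[m]^- \simeq (\kstm/\ktm)^-$ and likewise $\cm'[m]^- \simeq (\kpstm/\kptm)^-$. Since $p$ is odd and the complex conjugation $j$ commutes with every element of $G$, the $(-)$-eigenspace functor is exact and commutes with $G$-invariants; applying it to the isomorphism of part (1) immediately yields $\cm'[m]^- \simeq (\cm[m]^-)^G$. This isomorphism arises from the morphism of abstract $1$-motives $\cm' \to \cm$ and is therefore compatible with the multiplication-by-$p$ transition maps defining the Tate modules. Taking the projective limit over $m = p^n$ and using that $G$-invariants commute with inverse limits then gives $T_p(\cm')^- \simeq (T_p(\cm)^-)^G$.
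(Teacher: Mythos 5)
Your argument is correct and shares the paper's skeleton: the $m$-th power isomorphism $\kt\simeq\ktm$, the long exact $G$-cohomology sequence reducing (1) to $(\kstm)^G=\kpstm$ together with $H^1(G,\kt)=0$, and Corollary \ref{minus-reinterpret} plus functoriality (Remark \ref{reinterpret-extensions}) plus a projective limit for (2). The difference lies in how the two auxiliary computations are organized. For $H^1(G,\kt)=0$, the paper interpolates $\kt\subseteq\ck_{(\ct)}^\times\subseteq\ck^\times$, where $\ck_{(\ct)}^\times:=\{x\in\ck^\times:{\rm ord}_w(x)=0,\ \forall w\in\ct\}$, producing two short exact sequences with quotients $\Delta_{\ck,\ct}$ and ${\mathcal Div}_\ck(\ct)$, and kills the cohomology of each by Shapiro, Hilbert 90 and Herbrand quotients for the residue extensions; you pass in one step to $\ck^\times/\kt\simeq\bigoplus_{v\in\ct}\ck_v^\times/U_v^{(1)}$ (exactly the extension of ${\mathcal Div}_\ck(\ct)$ by $\Delta_{\ck,\ct}$) and compute $G$-invariants prime by prime. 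This is equivalent and slightly more compact, but it brings in ``completions'' $\ck_v$ of a $\zp$-field (direct limits over finite levels), which the paper's globally-phrased version avoids; note also that you do not really need ``cohomological triviality of principal units'' --- unramifiedness gives a $G_w$-splitting $\ck_v^\times/U_v^{(1)}\simeq\z\oplus\kappa(v)^\times$ via a uniformizer chosen in $\ck'_{v'}$, after which $(\kappa(v)^\times)^{G_w}=\kappa(v')^\times$ (Hilbert 90 for the residue extension) is the only remaining input, precisely the ingredient the paper uses. For $(\kstm)^G=\kpstm$, the paper takes $G$-invariants of $1\to\kstm\to\kt\to{\mathcal Div}_{\ck,\cs}\otimes\z/m\z$ and uses ${\mathcal Div}_{\ck',\cs'}\simeq({\mathcal Div}_{\ck,\cs})^G$; your direct characterization of $\kstm$ by divisibility of valuations outside $\cs\cup\cs_p\cup\ct$ is an explicit unpacking of the same statement, hinging on the same hypothesis that $\ck/\ck'$ is unramified outside $\cs$.
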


\begin{proof} (1) Let us fix an $m$ as above. Since $\ck/\ck'$ is unramified at finite primes outside of $\cs'$, we have inclusions
$$\kpt=({\kt})^G\subseteq\kt, \qquad \kpstm\subseteq({\kstm})^G\subseteq\kstm.$$
Since $\ct$ is non-empty and disjoint from $\cs_p$, the group $\kt$ has no $m$--torsion. Therefore, the $m$--power--map induces
a $G$--invariant group isomorphism $\kt\simeq\ktm$. If one takes $G$--invariants in this isomorphism, one obtains
$$\kptm=(\ktm)^G=\ktm\cap\ck'^\times.$$
When combined with the displayed inclusions above, this leads to an injection
$$\kpstm/\kptm \hookrightarrow (\kstm/\ktm)^G\,.$$
In order to complete the proof of (1), we need to show that this injection is an isomorphism. For this purpose, we write the first four terms in the long $G$--cohomology sequence
associated to the short exact sequence of $\z[G]$--modules
$$\xymatrix{1\ar[r] &\ktm\ar[r]\ar[r] &\kstm\ar[r] &\kstm/\ktm\ar[r] &1}.$$
We obtain an exact sequence of multiplicative groups
$$\xymatrix{1\ar[r] &\kptm\ar[r] &(\kstm)^G\ar[r] &(\kstm/\ktm)^G\ar[r] & H^1(G,\, \ktm)\ar[r]&\cdots}. $$
Therefore, (1) would be a consequence of the following equalities.
\begin{equation}\label{reduction}
H^1(G,\, \ktm)=0,\qquad (\kstm)^G=\kpstm.
\end{equation}
Since $\kt\simeq\ktm$, the first equality above is equivalent to
$H^1(G,\, \kt)=0$. This is proved as follows. We let
$\ck_{(\ct)}^\times:=\{x\in\ck^\times\mid {\rm ord}_w(x)=0,
\forall w\in\ct\}$. We have short exact sequences of
$\z[G]$--modules
$$\xymatrix{1\ar[r]&\kt\ar[r] &\ck_{(\ct)}^\times\ar[r]^{{\rm res}_{\ct}} &\Delta_{\ck, \ct}\ar[r] & 1},$$
$$\xymatrix{1\ar[r]&\ck_{(\ct)}^\times\ar[r] &\ck^\times\ar[r]^{\quad{div}_{\ck, \ct}\qquad} &{\mathcal Div}_{\ck}(\ct)\ar[r] & 0.}$$
Here, we have ${\rm res}_{\ct}(x):=(x\mod w)_{w\in\ct}$, for all
$x\in\ck_{(\ct)}^\times$, and ${div}_{\ck,
\ct}(x):=\sum_{w\in\ct}{\rm ord}_w(x)\cdot w$, for all
$x\in\ck^\times$. The maps ${\rm res}_{\ct}$ and  ${div}_{\ck,
\ct}$ are surjective as a consequence of the weak approximation
theorem applied to the independent valuations of $\ck$
corresponding to the primes in $\ct$. Since the extension
$\ck/\ck'$ is unramified at primes in $\ct'$ and
$\ct\cap\cs_p=\emptyset$, we have $\z[G]$--module isomorphisms
\begin{equation}\label{Delta-T}
\Delta_{\ck, \ct}\simeq\bigoplus_{w'\in\ct'}(\kappa(w)^\times\otimes_{\z[G_w]}\z[G]), \qquad  {\mathcal Div}_{\ck}(\ct)\simeq \bigoplus_{w'\in\ct'}(\z w\otimes_{\z[G_w]}\z[G])\,,
\end{equation}
where $w$ is a prime in $\ct$ sitting above $w'$ and $G_w$ is the
decomposition group of $w$ in $\ck/\ck'$. Since $G_w\simeq
G(\kappa(w)/\kappa(w'))$ and $\kappa(w')$ is a $\zp$--extension of
a finite field, $G_w$ is cyclic of order coprime to $p$. The first
isomorphism in (\ref{Delta-T}) combined with Shapiro's Lemma,
Hilbert's Theorem 90 and Herbrandt quotient theory (view
$\kappa(w)$ as a union of Galois extensions of Galois group $G_w$
of finite subfields of $\kappa(w')$) give
$$\widehat H^i(G, \Delta_{\ck, \ct})\simeq \bigoplus_{w'\in\ct'}\widehat H^i(G_w,
\kappa(w)^\times)=0\,,\qquad \forall\,  i\in\z.$$ Consequently,
Tate cohomology applied to the first exact sequences above
gives
$$H^1(G, \kt)\simeq H^1(G, \ck_{(\ct)}^\times)\,.$$
The second isomorphism in (\ref{Delta-T}) combined with Shapiro's
Lemma gives
$${\mathcal Div}_{\ck'}(\ct')\simeq {\mathcal Div}_{\ck}(\ct)^G, \quad
H^1(G, {\mathcal Div}_{\ck}(\ct))\simeq
\bigoplus_{w'\in\ct'}H^1(G_w, \z)=0\,,$$ where the first
isomorphism above is induced by the canonical injection ${\mathcal
Div}_{\ck'}(\ct')\to {\mathcal Div}_{\ck}(\ct)$. Since
${div}_{\ck, \ct}\mid_{\ck'^\times}={div}_{\ck, \ct}$, Tate
cohomology applied to the second exact sequence above combined
with Hilbert's Theorem 90 gives
$$H^1(G,\ck_{(\ct)}^\times)\simeq H^1(G, \ck^\times)=0\,.$$
Consequently, we have
$$H^1(G, \kt)\simeq H^1(G, \ck_{(\ct)}^\times)\simeq H^1(G, \ck^\times)=0,$$
which concludes the proof of the first equality in (\ref{reduction}).

In order to prove the second equality in (\ref{reduction}), we consider the exact sequence
$$\xymatrix{1\ar[r] &\kstm\ar[r] &\kt\ar[r]^{\overline{div}_{\ck, \cs}\quad\qquad } &{\mathcal Div}_{\ck, \cs}\otimes\z/m\z}$$
in the category of $\z[G]$--modules, where ${\mathcal Div}_{\ck,
\cs}:=\oplus_{v\not\in\cs}\Gamma_v\cdot v$ is the group of
divisors in $\ck$ supported away from $\cs$ and $\overline{div}_{\ck,
\cs}(x):=(\sum_{v\not\in\cs}{\rm ord}_v(x)\cdot v)\otimes\widehat
1$, for all $x\in\kt$. Since $\ck/\ck'$ is unramified away from
$\cs$, the natural injective morphism ${\mathcal Div}_{\ck',\cs'}\to
{\mathcal Div}_{\ck, \cs}$ induces an isomorphism of groups ${\mathcal Div}_{\ck', \cs'}\simeq ({\mathcal Div}_{\ck, \cs})^G$ and the
restriction of $\overline{div}_{\ck, \cs}$ to $\kpt$ equals $\overline{div}_{\ck',
\cs'}$. Consequently, when we take $G$--invariants in the exact
sequence above, we obtain an exact sequence at the $\ck'$--level
$$\xymatrix{1\ar[r] &(\kstm)^G\ar[r] &\kpt\ar[r]^{\overline{div}_{\ck', \cs'}\quad\qquad } &{\mathcal Div}_{\ck', \cs'}\otimes\z/m\z},$$
which shows that $\kpstm=(\kstm)^G$, as desired.
\medskip

(2) The first isomorphism in part (2) of the Proposition is a direct consequence of part (1) combined with Remark \ref{reinterpret-extensions} and Corollary \ref{minus-reinterpret}.
The second isomorphism in part (2) is obtained from the first by taking the obvious projective limit.
\end{proof}
\bigskip

Let us assume that $\ck$ is of CM--type and $p$ is odd. Then, if we let $\fp$ denote the field with $p$ elements, we have an exact sequence of $\fp[G]$--modules
$$\xymatrix{1\ar[r] &\ca_{\ck, \ct}^-[p]\ar[r] &\cm[p]^-\ar[r] &({\mathcal Div}_{\ck}(\cs\setminus\cs_p)\otimes\z/p\z)^-\ar[r] &1. }
$$
According to Lemma \ref{class-group-coranks}, if we let $r_{\cm}^-:=\dim_{\fp}\,\cm[p]^-$, we have
\begin{equation}\label{rm-minus}
r_{\cm}^- =\lambda_{\ck}^- + \delta_{\ck, \ct}^- -\delta_{\ck} + d_{\ck, \cs}^-\,,
\end{equation}
where $\lambda_{\ck}^-$, $\delta_{\ck, \ct}^-$ and  $\delta_{\ck}$ are as in Lemma \ref{class-group-coranks} and
$$d_{\ck, \cs}^-:=\dim_{\fp}\,({\mathcal Div}_{\ck}(\cs\setminus\cs_p)\otimes\z/p\z)^-.$$
We need a concrete formula for $d_{\ck, \cs}^-$. For that purpose, we let $J:=G(\ck/\ck^+)$ and we view ${\mathcal Div}_{\ck}(\cs\setminus\cs_p)$ as a $\z[J]$--module.
Obviously, $J$ is cyclic of order $2$, generated by $j$. Let $\cs^+$ and $\cs_p^+$ denote the sets of primes in $\ck^+$ sitting below primes
in $\cs$ and $\cs_p$, respectively. Also, for every $v\in\cs^+\setminus\cs_p^+$, let $J_v$ be the decomposition group of $v$ in $\ck/\ck^+$. Then, we have the following
obvious $\z[J]$--module and $\fp$--vector space isomorphisms, respectively:
$${\mathcal Div}_{\ck}(\cs\setminus\cs_p)\,\,\simeq\bigoplus_{v\in\cs^+\setminus\cs_p^+}\z[J/J_v], \quad ({\mathcal Div}_{\ck}(\cs\setminus\cs_p)\otimes\z/p\z)^-\simeq \bigoplus_{v\in\cs^+\setminus\cs_p^+}\fp[J/J_v]^-.$$
Obviously, we have $\fp[J/J_v]^-\simeq\fp$, if $J_v$ is trivial (i.e. if $v$ splits in $\ck/\ck^+$) and $\fp[J/J_v]^-=0$, otherwise. Consequently, we have the following formula
for $d_{\ck, \cs}^-$:
$$d_{\ck, \cs}^-={\rm card}\, \{v\mid v\in \cs^+\setminus\cs_p^+, \text{ $v$ splits completely in } \ck/\ck^+\}\,.$$
The above formula permits us to reformulate the main result in \cite{Kida} as follows.
\begin{theorem}[Kida]\label{Kida} Assume that $p$ is odd, $G$ is a $p$--group and $\ck$ and $\ck'$ are of CM--type. Then,
we have the following equality.
$$(\lambda_\ck^- - \delta_\ck + d_{\ck, \cs}^-) = |G|\cdot (\lambda_{\ck'}^- - \delta_{\ck'} + d_{\ck', \cs'}^-)\,.$$
\end{theorem}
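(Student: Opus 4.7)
The theorem is explicitly labeled as a reformulation of the main theorem of \cite{Kida}, so my plan is to quote Kida's Riemann--Hurwitz formula for minus $\lambda$--invariants and match terms. Under the present hypotheses ($p$ odd, $G$ a finite $p$--group, $\ck$ and $\ck'$ of CM type, $\mu_\ck=\mu_{\ck'}=0$), Kida's theorem takes the shape
$$
\lambda_\ck^- - \delta_\ck \;=\; |G|\,(\lambda_{\ck'}^- - \delta_{\ck'}) \;+\; R(\ck/\ck'),
$$
where $R(\ck/\ck')$ is a ramification correction built out of local contributions $(e-1)$ attached to primes outside $\cs_p$ whose behaviour in the diamond $\ck'^+\subset \ck^+,\, \ck'\subset \ck$ is of a prescribed type. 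The assumption $\cs\supseteq \cs_{\rm ram}^{\rm fin}(\ck/\ck')$ forces every contributing prime to lie above $\cs'^+ \setminus \cs_p'^+$, so the proof reduces to the combinatorial identity
$$
R(\ck/\ck') \;=\; |G|\,d_{\ck', \cs'}^- \;-\; d_{\ck, \cs}^-.
$$

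To verify this identity I would use the description established immediately before the theorem: $d_{\ck, \cs}^-$ is the number of primes $v\in\cs^+\setminus\cs_p^+$ that split in $\ck/\ck^+$, and similarly for $d_{\ck', \cs'}^-$. Running through each $v'\in\cs'^+\setminus\cs_p'^+$, the splitting pattern of $v'$ in $\ck'/\ck'^+$ dictates everything. If $v'$ splits in $\ck'/\ck'^+$, so $v'=w'\cdot jw'$ with $jw'\ne w'$, then for any prime $W$ of $\ck$ above $w'$ the primes $W$ and $jW$ lie above different primes of $\ck'$, hence $jW\ne W$ and every prime of $\ck^+$ above $v'$ splits in $\ck/\ck^+$. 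If on the other hand $v'$ is not split in $\ck'/\ck'^+$, so that there is a single $w'=jw'$ of $\ck'$ above $v'$, then the $g$ primes of $\ck$ above $w'$ fall into $n_0$ self-conjugate ones (contributing to $R$ but not to $d_{\ck, \cs}^-$) and $n_1$ free $j$-orbits (contributing to $d_{\ck, \cs}^-$ but not to $R$), with $n_0+2n_1=g$. Balancing these contributions against the splitting multiplicities $e,f,g$ of $v'$ in $\ck/\ck'$ (which are compatible with the corresponding multiplicities in $\ck^+/\ck'^+$ via the diamond) produces the desired equality.

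\emph{The main obstacle} is the bookkeeping in the second paragraph. Kida's theorem has to be quoted in a form where the ramification term is visibly supported over $\cs'^+\setminus\cs_p'^+$ and breaks up compatibly with the $j$-action on the primes of $\ck$; verifying that these local contributions match $|G|\,d_{\ck', \cs'}^- - d_{\ck, \cs}^-$ uniformly across all decomposition types of $v'$ is a routine but delicate exercise using $efg=|G|$ in each arm of the diamond. Once this matching is established, the theorem is an immediate consequence of Kida's formula.
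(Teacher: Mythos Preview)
The paper does not actually prove this theorem. Immediately after the statement it simply says: ``The above result is known [as] `Kida's formula'. It was first proved by Kida in \cite{Kida} and later and with different methods by Iwasawa (see \cite{Iwasawa-RH}) and Sinnott (see \cite{Sinnott-Kida}).'' So the paper's ``proof'' is a bare citation, together with a remark that the displayed equality is a reformulation of Kida's original Riemann--Hurwitz type formula.

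Your proposal goes further than the paper does: you sketch how to derive the stated form from the standard version of Kida's formula by identifying the ramification term $R(\ck/\ck')=\sum (e_w-1)$ with $|G|\,d_{\ck',\cs'}^- - d_{\ck,\cs}^-$. This reduction is correct in outline and is exactly the content of Iwasawa's reformulation in \cite{Iwasawa-RH}, which is one of the cited references. The bookkeeping you describe (matching $j$-orbits of primes above each $v'\in\cs'^+\setminus\cs_p'^+$ against the $(e-1)$ contributions, using $efg=|G|$) is the right mechanism, and the hypothesis $\cs\supseteq\cs_{\rm ram}^{\rm fin}(\ck/\ck')$ is indeed what localizes the ramification sum to primes below $\cs$. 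So your proposal is a valid sketch of why the paper's formulation is equivalent to Kida's original one, but from the paper's point of view no such argument is required---the equivalence is attributed to the literature.
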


\noindent The above result is known is ``Kida's formula''. It was first proved by Kida in \cite{Kida} and later and with different methods by Iwasawa (see \cite{Iwasawa-RH}) and
Sinnott (see \cite{Sinnott-Kida}.) This should be viewed as a ``partial'' number field analogue of Hurwitz's genus formula
for finite covers of smooth, projective curves over an algebraically closed field. Here, $\ca_{\ck}^-$ plays the role
of the Jacobian and its corank (equal to $\lambda_{\ck}^-$) plays the role of twice the genus of the underlying curve. Of course, a ``full'' number field
analogue of the Hurwitz  genus formula should involve the ideal class--group $\ca_{\ck}$ and its corank $\lambda_{\ck}$. To our knowledge,
no such formula exists at the moment. The following consequence of the above theorem is of interest to us.

 \begin{corollary}\label{rm-minus-G} Assume that $p$ is odd, $G$ is a $p$--group and $\ck$ and $\ck'$ are of CM--type. Then,
we have the following equality.
$$r_{\cm}^- = |G|\cdot r_{\cm'}^-\,.$$
 \end{corollary}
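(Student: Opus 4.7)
From equation \eqref{rm-minus} applied to both $\ck$ and $\ck'$ we have
$$r_{\cm}^- - \delta_{\ck,\ct}^- = \lambda_\ck^- - \delta_\ck + d_{\ck,\cs}^-,$$
and the analogous identity at the $\ck'$-level. Kida's formula (Theorem \ref{Kida}) relates the two right-hand sides by the factor $|G|$, so the corollary is equivalent to the single identity
$$\delta_{\ck,\ct}^- = |G|\cdot \delta_{\ck',\ct'}^-,$$
and the whole proof would be devoted to establishing this.

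The crucial structural input is that the decomposition group $G_w$ of every $w\in\ct$ is trivial. Indeed $w$ is unramified in $\ck/\ck'$ since $\ct\cap\cs=\emptyset$ and $\cs$ contains the ramification locus; and as shown in the proof of Proposition \ref{invariants}, $G_w\simeq\mathrm{Gal}(\kappa(w)/\kappa(v'))$ is cyclic of order coprime to $p$, because $\kappa(v')$ is a $\zp$-extension of a finite field (whose absolute Galois group is $\prod_{\ell\ne p}\z_\ell$). Since $G_w$ also sits inside the $p$-group $G$, it must be trivial. Consequently $\kappa(w)=\kappa(v')$ canonically, and $G$ acts freely on the $|G|$ primes of $\ck$ lying above any given $v'\in\ct'$.

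Next I would decompose $\Delta_{\ck,\ct}\otimes\zp$ as a $\zp[\cg]$-module, with $\cg:=G\times\langle j\rangle$, into summands indexed by the $\langle j\rangle$-orbits of $\ct'$. Two cases occur for a given orbit: (i) if $j(v')\ne v'$, the $\cg$-orbit above $\{v',j(v')\}$ consists of $2|G|$ primes with trivial $\cg$-stabilizer, and a direct check identifies the minus part of the corresponding summand with a free $\zp[G]$-module of rank $\mathrm{corank}\,(\kappa(v')^\times\otimes\zp)$, matching $|G|$ times the contribution of $\{v',j(v')\}$ to $\delta_{\ck',\ct'}^-$; (ii) if $j(v')=v'$, then for any $w$ above $v'$ we may write $j(w)=g\cdot w$ with $g\in G$, and $j^2=\mathbf 1$ together with the commutativity of $j$ and $G$ forces $g^2\in G_w=\{1\}$, whence $g=1$ since $G$ is a $p$-group of odd order; thus $j$ fixes each $w$ above $v'$ individually, the $\cg$-stabilizer of $w$ is $\langle j\rangle$, and the summand is $\zp[G]\otimes_\zp(\kappa(v')^\times\otimes\zp)$ with $j$ acting only on the second tensor factor, so its minus part is $\zp[G]$-free of rank $\mathrm{corank}\,(\kappa(v')^\times\otimes\zp)^-$, again $|G|$ times the contribution of $v'$ to $\delta_{\ck',\ct'}^-$. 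Summing over all $\langle j\rangle$-orbits yields the desired equality.

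The main technical obstacle is case (ii): one must preclude $j$ from acting on the $G$-orbit above a $j$-fixed $v'$ through a nontrivial element $g\in G$. The combination of $g^2=1$ with $G$ being a $p$-group of odd order is precisely what rules this out, and it is here that both the parity of $p$ and the $p$-group hypothesis on $G$ are indispensable.
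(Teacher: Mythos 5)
Your proposal is correct and follows the same route as the paper: reduce, via \eqref{rm-minus} and Kida's formula, to the single identity $\delta_{\ck,\ct}^- = |G|\cdot\delta_{\ck',\ct'}^-$, and deduce that from the first isomorphism in \eqref{Delta-T} together with the triviality of $G_w$ for $w\in\ct$ (since $G_w$ has order coprime to $p$ and sits inside the $p$-group $G$). The paper states the resulting isomorphism $(\Delta_{\ck,\ct}\otimes\zp)^-\simeq(\Delta_{\ck',\ct'}\otimes\zp)^-\otimes_{\zp}\zp[G]$ in one line and compares coranks; you verify that isomorphism's compatibility with the $j$-action by an explicit orbit analysis, and your observation in case (ii) — that $j(w)=g\cdot w$ forces $g^2\in G_w=\{1\}$ and hence $g=1$ because $|G|$ is odd, so $j$ fixes each prime above a $j$-fixed $v'$ — is precisely the point the paper leaves implicit. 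So the argument is the same; you have simply made the $\langle j\rangle$-equivariance explicit.
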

 \begin{proof} The first isomorphism in (\ref{Delta-T}) induces an isomorphism of $\zp[G]$--modules
 $$(\Delta_{\ck, \ct}\otimes\zp)^-\simeq (\Delta_{\ck', \ct}\otimes\zp)^-\otimes_{\zp}\zp[G]\,.$$
Above, we took into account that since $G$ is a $p$--group, $G_w$
is trivial, for all $w\in\ct$.
 Now, by comparing $p$--coranks on both sides of the above isomorphism, we
 get
 $$\delta_{\ck, \ct}^-=\mid G\mid\cdot\, \delta_{\ck', \ct'}^-\,.$$
 The equality in the statement of the corollary is a direct consequence of the above equality combined
 with (\ref{rm-minus}) and Theorem \ref{Kida}.
 \end{proof}
\medskip

 Now, we are ready to state and prove the main results of this section. The reader will note that in what follows we are using a strategy similar to
that of \S3 in \cite{GP}.

\begin{theorem}\label{free} Assume that $G$ is a $p$--group, $\ck$ and $\ck'$ are of CM--type, and $p$ is odd. Then, the following hold.
\begin{enumerate}
  \item The $\fp[G]$--module $\cm[p]^-$ is free of rank
  $$r_{\cm'}^-:=\lambda_{\ck'}^-+\delta_{\ck', \ct'}^{-} + d_{\ck', \cs'}^- -\delta_{\ck'}\,.$$
  \item The $\zp[G]$--module $T_p(\cm)^-$ is free of rank $r_{\cm'}^-$.
\end{enumerate}
\end{theorem}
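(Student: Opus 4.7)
The plan is to prove part (1) by combining Proposition~\ref{invariants}(2) and Corollary~\ref{rm-minus-G} with a standard freeness criterion for $\fp[G]$-modules over a $p$-group, and then to deduce part (2) from part (1) by a $p$-adic Nakayama lifting based on the Tate-torsion isomorphism (\ref{Tate-torsion}).

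For part (1), Proposition~\ref{invariants}(2) yields an $\fp[G]$-module isomorphism $(\cm[p]^-)^G \simeq \cm'[p]^-$, whose $\fp$-dimension is $r_{\cm'}^-$ by formula (\ref{rm-minus}) applied to $\ck'$. Corollary~\ref{rm-minus-G} (whose proof uses Kida's formula, Theorem~\ref{Kida}) gives $\dim_{\fp}\cm[p]^- = r_{\cm}^- = |G|\cdot r_{\cm'}^-$. Consequently
\begin{equation*}
\dim_{\fp}\cm[p]^- \;=\; |G|\cdot\dim_{\fp}(\cm[p]^-)^G.
\end{equation*}
I will invoke the following freeness criterion (presumably recorded in the Appendix): for a finite $p$-group $G$ and a finite $\fp[G]$-module $M$, one always has $\dim_{\fp}M \leq |G|\cdot\dim_{\fp}M^G$, with equality if and only if $M$ is $\fp[G]$-free of rank $\dim_{\fp}M^G$. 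This criterion rests on the fact that $\fp[G]$ is a local self-injective (Frobenius) algebra whose unique simple module is $\fp$, hence is the injective envelope of $\fp$; the socle of any finite $M$ therefore coincides with $M^G$, so $M$ embeds into the injective envelope of its socle, namely $\fp[G]^{\dim M^G}$, and equality of $\fp$-dimensions forces this embedding to be an isomorphism. Applying this criterion to $\cm[p]^-$ completes part (1).

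For part (2), specializing (\ref{Tate-torsion}) to $n=1$ and restricting to $-$-eigenspaces (an exact operation since $p$ is odd) produces a $\zp[G]$-module isomorphism $T_p(\cm)^-/p\,T_p(\cm)^- \simeq \cm[p]^-$, which by part (1) is $\fp[G]$-free of rank $r := r_{\cm'}^-$. Lifting an $\fp[G]$-basis to elements $x_1, \ldots, x_r \in T_p(\cm)^-$ yields a $\zp[G]$-module map $\phi\colon\zp[G]^r\to T_p(\cm)^-$ which becomes an isomorphism modulo $p$. Since the cokernel of $\phi$ is a finitely generated $\zp$-module killed by reduction modulo $p$, $\zp$-Nakayama forces $\phi$ to be surjective. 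Both $\zp[G]^r$ and $T_p(\cm)^-$ are $\zp$-free of rank $|G|\cdot r$ (the latter is a direct summand of the $\zp$-free module $T_p(\cm)$ and its $\zp$-rank equals $\dim_{\fp}\cm[p]^- = |G|\cdot r$), so the kernel of $\phi$ is a $\zp$-torsion submodule of a $\zp$-free module, hence trivial. Therefore $\phi$ is an isomorphism, establishing part (2).

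The principal obstacle lies in part (1), specifically the dimension identity $\dim_{\fp}\cm[p]^- = |G|\cdot r_{\cm'}^-$, which ultimately rests on Kida's formula via Corollary~\ref{rm-minus-G}. Once this identity and the $\fp[G]$-freeness criterion are in hand, the remaining steps are formal.
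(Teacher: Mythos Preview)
Your proof is correct. Part (1) is essentially the paper's argument: the paper quotes the freeness criterion as Nakajima's lemma (from \cite{Nakajima}, not the Appendix), namely that for a finite $p$-group $G$ and a finitely generated $\fp[G]$-module $M$, the inequality $\dim_{\fp} M \geq |G|\cdot\dim_{\fp} M^G$ forces $M$ to be free of rank $\dim_{\fp} M^G$. Your socle/injective-envelope justification of this criterion is a nice self-contained alternative to the citation.

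Part (2), however, takes a genuinely different route from the paper. The paper argues via cohomological triviality: from the short exact sequence $0\to T_p(\cm)^-\xrightarrow{\times p} T_p(\cm)^-\to \cm[p]^-\to 0$ and the $G$-cohomological triviality of the free $\fp[G]$-module $\cm[p]^-$, it deduces that multiplication by $p$ is an isomorphism on each $\widehat H^i(G, T_p(\cm)^-)$; since these are $p$-groups they vanish, so $T_p(\cm)^-$ is $G$-cohomologically trivial, hence $\zp[G]$-projective (being $\zp$-free), hence free (since $\zp[G]$ is local). Your direct Nakayama lift of an $\fp[G]$-basis followed by a $\zp$-rank count is more elementary and entirely bypasses the cohomological machinery. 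The paper's route, on the other hand, establishes cohomological triviality as an intermediate byproduct, which is precisely what is needed in the subsequent Theorem~\ref{projective}(1) for non-$p$-group $G$; so while your argument is cleaner here, the paper's choice dovetails with what follows.
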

\begin{proof} For the proof of (1), we need the following (see \cite{Nakajima}, Proposition 2, \S4.)
\begin{lemma}[Nakajima]
Assume that $k$ is a field of characteristic $\ell\ne 0$, $G$ is a finite $\ell$--group and
$M$ is a finitely generated $k[G]$--module, such that
$$\dim_k M\geq |G|\cdot\dim_k M^G.$$
Then, $M$ is a free $k[G]$--module of rank equal to $\dim_k M^G$.
\end{lemma}
\noindent Now, if we combine the first isomorphism in Proposition \ref{invariants}(2) for $m:=p$, with Corollary \ref{rm-minus-G} above, we obtain the following.
$$\dim_{\fp} \cm[p]^- = \mid G\mid\cdot\dim_{\fp}(\cm[p]^-)^G\,.$$
The above Lemma applied to $\ell:=p$, $k:=\fp$ and $M:=\cm[p]^-$ implies that, indeed, $\cm[p]^-$ is a free $\fp[G]$--module whose rank satisfies
$${\rm rank}_{\fp[G]}\cm[p]^-=\dim_{\fp}(\cm[p]^-)^G=\dim_{\fp}\cm'[p]^-=r_{\cm'}^-\,.$$
This concludes the proof of part (1).
\smallskip

In order to prove part (2), we consider the exact sequence of $\zp[G]$--modules
$$\xymatrix{0\ar[r] &T_p(\cm)^-\ar[r]^{\times p} &T_p(\cm)^-\ar[r] &\cm[p]^-\ar[r] &0}
$$
where the injective morphism is the multiplication--by--$p$ map (see (\ref{Tate-torsion}).) Since $\cm[p]^-$ is a free $\fp[G]$--module, it is $G$--induced and therefore
$G$--cohomologically trivial. Consequently, when we apply Tate $G$--cohomology to the short exact sequence above, we obtain group--isomorphisms
$$\xymatrix{\widehat H^i(G, T_p(\cm)^-)\ar[r]^{\times p}_{\sim} &\widehat H^i(G, T_p(\cm)^-)
}$$
given by the multiplication--by--$p$ map, for all $i\in\z$. Since the cohomology groups above are $p$--groups, this implies that $\widehat H^i(G, T_p(\cm)^-)=0$.
Since $G$ is a $p$--group, this implies that $T_p(\cm)^-$ is cohomologically trivial (see Theorem 8, in \cite{Serre-Local}, Ch. IX, \S5.) Therefore, $T_p(\cm)^-$ is a projective $\zp[G]$--module, as it is
$\zp$--free (see loc.cit.) However, since $G$ is a $p$--group, the ring $\zp[G]$ is local and therefore $T_p(\cm)^-$ is a free $\zp[G]$--module (see \cite{Milnor}, Lemma 1.2.)
The $\fp[G]$--module isomorphism
$$T_p(\cm)^-=\fp[G]\otimes_{\zp[G]}T_p(\cm)^-$$
(a consequence of the short exact sequence above) combined with part (1) leads to
$${\rm rank}_{\zp[G]}T_p(\cm)^-={\rm rank}_{\fp[G]}\cm[p]^-=r_{\cm'}^-.$$
This concludes the proof of the Theorem.
\end{proof}

\begin{theorem}\label{projective} Assume that $\ck$ is of CM--type, $p$ is odd and $G$ is arbitrary. Then, if we let ``{\rm pd}'' denote ``projective dimension'', the following hold.
\begin{enumerate}
\item ${\rm pd}_{\zp[G]^-}T_p(\cm)^-={\rm pd}_{\zp[G]}T_p(\cm)^-=0.$
\item If, in addition, $\ck'$ is the $\zp$--cyclotomic
    extension of a number field $k$, such that $\ck/k$ is
    Galois of Galois group $\cg$ and $\cs$ and $\ct$ are
    $\cg$--invariant, then  ${\rm
    pd}_{\zp[[\cg]]^-}T_p(\cm)^-={\rm
    pd}_{\zp[[\cg]]}T_p(\cm)^-=1$, unless $T_p(\cm)^-=0$.
\end{enumerate}
\end{theorem}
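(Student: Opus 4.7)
Fix a $p$--Sylow $G_p \subseteq G$. Since $p$ is odd and $j$ has order $2$, $j \notin G_p$, so $\ck^{G_p}$ remains of CM--type (with complex conjugation $j|_{\ck^{G_p}}$). Applying Theorem \ref{free} to the finite Galois extension of CM $\zp$--fields $\ck/\ck^{G_p}$, whose Galois group $G_p$ is a $p$--group, yields that $T_p(\cm)^-$ is $\zp[G_p]$--free, hence $G_p$--cohomologically trivial. The Tate cohomology groups $\widehat H^i(G, T_p(\cm)^-)$ are $p$--primary (as $T_p(\cm)^-$ is a $\zp$--module) and inject, via restriction, into $\widehat H^i(G_p, T_p(\cm)^-) = 0$; hence $T_p(\cm)^-$ is $G$--cohomologically trivial. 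Combined with its $\zp$--freeness, the classical criterion of \cite{Serre-Local}, Ch.~IX, \S 5, Thm.~8 (as already invoked in the proof of Theorem \ref{free}) gives $\zp[G]$--projectivity, so ${\rm pd}_{\zp[G]} T_p(\cm)^- = 0$. The equality ${\rm pd}_{\zp[G]^-} T_p(\cm)^- = 0$ follows because the $(-)$--part functor is exact (as $p$ is odd) and preserves projectivity.

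\textbf{Part (2).} Set $\Lambda^- := \zp[[\cg]]^-$. Since $k$ is totally real, $j \in \cg$, and (as $p$ is odd) $\Lambda^-$ is a ring-theoretic direct factor of $\zp[[\cg]]$. Fix a pro-cyclic $\zp$--subgroup $\Gamma \simeq \zp$ of $\cg$ (for instance $\Gamma = {\rm Gal}(\ck/K)$ for a suitable number field $K$ with $\ck = K^{cyc}$); then $\Lambda^-$ is finitely generated and free of positive rank as a $\zp[[\Gamma]]$--module. Consequently, any nonzero finitely generated projective $\Lambda^-$--module is, upon restriction, free of positive rank over the local ring $\zp[[\Gamma]]$ (by Nakayama), and so has infinite $\zp$--rank. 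As $T_p(\cm)^-$ is $\zp$--free of finite rank, this forces ${\rm pd}_{\Lambda^-} T_p(\cm)^- \geq 1$ whenever $T_p(\cm)^-$ is nonzero, and the same holds over $\zp[[\cg]]$.

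For the matching upper bound ${\rm pd}_{\Lambda^-} T_p(\cm)^- \leq 1$, the plan is to combine the finite-layer projectivity of part (1) with the Iwasawa structure of $\zp[[\Gamma]]$. The $\zp$--freeness of $T_p(\cm)^-$ implies it has no non-trivial finite $\zp[[\Gamma]]$--submodule (submodules of $\zp$--free modules being $\zp$--free), which by Iwasawa's classical structure theorem forces ${\rm pd}_{\zp[[\Gamma]]} T_p(\cm)^- \leq 1$. Meanwhile, part (1) applied to an appropriate finite Galois sub-extension of $\ck$ yields $\zp[G_0]^-$--projectivity of $T_p(\cm)^-$, where $G_0$ is an appropriate finite quotient of $\cg$ (e.g., $G_0 = \cg/\Gamma$ after arranging the structure, possibly via a pro-$p$ open subgroup of finite prime-to-$p$ index). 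A change-of-rings / Auslander--Buchsbaum-type argument applied to the twisted power-series algebra $\Lambda^-$ over $\zp[G_0]^-$ then combines these two facts to yield ${\rm pd}_{\Lambda^-} T_p(\cm)^- \leq 1$. The precise homological-algebra machinery needed for this step is the content of the Appendix \S 7.

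The principal technical obstacle is the passage from finite-layer projectivity (part (1)) and the one-variable Iwasawa bound to the single bound ${\rm pd}_{\Lambda^-} \leq 1$ over the full profinite algebra: this requires carefully matching the finite-group and pro-cyclic structures inside $\Lambda^-$, which may be a non-split extension of $\zp[G_0]^-$ by $\zp[[\Gamma]]$ in general. Making this rigorous in the required generality---and verifying that the appendix's homological-algebra lemmas apply to the twisted profinite ring $\Lambda^-$---is the crux of the argument.
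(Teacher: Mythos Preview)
Your Part (1) is correct and essentially identical to the paper's proof: reduce $G$--cohomological triviality to the $p$--Sylow subgroup, observe that the fixed field of the $p$--Sylow is still of CM--type (since $p$ is odd), and invoke Theorem \ref{free}.

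Your Part (2), however, is incomplete --- and you say so yourself. You correctly isolate the two ingredients (finite-level projectivity from Part (1), and ${\rm pd}_{\zp[[\Gamma]]}\leq 1$ from $\zp$--freeness) and you correctly identify the obstacle: $\cg$ is in general only a semidirect product $G\rtimes\Gamma$, so $\zp[[\cg]]$ is a \emph{twisted} group ring over $\Lambda=\zp[[\Gamma]]$, and there is no off-the-shelf lemma combining the two ingredients over such a ring. Your ``Auslander--Buchsbaum-type argument'' remains a wish, not an argument.

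The paper resolves this by an embedding trick you are missing. Having split $1\to G\to\cg\to\Gamma_k\to 1$ and written $\cg\simeq G\rtimes\Gamma$, one takes $K$ to be the normal closure over $k$ of $\ck^{\Gamma}$ and sets $G':=G(K/k)$. Restriction to $K$ and to $\ck'$ gives an exact sequence
\[
1\longrightarrow \cg \longrightarrow G'\times\Gamma \longrightarrow G'/G \longrightarrow 1,
\]
so $\zp[[G'\times\Gamma]]$ is \emph{free} as a $\zp[[\cg]]$--module. It therefore suffices to compute the projective dimension of the induced module $\widetilde{T_p(\cm)^-}:=\zp[[G'\times\Gamma]]\otimes_{\zp[[\cg]]}T_p(\cm)^-$ over $\zp[[G'\times\Gamma]]\simeq\Lambda[G']$, the \emph{untwisted} group ring of the finite group $G'$ over $\Lambda$. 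Now Lemma \ref{projective-dimension} (which is exactly the combining lemma you were groping for, but stated only for genuine group rings $\Lambda[G']$) applies directly: condition (ii) is automatic from $\zp$--freeness, and condition (i) follows because $\widetilde{T_p(\cm)^-}\simeq\zp[G']\otimes_{\zp[G]}T_p(\cm)^-$ as $\zp[G']$--modules, so Part (1) gives ${\rm pd}_{\zp[G']}\widetilde{T_p(\cm)^-}=0$. The passage to a direct product is the entire point; without it your sketch does not close.
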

\begin{proof}
(1) Since $T_p(\cm)^-$ is a free $\zp$--module, the statement in part (1) of the Theorem is equivalent to the $G$--cohomological triviality
of $T_p(\cm)^-$. (See \cite{Serre-Local}, Ch. IX, \S5.) In order to prove that, let $H$ denote a $p$--Sylow subgroup of $G$ and let $\ck^H$ be the maximal subfield
of $\ck$ fixed by $H$. Then, $\ck/\ck^H$ is Galois, of Galois group $H$. Also, $\ck^H$ is of CM--type. Indeed, if $\ck^H\subseteq \ck^+$, then
$2=[\ck:\ck^+]$ would divide $\mid H\mid=[\ck:\ck^H]$, which is false, because $p$ is odd. Consequently, we can apply Theorem \ref{free}(2) to the extension
$\ck/\ck^H$ to conclude that
$$\widehat H^i(H, T_p(\cm)^-)=0,\quad  \forall\, i\in\z\,.$$
Since this happens for any $p$--Sylow subgroup $H$ of $G$, the module $T_p(\cm)^-$ is $G$--cohomologically trivial and therefore
$\zp[G]$--projective, as desired. (See loc.cit.)

(2) Let $\Gamma_k:=G(\ck'/k)$. Since $\Gamma_k$ is a free abelian topological group,
the usual Galois--restriction exact sequence
\begin{equation}\label{groups-split}
\xymatrix{1\ar[r] &G\ar[r] &\cg\ar[r]^{\rm res_{\ck/\ck'}} &\Gamma_k\ar[r] &1}
\end{equation}
is split. Pick a splitting and let $\Gamma$ be its image in $\cg$. Then, we have
a group isomorphism $\cg\simeq G\rtimes \Gamma$, where $\rtimes$ denotes a semi-direct product. Let $K$ be the normal closure (over $k$) in $\ck$ of
$\ck^\Gamma$ (the maximal subfield of $\ck$ fixed by $\Gamma$.) Then $K/k$ is a Galois extension of number fields
and  $\ck=K\cdot\ck'$ (the compositum of $K$ and $\ck'$ inside $\ck$.) Consequently, if we let $G':=G(K/k)$,
Galois restriction induces an isomorphism between $G$ and the normal subgroup $G(K/\ck'\cap K)$ of $G'$. Via this isomorphism,
we identify $G$ with $G(K/\ck'\cap K)$ and $G'/G$ with $G(\ck'\cap K/k)$. We have an exact sequence
of groups induced by the obvious Galois restriction maps
\begin{equation}\label{groups}
\xymatrix{1\ar[r] &\cg\ar[r] &G'\times\Gamma\ar[r] & G'/G\ar[r] &1}.
\end{equation}
Consequently, $\zp[[G'\times\Gamma]]$ viewed as a left or right $\zp[[\cg]]$--module in the obvious way is free
of basis given by any complete set of representatives for the left cosets of $G$ in $G'$. We consider the
left $\zp[[G'\times\Gamma]]$--module
$$\widetilde{T_p(\cm)^-}:= \zp[[G'\times\Gamma]]\otimes_{\zp[[\cg]]}T_p(\cm)^-.$$
As $\zp[[G'\times\Gamma]]$ is a free right $\zp[[\cg]]$--module, (2) in the Theorem is equivalent
to
$${\rm pd}_{\zp[[G'\times\Gamma]]}\widetilde{T_p(\cm)^-}=1.$$
We claim that this follows directly
from part (1) of the Theorem combined with Proposition 2.2 and Lemma 2.3 in \cite{Popescu-CS}. In order to see this, let
us first note that we have an obvious ring isomorphism $\zp[[G'\times\Gamma]]\simeq\Lambda[G']$, where $\Lambda:=\zp[[\Gamma]]$.
Now, we have the following.
\begin{lemma}\label{projective-dimension} A finitely generated $\Lambda[G']$--module
$M$  satisfies ${pd}_{\Lambda[G']}M\leq 1$ if and only if the following conditions are satisfied.
\begin{enumerate}\item [(i)] ${\rm pd}_{\zp[G']}M\leq 1$.
\item[(ii)] $M$ has no non trivial finite $\Lambda$--submodules (i.e. ${\rm pd}_{\Lambda}M\leq 1$.)
\end{enumerate}
\end{lemma}
\begin{proof} Combine Proposition 2.2 and Lemma 2.3 in \cite{Popescu-CS}.\end{proof}
\noindent Since the $\Lambda$--module $\widetilde{T_p(\cm)^-}$ is $\zp$--free, it satisfies (ii) above automatically.
Also, as a consequence of the definitions and exact sequence (\ref{groups}), we have an isomorphism
of left $\zp[G']$--modules
$$\widetilde{T_p(\cm)^-}\simeq \zp[G']\otimes_{\zp[G]}{T_p(\cm)^-}.$$
Consequently, part (1) of the Theorem implies that ${\rm pd}_{\zp[G']}\widetilde{T_p(\cm)^-}=0$.
The above Lemma implies that ${\rm pd}_{\Lambda[G']}\widetilde{T_p(\cm)^-}\leq 1$. However, since
$\widetilde{T_p(\cm)^-}$ is of finite $\zp$--rank, we must have ${\rm pd}_{\Lambda[G']}\widetilde{T_p(\cm)^-}=1$.
This concludes the proof of part (2) in the Theorem.
\end{proof}
\medskip

\begin{corollary}\label{coinvariants} Assume that $\ck$ and $\ck'$ are of CM--type,
$p$ is odd and $G$ is arbitrary. Then, there is a canonical isomorphism of $\zp$--modules
$$T_p(\cm)^-/I_GT_p(\cm)^-\simeq T_p(\cm')^-,$$
where $I_G$ is the usual augmentation ideal in $\zp[G]$.
\end{corollary}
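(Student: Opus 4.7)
The plan is to combine the projectivity result from Theorem \ref{projective}(1) with the computation of $G$-invariants from Proposition \ref{invariants}(2), connecting coinvariants to invariants via the norm map.

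First, I would invoke Theorem \ref{projective}(1), which says that under the hypotheses of the corollary ($\ck$ of CM-type, $p$ odd, $G$ arbitrary), the $\zp[G]$-module $T_p(\cm)^-$ is projective. Since $T_p(\cm)^-$ is also $\zp$-free of finite rank, it is $G$-cohomologically trivial in the sense of Tate, so that
\[
\widehat H^i(G,\, T_p(\cm)^-)=0 \quad \text{for all } i\in\z.
\]
In particular, vanishing of $\widehat H^{-1}$ and $\widehat H^{0}$ means that the norm map
\[
N_G\,:\, T_p(\cm)^-/I_G T_p(\cm)^-\longrightarrow (T_p(\cm)^-)^G, \qquad \bar x\mapsto \sum_{g\in G} g\cdot x,
\]
is an isomorphism of $\zp$-modules. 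This is the standard fact that for a cohomologically trivial module, coinvariants and invariants are canonically identified via the norm.

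Next, I would apply Proposition \ref{invariants}(2), which establishes a canonical $\zp$-module isomorphism
\[
T_p(\cm')^-\simeq (T_p(\cm)^-)^G
\]
induced by the morphism of abstract $1$-motives $\cm'\to\cm$ (which in turn comes from the inclusion ${\ck'}\hookrightarrow \ck$). Composing the inverse of this isomorphism with the norm-map isomorphism above yields the desired canonical isomorphism
\[
T_p(\cm)^-/I_G T_p(\cm)^- \;\simeq\; T_p(\cm')^-.
\]

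There is no genuine obstacle here: all the heavy lifting has already been done in Theorem \ref{projective}(1) (which required Kida's formula and Nakajima's lemma to ensure freeness over $\fp[G]$ in the $p$-group case, then cohomological triviality in general) and in Proposition \ref{invariants}(2) (which used the nonemptiness of $\ct$ together with Hilbert 90 and Shapiro-type arguments to handle the cohomology of $\kt$). The only mild subtlety to check is canonicity, which amounts to verifying that the norm map is functorial and that the identification in Proposition \ref{invariants}(2) is compatible with the module structure; both are clear from the constructions.
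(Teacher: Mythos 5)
Your proof is correct and follows essentially the same route as the paper's: invoke the $\zp[G]$-projectivity of $T_p(\cm)^-$ from Theorem \ref{projective}(1) to get cohomological triviality, use the vanishing of $\widehat H^{-1}$ and $\widehat H^0$ to identify coinvariants with invariants via the norm map, and then apply Proposition \ref{invariants}(2).
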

\begin{proof}
Since $T_p(\cm)^-$ is $\zp[G]$--projective, it is $G$--cohomologically trivial. In particular,
$\widehat H^0(G, T_p(\cm)^-)=\widehat H^{-1}(G, T_p(\cm)^-)=0$. As a consequence, if we denote by $N_G$ the usual
norm element in $\zp[G]$, we have
$$(T_p(\cm)^-)^G=N_G\cdot T_p(\cm)^-,\qquad T_p(\cm)^-/I_GT_p(\cm)^-\simeq N_G\cdot T_p(\cm)^-\,,$$
where the isomorphism of $\zp$--modules to the right is induced by multiplication with $N_G$. Now, the corollary is a direct consequence of
Proposition \ref{invariants}, part (2).
\end{proof}

\section{The Equivariant Main Conjecture}\label{EMC}

We begin by recalling two classical theorems on special values of global $L$--functions. Let $K/k$ be an abelian extension
of number fields of Galois group $G$. Throughout, if $\mathcal C$ is an algebraically closed field, we denote by $\widehat G(\mathcal C)$ the group
of irreducible $\mathcal C$--valued characters of $G$. Let $S$ be a finite set of primes in $k$, which contains
the set $S_\infty(k)\cup S_{\rm ram}(K/k)$ of primes which are either infinite or ramify in $K/k$. For every $\chi\in\widehat G(\C)$, we let $L_S(\chi, s)$ denote the $\C$--valued $S$--incomplete Artin $L$--function
associated to $\chi$, of complex variable $s$. These $L$--functions are holomorphic everywhere, except for a simple pole at $s=1$ when $\chi=\mathbf 1_G$. We let
$$\Theta_{S, K/k}:=\sum_{\chi\in\widehat G(\C)}L_S(\chi^{-1}, s)\cdot e_\chi, \qquad  \Theta_{S, K/k}: \C\to \C[G]$$
denote the so--called $S$--incomplete $G$--equivariant $L$--function associated to $K/k$, where $e_{\chi}:=1/|G|\sum_{\sigma\in G}\chi(\sigma)\cdot\sigma^{-1}$
is the idempotent associated to $\chi$ in $\C[G]$.
\begin{theorem}[Siegel \cite{Siegel}]\label{siegel-theorem} For all $n\in\z_{\geq 1}$, we have
$$\Theta_{S, K/k}(1-m)\in\q[G].$$
\end{theorem}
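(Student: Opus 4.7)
The plan is to compute the coefficients of $\Theta_{S,K/k}(1-m)$ in the basis $\{\sigma^{-1}\}_{\sigma\in G}$ of $\C[G]$ and to show that each lies in $\q$ by an invariance-under-$\text{Gal}(\overline\q/\q)$ argument. Expanding the idempotent $e_\chi$ one obtains
$$\Theta_{S,K/k}(1-m)=\frac{1}{|G|}\sum_{\sigma\in G}\Big(\sum_{\chi\in\widehat G(\C)}\chi(\sigma)\,L_S(\chi^{-1},1-m)\Big)\sigma^{-1}.$$
Thus it suffices to establish the following two ingredients: (a) each value $L_S(\chi,1-m)$ is algebraic, and in fact lies in the cyclotomic field $\q(\chi)$; and (b) for every $\tau\in\text{Gal}(\overline\q/\q)$, one has $\tau\bigl(L_S(\chi,1-m)\bigr)=L_S(\tau\circ\chi,1-m)$. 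Granted (a) and (b), applying any $\tau$ to the inner sum above and re-indexing the summation by $\psi=\tau\circ\chi$ recovers the same sum, so every coefficient is $\tau$-fixed and hence rational.

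The route to (a) is Brauer induction followed by Siegel-Klingen. Writing $\chi=\sum_i n_i\,\mathrm{Ind}_{H_i}^G\psi_i$ with $H_i\le G$, $\psi_i$ one-dimensional and $n_i\in\z$, Artin's formalism gives
$$L_S(\chi,s)=\prod_i L_{S_i}(\psi_i,s)^{n_i},$$
where $L_{S_i}(\psi_i,s)$ is viewed as a Hecke L-function of the intermediate field $K^{H_i}$ with $S_i$ the set of primes below $S$. This reduces (a) to abelian Hecke L-functions at negative integers, for which the classical theorem of Siegel (extended by Klingen to nontrivial characters) applies: the value $L_{S_i}(\psi_i,1-m)$ is either forced to vanish by the functional equation (this is automatic whenever $K^{H_i}$ fails to be totally real, or whenever the parity of $\psi_i$ is incompatible with $m$) or it is given by an explicit formula involving generalized Bernoulli numbers attached to $\psi_i$, which are manifestly algebraic and lie in $\q(\psi_i)\subseteq\q(\chi)$.

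For (b), the Galois equivariance is inherited from the abelian case via the same Brauer decomposition. Indeed, $\tau$ permutes the one-dimensional $\psi_i$ among themselves and, since Brauer induction is $\z$-linear and the multiset $\{(H_i,\psi_i,n_i)\}$ is permuted into $\{(H_i,\tau\circ\psi_i,n_i)\}$, the Artin-formalism identity reduces the equivariance for $\chi$ to the analogous equivariance for each $L_{S_i}(\psi_i,1-m)$, which is transparent from the generalized Bernoulli-number formula.

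The main obstacle is ingredient (a) in its abelian case, i.e.\ the Siegel-Klingen theorem itself, which is the substantial classical input and relies on Hilbert-modular or Eisenstein-series computations of Hecke L-values at negative integers. However, since the statement is attributed here to Siegel and is treated as a known result, the remaining work in proving the $\C[G]$-statement is purely formal: it consists of the combination of Brauer induction with the $\text{Gal}(\overline\q/\q)$-equivariance of the collection of idempotents $\{e_\chi\}_{\chi\in\widehat G(\C)}$.
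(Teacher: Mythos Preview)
The paper does not give its own proof of this statement; it is simply quoted as a classical result of Siegel (with the citation \cite{Siegel}) and used as a black box throughout \S\ref{EMC}. Your outline of how to deduce the $\q[G]$--valued statement from the scalar Siegel--Klingen algebraicity theorem, by expanding $\Theta_{S,K/k}(1-m)$ in the group basis and checking that each coefficient is fixed by ${\rm Gal}(\overline\q/\q)$ via the equivariance $\tau(L_S(\chi,1-m))=L_S(\tau\circ\chi,1-m)$, is correct and is indeed the standard way to pass from the character-by-character statement to the equivariant one.

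One simplification is available in this specific context: at the beginning of \S\ref{EMC} the extension $K/k$ is assumed \emph{abelian}, so every $\chi\in\widehat G(\C)$ is already one-dimensional and $L_S(\chi,s)$ is directly a Hecke $L$--function of $k$. The Brauer-induction step you invoke is therefore unnecessary here (though harmless). With this observation, ingredient (a) is immediate from Siegel's theorem for Hecke $L$--functions over totally real $k$ (and from the functional-equation vanishing when $k$ is not totally real), and ingredient (b) follows directly from the generalized-Bernoulli-number formula without any reduction step.
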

\noindent As usual, for every abelian group $M$ and every $m\in\z$, we let $M(m)$ denote the group $M$ endowed with the action by the absolute Galois group
$G_k$ of $k$ given by the $n$--th power of the cyclotomic character $c_k: G_k\to {\rm Aut}(\bmu_\infty)\simeq{\widehat\z}^\times$.
\begin{theorem}[Deligne-Ribet \cite{Deligne-Ribet}, Cassou-Nogu\`es \cite{Cassou-Nogues}]\label{deligne-ribet-theorem} For all $m\in\z_{\geq 1}$, we have
$${\rm Ann}_{\z[G]}(\q/\z(m)^{\, G_K})\cdot\Theta_{S, K/k}(1-m)\subseteq \z[G],$$
where $\q/\z(m)^{\, G_K}$ denotes the $\z[G]$--module of $G_K$--invariants of $\q/\z(m)$.
\end{theorem}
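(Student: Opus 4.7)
The plan is to reduce to a $p$-local statement for each prime $p$ and then invoke the classical integrality theorem of Deligne--Ribet and Cassou-Nogu\`es, in both its raw and $T$-smoothed forms. Since $\z[G]=\bigcap_p\z_{(p)}[G]$ inside $\q[G]$, it is enough to show, for every prime $p$, that
$$\mathrm{Ann}_{\zp[G]}\bigl((\qp/\zp(m))^{G_K}\bigr)\cdot\Theta_{S,K/k}(1-m)\subseteq\zp[G].$$
Fix such a $p$ and set $M_p:=(\qp/\zp(m))^{G_K}$. The module $M_p$ is cyclic over $\zp$ of order $p^{a_p}$, where $a_p:=\max\{n\geq 0:c_k(\sigma)^m\in 1+p^n\zp,\ \forall\,\sigma\in G_K\}$. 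For every prime $v$ of $k$ unramified in $K/k$ (in particular every $v\notin S$), any Frobenius lift $\tilde\sigma_v\in G_k$ acts on $M_p$ by multiplication by $c_k(\tilde\sigma_v)^m=N(v)^m$, so $\sigma_v-N(v)^m\in\mathrm{Ann}_{\zp[G]}(M_p)$. Chebotarev density ensures that $\{\sigma_v-N(v)^m:v\notin S\}\cup\{p^{a_p}\}$ generates $\mathrm{Ann}_{\zp[G]}(M_p)$ as an ideal, so the inclusion need only be checked on these generators.

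For the generator $p^{a_p}$ I invoke the equivariant integrality theorem
$$w_m(K)\cdot\Theta_{S,K/k}(1-m)\in\z[G]$$
of Deligne--Ribet and Cassou-Nogu\`es, whose $p$-primary part is precisely $p^{a_p}\Theta_{S,K/k}(1-m)\in\zp[G]$. For a generator $\sigma_v-N(v)^m$ with $v\notin S$, I use the algebraic identity
$$(\sigma_v-N(v)^m)\cdot\Theta_{S,K/k}(1-m)=\sigma_v\cdot\Theta_{S,K/k}^{\{v\}}(1-m),$$
where
$$\Theta_{S,K/k}^{T}(s):=\prod_{w\in T}\bigl(1-N(w)^{1-s}\sigma_w^{-1}\bigr)\cdot\Theta_{S,K/k}(s)$$
is the $T$-smoothed equivariant $L$-function. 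At $s=1-m$ the smoothing factor at $v$ equals $\sigma_v^{-1}(\sigma_v-N(v)^m)$, which annihilates $M_p$; the refined Deligne--Ribet theorem then yields $\Theta_{S,K/k}^{\{v\}}(1-m)\in\zp[G]$, and the claim follows after multiplication by the unit $\sigma_v\in\zp[G]^\times$.

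The main obstacle is the deep input from \cite{Deligne-Ribet} and \cite{Cassou-Nogues}: the construction of the Deligne--Ribet pseudo-measure attached to the totally real field $k$ and the verification of its integrality, both in the unsmoothed form $w_m\Theta_S(1-m)\in\z[G]$ and in the $T$-smoothed form $\Theta_S^T(1-m)\in\z[G]$ for $T$ whose smoothing factor annihilates $(\q/\z(m))^{G_K}$. Both are proved there via the $q$-expansion principle for Hilbert modular forms over $k$ with $\z[G]$-coefficients. Granting these inputs, the rest of the argument is purely algebraic: identify the annihilator of $M_p$ by Chebotarev, and match the algebraic generator $\sigma_v-N(v)^m$ with the analytic smoothing factor $(1-N(v)^m\sigma_v^{-1})$ up to a unit in $\zp[G]$.
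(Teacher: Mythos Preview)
The paper does not supply its own proof of this theorem; it is quoted as a classical result of Deligne--Ribet and Cassou-Nogu\`es with only the citations given, so there is no argument in the paper to compare against. Your sketch is the standard derivation and is essentially correct: localize at a prime $p$, identify generators of $\mathrm{Ann}_{\zp[G]}\bigl((\qp/\zp(m))^{G_K}\bigr)$ via Chebotarev (this is Coates' lemma, which the paper itself invokes in Lemma~\ref{CS-lemma}(2)), and match each generator with the primitive integrality statement proved in the cited papers.

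Two small points. First, your separate appeal to $w_m(K)\,\Theta_S(1-m)\in\z[G]$ for the generator $p^{a_p}$ is redundant and mildly circular, since that statement is itself a special case of the theorem you are proving. You can dispense with it: by the definition of $a_p$ and Chebotarev applied to $K(\bmu_{p^{a_p+1}})/k$, there exists $v\notin S$, $v\nmid p$, with $\sigma_v=1$ in $G$ and $v_p(1-N(v)^m)=a_p$, so that $\sigma_v-N(v)^m=1-N(v)^m$ is already a $\zp$-unit multiple of $p^{a_p}$. Second, the input you describe as ``$\Theta_S^T(1-m)\in\z[G]$ for $T$ whose smoothing factor annihilates $(\q/\z(m))^{G_K}$'' is itself a reformulation of the target theorem rather than the primitive result. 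What \cite{Deligne-Ribet} and \cite{Cassou-Nogues} actually establish, via the $q$-expansion principle for Hilbert modular forms, is the unconditional singleton case $(1-N(v)^m\sigma_v^{-1})\,\Theta_S(1-m)\in\z[G]$ for every $v\notin S$; that is the black box you should name, and it suffices for your argument.
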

\noindent Now, let $T$ be a finite, non-empty set of finite primes in $k$, disjoint from $S$. We let
$$\delta_{T, K/k}:=\prod_{v\in T}(1-\sigma_v^{-1}\cdot({\bf N}v)^{1-s}), \qquad \delta_{T, K/k}:\C\to\C[G],$$
where $\sigma_v$ denotes the Frobenius morphism associated to $v$ in $G$.
\begin{corollary}\label{Deligne-Ribet-rewrite} Let $p$ be a prime number and $m\in\z_{\geq 1}$. Assume that either $T$ contains at least a prime which does not sit above $p$ or
 $p$ does not divide the cardinality of (the finite group) $\q/\z(m)^{\, G_K}$. Then, we have
$$\delta_{T, K/k}(1-m)\cdot\Theta_{S, K/k}(1-m)\in\z_{(p)}[G]\,.$$
\end{corollary}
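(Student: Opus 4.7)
The strategy is to combine Theorem \ref{deligne-ribet-theorem} with an explicit description of how $\delta_{T, K/k}(1-m)$ interacts with the $p$-primary part of $M := (\q/\z)(m)^{G_K}$, which is a finite abelian group. The element $\delta_{T,K/k}(1-m) = \prod_{v \in T}(1 - \sigma_v^{-1}({\bf N}v)^m)$ obviously lies in $\z[G]$, since $\sigma_v \in G$ is well-defined (each $v \in T$ is unramified in $K/k$, as $T \cap S = \emptyset$ and $S$ contains the ramification locus) and $({\bf N}v)^m \in \z$.

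First I would dispose of hypothesis (b). If $p \nmid |M|$, then $|M|$ is a unit of $\z_{(p)}$, and since $|M| \in {\rm Ann}_{\z[G]}(M)$, Theorem \ref{deligne-ribet-theorem} gives $|M|\cdot \Theta_{S,K/k}(1-m) \in \z[G]$. Dividing by $|M|$ shows $\Theta_{S,K/k}(1-m) \in \z_{(p)}[G]$, and multiplying by $\delta_{T,K/k}(1-m) \in \z[G]$ yields the claim.

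Next I would handle hypothesis (a). The key lemma is that if $v_0 \in T$ does not lie above $p$, then $(1-\sigma_{v_0}^{-1}({\bf N}v_0)^m)$ annihilates the $p$-primary part $M_p := M \otimes \z_{(p)} = (\q_p/\z_p)(m)^{G_K}$. Indeed, any lift of $\sigma_{v_0}$ to $G_k$ can be chosen to be a Frobenius element at a prime above $v_0$; since $v_0$ does not lie above $p$, such a Frobenius acts on $\q_p/\z_p$ by raising to the ${\bf N}v_0$ power, hence acts on $(\q_p/\z_p)(m)$ as multiplication by $({\bf N}v_0)^m$. This action descends to $M_p$, showing $\sigma_{v_0}$ acts as $({\bf N}v_0)^m$ there. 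Consequently the full product $\delta_{T,K/k}(1-m)$ annihilates $M_p$, i.e.
\[
\delta_{T, K/k}(1-m) \in {\rm Ann}_{\z_{(p)}[G]}(M_p) = {\rm Ann}_{\z[G]}(M) \otimes_{\z} \z_{(p)},
\]
the last equality being the standard fact that annihilators of finitely presented modules commute with localization. Writing $\delta_{T,K/k}(1-m) = \sum_i \alpha_i \beta_i$ with $\alpha_i \in {\rm Ann}_{\z[G]}(M)$ and $\beta_i \in \z_{(p)}[G]$, Theorem \ref{deligne-ribet-theorem} gives $\alpha_i \Theta_{S,K/k}(1-m) \in \z[G]$ for each $i$, and hence
\[
\delta_{T, K/k}(1-m)\cdot \Theta_{S,K/k}(1-m) = \sum_i \beta_i \bigl(\alpha_i \Theta_{S,K/k}(1-m)\bigr) \in \z_{(p)}[G],
\]
as desired.

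The only nontrivial point is the Frobenius-action computation on $M_p$ in case (a); everything else is a manipulation of ideals and localization. Since both cases reduce the assertion directly to the (stronger) Deligne--Ribet--Cassou-Nogu\`es integrality statement already invoked, no further analytic input is needed.
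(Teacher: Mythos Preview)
Your argument is correct and follows the same route as the paper's proof: the key point in both is that, under either hypothesis, $\delta_{T,K/k}(1-m)$ lies in ${\rm Ann}_{\z_{(p)}[G]}\bigl(\qp/\zp(m)^{G_K}\bigr)$, from which the conclusion is immediate via Theorem~\ref{deligne-ribet-theorem}. The paper treats the two hypotheses uniformly (case (b) making the $p$-primary part trivial, so the annihilator condition is vacuous), whereas you split them out explicitly; but the substance is identical.
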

\begin{proof} Remark that if $v\in T$ does not sit above $p$, then $\sigma_v$ acts on the group $\q/\z(m)^{\, G_K}\otimes\zp\simeq \q_p/\z_p(m)^{\, G_K}$ via
multiplication by $({\bf N}v)^{m}$. This implies that, under the hypotheses of the Corollary, we have
$$\delta_{T, K/k}(1-m)\in {\rm Ann}_{\z_{(p)}[G]}(\qp/\zp(m)^{\, G_K}).$$
Now, the Corollary follows from Theorem \ref{deligne-ribet-theorem}.
\end{proof}
\begin{definition} For $S$ and $T$ as above, the $S$--incomplete $T$--modified $G$--equivariant $L$--function
associated to $K/k$ is defined by
$$\Theta_{S, T, K/k}:=\delta_{T, K/k}(s)\cdot\Theta_{S, K/k}(s), \qquad \Theta_{S, T, K/k}: \C\to\C[G].$$
\end{definition}
\begin{remark}\label{theta-st-remark} Note that $\Theta_{S, T, K/k}$ is holomorphic everywhere on $\C$ and
$$\Theta_{S, T, K/k}(1-m)\in\z_{(p)}[G],$$
whenever the integer $m$, the prime $p$ and the set $T$ satisfy the hypotheses of the Corollary above. In particular, if $T$ contains a prime of
residual characteristics coprime to the cardinality of $(\q/\z(m)^{G_K})$, for some $m\in\z_{\geq 1}$, then
$$\Theta_{S, T, K/k}(1-m)\in\z[G].$$
Also, if $T$ contains two primes of distinct residual characteristics, then
$$\Theta_{S, T, K/k}(1-m)\in\z[G],\qquad \forall\, m\in\z_{\geq 1}.$$
\end{remark}
\medskip

Throughout the rest of this section, we fix an odd prime $p$ and an abelian extension $\ck/k$, where $\ck$ is a CM $\zp$--field and $k$ is a totally real number field.
Let $\cg:={\rm Gal}(\ck/k)$. We fix two finite, non--empty,
disjoint sets $S$ and $T$ of primes in $k$, such that $S$ contains  $S_{\rm ram}(\ck/k)\cup S_\infty$. Note that, in particular,
$S$ contains the set $S_p$ of $p$--adic primes of $k$.
We let $\cs$ and $\ct$ denote
the sets of finite primes in $\ck$ sitting above primes in $S$ and $T$, respectively.

The main goal of this section is the proof of an Equivariant Main Conjecture for the data $(\ck/k, S, T, p)$.  This statement expresses the first Fitting invariant
${\fit}_{\zp[[\cg]]^-}(T_p(\mk)^-)$ of the finitely generated module $T_p(\mk)^-$  over the (Noetherian, commutative) ring
$$\zp[[\cg]]^-:=\zp[[\cg]]/(1+j)$$
in terms of a certain equivariant $p$--adic $L$--function $\Theta_{S,T}^{(\infty)}$, which is an element of $\zp[[\cg]]^-$
canonically associated to the
data $(\ck/k, S, T, p)$. As usual, $j$ denotes the complex conjugation automorphism of $\ck$, viewed as an element
of $\cg$. The precise statement, to be proved in \S\ref{emc-section} below, is the following.
\begin{theorem}[The Equivariant Main Conjecture]\label{emc} Under the above hypotheses, we have the following equality of ideals in $\zp[[\cg]]^-$.
$${\rm Fit}_{\zp[[\cg]]^-}(T_p(\mk)^-)=(\Theta_{S, T}^{(\infty)}).$$
\end{theorem}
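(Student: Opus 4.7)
The approach is to first establish the equality after inverting $p$ using the exact sequence (\ref{sequence-empty-T}) together with Wiles's theorem, and then to refine it integrally by exploiting the projective-dimension property from Theorem \ref{projective}. By Theorem \ref{projective}(2) applied to the extension $\ck/k$, the $\zp[[\cg]]^-$-module $T_p(\mk)^-$ has projective dimension at most $1$, and, being $\zp$-free of finite rank, is torsion over $\zp[[\cg]]^-$. After decomposing the commutative semilocal ring $\zp[[\cg]]^-$ by the characters of the prime-to-$p$ torsion part of $\cg$, one obtains on each component a square free presentation of $T_p(\mk)^-$; consequently ${\rm Fit}_{\zp[[\cg]]^-}(T_p(\mk)^-) = (F)$ is principal, with a non-zero-divisor generator $F \in \zp[[\cg]]^-$. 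The task reduces to showing $(F) = (\Theta_{S,T}^{(\infty)})$.

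Next I would tensor (\ref{sequence-empty-T}) with $\qp$: the resulting short exact sequence consists of $\qp(\zp[[\cg]]^-)$-modules of finite projective dimension, so Fitting ideals multiply. The Fitting ideal of $\qp \otimes T_p(\Delta_{\ck,\ct})^-/\zp(1)$ is $(\mathfrak{D}_T^{(\infty)})$ essentially by the definition of $\mathfrak{D}_T^{(\infty)}$ recalled in the introduction, since $T_p(\Delta_{\ck,\ct})^-/\zp(1)$ is an explicit direct sum of local contributions indexed by primes of $\ct$ and its Fitting ideal admits a direct calculation. The Fitting ideal of $\qp \otimes T_p(\cm^\ck_{\cs,\emptyset})^-$ is computed via Lemma \ref{link-classical}, which identifies $T_p(\cm^\ck_{\cs,\emptyset})^-$ with the appropriate Tate twist of $\cxs^+(-1)^\ast$, combined with Wiles's theorem (\ref{Wiles-intro}), yielding $(\mathfrak{G}_S^{(\infty)})$. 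Multiplying these and invoking the factorization (\ref{theta-intro}) gives $(F) \cdot \qp(\zp[[\cg]]^-) = (\Theta_{S,T}^{(\infty)}) \cdot \qp(\zp[[\cg]]^-)$, so $F = u \cdot \Theta_{S,T}^{(\infty)}$ for a uniquely determined $u \in \qp(\zp[[\cg]]^-)^\times$.

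The main obstacle is the integral descent: showing $u$ in fact lies in $(\zp[[\cg]]^-)^\times$. Wiles's theorem on its own is insufficient for this, precisely because $\cxs^+(-1)^\ast$ typically has infinite projective dimension and a non-principal integral Fitting ideal; the conceptual role of the $T$-smoothing is to compensate for this defect, packaging $\cxs^+(-1)^\ast$ together with $T_p(\Delta_{\ck,\ct})^-/\zp(1)$ into the single module $T_p(\mk)^-$ whose integral Fitting ideal is already forced to be principal. To pin $u$ down integrally I would exploit Iwasawa co-descent via Corollary \ref{coinvariants}: at each finite intermediate extension $K_n/k$, the image of $F$ in $\zp[{\rm Gal}(K_n/k)]^-$ generates the Fitting ideal of the finite-level co-invariants $T_p(\mk)^-/I_{\Gamma_n}T_p(\mk)^-$, while the image of $\Theta_{S,T}^{(\infty)}$ is the finite-level equivariant $L$-value $\Theta_{S,T,K_n/k}(0) \in \zp[{\rm Gal}(K_n/k)]^-$, integral by Corollary \ref{Deligne-Ribet-rewrite}. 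Comparing at every finite level these two integral generators of the same principal ideal forces the image of $u$ in each $\zp[{\rm Gal}(K_n/k)]^-$ to be a unit, and passing to the projective limit over $n$ yields $u \in (\zp[[\cg]]^-)^\times$, completing the proof.
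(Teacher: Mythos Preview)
Your rational step---using the exact sequence \eqref{sequence-empty-T}, Lemma \ref{link-classical}, and Wiles's theorem to match the Fitting ideals after inverting $p$---is essentially what the paper does, though the paper carries this out character by character (Lemmas \ref{G}, \ref{H}, \ref{delta}) rather than over $\qp(\zp[[\cg]]^-)$ directly. That part is fine.

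The integral descent is where your argument breaks. You claim that at each finite level $K_n/k$ the images $\pi_n(F)$ and $\pi_n(\Theta_{S,T}^{(\infty)}) = \Theta_{S,T,K_n/k}(0)$ are ``integral generators of the same principal ideal.'' But you have no independent reason to know that $\Theta_{S,T,K_n/k}(0)$ generates ${\rm Fit}_{\zp[G_n]^-}\bigl((T_p(\mk)^-)_{\Gamma_n}\bigr)$: that statement is essentially the refined Brumer--Stark statement (Theorem \ref{refined-BS}), which the paper derives \emph{from} the Equivariant Main Conjecture, not the other way around. All you actually have at finite level is that $\pi_n(F)$ and $\pi_n(\Theta)$ are both integral and generate the same ideal after inverting $p$; this does not force them to differ by a unit in $\zp[G_n]^-$. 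So the co-descent step is circular.

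The paper resolves the integral descent by a completely different mechanism: the equivariant Weierstrass preparation theorem (Corollary \ref{corollary-association}). One writes $\zp[[\cg]]^- \simeq \zp[G]^-[[t]]$, lets $F := \det_{\zp[G]^-}((t+1)-\mathfrak m_\gamma \mid T_p(\mk)^-)$ and $\Theta := \Theta_{S,T}^{(\infty)}$, and checks that for every odd $\chi \in \widehat G(\cp)$ one has both $\chi(F) \sim \chi(\Theta)$ in $\co[[t]]$ (this is the character-wise form of your rational step) and $\mu(\chi(F)) = \mu(\chi(\Theta)) = 0$. The vanishing of the $\mu$-invariants is not automatic: for $F$ it holds because $F$ is monic, but for $\Theta$ it uses Corollary \ref{Wiles+zero-mu}, which is exactly where the hypothesis $\mu_\ck = 0$ enters. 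With both conditions in hand, Corollary \ref{corollary-association} gives $F \sim \Theta$ in $\zp[G]^-[[t]]$ directly---no descent to finite levels is needed or used.
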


Let $\ck'$ be the cyclotomic $\zp$--extension of $k$ and $\Gamma_k:=G(\ck'/k)$.
As in the proof of Theorem \ref{projective}(2), the exact sequence
$$\xymatrix{1\ar[r] &G(\ck/\ck')\ar[r] &\cg\ar[r]^{\rm res_{\ck/\ck'}} &\Gamma_k\ar[r] &1
}$$
is split. We fix a splitting, i.e. we fix a subgroup $\Gamma$ of $\cg$ which is mapped isomorphically
to $\Gamma_k$ by ${\rm res_{\ck/\ck'}}$. We let $K:=\ck^\Gamma$ denote the maximal subfield of $\ck$ fixed by $\Gamma$.
Then, Galois theory combined with the fact that $\cg$ is abelian, implies that $K/k$ is Galois, $K\cdot\ck'=\ck$
and $\ck'\cap K=k$. If we let $G:=G(K/k)$, Galois restriction induces group isomorphisms $\cg\simeq G\times\Gamma_k$, $\Gamma\simeq\Gamma_k$, and $G(\ck/\ck')\simeq G$.
Below, we freely identify these groups via these isomorphisms.
Obviously, $K$ is a CM number field whose cyclotomic $\zp$--extension is $\ck$. We let $j$ denote the complex conjugation automorphism of $K$ as well.
We fix a topological generator $\gamma$ of $\Gamma$ and identify it via the Galois restriction
isomorphism with a generator of $\Gamma_k$. For any finite extension $\co$ of $\zp$, we have isomorphisms of compact $\co[G]$--algebras
\begin{equation}\label{power-series-iso}\co[[\cg]]\simeq \co[G][[\Gamma]]\simeq \co[G][[t]],\qquad \co[[\cg]]^\pm\simeq \co[G]^\pm[[\Gamma]]\simeq \co[G]^\pm[[t]].\end{equation}
Above, $\co[G]^\pm:=\co[G]/(1\mp j)$, $t$ is a variable, and the right-most isomorphisms send $\gamma$ to $(t+1)$, as usual.

\subsection{The work of Wiles on the Main Conjecture \cite{Wiles}} In order to simplify notations, in this section we assume that $\bmu_p\subseteq K$ (i.e. $\bmu_{p^\infty}\subseteq\ck$.)
Let $c:=\omega\cdot\kappa$ be the decomposition of the $p$--cyclotomic character $c:=c_p$ of $\cg$ into its Teichm\"uler component $\omega:=\omega_p$ and
its complement $\kappa:=\kappa_p$, as in \S\ref{appendix-twisting} in the Appendix. Note that, in this case, $\omega$ and $\kappa$ factor through $G$ and $\Gamma$, respectively. Let $u$ be the element in $(1+p\zp)$ given by
$u:=\kappa(\gamma)$.

We fix an embedding $\Bbb C\hookrightarrow\cp$. Via this embedding we identify $\widehat G(\Bbb C)$ and $\widehat G(\cp)$.   A character $\psi\in\widehat G(\cp)$ is called even if $\psi(j)=1$ and odd otherwise. Let $\co$ be a fixed finite extension of $\zp$ which contains the values of all $\psi\in\widehat G(\cp)$. We fix a uniformizer $\pi$ in $\co$ and denote by $Q(\co)$ the field of fractions of $\co$.
In \cite{Deligne-Ribet}, Deligne and Ribet proved that for every character $\psi\in\widehat G(\cp)$, there exist power series
 $G_{\psi, S}(t)$ and  $H_{\psi, S}(t)$ in $\co[[t]]$, uniquely determined by the following properties.
\begin{equation}\label{power-series}
 H_{\psi, S}=\left\{
               \begin{array}{ll}
                 t, & \hbox{if $\psi=\boldsymbol 1_G$;} \\
                 1, & \hbox{otherwise;}
               \end{array}
             \right.
\qquad\quad
\frac{G_{\psi, S}(u^m-1)}{H_{\psi, S}(u^m-1)}=L_S(\psi\omega^{-m}, 1-m),\quad \forall\, m\in\z_{\geq 1.}
 \end{equation}
The reader may consult \S1 in \cite{Wiles}
and \S4 in \cite{Popescu-CS} for the properties above and note that, since $K\cap\ck' =k$,
all the non trivial characters of $G$ are of ``type S'', in the terminology used in loc.cit.

By definition, the $S$--incomplete $p$--adic $L$--function $L_{p, S}(\psi, s)$ associated to an even character
$\psi\in\widehat G(\mathbb C)$ is given by
\begin{equation}\label{p-adicL} L_{p, S}(\psi, 1-s)=\frac{G_{\psi, S}(u^s-1)}{H_{\psi, S}(u^s-1)},\qquad s\in\zp.\end{equation}
The function $L_{p, S}(\psi, s)$ is $p$--adically analytic everywhere, except for a possible pole of order $1$
at $s=1$, if $\psi=\boldsymbol 1_G$.

\begin{remark}\label{odd-characters} As an immediate consequence of the functional equation for the
global $L$--functions $L_S(\chi, s)$ considered above, we have $L_S(\chi, 1-m)=0$, whenever $\chi$ and $1-m$ have the same parity, for all $m\in\z_{\geq 1}$ and all $\chi\in\widehat G(\C)$.
Assume that if $\psi$ is an odd character.
Then $\psi\omega^{-m}$ and $1-m$ have the same parity, for all $m\in\Bbb Z_{\geq 1}$. Consequently,
we have $L_S(\psi\omega^{-m}, 1-m)=0$, for all $m\in\z_{\geq 1}$. Therefore, $G_{\psi, S}(t)=0$ and $L_{p, S}(\psi, s)=0$, for all odd characters $\psi.$
\end{remark}

For simplicity, if $L$ is a subfield of $K$ containing $k$, we denote by $\cx_{L}$ the Galois
group of the maximal abelian pro--$p$ extension of $\cl:=L\cdot\ck'$ (the cyclotomic $\zp$--extension of
$L$), which is unramified away from the primes above those in $S$. We view $\cx_{L}$ as a module
over $\zp[[G(\cl/k)]]\simeq\zp[G(L/k)][[\Gamma]]$ (and consequently over $\zp[[\cg]]$) in the usual way.
Note that $\cx_K$ is the module we denoted by $\cxs$ in \S\ref{classical}. We will continue to use $\cxs$ instead of $\cx_K$ in what follows. The modules $\cx_{L}$ and $\cx_{L}^+$ are finitely generated, respectively
torsion and finitely generated modules over $\Lambda:=\zp[[\Gamma]]$ (a classical theorem of Iwasawa \cite{Iwasawa-zl}.)

Let $\psi\in\widehat G(\cp)$ be an even character. Let $K_\psi:= K^{\ker\psi}$ be the largest subfield of $K$ fixed by the kernel of $\psi$. We have $G(K_\psi/ k)\simeq F\times H$,
where $H$ is the $p$--Sylow subgroup of  $G(K_\psi/k)$ and $F$ is its maximal subgroup of order coprime to $p$. Consequently, $\psi$ (viewed as a
character of $G(K_\psi/ k)$) splits as $\psi=\varphi\cdot\rho$, with $\varphi$ a faithful character of $R$ (of order coprime to $p$) and $\rho$ a faithful character of  $H$ (of order
a power of $p$.) Let $h$ be a generator of $H$ and assume that ${\rm ord}(h)=p^n$, for some $n\in\Bbb Z_{\geq 0}$.  Let $e_{\varphi}:=1/|F|\sum_{f\in F}\varphi(f)\cdot f^{-1}$ be the
idempotent associated to $\varphi$. Note that $e_{\varphi}\in \co[F]$. The following definition of ``character $\mu$--invariants'' of $\cxs$ is due
to Greenberg (see \cite{Neukirch}, pp. 656--657.)

\begin{definition}[Greenberg] Let $\psi\in\widehat G(\cp)$ be an even character. Then $\mu_{\psi, \co}(\cxs)$ is defined to be
the $\mu$--invariant $\mu_\co(\cxs^\psi)$ of the $\co[[\Gamma]]$--module
$$\cxs^\psi:=\left\{
    \begin{array}{ll}
      (h^{p^{n-1}}-1)e_\varphi(\cx_{K_\psi}\otimes_{\zp}\co), & \hbox{if $n\geq 1$;} \\
      e_\varphi(\cx_{K_\psi}\otimes_{\zp}\co), & \hbox{otherwise.}
    \end{array}
  \right.
$$
\end{definition}
\noindent Note that if the character $\psi$ has order coprime to $p$, i.e. $\psi=\varphi$ and $n=0$ in the above notation,
then we have
$$e_\varphi(\cx_{K_\psi}\otimes_{\zp}\co)=\{x\in (\cx_{K_\psi}\otimes_{\zp}\co)\mid gx=\psi(g)x, \forall g\in G(K_\psi/k)\}.$$
 This reconciles the definition above with Wiles's definition (see \cite{Wiles}, p. 497) of ``character $\mu$--invariants''
for characters of order coprime to $p$. (See also Definition 11.6.14 in \cite{Neukirch}.)

The following is the statement of the classical (non--equivariant) Main Conjecture for the module $\cxs^+$, proved by Wiles
in \cite{Wiles}. Below, we denote by $\mathfrak m_\gamma$ the multiplication-by-$\gamma$ automorphism of all the relevant vector spaces.

\begin{theorem}[Wiles]\label{Wiles} For every even character $\psi\in\widehat G(\C)$, we have
$$G_{\psi, S}(t) \sim \pi^{\mu_{\psi, \co}(\cxs)}\cdot{\rm det}_{Q(\co)}((t+1)-\mathfrak m_\gamma\mid e_{\psi}(\cxs\otimes_{\zp}Q(\co))),$$
where $\sim$ denotes association in divisibility in the ring $\co[[t]]$ and $e_{\psi}$ is the idempotent associated to $\psi$ in $Q(\co)[G]$.
\end{theorem}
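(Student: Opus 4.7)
The plan is to deduce the statement from Wiles's proof of the classical Main Conjecture in \cite{Wiles} via a character-by-character decomposition. After extending scalars to $\co$, I would decompose $\cxs \otimes_{\zp} \co$ under the action of the prime-to-$p$ part of $G$. For a character $\varphi$ of $G$ of order coprime to $p$, the idempotent $e_\varphi$ lies in $\co[G]$ and picks out a direct $\co[[\Gamma]]$-summand $e_\varphi(\cx_{K_\varphi} \otimes_{\zp} \co)$. Wiles's main theorem, in the form restated by Greenberg (see Theorem 11.6.15 of \cite{Neukirch}), is precisely the asserted equality for such $\varphi$, with the exponent on $\pi$ recording the $\mu$-invariant. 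Under the standing hypothesis $\mu_{\ck}=0$ these $\mu$-invariants actually vanish, but the statement is formulated in a form that does not require this.

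For a general even character $\psi = \varphi \cdot \rho$ with $\rho$ faithful on $H = \langle h\rangle$ of order $p^n$, I would pass to the intermediate CM field $K_\psi$ and apply Wiles's theorem to the cyclotomic $\zp$-extension of $K_\psi$ at the character $\varphi$, now viewed as a character of $G(K_\psi/k)$ trivial on $H$. This yields a divisibility relation for $G_{\varphi,S}(t)$ in terms of $e_\varphi(\cx_{K_\psi} \otimes_{\zp} \co)$. The modification by multiplication by $(h^{p^{n-1}}-1)$ in the definition of $\cxs^\psi$ cuts out exactly the subquotient on which $H$ acts through a character of full order $p^n$, i.e.\ the $\rho$-faithful part of $e_\varphi(\cx_{K_\psi} \otimes_{\zp} \co)$. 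The interpolation formula \eqref{power-series}, together with the transformation of $p$-adic $L$-functions under twisting by the finite-order character $\rho$ of the $p$-Sylow part of $\cg$, shows that $G_{\psi,S}(t)$ corresponds under this cut to the characteristic polynomial of $\mathfrak m_\gamma$ on $e_\psi(\cxs \otimes_{\zp} Q(\co))$, and the claimed equality up to a unit and an explicit power of $\pi$ follows.

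The main obstacle is of course Wiles's theorem itself, whose proof via $\Lambda$-adic Hilbert modular forms, Galois representations attached to Hida families, and congruences between cusp forms and Eisenstein series constitutes the substance of \cite{Wiles}. Granting that input, the passage from characters of order coprime to $p$ to arbitrary even characters is technical but essentially routine in the Iwasawa theory of modules over completed group algebras: one must verify that the $p$-adic analytic twist of $L$-functions by $\rho$ matches the algebraic cut by $(h^{p^{n-1}}-1)$, and that the relevant $\mu$-invariants behave correctly under this decomposition. The bookkeeping of powers of $\pi$ is the most delicate point and is what motivates Greenberg's ad hoc definition of the character $\mu$-invariants $\mu_{\psi,\co}(\cxs)$ used in the statement.
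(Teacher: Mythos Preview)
Your proposal is correct and follows essentially the same approach as the paper's own (sketch) proof: both defer the substance to Wiles's Theorems~1.3 and~1.4 in \cite{Wiles}, handle characters of order coprime to $p$ directly, and then reduce the general case $\psi=\varphi\cdot\rho$ to the coprime-to-$p$ case via Greenberg's definition of $\cxs^\psi$ and base change to $K_\psi$, citing \cite{Neukirch} for the bookkeeping. The only point worth noting is that the paper's reduction explicitly invokes \emph{two} base fields, $K_\psi$ and $K_{\psi^p}$, applying Wiles's result for the trivial character over each and comparing; this comparison is what pins down the power of $\pi$ exactly, and is implicit but not spelled out in your description of ``cutting out the $\rho$-faithful part''.
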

\begin{proof} (Sketch.) For characters $\psi$ of order coprime to $p$, the statement above is the combination of Theorems 1.3 and 1.4 in \cite{Wiles}.
For arbitrary characters, \cite{Wiles} contains a proof of the statement above only up to a power of $\pi$ (Theorem 1.3. loc.cit.)
However, once one has the right definition of $\mu_{\psi, \co}(\cxs)$ (see above), one can deduce the statement above for arbitrary
characters $\psi$ without much difficulty from Theorems 1.3 and 1.4 in \cite{Wiles} restricted to the case of the trivial character and base fields $K_\psi$ and $K_{\psi^p}$, respectively (see Theorem 11.6.16 in \cite{Neukirch} for a proof.)  \end{proof}

\begin{remark}\label{Wiles-Fitting}
As proved by Greenberg (see Proposition 1 in \cite{Greenberg}), we have an equality of monic polynomials in $\co[t]$
$${\rm det}_{Q(\co)}((t+1)-\frak m_\gamma\mid e_{\psi}(\cxs\otimes_{\zp}Q(\co)))={\rm det}_{Q(\co)}((t+1)-\mathfrak m_\gamma\mid e_{\psi}(\cx_{K_\psi}\otimes_{\zp}Q(\co))),$$
for all even $\psi$. Also, it is easy to see that
$$e_{\psi}(\cx_{K_\psi}\otimes_{\zp}Q(\co))=\cxs^\psi\otimes_\co Q(\co)\,,$$
for all even $\psi$. Consequently, the right hand-side of the equivalence $\sim$ in Theorem \ref{Wiles} is precisely the characteristic polynomial
${\rm char}_{\co[[\Gamma]]}(\cxs^\psi)$ of the $\co[[\Gamma]]$--module $\cxs^\psi$. On the other hand, by definition, $\cxs^\psi$ is an
$\co[[\Gamma]]$--submodule of $(\cx_{K_\psi}\otimes_{\zp} \co)$. As such, it is a torsion $\co[[\Gamma]]$--module with no finite non trivial $\co[[\Gamma]]$--submodules (a classical theorem of Iwasawa.)
Consequently, ${\rm pd}_{\co[[\Gamma]]} \cxs^\psi=1$ (see Lemma \ref{projective-dimension}) and
its first Fitting ideal ${\rm Fit}_{\co[[\Gamma]]}(\cxs^\psi)$ is principal, generated by ${\rm char}_{\co[[\Gamma]]}(\cxs^\psi)$ (see Lemma 2.4 in \cite{Popescu-CS} and also Proposition \ref{fitting-calculation}(1) in the Appendix.)
This leads to the following equivalent, and perhaps more elegant formulation of Theorem \ref{Wiles}:
$${\rm Fit}_{\co[[\Gamma]]}(\cxs^\psi)=(G_{\psi, S}(t)),\qquad \text{for all even $\psi$},$$
where $(G_{\psi, S}(t))$ denotes the principal ideal of $\co[[\Gamma]]$ generated by $G_{\psi, S}(t)$.

In light of this reformulation,
it would be natural to expect that a $G$--equivariant refinement of Theorem \ref{Wiles} would give the Fitting ideal
${\rm Fit}_{\zp[[\cg]]^+}(\cxs^+)$ in terms of an equivariant $p$--adic $L$--function. Unfortunately, in general, the $\zp[[\cg]]^+$--module
$\cxs^+$ has infinite projective dimension, which makes its Fitting ideal non--principal and difficult to compute. This is the main reason why
the module $\cxs^+$ is replaced with $T_p(\mk)^-$, a $\zp[[\cg]]^-$--module of projective dimension $1$ (see Theorem \ref{projective} above), whose Fitting
ideal is principal (see Proposition \ref{fitting-calculation}(1) in the Appendix). Moreover, $T_p(\mk)^-$ has a quotient isomorphic to $(\cxs^+)^\ast(1)$ (see Remark \ref{remark-link-classical}.)

\end{remark}

\begin{corollary}\label{Wiles+zero-mu} If the $\mu$--invariant $\mu_{\zp}(\cxs^+)$ of the $\zp[[\Gamma]]$--module $\cxs^+$ is $0$, then the following hold
for all even characters $\psi\in\widehat G(\cp)$.
\begin{enumerate}
  \item $\mu_{\psi, \co}(\cxs)=0.$
  \item $G_{\psi, S}(t) \sim {\rm det}_{Q(\co)}((t+1)-\mathfrak m_\gamma\mid e_{\psi}(\cxs\otimes_{\zp}Q(\co)))$ in $\co[[t]]$.
\end{enumerate}
\end{corollary}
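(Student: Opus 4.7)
Part (2) is an immediate consequence of part (1) combined with Theorem \ref{Wiles}: once $\mu_{\psi,\co}(\cxs)=0$, the factor $\pi^{\mu_{\psi,\co}(\cxs)}$ in Wiles's formula is a unit in $\co[[t]]$, so the association in divisibility reduces to the stated equivalence. The entire plan therefore focuses on establishing part (1).

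For part (1), my first step is to replace $\cxs^\psi$ by a more tractable object. Write $\psi=\varphi\cdot\rho$ as in the preamble to the corollary, with $\rho$ a faithful character of the $p$-Sylow subgroup $H$ of $G(K_\psi/k)$. Since $p$ is odd, the value $\rho(j)$ has $p$-power order dividing $2$, hence equals $1$. Combined with the hypothesis $\psi(j)=1$, this forces $\varphi(j)=1$, so $\varphi$ is an even character of $F$. Because $p$ is odd, $e_\varphi$ commutes with the idempotent $\tfrac12(1+j)$ and annihilates the $j$-minus part, giving
\[
e_\varphi(\cx_{K_\psi}\otimes_{\zp}\co)\;=\;e_\varphi\bigl(\cx_{K_\psi}^+\otimes_{\zp}\co\bigr).
\]
In either case of Greenberg's definition ($n=0$ or $n\geq 1$), $\cxs^\psi$ is then an $\co[[\Gamma]]$-submodule of $\cx_{K_\psi}^+\otimes_{\zp}\co$. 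Hence, once we show that $\cx_{K_\psi}^+$ is finitely generated as a $\zp$-module (equivalently, $\mu_{\zp}(\cx_{K_\psi}^+)=0$), the submodule $\cxs^\psi$ is automatically finitely generated over $\co$, so $\mu_\co(\cxs^\psi)=0$.

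The second step is therefore to derive $\mu_{\zp}(\cx_{K_\psi}^+)=0$ from the hypothesis $\mu_{\zp}(\cxs^+)=0$ by a standard class-field-theoretic comparison. Let $M_K$ and $M_{K_\psi}$ denote the maximal abelian pro-$p$ extensions of $\ck$ and $\cl_\psi:=K_\psi\cdot\ck'$, respectively, that are unramified outside the primes above $S$. Since $K/k$ is abelian, so is $\ck/\cl_\psi$, and therefore $M_{K_\psi}\cdot\ck$ is an abelian pro-$p$ extension of $\ck$ unramified outside $S$, so $M_{K_\psi}\cdot\ck\subseteq M_K$. This yields a natural surjection $\cxs\twoheadrightarrow G(M_{K_\psi}\cdot\ck/\ck)$, whose target embeds into $\cx_{K_\psi}$ with cokernel a quotient of $G(\ck/\cl_\psi)\simeq G(K/K_\psi)$, which is finite. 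Because complex conjugation acts compatibly on both Iwasawa modules and the functor $M\mapsto M^+$ is exact on $\zp$-modules (as $p$ is odd), the induced map $\cxs^+\to\cx_{K_\psi}^+$ also has finite cokernel. By hypothesis, $\cxs^+$ is finitely generated over $\zp$, hence so is its image in $\cx_{K_\psi}^+$; the finiteness of the cokernel then forces $\cx_{K_\psi}^+$ itself to be finitely generated over $\zp$, as required.

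The main technical point I expect to need to justify carefully is the finiteness-of-cokernel claim in the third paragraph, together with its compatibility with the $j$-action; the remaining manipulations---namely the $e_\varphi$-reduction to the plus part, the preservation of $\mu_\co=0$ under $\co[[\Gamma]]$-submodules of $\zp$-finitely-generated modules, and the deduction of (2) from (1) via Theorem \ref{Wiles}---are essentially formal.
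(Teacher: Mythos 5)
Your proof is correct and follows essentially the same route as the paper's, so I will keep my remarks short. Both arguments reduce part~(1) to the assertion $\mu_{\zp}(\cx_{K_\psi})=0$ via the observation that $\cxs^\psi$ is an $\co[[\Gamma]]$--submodule of $\cx_{K_\psi}\otimes_{\zp}\co$, and both obtain $\mu_{\zp}(\cx_{K_\psi})=0$ from $\mu_{\zp}(\cxs^+)=0$ by showing a suitable restriction map has finite cokernel.

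Two small remarks on presentation. First, your opening paragraph deducing that $\varphi$ is even from $\rho(j)=1$ is a detour: $\psi(j)=1$ means $j\in\ker\psi=G(K/K_\psi)$, so $K_\psi\subseteq K^+$ is totally real; since the $\cg$--action on $\cx_{K_\psi}$ factors through $G(\cl_\psi/k)$ and $j$ has trivial image there, $j$ acts trivially on $\cx_{K_\psi}$, so $\cx_{K_\psi}^+=\cx_{K_\psi}$ and no parity considerations for $\varphi$ are needed. Second, the paper shortcuts your compositum argument by invoking the standard class--field--theoretic identification $\cxs^+\simeq\cx_{K^+}$ and then applying the restriction $\cx_{K^+}\to\cx_{K_\psi}$, whereas you go through $\cxs\twoheadrightarrow G(M_{K_\psi}\cdot\ck/\ck)\hookrightarrow\cx_{K_\psi}$ and pass to plus parts; both yield the same finite--cokernel conclusion, so this is a cosmetic rather than substantive difference. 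The use of Theorem~\ref{Wiles} to deduce part~(2) from part~(1) matches the paper exactly.
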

\begin{proof}
(2) is a consequence of (1) and Theorem \ref{Wiles}.
Part (1) is proved as follows. It is easily seen that Galois restriction gives an isomorphism of
$\zp[[\cg]]$--modules $\cxs^+\simeq \cx_{K^+}$, where $K^+$ is the maximal real subfield
of $K$. Also, since $\psi$ is even, we have $K_\psi\subseteq K^+$ and Galois restriction
leads to an exact sequence of $\zp[[\cg]]$--modules
$$
\xymatrix{\cx_{K^+}\ar[r] &\cx_{K_\psi}\ar[r] & G(K_\psi\cap\ck'/k)\ar[r] &1
}.$$
Since $G(K_\psi\cap\ck'/k)$ is finite and $\mu_{\zp}(\cxs^+)=0$, we have $\mu_{\zp}(\cx_{K_\psi})=0$.
Consequently, we have $\mu_\co(\cx_{K_\psi}\otimes_{\zp} \co)=0$. Therefore $\mu_\co(e_{\varphi}(\cx_{K_\psi}\otimes_{\zp} \co))=0$
and $\mu_{\psi, \co}(\cxs)=0$.  We have used the fact that  the $\mu$--invariant of a quotient or submodule
of an Iwasawa module is at most equal to that of the module itself.
\end{proof}

\subsection{The relevant equivariant $p$--adic $L$--functions}\label{equivariant-L-functions} For simplicity, we will continue to assume that $\boldsymbol\mu_p\subseteq K$,
unless otherwise stated. If $R$ is a commutative ring with $1$, we denote by $Q(R)$ the total ring of fractions of $R$. By abusing notation,
we denote by  $\iota, t_m: Q(\co[[\cg]])\simeq Q(\co[[\cg]])$ the unique $Q(\co)$--algebra automorphisms obtained by extending to $Q(\co)[[\cg]]$ the $\zp$--algebra
automorphisms $\iota, t_m: \zp[[\cg]]\simeq\zp[[\cg]]$ defined in \S\ref{appendix-twisting} of the Appendix, for all $m\in\z$.

As usual, we freely identify $\co[[\cg]]$
and $Q(\co[[\cg]])$ with $\co[G][[t]]$ and $Q(\co[G][[t]])$, respectively, via the first isomorphism in (\ref{power-series-iso}).
By abusing notation once again, we let $\psi:\co[G][[t]]\to \co[[t]]$ denote
the unique $\co[[t]]$--algebra morphism extending $\psi: G\to \co$, for all $\psi\in\widehat G(\cp)$.
\begin{remark}
Note that the non zero--divisors of $\co[G][[t]]$ (respectively $\co[G]$) are precisely those elements $f\in \co[G][[t]]$ (respectively $f\in \co[G]$)
with the property that $\psi(f)\ne 0$ in $\co[[t]]$ (respectively in $\co$),
for all $\psi\in\widehat G(\cp)$.
\end{remark}
\noindent We consider the following power series in $\frac{1}{|G|}\co[G][[t]]$:
$$G_S(t):=\sum_{\psi\in\widehat G(\cp)}G_{\psi, S}(t)\cdot e_\psi, \quad H_S(t):=\sum_{\psi\in\widehat G(\cp)}H_{\psi, S}(t)\cdot e_\psi=t\cdot e_{\boldsymbol 1_G}+(1-e_{\boldsymbol 1_G}).$$
Observe that, for all $m\in\z$,  we have equalities
$$\iota(e_\psi)=e_{\psi^{-1}}, \quad t_m(e_\psi)=e_{\psi\omega^{-m}},\quad \iota(t)=(1+t)^{-1}-1,\quad t_m(t)=u^m(1+t)-1,$$
for all characters $\psi\in\widehat G(\cp)$. Consequently, we have the following, for all $m\in\z$.
\begin{eqnarray}
\nonumber  (\iota\circ t_m)(G_S(t)) &=& \sum_{\psi\in\widehat G(\cp)}G_{\psi^{-1}\omega^m, S}(u^m(1+t)^{-1}-1)\cdot e_\psi, \\
\nonumber  (\iota\circ t_m)(H_S(t)) &=& (u^m(1+t)^{-1}-1)\cdot e_{\omega^m} + (1-e_{\omega^m}).
\end{eqnarray}
For every $n\in\z_{\geq 0}$, we let $K_n$ denote the unique field, with $K\subseteq K_n\subseteq \ck$ and $[K_n:K]=p^n$. We let $G_n:=G(K_n/k)$ and note that
$G_n\simeq G\times \Gamma/\Gamma^{p^n}$. We denote by $\pi_n: \co[[\cg]]\twoheadrightarrow \co[G_n]$ the usual (surjective) $\co$--algebra morphism induced by Galois restriction.
By applying the remark above to the group rings $\co[[\cg]]$ and $\co[G_n]$, it is easily seen that $|G|(\iota\circ t_m)(H_S)$ is not a zero--divisor in $\co[[\cg]]$, for all $m\ne 0$.
Moreover,
$\pi_n(|G|(\iota\circ t_m)(H_S))$ is not a zero--divisor in $\co[G_n]$, for all $n$. Let
$$\vu:=\{f\in \co[[\cg]]\mid \pi_n(f)\text{ not a zero--divisor in }\co[G_n], \forall\, n\}, \qquad \vu_n:=\pi_n(\vu)\,.$$
It is easily proved that $\vu_n$ is in fact the set of all non zero--divisors of $\co[G_n]$. As a consequence, we have an equality $\vu_n^{-1}\co[G_n]=Q(\co[G_n])=Q(\co)[G_n]$.
The morphisms  $\pi_n$ above can be uniquely extended to surjective $Q(\co)$--algebra morphisms (for simplicity, also denoted) $\pi_n: \vu^{-1}\co[[\cg]]\twoheadrightarrow Q(\co[G_n])$, whose projective
limit gives an injective $Q(\co)$--algebra morphism
$$\vu^{-1}\co[[\cg]]\hookrightarrow\underset{n}{\underset{\longleftarrow}\lim}\, Q(\co[G_n]).$$
Next, we consider the element
$$g_S:=\frac{(\iota\circ t_1)(G_S(t))}{(\iota\circ t_1)(H_S(t))}=\frac{|G|(\iota\circ t_1)(G_S(t))}{|G|(\iota\circ t_1)(H_S(t))}\in \vu^{-1}\co[[\cg]],$$
and describe its images $\pi_n(g_S)$ in $Q(\co[G_n])$, for all $n$. In order to do that, let
$$\Theta_S^{(n)}(s):=\Theta_{S, K_n/k}(s),\qquad \forall\, n\in\z_{\geq 0}$$
be the $S$--incomplete $G_n$--equivariant $L$--function associated to $K_n/k$. According to Siegel's Theorem \ref{siegel-theorem}, we have
$$\Theta_S^{(n)}(1-m)\in\q[G_n]\subseteq Q(\co[G_n]), \qquad \forall\, n\in\z_{\geq 0}, \quad \forall\, m\in\z_{\geq 1}.$$
Moreover, the inflation property of Artin $L$--functions implies that
$$(\Theta_S^{(n)}(1-m))_n\in\underset{n}{\underset{\longleftarrow}\lim}\, Q(\co[G_n]),\qquad \forall\, m\in\z_{\geq 1}.$$
We let $\Theta_S^{(\infty)}(1-m):=(\Theta_S^{(n)}(1-m))_n$ and view it as an element of $\underset{n}{\underset{\longleftarrow}\lim}\, Q(\co[G_n])$.
\begin{lemma}\label{twisting-gs} For all $m\in\z_{\geq 1}$, we have.
\begin{enumerate}\item $t_{1-m}(g_S)\in\vu^{-1}\co[[\cg]].$
\item $\Theta_S^{(\infty)}(1-m)=t_{1-m}(g_S)$ in $\vu^{-1}\co[[\cg]]$.
\end{enumerate}\end{lemma}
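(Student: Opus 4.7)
The plan is to reduce both assertions to explicit character-by-character evaluations. The starting observation is the identity
$$t_{1-m}\circ\iota\circ t_1 \;=\; \iota\circ t_m$$
of automorphisms of $Q(\co[[\cg]])$, which I would verify directly from the formulas for $\iota, t_a$ on $e_\psi$ and on $1+t$: on the one hand $t_{1-m}\iota t_1(e_\psi)=t_{1-m}(e_{\psi^{-1}\omega})=e_{\psi^{-1}\omega^m}=\iota t_m(e_\psi)$, and on the other $t_{1-m}\iota t_1(1+t)=t_{1-m}(u(1+t)^{-1})=u^m(1+t)^{-1}=\iota t_m(1+t)$. Applying $t_{1-m}$ to the fractional representation of $g_S$ then gives
$$t_{1-m}(g_S)\;=\;\frac{|G|(\iota\circ t_m)(G_S(t))}{|G|(\iota\circ t_m)(H_S(t))}.$$

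For part (1), I would show that the denominator lies in $\vu$. Using the formula in the text,
$(\iota\circ t_m)(H_S(t))=(u^m(1+t)^{-1}-1)e_{\omega^m}+(1-e_{\omega^m})$. For any character $\chi$ of $G_n$, set $\zeta:=\chi(\gamma)$, which is a $p^n$-th root of unity. Then $\chi$ sends the denominator to $u^m\zeta^{-1}-1$ if $\chi|_G=\omega^m$, and to $1$ otherwise. Since $m\geq 1$, the element $u^m$ lies in $1+p\zp\setminus\{1\}$ and has infinite multiplicative order, hence cannot equal any $p^n$-th root of unity. Thus the denominator is evaluated to a nonzero element by every $\chi$, so its image in $\co[G_n]$ is a non zero-divisor for every $n$. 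This places the denominator in $\vu$ and yields $t_{1-m}(g_S)\in\vu^{-1}\co[[\cg]]$.

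For part (2), the plan is to exploit the injection $\vu^{-1}\co[[\cg]]\hookrightarrow\underset{n}\varprojlim\,Q(\co[G_n])\hookrightarrow\prod_n\prod_{\chi\in\widehat{G_n}(\cp)}\overline{\qp}$ and check the equality $\chi(t_{1-m}(g_S))=L_S(\chi^{-1},1-m)=\chi(\Theta_S^{(\infty)}(1-m))$ for every finite-order character $\chi$ of $\cg$. Decomposing $\chi=\chi|_G\cdot\chi|_\Gamma$ and evaluating at $\chi$ using the explicit formulas quoted in the text,
$$\chi(t_{1-m}(g_S))\;=\;\frac{G_{\chi|_G^{-1}\omega^m,\,S}(u^m\zeta^{-1}-1)}{H_{\chi|_G^{-1}\omega^m,\,S}(u^m\zeta^{-1}-1)}.$$
The Deligne-Ribet interpolation formula (\ref{power-series}), in its extended form for the character $\chi|_G^{-1}\omega^m$ of $G$ twisted by the character of $\Gamma$ sending $\gamma$ to $\zeta^{-1}$, identifies this ratio with $L_S((\chi|_G^{-1}\omega^m)\omega^{-m}\cdot\chi|_\Gamma^{-1},1-m)=L_S(\chi^{-1},1-m)$, which matches the evaluation of $\Theta_S^{(\infty)}(1-m)$ at $\chi$ coming from its definition as $\sum_{\chi'}L_S((\chi')^{-1},1-m)\,e_{\chi'}$.

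The main (mild) obstacle is the use of the interpolation formula at twisted arguments $\zeta u^m-1$ rather than $u^m-1$: the statement recalled in the excerpt covers only the case of a character of $G$ (i.e.\ $\zeta=1$). The extension to characters of $\cg$ of finite order is standard and can be obtained from the original Deligne-Ribet construction by base-changing to the finite cyclotomic $\zp$-tower cut out by $\chi|_\Gamma$ and re-applying (\ref{power-series}) there; I would invoke this as the only non-trivial external input.
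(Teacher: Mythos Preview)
Your proposal is correct and follows essentially the same approach as the paper. For part (1), you both observe that $t_{1-m}(|G|(\iota\circ t_1)(H_S))=|G|(\iota\circ t_m)(H_S)$ lies in $\vu$; you make the character-by-character verification explicit, while the paper had already asserted this just before the lemma. For part (2), the paper simply cites Proposition~4.1 of \cite{Popescu-CS} for the equality $\pi_n(t_{1-m}(g_S))=\Theta_S^{(n)}(1-m)$; your sketch unpacks exactly what that proposition establishes, and the ``extended interpolation'' you flag as the only nontrivial external input is precisely the content of that reference.
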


\begin{proof}
Part (1) follows from the observation that the element
$$t_{1-m}(|G|(\iota\circ t_1)(H_S))=|G|(\iota\circ t_m)(H_S)$$
belongs to $\vu$, for all $m\in\z_{\geq 1}$. Part (2) is equivalent to
$$\pi_n(t_{1-m}(g_S))=\Theta^{(n)}_S(1-m),$$
for all $m\in\z_{\geq 1}$ and all $n\in\z_{\geq 0}$.
This is Proposition 4.1 in \cite{Popescu-CS}. Note that in loc.cit. our power series
$(\iota\circ t_1)(G_S(t))$ and $(\iota\circ t_1)(H_S(t))$ are denoted $G_S(t)$ and $H_S(t)$, respectively.
However, $g_S$ denotes the same element in $\vu^{-1}\co[[\cg]]$.
\end{proof}

Next, we use the additional set of primes $T$ to eliminate the denominators of the elements $t_{1-m}(g_S)$ in
$\vu^{-1}\co[[\cg]]$, for all $m\in\z_{\geq 1}$. For that purpose, we consider
$$\delta_T^{(n)}:=\delta_{T, K^{(n)}/k},\quad  \Theta_{S, T}^{(n)}:=\Theta_{S, T, K^{(n)}/k}=\delta_{T, K^{(n)}/k}\cdot\Theta_{S, K^{(n)}/k} ,$$
viewed as holomorphic functions $\C\to\C[G_n]$, for all $n\in\z_{\geq 0}$. Note that we have
$$\delta_T^{(\infty)}(1-m):=(\delta_T^{(n)}(1-m))_n\in \underset{n}{\underset{\longleftarrow}\lim}\, \zp[G_n]=\zp[[\cg]],$$
for all $m\in\z_{\geq 1}$, as an immediate consequence of the definition of the $\delta_T^{(n)}$'s. Consequently, since $T\cap S_p=\emptyset$,
Remark \ref{theta-st-remark} implies that
$$(\Theta_{S, T}^{(n)}(1-m))_n\in \underset{n}{\underset{\longleftarrow}\lim}\, \zp[G_n]=\zp[[\cg]], \qquad \forall\, m\in\z_{\geq 1}.$$
 We let $\Theta_{S, T}^{(\infty)}(1-m):=(\Theta_{S, T}^{(n)}(1-m))_n$ and view these as elements in $\zp[[\cg]]$.
\begin{lemma}\label{twisting-theta-st} For all $m\in\z_{\geq 1}$, we have the following equalities in $\zp[[\cg]]$.
\begin{enumerate}
\item $\delta_T^{(\infty)}(1-m)=t_{1-m}(\delta_T^{(\infty)}(0))$.
\item $\Theta_{S, T}^{(\infty)}(1-m)=t_{1-m}(\delta_T^{(\infty)}(0))\cdot t_{1-m}(g_S)=t_{1-m}(\Theta_{S,T}^{(\infty)}(0)).$
\end{enumerate}
\end{lemma}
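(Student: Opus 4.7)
The plan is to establish (1) by a direct Euler-factor computation using the concrete description of $t_{1-m}$ on Frobenius elements, and then to derive (2) as a purely formal consequence of (1), Lemma \ref{twisting-gs}(2), and the level-wise multiplicativity $\Theta_{S,T}^{(n)}(s) = \delta_T^{(n)}(s)\cdot\Theta_S^{(n)}(s)$ that defines $\Theta_{S,T}^{(n)}$.

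For (1), the appendix construction (consistent with the formulas $t_m(e_\psi) = e_{\psi\omega^{-m}}$ and $t_m(\gamma) = u^m\gamma$ already used in the text) describes $t_{1-m}$ as the $\zp$-algebra automorphism of $\zp[[\cg]]$ induced at each finite level $n$ by the group-ring rule $\sigma \mapsto c_p(\sigma)^{1-m}\sigma$ on $G_n$. Each prime $v \in T$ satisfies $v \notin S \supseteq S_{\rm ram}(\ck/k)$ and $v \notin S_p$, so $v$ is unramified in $\ck/k$, the arithmetic Frobenius $\sigma_v \in \cg$ is well defined, and $\sigma_v$ acts on $\bmu_{p^\infty}$ by raising to the ${\bf N}v$-th power; hence $c_p(\sigma_v) = {\bf N}v$. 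A direct computation then gives
$$t_{1-m}\bigl(1 - \sigma_v^{-1}\cdot{\bf N}v\bigr) \;=\; 1 - c_p(\sigma_v)^{-(1-m)}\cdot\sigma_v^{-1}\cdot{\bf N}v \;=\; 1 - \sigma_v^{-1}\cdot{\bf N}v^{m},$$
and multiplying these identities over $v \in T$ transforms the Euler expansion of $\delta_T^{(n)}(0)$ into that of $\delta_T^{(n)}(1-m)$. Passing to the projective limit over $n$ yields (1) as an identity in $\zp[[\cg]]$.

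For (2), specializing Lemma \ref{twisting-gs}(2) at $m=1$ (where $t_0$ is the identity) identifies $g_S = \Theta_S^{(\infty)}(0)$ inside $\vu^{-1}\co[[\cg]]$. Combining this with the level-wise factorization $\Theta_{S,T}^{(n)}(0) = \delta_T^{(n)}(0)\cdot\Theta_S^{(n)}(0)$ and passing to the projective limit produces $\Theta_{S,T}^{(\infty)}(0) = \delta_T^{(\infty)}(0)\cdot g_S$; Remark \ref{theta-st-remark}, applied to the set $T$ (which by hypothesis avoids $S_p$), guarantees that this element actually lies in $\zp[[\cg]]$. Applying the $Q(\co)$-algebra automorphism $t_{1-m}$ to this identity and invoking part (1) together with Lemma \ref{twisting-gs}(2) then produces
$$t_{1-m}\bigl(\Theta_{S,T}^{(\infty)}(0)\bigr) \;=\; t_{1-m}\bigl(\delta_T^{(\infty)}(0)\bigr)\cdot t_{1-m}(g_S) \;=\; \delta_T^{(\infty)}(1-m)\cdot\Theta_S^{(\infty)}(1-m) \;=\; \Theta_{S,T}^{(\infty)}(1-m),$$
which is exactly the chain of equalities asserted in (2).

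The only substantive step is (1), and there the sole thing to verify carefully is the identity $c_p(\sigma_v) = {\bf N}v$ for each $v\in T$; this follows immediately from the running hypothesis that $T$ is disjoint from $S \supseteq S_{\rm ram}(\ck/k)\cup S_p$. Everything else is formal bookkeeping inside the profinite ring $\zp[[\cg]]$, exploiting the continuity of $t_{1-m}$ in the projective-limit topology.
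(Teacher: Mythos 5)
Your argument is correct and is essentially the proof in the paper: both establish (1) by observing that each Euler factor $1-\sigma_v^{-1}\cdot{\bf N}v$ of $\delta_T^{(\infty)}(0)$ is sent by $t_{1-m}$ to $1-\sigma_v^{-1}\cdot{\bf N}v^m$ once one knows $c_p(\sigma_v)={\bf N}v$ for $v\in T$ (which holds because $T\cap S_p=\emptyset$), and both derive (2) from (1) together with Lemma~\ref{twisting-gs}(2). You simply spell out the Euler-factor computation and the level-wise multiplicativity $\Theta_{S,T}^{(n)}=\delta_T^{(n)}\cdot\Theta_S^{(n)}$ more explicitly than the paper does.
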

\begin{proof}  Let $m$ be as above. From the definitions, we have
$$\delta_T^{(\infty)}(1-m)=\prod_{v\in T}(1-(\sigma_v^{(\infty)})^{-1}\cdot{{\bf N}v}^{m}),$$
where $\sigma_v^{(\infty)}$ is the Frobenius morphism associated to $v$ in $\cg$. Now, since $T\cap S_p=\emptyset$, we have
$c(\sigma_v^{(\infty)})={\bf N}v$, for all $v\in T$. Combined with the last displayed equality, this implies part (1) of the Lemma.
Part (2) is a direct consequence of part (1) combined with Lemma \ref{twisting-gs}(2).
\end{proof}

\begin{corollary}
We have the following.
\begin{enumerate}
\item $g_S\in \vu_{\zp}^{-1}\zp[[\cg]]$, where $\vu_{\zp}:=\vu\cap\zp[[\cg]]$.
\item $G_S\in 1/|G|\zp[[\cg]]$.
\end{enumerate}
\end{corollary}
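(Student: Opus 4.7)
For part (1), the plan is to specialize Lemma \ref{twisting-theta-st}(2) at $m=1$, which gives the identity
$$\Theta_{S,T}^{(\infty)}(0)=\delta_T^{(\infty)}(0)\cdot g_S$$
in $\vu^{-1}\co[[\cg]]$. Since $T$ is disjoint from $S_p$, every prime of $T$ has residual characteristic different from $p$, so Corollary \ref{Deligne-Ribet-rewrite} applied at each finite layer $K_n/k$ gives $\Theta_{S,T}^{(n)}(0)\in \zp[G_n]$; passage to the projective limit yields $\Theta_{S,T}^{(\infty)}(0)\in \zp[[\cg]]$. On the other hand, $\delta_T^{(\infty)}(0)$ lies in $\zp[[\cg]]$ directly from its Euler-product definition, and it is a member of $\vu_{\zp}$ because its image under any character $\chi\in\widehat{G_n}(\cp)$ equals $\prod_{v\in T}(1-\chi(\sigma_v)^{-1}\mathbf{N}v)$, and each factor is nonzero since $\chi(\sigma_v)^{-1}$ is a root of unity while $\mathbf{N}v$ is a rational integer $>1$. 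Dividing the displayed identity by $\delta_T^{(\infty)}(0)$ therefore produces $g_S\in \vu_{\zp}^{-1}\zp[[\cg]]$.

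For part (2), the plan is to descend along $\Gamma_{\co}:=\mathrm{Gal}(Q(\co)/\qp)$, after enlarging $\co$ so that it is the ring of integers of a finite Galois extension of $\qp$ containing all values of every $\psi\in \widehat G(\cp)$. The key intermediate claim is that the Deligne--Ribet series satisfy $G_{\psi^\sigma,S}=G_{\psi,S}^\sigma$ in $\co[[t]]$, for every $\sigma\in\Gamma_\co$ and every $\psi\in\widehat G(\cp)$. To prove this, combine the characterizing interpolation (\ref{power-series}) with the Galois-equivariance of Artin $L$-values: since $\omega$ takes values in $\zp$ and is hence fixed by $\Gamma_\co$, applying $\sigma$ to $L_S(\psi\omega^{-m},1-m)$ gives $L_S(\psi^\sigma\omega^{-m},1-m)$, and noting the trivial identity $H_{\psi^\sigma,S}=H_{\psi,S}^\sigma=H_{\psi,S}$ one deduces
$$G_{\psi,S}^\sigma(u^m-1)=G_{\psi^\sigma,S}(u^m-1),\qquad \forall\,m\in\z_{\geq 1}.$$
Because $u^m-1$ accumulates at $0$ inside the maximal ideal of $\co$, the identity theorem for power series promotes these pointwise equalities to an identity in $\co[[t]]$. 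Since $\sigma(e_\psi)=e_{\psi^\sigma}$, it follows that $G_S=\sum_\psi G_{\psi,S}\cdot e_\psi$ is fixed by every $\sigma$, hence $G_S\in\qp[[\cg]]$. Combining the a priori containment $G_S\in\frac{1}{|G|}\co[[\cg]]$ with the coefficient-wise intersection $\qp[[\cg]]\cap \frac{1}{|G|}\co[[\cg]]=\frac{1}{|G|}\zp[[\cg]]$ (which follows from $\co\cap\qp=\zp$) yields the desired containment.

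The principal technical obstacle lies in the identification $G_{\psi,S}^\sigma=G_{\psi^\sigma,S}$ used in (2): although morally evident from $p$-adic interpolation of Galois-equivariant global $L$-values, the rigorous argument depends on the identity theorem for power series, applicable precisely because $\bmu_p\subset K$ forces $u=\kappa(\gamma)\in 1+p\zp$ and so $u^m-1\to 0$. Once this rationality has been established, both parts are essentially formal: (1) merely packages $g_S$ against the denominator $\delta_T^{(\infty)}(0)\in\vu_{\zp}$, and (2) is a Galois descent along $\Gamma_\co$ combined with the elementary $\qp$-versus-$\co$ coefficient comparison.
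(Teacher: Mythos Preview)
Your argument for part (1) is correct and matches the paper's approach exactly: specialize Lemma~\ref{twisting-theta-st}(2) at $m=1$ and divide by $\delta_T^{(\infty)}(0)\in\vu_{\zp}$.

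For part (2), your argument is correct but takes a genuinely different route from the paper. The paper derives (2) directly from (1): since $|G|(\iota\circ t_1)(G_S)=g_S\cdot |G|(\iota\circ t_1)(H_S)$ with $g_S\in\vu_{\zp}^{-1}\zp[[\cg]]$ and $|G|(\iota\circ t_1)(H_S)\in\zp[[\cg]]$, one gets $|G|(\iota\circ t_1)(G_S)\in\vu_{\zp}^{-1}\zp[[\cg]]$; combined with the a priori containment $|G|(\iota\circ t_1)(G_S)\in\co[[\cg]]$ and the (implicit) intersection $\co[[\cg]]\cap\vu_{\zp}^{-1}\zp[[\cg]]=\zp[[\cg]]$ (checked coefficientwise via $\co\cap\qp=\zp$ at each finite level $G_n$), the result follows. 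Your Galois-descent argument is independent of (1) and instead establishes the sharper identity $G_{\psi^\sigma,S}=(G_{\psi,S})^\sigma$ from Siegel's rationality $\Theta_S(1-m)\in\q[G]$ together with the uniqueness in the Deligne--Ribet interpolation. This is more work but more informative, and explains \emph{why} $G_S$ has $\zp$-coefficients rather than deducing it from the auxiliary $T$-modification. One small imprecision: the sequence $u^m-1$ does not literally tend to $0$ as $m\to\infty$ through positive integers, but it does have $0$ as an accumulation point (take $m=p^j$), and in any case the points are infinitely many and lie in the maximal ideal, which is all the Weierstrass-based identity theorem requires.
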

\begin{proof} Part (1) follows from Lemma \ref{twisting-theta-st}(2) with $m=1$ and the obvious observation that $\delta_T^{(\infty)}(0)\in\vu_{\zp}$.
Part (2) is an immediate consequence of part (1) and the fact that $H_S\in 1/|G|\zp[[\cg]]$.\end{proof}

\begin{definition} The $S$--incomplete, respectively $S$--incomplete $T$--modified, $G$--equivariant $p$--adic $L$--functions $\Theta_S^{(\infty)}\in\vu_{\zp}^{-1}\zp[[\cg]]$ and $\Theta_{S,T}^{(\infty)}\in\zp[[\cg]]$
associated to $(\ck/k, S, T, p)$ are defined by
$$\Theta_S^{(\infty)}:=\Theta_S^{(\infty)}(0)=g_S, \qquad \Theta_{S,T}^{(\infty)}:=\Theta_{S,T}^{(\infty)}(0)\,.$$
For consistency, we let $\delta_T^{(\infty)}:=(\delta_T^{(n)}(0))_n\in\zp[[\cg]]$.
\end{definition}

\begin{remark} Note that, by Lemmas \ref{twisting-gs} and \ref{twisting-theta-st}, we have
$$\Theta_{S,T}^{(\infty)}=\delta_T^{(\infty)}\cdot\Theta_S^{(\infty)}$$
for all $m\in\z_{\geq 1}$. Obviously, the link between $\Theta_{S, T}^{(\infty)}$ and the classical $p$--adic $L$--functions $L_p(\psi, s)$ defined in \eqref{p-adicL} is given by
\begin{equation}\label{link-to-classical-Lp}\chi(\Theta_{S,T}^{(\infty)})=\chi(\delta_T^{(\infty)})\cdot\frac{G_{\chi^{-1}\omega, S}(u(1+t)^{-1}-1)}{H_{\chi^{-1}\omega, S}(u(1+t)^{-1}-1)}\,,
\end{equation}
for all characters $\chi\in\widehat G(\cp)$.
Also, note that since for all odd characters $\psi\in\widehat G_n(\cp)$ we have $G_{\psi, S}=0$ (see Remark \ref{odd-characters}) and $p$ is odd,
we have
$$\Theta_S^{(\infty)}\in \frac{1}{2}(1-j)\cdot\vu_{\zp}^{-1}\zp[[\cg]], \qquad \Theta_{S,T}^{(\infty)}\in\frac{1}{2}(1-j)\cdot\zp[[\cg]].$$
In what follows, we view $\Theta_{S,T}^{(\infty)}$ and $\Theta_S^{(\infty)}$ as elements of both $1/2(1-j)\zp[[\cg]]$ and $\zp[[\cg]]^-=\zp[[\cg]]/(1+j)$
via the ring isomorphism $1/2(1-j)\zp[[\cg]]\simeq \zp[[\cg]]^-$ given by reduction modulo $(1+j)$.
\end{remark}

\begin{remark}\label{no-mup}
In this section we assumed that $\bmu_p\subseteq  K$ (i.e. $\bmu_{p^\infty}\subseteq \ck$). This hypothesis was used whenever the twisting
automorphisms $t_m$ of $\co[[\cg]]$ with respect to the various powers $c^m$ of the $p$--cyclotomic character $c$ of $\cg$ were needed.

\noindent However, even if $\bmu_p\not\subseteq K$, we can still construct any object $\ast$ in the list $G_S(t)$, $H_S(t)$, $\Theta_S^{(n)}(s)$, $\delta_T^{(n)}(s)$, $\Theta_{S,T}^{(n)}(s)$ for the data $(\ck/k, S, T, p)$, just as above. Let $\widetilde K:=K(\mu_p)$, $\widetilde\ck=\ck(\mu_p)$, $\widetilde G:=G(\widetilde K/k)$, $\widetilde G_n:=G(\widetilde K_n/k)$, $\widetilde\cg:=G(\widetilde\ck/k)$. Further,
let $\widetilde\ast$ denote the analogue of $\ast$ for $(\widetilde\ck/k, S, T, p)$. By abuse of notation, let
$\pi:\C[\widetilde G_n]\twoheadrightarrow \C[G_n]$, $\pi: \co[[\widetilde\cg]]\twoheadrightarrow \co[[\cg]]$, etc., denote the $\co$--algebra morphisms induced by Galois restriction. The inflation property of Artin $L$--functions
immediately implies that
$$\pi(\widetilde\ast)=\ast\,,$$
for all $\ast$ as above. Consequently, we can construct equivariant $p$--adic $L$--functions $\Theta_S^{(\infty)}:=(\Theta_S^{(n)}(0))_n\in\vu_{\zp}^{-1}\zp[[\cg]]^-$ and $\Theta_{S,T}^{(\infty)}:=(\Theta_{S,T}^{(n)}(0))_n\in\zp[[\cg]]^-$, for $(\ck/k, S, T, p)$. Obviously, these will satisfy
$$\pi(\widetilde\Theta_{S, T}^{(\infty)})=\Theta_{S, T}^{(\infty)}.$$
\end{remark}

\subsection{\bf The Proof of the Equivariant Main Conjecture}\label{emc-section} In this section, we prove Theorem \ref{emc}. Consequently,
we work under the hypothesis $\mu_{\ck}=0$. In particular, we have $\mu_{\zp}(\cxs^+)=0$. (Recall from classical Iwasawa theory that $\mu_{\zp}(\cxs^+)=\mu_{K,p}^-$, for all
sets $S$ as above.)

\begin{lemma}
It suffices to prove Theorem \ref{emc} under the assumption that $\bmu_p\subseteq\ck$.
\end{lemma}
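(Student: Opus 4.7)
The plan is to reduce the theorem for $(\ck/k,S,T,p)$ to the analogous statement for $(\widetilde\ck/k, S, T, p)$, where $\widetilde\ck:=\ck(\bmu_p)$. Set $\widetilde K:=K(\bmu_p)$, $\widetilde G:=G(\widetilde\ck/\ck)\simeq G(\widetilde K/K)$, and $\widetilde\cg:=G(\widetilde\ck/k)$. Since $\widetilde G$ embeds in $(\z/p\z)^\times$, its order divides $p-1$ and hence is coprime to $p$. The field $\widetilde\ck$ is again a CM $\zp$--field, $\widetilde\ck/k$ is abelian as the compositum of abelian extensions of $k$, and Galois restriction yields a short exact sequence $1\to \widetilde G\to \widetilde\cg\to \cg\to 1$ inducing a surjective $\zp$--algebra morphism $\rho:\zp[[\widetilde\cg]]^-\twoheadrightarrow \zp[[\cg]]^-$ whose kernel is the ideal generated by $I_{\widetilde G}$. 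Let $\widetilde\cs,\widetilde\ct$ be the sets of primes in $\widetilde\ck$ sitting above those in $\cs,\ct$; they are $\widetilde\cg$--invariant, $j$--invariant, and $\widetilde\ct\cap(\widetilde\cs\cup\cs_p)=\emptyset$.

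Before applying the hypothetical case $\bmu_p\subseteq\widetilde\ck$, I would first verify $\mu_{\widetilde\ck}=0$. Because $\widetilde\ck/\ck$ is finite of degree coprime to $p$, the canonical map $\cx_{\widetilde\ck,S}/I_{\widetilde G}\cx_{\widetilde\ck,S}\twoheadrightarrow \cx_{\ck,S}$ has finite kernel (cohomology is annihilated by $|\widetilde G|$, a unit in $\zp$), so the assumed vanishing $\mu_\ck=0$ propagates to $\mu_{\widetilde\ck}=0$. All hypotheses of Theorem \ref{emc} are therefore met by $(\widetilde\ck/k,S,T,p)$.

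Next I would invoke Corollary \ref{coinvariants} applied to the extension $\widetilde\ck/\ck$ of CM--type $\zp$--fields with Galois group $\widetilde G$ and to the sets $\widetilde\cs,\widetilde\ct$. It yields a canonical $\zp[[\cg]]^-$--module isomorphism
$$T_p\bigl(\cm^{\widetilde\ck}_{\widetilde\cs,\widetilde\ct}\bigr)^-\bigl/I_{\widetilde G}\,T_p\bigl(\cm^{\widetilde\ck}_{\widetilde\cs,\widetilde\ct}\bigr)^-\;\simeq\;T_p(\mk)^-,$$
or equivalently
$$T_p(\mk)^-\;\simeq\;T_p\bigl(\cm^{\widetilde\ck}_{\widetilde\cs,\widetilde\ct}\bigr)^-\otimes_{\zp[[\widetilde\cg]]^-}\zp[[\cg]]^-,$$
via the surjection $\rho$. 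By the standard base--change compatibility of Fitting ideals under surjective ring morphisms (applied to $\rho$),
$${\rm Fit}_{\zp[[\cg]]^-}\bigl(T_p(\mk)^-\bigr)=\rho\!\left({\rm Fit}_{\zp[[\widetilde\cg]]^-}\bigl(T_p(\cm^{\widetilde\ck}_{\widetilde\cs,\widetilde\ct})^-\bigr)\right).$$

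Finally, assuming Theorem \ref{emc} holds whenever $\bmu_p$ lies in the top field, we may apply it to $(\widetilde\ck/k,S,T,p)$ to obtain ${\rm Fit}_{\zp[[\widetilde\cg]]^-}(T_p(\cm^{\widetilde\ck}_{\widetilde\cs,\widetilde\ct})^-)=(\widetilde\Theta_{S,T}^{(\infty)})$. Combined with the compatibility $\rho(\widetilde\Theta_{S,T}^{(\infty)})=\Theta_{S,T}^{(\infty)}$ recorded in Remark \ref{no-mup}, we conclude
$${\rm Fit}_{\zp[[\cg]]^-}\bigl(T_p(\mk)^-\bigr)=\bigl(\Theta_{S,T}^{(\infty)}\bigr),$$
as desired. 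The only nontrivial input is Corollary \ref{coinvariants}, which is already in hand; the reduction itself should be essentially formal, with the main subtlety being the propagation of the $\mu=0$ hypothesis across $\widetilde\ck/\ck$, handled as above.
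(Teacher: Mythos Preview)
Your approach is essentially identical to the paper's: pass to $\widetilde\ck=\ck(\bmu_p)$, apply Corollary~\ref{coinvariants} to identify $T_p(\mk)^-$ with the $\widetilde G$--coinvariants of $T_p(\cm^{\widetilde\ck}_{\widetilde\cs,\widetilde\ct})^-$, invoke base change of Fitting ideals along the restriction map $\rho:\zp[[\widetilde\cg]]^-\twoheadrightarrow\zp[[\cg]]^-$, and use $\rho(\widetilde\Theta_{S,T}^{(\infty)})=\Theta_{S,T}^{(\infty)}$ from Remark~\ref{no-mup}. The paper writes $\Delta$ for your $\widetilde G$ and $\pi$ for your $\rho$, but the argument is the same.

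One caveat concerns your verification that $\mu_{\widetilde\ck}=0$. The argument you sketch does not establish this: first, the paper's hypothesis $\mu_\ck=0$ refers to the classical invariant attached to the unramified Iwasawa module (equivalently, to divisibility of $\ca_\ck$), not to $\cx_{\ck,S}$; second, even for the correct module, since $|\widetilde G|$ is prime to $p$ the $\Lambda$--module on the $\widetilde\ck$--side splits into character eigenspaces, and controlling only the $\widetilde G$--coinvariants (the trivial--character piece) says nothing about the $\mu$--invariants of the nontrivial character components. In fact the implication $\mu_\ck=0\Rightarrow\mu_{\widetilde\ck}=0$ for a prime-to-$p$ extension is not a formality. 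The paper itself does not address this point in the proof of the lemma; it operates throughout under the blanket assumption that the Iwasawa $\mu$--invariants of \emph{all} $\zp$--fields involved vanish (see the opening of \S\ref{ct-section} and the phrasing in the abstract), so that $\mu_{\widetilde\ck}=0$ is part of the standing hypotheses rather than something deduced from $\mu_\ck=0$. You should either adopt that convention or supply a correct argument for the propagation.
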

\begin{proof} Assume that $\bmu_p\not\subseteq\ck$. With notations as in remark \label{no-mup} above, let $\Delta:=G(\widetilde \ck/\ck)$.
Also let $\cm:=\mk$ and $\widetilde{\cm}:=\cm^{\widetilde\ck}_{\widetilde\cs, \widetilde\ct}$, where $\widetilde\cs$ and $\widetilde\ct$ are the sets of
primes in $\widetilde\ck$ sitting above primes in $\cs$ and $\ct$, respectively. Since
$$\ker(\pi: \zp[[\widetilde\cg]]^-\twoheadrightarrow \zp[[\cg]]^-)=I_{\Delta}\zp[[\widetilde\cg]]^-,$$
where $I_{\Delta}$ is the augmentation ideal in $\zp[\Delta]$, Corollary \ref{coinvariants} implies that we have isomorphisms
of $\zp[[\cg]]^-$--modules
$$T_p(\cm)^-\simeq T_p(\widetilde\cm)^-/I_{\Delta} T_p(\widetilde\cm)^-\simeq T_p(\widetilde\cm)^-\otimes_{\zp[[\widetilde\cg]]^-}\zp[[\cg]]^-.$$
Consequently, the second equality in \eqref{fitt-base-change} (Appendix) implies that we have
$${\rm Fit}_{\zp[[\cg]]^-}T_p(\cm)^-=\pi({\rm Fit}_{\zp[[\widetilde\cg]]^-}T_p(\widetilde\cm)^-).$$
Now the last equality in Remark \ref{no-mup} shows that Theorem \ref{emc} for the
data $(\widetilde\ck/k, S, T, p)$ implies the same result for the data $(\ck/k, S, T, p)$.
\end{proof}
\noindent As a consequence of the above Lemma, we may and will assume that $\bmu_p\subseteq\ck$ throughout the rest of this section. Now, note that Theorem \ref{projective}(1) shows that the $\zp[G]^-$--module
$T_p(\mk)^-$ is projective. Consequently, if we apply Proposition \ref{fitting-calculation}(1) (see Appendix) for the semi-local ring $R:=\zp[G]^-$, $g:=\gamma$ and the $R[[\Gamma]]$--module $M:=T_p(\mk)^-$, we conclude that Theorem \ref{emc} is equivalent to the following association in divisibility in $\zp[[\cg]]^-\simeq \zp[G]^-[[t]]$:
\begin{equation}\label{equivalent-association} \Theta_{S, T}^{(\infty)}\sim {\rm det}_{\zp[G]^-}((t+1)-\mathfrak m_\gamma\mid T_p(\mk)^-).\end{equation}

\begin{lemma}\label{G}For all odd characters $\chi\in\widehat G(\cp)$, we have the following in $\co[[t]]$.\begin{enumerate}
\item $G_{\chi^{-1}\omega, S}(u(1+t)^{-1}-1)\sim \chi\left({\rm det}_{Q(\co)[G]}((t+1)-\mathfrak m_\gamma\mid T_p(\cm^{\ck}_{\cs, \emptyset})^-\otimes_{\zp}Q(\co))\right).$
\item $\mu(G_{\chi^{-1}\omega, S}(u(1+t)^{-1}-1))=0$, with notations as in Corollary \ref{corollary-association}.
\end{enumerate}
\end{lemma}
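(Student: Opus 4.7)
The plan is to deduce both statements from Corollary \ref{Wiles+zero-mu} by combining Lemma \ref{link-classical}---which identifies $T_p(\cm^\ck_{\cs, \emptyset})^-$ with a twisted $\zp(1)$-dual of $\cxs^+$---with a direct eigenvalue manipulation that realises the substitution $t\mapsto u(1+t)^{-1}-1$ as the transition from the Iwasawa-module side to the $1$--motive side.

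First, I would fix an odd character $\chi\in\widehat G(\cp)$, set $\psi:=\omega\chi^{-1}$ (an even character), and use Lemma \ref{link-classical} to produce an $\co[[\Gamma]]$--linear isomorphism
$$e_\chi\bigl(T_p(\cm^\ck_{\cs, \emptyset})^-\otimes_{\zp}\co\bigr)\;\simeq\;{\rm Hom}_\co(V_\psi,\co(1)),$$
where $V_\psi:=e_\psi(\cxs^+\otimes_{\zp}\co)$ and $\gamma$ acts on the right-hand side by $u\cdot(\gamma^{-1})^\ast$. The character matching $\chi\leftrightarrow\omega\chi^{-1}$ comes from $(\iota\circ t_1)(g)=c(g)g^{-1}$, using $c|_G=\omega$ (since $\bmu_p\subseteq K$). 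The standing hypothesis $\mu_\ck=0$ gives $\mu_{\zp}(\cxs^+)=0$ and combined with Iwasawa's theorem that $\cxs^+$ has no non-trivial finite $\Lambda$--submodule, this forces $\cxs^+$ to be a finitely generated free $\zp$--module; consequently $V_\psi$ is a free $\co$--module of some rank $n$. Picking a basis, let $A$ denote the matrix of $\mathfrak m_\gamma$ on $V_\psi$, so that $\mathfrak c_\psi(t):=\det((t+1)I-A)$ is a distinguished polynomial in $\co[t]$ of degree $n$.

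Second, I would write both sides of (1) in terms of the eigenvalues $\lambda_1,\ldots,\lambda_n$ of $A$. The twisted action makes $\gamma$ act on ${\rm Hom}_{Q(\co)}(V_\psi\otimes Q(\co),Q(\co)(1))$ via $u A^{-{\rm T}}$, whose characteristic polynomial is $\prod_i((t+1)-u/\lambda_i)=\det(A)^{-1}\prod_i(\lambda_i(t+1)-u)$. By Corollary \ref{Wiles+zero-mu}(2), $G_{\psi,S}(t)\sim\mathfrak c_\psi(t)=\prod_i((t+1)-\lambda_i)$ in $\co[[t]]$; substituting $t\mapsto u(1+t)^{-1}-1$ and clearing the factor $(1+t)^{-n}$ gives
$$G_{\psi,S}(u(1+t)^{-1}-1)\;\sim\;(-1)^n(1+t)^{-n}\prod_i(\lambda_i(1+t)-u).$$
The two expressions differ by the scalar factor $(-1)^n(1+t)^{-n}\det(A)$; since $\mathfrak c_\psi$ is distinguished, $\det(A)=(-1)^n\mathfrak c_\psi(-1)\equiv 1\pmod{\pi}$ is a unit in $\co$, and $(1+t)^{-n}\in\co[[t]]^\times$, so this factor is a unit in $\co[[t]]$. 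Interpreting $\chi(\det_{Q(\co)[G]}(\cdots))$ as the $\chi$--component via the decomposition $Q(\co)[G]=\prod_{\chi'}Q(\co)$, this establishes (1).

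For (2), I would note that $\sigma\colon t\mapsto u(1+t)^{-1}-1$ is a continuous $\co$--algebra involution of $\co[[t]]$ (one checks $(1+t)(1+\sigma(t))=u$), hence an automorphism that descends to an automorphism of $(\co/\pi)[[t]]$; in particular it preserves $\mu$--invariants. Since Corollary \ref{Wiles+zero-mu}(1) yields $\mu(G_{\psi,S}(t))=0$, it follows at once that $\mu(G_{\psi,S}(u(1+t)^{-1}-1))=0$. The subtlest point is the verification that $\cxs^+$ is literally $\zp$--free, not merely up to pseudo-isomorphism---otherwise the eigenvalue computation above would hold only modulo a pseudo-null defect; this is handled by combining $\mu=0$ with Iwasawa's no-finite-submodule theorem, as indicated.
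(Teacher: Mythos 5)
Your proof is correct and follows essentially the same route as the paper: it rests on Corollary \ref{Wiles+zero-mu} together with the identification from Lemma \ref{link-classical}, and your explicit eigenvalue manipulation for the twisted $\zp(1)$--dual is precisely the content that the paper outsources to Lemma \ref{twist-poly-lemma}(3) in the Appendix. Your argument for part (2), via the observation that $t\mapsto u(1+t)^{-1}-1$ is an $\co$--algebra involution of $\co[[t]]$ and hence preserves $\mu$--invariants, is a pleasant cosmetic alternative to the paper's appeal to the monicity of the right-hand side of (1), but the substance (reduction to $\mu_{\psi,\co}(\cxs)=0$ from Corollary \ref{Wiles+zero-mu}(1)) is the same.
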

\begin{proof} (1) Combine Corollary \ref{Wiles+zero-mu}(2)
and Lemma \ref{twist-poly-lemma}(3), applied to the lattice
$\cl:=\cxs^+\otimes_{\zp}\co$ and $n=1$ to conclude that
$$G_{\chi^{-1}\omega, S}(u(1+t)^{-1}-1)\sim \chi\left({\rm det}_{Q(\co)[G]}((t+1)-\mathfrak m_\gamma\mid (\cxs^+)^\ast(1)\otimes_{\zp}Q(\co))\right ).$$
Now use the $\zp[[\cg]]$--module isomorphism $T_p(\cm^{\ck}_{\cs, \emptyset})^-\simeq (\cxs^+)^\ast(1)$ given by Lemma \ref{link-classical} to conclude the proof of (1).

(2) This follows from (1) and the fact that the right-hand-side in (1) is a monic polynomial in $\co[t]$ (see Lemma \ref{twist-poly-lemma} in the Appendix.)
\end{proof}

\begin{lemma}\label{H} If $Q(\co)$ is endowed with the trivial $\cg$--action, then:
\begin{enumerate} \item $H_S(t)={\rm det}_{Q(\co)[G]}((t+1)-\mathfrak m_{\gamma}\mid Q(\co))$.
\item For all odd $\chi\in\widehat G(\cp)$, we have an association in divisibility in $\co[[t]]$:
$$H_{\chi^{-1}\omega, S}(u(1+t)^{-1}-1)\sim \chi({\rm det}_{Q(\co)[G]}((t+1)-\mathfrak m_{\gamma}\mid Q(\co)(1)))\,.$$
\item For all odd $\chi\in\widehat G(\cp)$, we have $\mu(H_{\chi^{-1}\omega, S}(u(1+t)^{-1}-1))=0$, with notations as in Corollary \ref{corollary-association}.
\end{enumerate}\end{lemma}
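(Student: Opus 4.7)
The plan is to reduce both determinants to the idempotent decomposition $Q(\co)[G]=\prod_{\psi\in\widehat G(\cp)}Q(\co)\cdot e_\psi$, which holds because $\co$ contains every character value of $G$. Under this decomposition, the reduced determinant $\det_{Q(\co)[G]}(A\mid M)$ of a $Q(\co)[G]$-linear endomorphism $A$ of a $Q(\co)[G]$-module $M$ factors as $\sum_\psi \det_{Q(\co)}(A\mid e_\psi M)\cdot e_\psi$. So the entire argument amounts to identifying the $\psi$-isotypic pieces of the relevant module and the scalar by which $\mathfrak m_\gamma$ acts on each.

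For part (1), $G$ acts trivially on $Q(\co)$, so $e_\psi\cdot Q(\co)=0$ for $\psi\neq\mathbf 1_G$ and $e_{\mathbf 1_G}\cdot Q(\co)=Q(\co)$, while $\mathfrak m_\gamma$ is the identity. The only nontrivial slot contributes $\det_{Q(\co)}(t\mid Q(\co))=t$, and each remaining slot contributes the empty determinant $1$. Summing gives $t\cdot e_{\mathbf 1_G}+(1-e_{\mathbf 1_G})$, which is the defining formula for $H_S(t)$.

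For part (2), recall that $G$ acts on $Q(\co)(1)$ through $\omega$ (the $G$-component of the $p$-cyclotomic character $c$) and $\gamma$ acts by the scalar $\kappa(\gamma)=u$; since $\gamma$ and $G$ commute, $\mathfrak m_\gamma$ is $Q(\co)[G]$-linear. Only the $\omega$-isotypic component is nonzero, so
$$\det_{Q(\co)[G]}\bigl((t+1)-\mathfrak m_\gamma\mid Q(\co)(1)\bigr)=(t+1-u)\,e_\omega+(1-e_\omega).$$
Applying $\chi$ and using $\chi(e_\omega)=1$ iff $\chi=\omega$, this evaluates to $1$ when $\chi\neq\omega$ and to $t+1-u$ when $\chi=\omega$. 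On the other hand, by the defining case-split of $H_{\psi,S}$, the quantity $H_{\chi^{-1}\omega,S}(u(1+t)^{-1}-1)$ equals $1$ when $\chi\neq\omega$ and equals $u(1+t)^{-1}-1$ when $\chi=\omega$. In the nontrivial case the identity
$$u(1+t)^{-1}-1=\frac{u-1-t}{1+t}=-(1+t)^{-1}\cdot\bigl((t+1)-u\bigr),$$
combined with the fact that $-(1+t)^{-1}\in\co[[t]]^\times$, establishes the association in divisibility.

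For part (3), the case $\chi\neq\omega$ is immediate because the power series is $1$. In the case $\chi=\omega$ the series $u(1+t)^{-1}-1$ has linear coefficient $-u$, and $u\in 1+p\zp\subset\co^\times$ is a $\pi$-adic unit, so no positive power of $\pi$ divides every coefficient; hence $\mu=0$. There is no genuine obstacle here: the whole argument is formal once the idempotent decomposition is in place, and the only input that must be handled with care is the normalization $u=\kappa(\gamma)$ tying the Iwasawa variable to the chosen generator of $\Gamma$ and to the $\gamma$-action on the Tate twist.
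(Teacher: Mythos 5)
Your proof is correct, and it takes a more direct route than the paper for parts (2) and (3). The paper proves part (2) by invoking the general twisting machinery of Lemma~\ref{twist-poly-lemma}(3) from the Appendix with $\cl=\co$, $V=Q(\co)$, $n=1$: that lemma identifies $\chi(\det_{Q(\co)[G]}((t+1)-\mathfrak m_\gamma\mid Q(\co)(1)))$ with $\chi((\iota\circ t_1)(H_S(t)))$ up to association, after which the explicit formula $(\iota\circ t_1)(H_S)=(u(1+t)^{-1}-1)e_\omega+(1-e_\omega)$ computed in \S\ref{equivariant-L-functions} closes the loop. You instead compute the right-hand determinant directly from the $\omega$-isotypic decomposition of the one-dimensional module $Q(\co)(1)$, getting $(t+1-u)e_\omega+(1-e_\omega)$, and match it against the explicit case-split in the definition of $H_{\psi,S}$ via the unit $-(1+t)^{-1}$. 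Both are valid; your computation is more elementary and self-contained, while the paper's invocation of Lemma~\ref{twist-poly-lemma}(3) has the virtue of running structurally parallel to the proof of Lemma~\ref{G}, where that machinery is actually indispensable because $\cxs^+$ is not one-dimensional. For part (3) the paper deduces $\mu=0$ from the observation that $(t+1)-u$ (respectively $1$) is a monic polynomial in $\co[t]$; you read off $\mu=0$ directly from the linear coefficient $-u\in\co^\times$ of the power series $u(1+t)^{-1}-1$, which amounts to the same thing and is equally valid.
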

\begin{proof} Part (1) is clear from the equality $H_S(t):=t\cdot e_{\mathbf 1_G}+ (1-e_{{\mathbf 1}_G})$. Part (2) is a direct consequence of part (1),
and Lemma \ref{twist-poly-lemma}(3) applied to $\cl:=\co$ (with trivial $\cg$--action), $V=Q(\co)$, and $n=1$. Observe that $Q(\co)^\ast(1)\simeq Q(\co)(1)$.
Part (3) follows from (2) and the fact that the right-hand-side in (2) is a monic polynomial in $\co[t]$ (see Lemma \ref{twist-poly-lemma} in the Appendix.)
\end{proof}
\begin{lemma}\label{delta}
The $\zp[[\cg]]$--module $T_p(\Delta_{\ck, \ct})$ satisfies the following.
\begin{enumerate}
\item $T_p(\Delta_{\ck, \ct})\simeq\bigoplus_{v\in T}\zp[[\cg]]/(\delta_v^{(\infty})$, where $\delta_v^{(\infty)}:=(1-(\sigma_v^{(\infty})^{-1}\cdot{\bf N}v)$.
\item ${\rm Fit}_{\zp[[\cg]]} T_p(\Delta_{\ck, \ct})=(\delta_T^{(\infty})$.
\item ${\rm pd}_{\zp[[\cg]]}T_p(\Delta_{\ck, \ct})=1$ and ${\rm pd}_{\zp[G]}T_p(\Delta_{\ck, \ct})=0$.
\item $\delta_T^{(\infty)}\sim {\rm det}_{\zp[G]}((t+1)-\mathfrak m_\gamma\mid T_p(\Delta_{\ck, \ct}))$ in $\zp[[\cg]]$
\item $\mu(\chi(\delta_T^{(\infty)}))=0$, for all $\chi\in\widehat G(\cp)$.
\end{enumerate}
\end{lemma}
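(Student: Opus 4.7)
The plan is to decompose $\Delta_{\ck,\ct}$ according to the primes of $k$ below those in $\ct$ and run an induction-from-decomposition argument on each piece. Since $\ct$ is $\cg$-invariant and disjoint from $\cs_p$, for each $v\in T$ the primes of $\ct$ above $v$ form a single $\cg$-orbit, and the decomposition subgroup $\cg_w\subseteq\cg$ at a chosen $w\mid v$ is the closure of $\langle \sigma_v^{(\infty)}\rangle$, isomorphic via reduction at $w$ to the pro-cyclic group $G(\kappa(w)/\kappa(v))$. By the argument in the proof of Lemma \ref{class-group-coranks}(1), $v\notin \cs_p$ does not split completely in $\ck/K$, so $\cg_w$ has finite index in $\cg$ and only finitely many primes of $\ck$ lie above $v$.

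For (1) I would identify $\bigoplus_{w'\mid v,\,w'\in \ct}\kappa(w')^\times$ with $\mathrm{Ind}_{\cg_w}^{\cg}\kappa(w)^\times$ as $\cg$-modules, and then use that $T_p$ commutes with this finite induction to get the $v$-summand of $T_p(\Delta_{\ck,\ct})$ isomorphic to $\zp[[\cg]]\otimes_{\zp[[\cg_w]]}T_p(\kappa(w)^\times)$. The essential local identification is the $\zp[[\cg_w]]$-module isomorphism
\[
T_p(\kappa(w)^\times)\ \simeq\ \zp[[\cg_w]]/(\sigma_v^{(\infty)}-{\bf N}v).
\]
When $\bmu_p\subseteq \kappa(w\cap K)$, the field $\kappa(w)$ contains $\bmu_{p^\infty}$ and Frobenius acts as ${\bf N}v$-th power, giving $T_p(\kappa(w)^\times)\simeq \zp(1)$, which coincides with the cyclic quotient. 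When $\bmu_p\not\subseteq\kappa(w\cap K)$, both sides vanish: for the right-hand side, a Weierstrass preparation in $\zp[[\cg_w]]$ (pro-cyclic of supernatural order $|G_v|\cdot p^\infty$ where $G_v$ is the decomposition of $v$ in $G$) shows $\sigma_v^{(\infty)}-{\bf N}v$ is a unit exactly when ${\bf N}v^{|G_v|}\not\equiv 1\pmod p$, which is the failure of $\bmu_p\subseteq\kappa(w\cap K)$. Tensoring up to $\zp[[\cg]]$ and noting that $(\sigma_v^{(\infty)}-{\bf N}v)$ and $(\delta_v^{(\infty)})$ differ by the unit $-(\sigma_v^{(\infty)})^{-1}$ completes (1).

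Part (2) is then a direct Fitting-ideal computation for a finite direct sum of cyclic modules over the commutative ring $\zp[[\cg]]$. For (3), I would first show each $\delta_v^{(\infty)}$ is a non-zero-divisor: writing $\sigma_v^{(\infty)}=g_v\cdot\gamma_v$ with $g_v\in G$ and $\gamma_v=(1+T)^{u_v}$ for some $u_v\in\zp$, the image $\chi(\delta_v^{(\infty)})=1-\chi(g_v)^{-1}{\bf N}v\,(1+T)^{-u_v}$ under any $\chi\in\widehat G(\cp)$ is a nonzero power series in $\co[[T]]$, because $u_v\neq 0$ (no prime outside $\cs_p$ splits completely in the cyclotomic tower). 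The resulting resolution $0\to\zp[[\cg]]\xrightarrow{\delta_v^{(\infty)}}\zp[[\cg]]\to\zp[[\cg]]/(\delta_v^{(\infty)})\to 0$ yields $\mathrm{pd}_{\zp[[\cg]]}T_p(\Delta_{\ck,\ct})\leq 1$. The $\zp[G]$-projectivity reduces (by Theorem 8, \cite{Serre-Local}, Ch.~IX, \S5) to $G$-cohomological triviality of the $\zp$-free module $T_p(\Delta_{\ck,\ct})$, which follows by a Shapiro-type argument from the identification of each $v$-summand as an induced $\zp[G]$-module. Part (4) then follows upon applying Proposition \ref{fitting-calculation}(1) with $R=\zp[G]$, $g=\gamma$, $M=T_p(\Delta_{\ck,\ct})$, combined with (2).

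Finally for (5): reducing $\chi(\delta_T^{(\infty)})=\prod_{v\in T}\bigl(1-\chi(g_v)^{-1}{\bf N}v\,(1+T)^{-u_v}\bigr)$ modulo the uniformizer of $\co$ gives $\prod_v\bigl(1-\bar\chi(g_v)^{-1}\overline{{\bf N}v}\,(1+T)^{-u_v}\bigr)\in\overline{\fp}[[T]]$, a product of nonzero power series and hence nonzero, so $\mu(\chi(\delta_T^{(\infty)}))=0$. The hard part I anticipate is the local identification in (1) in the case $\bmu_p\not\subseteq\kappa(w\cap K)$: matching the vanishing of $T_p(\kappa(w)^\times)$ with the unitarity of $\sigma_v^{(\infty)}-{\bf N}v$ on every character component simultaneously requires careful bookkeeping of the pro-cyclic structure of $\cg_w$ and of Weierstrass preparation in $\zp[[\cg_w]]$.
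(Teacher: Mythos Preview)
Your argument is correct and follows essentially the same route as the paper: decompose $\Delta_{\ck,\ct}$ as an induced module over each $v\in T$, identify $T_p(\kappa(w)^\times)$ as a cyclic $\zp[[\cg_v]]$--module with annihilator $(\sigma_v^{(\infty)}-{\bf N}v)$, and then read off (2)--(5). Two points are worth flagging.

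First, the ``hard case'' you anticipate in (1) does not actually occur. At this stage of \S\ref{EMC} the paper is working under the standing hypothesis $\bmu_p\subseteq\ck$ (equivalently $\bmu_p\subseteq K$, since $p$ is odd). For $v\in T$ we have $v\nmid p$, so the reduction map $\bmu_p\hookrightarrow\kappa(w\cap K)$ is injective; hence $\bmu_p\subseteq\kappa(w\cap K)$ always, and $\bmu_{p^\infty}\subseteq\kappa(w)$. The paper therefore simply invokes Lemma~\ref{twisting-Fitting}(4) (applied to the residue-field tower $\kappa(w)/\kappa(v)$ with $q={\bf N}v$ and $\sigma_q=\sigma_v^{(\infty)}$) to get the cyclic presentation, with no case distinction needed. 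Your extra bookkeeping is harmless but unnecessary here.

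Second, for the two places where you diverge tactically, the paper's shortcuts are cleaner. For ${\rm pd}_{\zp[G]}T_p(\Delta_{\ck,\ct})=0$ in (3), rather than unwinding the $\zp[G]$--module structure via a Shapiro argument, the paper runs Lemma~\ref{projective-dimension} in reverse: knowing ${\rm pd}_{\zp[[\cg]]}\leq 1$ and that the module (being $\zp$--free) has no nontrivial finite $\Lambda$--submodules forces ${\rm pd}_{\zp[G]}\leq 1$, hence $=0$ by $\zp$--freeness. For (5), rather than your direct reduction modulo $\pi$, the paper observes that the right-hand side of (4) is a \emph{monic} polynomial in $\zp[G][t]$ (Remark~\ref{monicity-remark}), so $\mu(\chi(\cdot))=0$ is immediate for every $\chi$. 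Your direct arguments are correct (in particular $u_v\ne 0$ guarantees each factor $1-\chi(g_v)^{-1}{\bf N}v\,(1+T)^{-u_v}$ is nonzero modulo $\pi$), but the paper's route avoids the computation entirely.
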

\begin{proof}
 (1) For every $v\in T$, let us fix a prime $w(v)\in \ct$ sitting above $v$. Then, we have an obvious isomorphism of $\z[[\cg]]$--modules
$$\Delta_{\ck, \ct}\simeq \bigoplus_{v\in T}\left(\kappa(w(v))^\times\otimes_{\z[[\cg_v]]}\z[[\cg]]\right),$$
where $\cg_v$ is the decomposition group associated to $v$ in $\cg$ (topologically generated by
$\sigma_v^{(\infty})$) and $\kappa(w(v))$ denotes, as usual, the residue field associated to $w(v)$. This induces an isomorphism of $\zp[[\cg]]$--modules
$$T_p(\Delta_{\ck, \ct})\simeq \bigoplus_{v\in T}\left(T_p(\kappa(w(v))^\times)\otimes_{\zp[[\cg_v]]}\zp[[\cg]]\right)\,.$$
The isomorphism above combined with Lemma \ref{twisting-Fitting}(4) (see Appendix) applied to the extension $\kappa(w(v))/\kappa(v)$ of Galois group $\cg_v$,
for $q:={\rm card}(\kappa(v))$ and $\sigma_q=\sigma_v^{(\infty)}$ concludes the proof of (1).

(2) is a direct consequence of (1) and the definition of the Fitting ideal.

(3)
Note that if $\alpha\in\zp$, such that $\sigma_v^{(\infty)}=\gamma^\alpha$ and $\sigma_v$ is the Frobenius associated to $v$ in $G$,
then we have
$$\delta_v^{(\infty)}\sim (\sigma_v\cdot(t+1)^\alpha-{\bf N}v)\quad  \text{in $\zp[[\cg]]\simeq\zp[G][[t]]$}.$$
Consequently,
$\chi(\delta_v^{(\infty)})=(\chi(\sigma_v)(1+t)^\alpha-{\bf N}v)$ is not a zero divisor in $\zp(\chi)[[t]]$, for all $\chi\in\widehat G(\cp)$ and therefore
$\delta_v^{(\infty)}$ is not a zero divisor in $\zp[[\cg]]$, for all $v\in T$. This observation combined with the isomorphism in part (1)
leads to the equality ${\rm pd}_{\zp[[\cg]]}T_p(\Delta_{\ck, \ct})=1$. Now, in order to obtain the equality ${\rm pd}_{\zp[G]}T_p(\Delta_{\ck, \ct})=0$, one applies Lemma \ref{projective-dimension}
for $M:=T_p(\Delta_{\ck, \ct})$ and $H:=G$, keeping in mind that $T_p(\Delta_{\ck, \ct})$ is $\zp$--free.

(4) This is a direct consequence of (2) and (3) combined with Proposition \ref{fitting-calculation}(1) applied for $R:=\zp[G]$, $g:=\gamma$ and the $R[[\Gamma]]=\zp[[\cg]]$--module
$M:=T_p(\Delta_{\ck, \ct})$.

(5) This is a direct consequence of (4) and the fact that the right-hand-side in (4) is a monic polynomial in $\zp[G][[t]]$ (see Remark \ref{monicity-remark} in the Appendix.)
\end{proof}
\begin{remark}\label{remark-delta} Note that the proofs of (1), (2) and the first equality in (3) of the above Lemma are independent on our
assumption that $\cg\simeq G\times \Gamma$.
\end{remark}

Now, we are ready to prove \eqref{equivalent-association} above. Let
$$\Theta:=\Theta_{S, T}^{(\infty)},\qquad F:= {\rm det}_{\zp[G]^-}((t+1)-\mathfrak m_\gamma\mid T_p(\mk)^-).$$
We view $\Theta$ and $F$ as power series in $\zp[G]^-[[t]]$ and plan on using Corollary \ref{corollary-association} (see Appendix) to show that $\Theta\sim F$. Note that, in fact,
$F$ is a monic polynomial in $\zp[G]^-[t]$ (see Remark \ref{monicity-remark} in the Appendix). Consequently, with notations as in Corollary \ref{corollary-association}, we have
\begin{equation}\label{mu-F}\mu(\chi(F))=0,\qquad  \forall\, \chi\in\widehat G(\cp), \text{ $\chi$ odd.}\end{equation}
Now, Lemma \ref{G}(2), Lemma \ref{H}(3) and Lemma \ref{delta}(5) combined with equality \eqref{link-to-classical-Lp} above show that
\begin{equation}\label{mu-Theta}\mu(\chi(\Theta))=0,\qquad  \forall\, \chi\in\widehat G(\cp), \text{ $\chi$ odd.}\end{equation}
Now, exact sequence \eqref{sequence-empty-T} combined with equality \eqref{link-to-classical-Lp} and Lemma \ref{G}(1), Lemma \ref{H}(2) and Lemma \ref{delta}(4) show that the following holds in $\co[[t]]$.
\begin{equation}\label{Theta-F}\chi(\Theta)\sim \chi\left( {\rm det}_{Q(\co)[G]^-}((t+1)-\mathfrak m_\gamma\mid T_p(\mk)^-\otimes_{\zp}Q(\co)\right)=\chi(F),\end{equation}
for all odd $\chi\in\widehat G(\cp)$. Now, as a consequence of \eqref{mu-F}, \eqref{mu-Theta} and \eqref{Theta-F}, Corollary \ref{corollary-association} in the Appendix leads to
$$\Theta\sim F \text{ in $\zp[G]^-[[t]]$,}$$
which concludes the proof of \eqref{equivalent-association} and that of Theorem \ref{emc}. \hfill $\Box$
\begin{corollary}\label{full-group-ring}
Under the hypotheses of Theorem \ref{emc}, we have
$${\rm Fit}_{\zp[[\cg]]}((T_p(\mk)^-)^\ast)={\rm Fit}_{\zp[[\cg]]}(T_p(\mk)^-)=\left(\Theta_{S,T}^{(\infty)},\quad \frac{1}{2}(1+j)\right)$$
where $(T_p(\mk)^-)^\ast:={\rm Hom}_{\zp}(T_p(\mk)^-, \zp)$ with the coinvariant
$\cg$--action.
\end{corollary}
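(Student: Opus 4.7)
The plan is to deduce both equalities directly from Theorem~\ref{emc} by viewing $T_p(\mk)^-$ and $(T_p(\mk)^-)^\ast$ as $\zp[[\cg]]^-$-modules (the latter after checking that the coinvariant action makes $(1+j)$ act as zero) and then passing from $\zp[[\cg]]^-$-Fitting ideals to $\zp[[\cg]]$-Fitting ideals by a routine lifting argument.

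To see that $(1+j)$ kills $(T_p(\mk)^-)^\ast$, note that for any $f\in(T_p(\mk)^-)^\ast$ and any $x\in T_p(\mk)^-$, the coinvariant action $(g\cdot f)(x):=f(gx)$ gives $((1+j)\cdot f)(x)=f(x)+f(jx)=f(x)+f(-x)=0$, since $j$ acts as $-1$ on $T_p(\mk)^-$. Hence both modules are naturally $\zp[[\cg]]^-$-modules.

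Next I would establish a general base-change identity: for any finitely generated $\zp[[\cg]]^-$-module $N$,
$$
{\rm Fit}_{\zp[[\cg]]}(N) \;=\; \pi^{-1}\!\bigl({\rm Fit}_{\zp[[\cg]]^-}(N)\bigr),
$$
where $\pi:\zp[[\cg]]\twoheadrightarrow\zp[[\cg]]^-$. To prove it, lift a finite $\zp[[\cg]]^-$-presentation $\zp[[\cg]]^{-,m}\overset{\bar A}{\longrightarrow}\zp[[\cg]]^{-,n}\to N\to 0$ to the $\zp[[\cg]]$-presentation $\zp[[\cg]]^{m+n}\overset{[\,\tilde A\mid(1+j)I_n\,]}{\longrightarrow}\zp[[\cg]]^n\to N\to 0$ obtained by adjoining the relations $(1+j)e_i=0$. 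A top minor of $[\,\tilde A\mid(1+j)I_n\,]$ formed using $k$ columns from $(1+j)I_n$ equals $(1+j)^k$ times an $(n-k)\times(n-k)$ minor of $\tilde A$. For $k\ge 1$ one has $(1+j)^k=2^{k-1}(1+j)$, and since $2\in\zp^\times$ the resulting contributions generate exactly the ideal $(\tfrac12(1+j))$; the $k=0$ minors are top minors of $\tilde A$, reducing mod $(1+j)$ to generators of ${\rm Fit}_{\zp[[\cg]]^-}(N)$. Applied to $N=T_p(\mk)^-$ together with Theorem~\ref{emc}, this immediately yields the rightmost equality of the corollary.

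It remains to show that ${\rm Fit}_{\zp[[\cg]]^-}((T_p(\mk)^-)^\ast)=(\Theta_{S,T}^{(\infty)})$; one more application of the base-change identity then gives the leftmost equality. By Theorem~\ref{projective}(1), $T_p(\mk)^-$ is a projective $\zp[G]^-$-module, hence so is its $\zp$-linear dual, and the coinvariant action identifies $\gamma$ acting on $(T_p(\mk)^-)^\ast$ with the $\zp[G]^-$-linear transpose of $\gamma$ acting on $T_p(\mk)^-$. Applying Proposition~\ref{fitting-calculation}(1) to both modules (in the same form as in \S\ref{emc-section}), the two Fitting ideals over $\zp[[\cg]]^-\simeq\zp[G]^-[[t]]$ are both generated by ${\det}_{\zp[G]^-}\bigl((t+1)-\mathfrak m_\gamma\mid -\bigr)$, and these determinants agree because the determinant of an endomorphism of a finitely generated projective module is invariant under dualization. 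The main subtlety is to set up the coinvariant action correctly so that it yields genuine transposition of the $\gamma$-matrix rather than the involution $\iota$; otherwise one would obtain the $\iota$-twisted Fitting ideal instead of ${\rm Fit}(T_p(\mk)^-)$, and the two differ in general.
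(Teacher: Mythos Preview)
Your argument is correct and uses the same underlying ingredients as the paper (Theorem~\ref{projective}(1), Proposition~\ref{fitting-calculation}, and Theorem~\ref{emc}), but organizes them differently. The paper obtains the first equality over $\zp[[\cg]]$ in one stroke by invoking Proposition~\ref{fitting-calculation}(3) with $R=\zp[G]$ (noting that $T_p(\mk)^-$ is $\zp[G]$--projective by Theorem~\ref{projective}(1)), and then derives the second equality as a ``direct consequence'' of Theorem~\ref{emc}, implicitly using the ring splitting $\zp[[\cg]]\simeq\zp[[\cg]]^-\times\zp[[\cg]]^+$ to pass from Fitting ideals over $\zp[[\cg]]^-$ to those over $\zp[[\cg]]$. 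You instead work over $\zp[[\cg]]^-$ throughout and lift at the end: your presentation-based computation of $\pi^{-1}({\rm Fit}_{\zp[[\cg]]^-}(N))$ is correct (the $k=n$ minor gives $(1+j)^n\sim\tfrac12(1+j)$, and the $k=0$ minors lift the generators of the Fitting ideal over $\zp[[\cg]]^-$), though the simpler route is just to use the idempotent decomposition. Your transpose argument for the dual is essentially a re-derivation of Proposition~\ref{fitting-calculation}(2)--(3) in the special case $R=\zp[G]^-$; the identification of ${\rm Hom}_{\zp}(M,\zp)$ (covariant $G$--action) with ${\rm Hom}_{\zp[G]^-}(M,\zp[G]^-)$ that makes your ``$\zp[G]^-$--linear transpose'' claim precise rests on the Frobenius (self-injective) property of $\zp[G]^-$, which you use implicitly. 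Both approaches are valid; the paper's is shorter because it cites the packaged result, while yours unpacks more of the mechanism.
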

\begin{proof}
Theorem \ref{projective}(1) permits us to apply Proposition \ref{fitting-calculation}(3) (see Appendix) to $M:=T_p(\mk)^-$. The latter implies the first equality above.
The second equality is a direct consequence of Theorem \ref{emc}. Note that in the statement of the Corollary $\Theta_{S, T}^{(\infty)}$ is viewed as an element of
$\zp[[\cg]]$.
\end{proof}
\section{Applications of the Equivariant Main Conjecture}\label{applications-section}

In this section, we give two application of Theorem \ref{emc} proved above: the first is a proof of a refinement of an imprimitive version of the Brumer--Stark Conjecture,
away from its $2$--primary part (see Theorem \ref{refined-BS} below); the second is a proof of a refinement of the Coates-Sinnott Conjecture, away from its $2$--primary part
(see Theorem \ref{cs-theorem} and Corollary \ref{cs-corollary} below.) Of course, these theorems are proved under the assumption that the relevant classical Iwasawa $\mu$--invariants vanish.

In what follows, if $M$ is a $\zp[[\mathcal H]]$--module, for some prime $p$ and some abelian profinite group $\mathcal H$,
then $M^\vee:={\rm Hom}_{\zp}(M, \qp/\zp)$ and $M^\ast:={\rm Hom}_{\zp}(M, \zp)$ denote its Pontrjagin and $\zp$--module dual, respectively. depending on the context, $M^\vee$ and $M^\ast$ will
be endowed either with the covariant $\mathcal H$--action $({}^\sigma f)(m):=f({}^\sigma m)$ or the contravariant one $({}^\sigma f)(m):=f({}^{\sigma^{-1}} m)$, for all $m\in M$, $\sigma\in\mathcal H$, and all $f\in M^\vee$ and $f\in M^\ast$, respectively.

Throughout this section, we let $K/k$ be an abelian extension of number fields of Galois group $G$ and let $S$ be a fixed finite set of primes in $k$, containing
the set $S_{\rm ram}(K/k)\cup S_\infty$. We will use the notations introduced at the beginning of \S\ref{EMC}.

\subsection{\bf The Brumer-Stark Conjecture} We fix a non-empty, finite set $T$ of primes in $k$, such that  $S\cap T=\emptyset$. We assume that
$T$ contains either at least two primes of distinct residual characteristics or a prime of residual characteristic coprime to $w_K:=|\mu_K|$, where $\mu_K$ denotes the group of roots of unity in $K$.
(It is easily seen that this last condition is equivalent to the non-existence of non trivial roots of unity in $K$ which are congruent to $1$ modulo all primes in $T$.)

For simplicity, we let $\Theta_{S,T}:=\Theta_{S, T, K/k}$ denote
the $S$--incomplete, $T$--modified $G$--equivariant $L$--function associated to $(K/k, S, T)$ at the beginning of \S\ref{EMC}. Note that, under
our assumptions, Remark \ref{theta-st-remark} for $m=1$ gives
$$\Theta_{S, T}(0)\in\z[G].$$
In order to simplify notations, we let $S$ and $T$ also denote the sets of primes in $K$ sitting above primes in the original sets $S$ and $T$, respectively. Similarly, depending on the context, $S_\infty$ will denote the set of infinite primes in either $K$ or $k$. As in \S4.3 of \cite{Popescu-PCMI}, to the set of data $(K,T)$, one can associate a $T$--modified Arakelov class-group (Chow group) ${\rm CH}^1(K)^0_T$, endowed with a natural $\z[G]$--module structure, as follows. First, one defines the divisor group
$${\rm Div}(K)_T:=(\bigoplus_{w\not\in S_\infty\cup T}\z\cdot w)\bigoplus(\bigoplus_{w\in S_{\infty}}\Bbb R \cdot w)\,,$$
where the direct sums are taken over all the primes $w$ in $K$. Then, one defines
$${\rm deg}_{K, T}: {\rm Div}(K)_T\to \Bbb R$$
to be the unique degree map which is $\z$--linear on the left direct summand and $\Bbb R$--linear on the right and satisfies
$${\rm deg}_{K, T}(w):=\left\{
                      \begin{array}{ll}
                        \log|{\bf N}w|, & \hbox{if $w$ is a finite prime;} \\
                        1 & \hbox{if $w$ is an infinite prime.}
                      \end{array}
                    \right.$$
We let ${\rm Div}(K)^0_T:=\ker({\rm deg}_{K, T})$. Next, one defines a divisor map
$${\rm div}_K: K_T^\times\to {\rm Div}(K)^0_T, \quad {\rm div}_K(x):=\sum_{w\not\in S_\infty\cup T}{\rm ord_w}(x)\cdot w +\sum_{w\in S_{\infty}}(-\log|x|_w)\cdot w,$$
where ${\rm ord}_w$ and $|\cdot|_w$ denote the canonically normalized valuation and metric associated to $w$, respectively and $K_T^\times$ denotes the subgroup of $K^\times$ consisting of elements congruent to $1$ modulo all primes in $T$. Finally, one defines
$${\rm CH}^1(K)_T^0:=\frac{{\rm Div}(K)^0_T}{{\rm div}_K(K_T^\times)}\,.$$
If endowed with the natural quotient topology, this is a compact group whose volume is the absolute value of the leading term $\zeta_{S, T, k}^\ast(0)$
at $s=0$ of the modified zeta function $\zeta_{S,T,k}:=\Theta_{S, T, k/k}: \C\to \C$ associated to $k$ (see loc.cit.)

\begin{remark} Note the difference, both in notation and definition, between the finite divisor group ${\mathcal Div}_{K,T}$ and divisor map ${div}_{K}$ defined
in \S\ref{ideal-class-groups} and, respectively, the Arakelov divisor group ${\rm Div}(K)_T$ and divisor map ${\rm div}_{K}$ defined above. The link between the finite $T$--modified class-group
$C_{K,T}$ defined in \S\ref{ideal-class-groups}  and the compact $T$--modified Arakelov class--group ${\rm CH}^1(K)^0_T$ defined above is captured by the following obvious exact sequence of $\z[G]$--modules.
$$\xymatrix{ 0\ar[r] &\dfrac{{\rm Div}(K)^0(S_\infty)}{{\rm div}_{K}(U_{K,T})}\ar[r] & {\rm CH}^1(K)^0_T\ar[r] & C_{K, T}\ar[r] &0.
}$$
Above, ${\rm Div}(K)^0(S_\infty):=\bigoplus_{v\in S_\infty}\Bbb R\cdot v$ denotes the $\Bbb R$--vector space of Arakelov divisors of degree $0$ supported on $S_\infty$ (denoted by $\Bbb RX_{S_\infty}$ in \cite{Popescu-PCMI})
and $U_{K,T}$ denotes the group of units in $K$ which are congruent to $1$ modulo all the primes in $T$.
(See \S4.3 of \cite{Popescu-PCMI} for the exact sequence above.)
\end{remark}

In \S4.3 of \cite{Popescu-PCMI}, we proved that the Brumer--Stark Conjecture ${\bf BrSt}(K/k, S)$ for the data $(K/k, S)$ (as stated, for example, in Chpt. IV, \S6 of \cite{Tate-Stark})
is equivalent to the following statement.

\begin{conjecture}[Brumer-Stark]\label{BS} Let $(K/k, S)$ be as above. Then, for all sets $T$ satisfying the above conditions, we have
\begin{equation*}{\bf BrSt}(K/k, S, T):\qquad \Theta_{S, T}(0)\in{\rm Ann}_{\z[G]} {\rm CH}^1(K)^0_T\,.\end{equation*}
\end{conjecture}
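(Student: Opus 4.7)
The plan is to deduce the conjecture, away from its $2$-primary part and under the standing assumption that $\mu_{K,p}=0$ for every odd prime $p$, by running the Equivariant Main Conjecture (Theorem \ref{emc}) at each odd $p$ and then performing Iwasawa co-descent from the cyclotomic $\zp$-tower down to $K$. The cases in which $K$ is not CM are disposed of separately via the functional equation for Artin $L$-functions, which forces the relevant component of $\Theta_{S,T}(0)$ to vanish; so we may concentrate on the situation of \S\ref{EMC}, with $K$ CM, $k$ totally real, and $\cg=\text{Gal}(\ck/k)\simeq G\times\Gamma$, where $\Gamma=\text{Gal}(\ck/K)$.

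The first step is to apply Theorem \ref{emc} together with the inclusion $\text{Fit}_{\zp[[\cg]]^-}(T_p(\mk)^-)\subseteq \text{Ann}_{\zp[[\cg]]^-}(T_p(\mk)^-)$ to obtain
\begin{equation*}
\Theta_{S,T}^{(\infty)}\cdot T_p(\mk)^- = 0.
\end{equation*}
Pushing this forward via the Galois-restriction map $\pi_0:\zp[[\cg]]^-\twoheadrightarrow\zp[G]^-$, and using $\pi_0(\Theta_{S,T}^{(\infty)})=\Theta_{S,T}(0)$ (which is immediate from the definition $\Theta_{S,T}^{(\infty)}:=(\Theta_{S,T}^{(n)}(0))_n$), we see that $\Theta_{S,T}(0)$ annihilates the $\Gamma$-coinvariants $T_p(\mk)^-/I_\Gamma T_p(\mk)^-$.

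The second step, which is the heart of the argument, is the co-descent: I would exhibit a natural surjection of $\zp[G]^-$-modules from these $\Gamma$-coinvariants onto $A_{K,T}^-:=(C_{K,T}\otimes\zp)^-$. This is the number-field analogue of the Galois co-descent carried out for function fields in \cite{GP}. Using the exact sequence of Lemma \ref{lambda-module-lemma} on the minus side, the identification $\ca_{\ck,\ct}^-\simeq\underset n\underrightarrow\lim\,A_{K_n,T_n}^-$ from \eqref{limits}, and the projectivity of $T_p(\mk)^-$ over $\zp[G]^-$ given by Theorem \ref{projective}(1), a diagram chase produces the desired surjection; the vanishing of $\mu_{K,p}$ is essential here to ensure that $\ca_{\ck,\ct}^-$ is $p$-divisible of finite corank, so that its Tate module is well-behaved under coinvariants. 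Consequently $\Theta_{S,T}(0)\cdot A_{K,T}^-=0$ for each such $p$.

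The final step is to pass from annihilation of $A_{K,T}^-$ to annihilation of $\text{CH}^1(K)_T^0$. By the equivalence worked out in \S4.3 of \cite{Popescu-PCMI}, the two statements coincide on the minus part, once one knows that $\Theta_{S,T}(0)$ also kills the extra archimedean piece $\Bbb R X_{S_\infty}/\log|U_{K,T}|$ that appears in the defining exact sequence of $\text{CH}^1(K)_T^0$; this vanishing comes from the abelian case of Stark's conjecture at $s=0$, a theorem of Tate. Assembling the $\z_{(p)}[G]$-level statements over all odd primes $p$ with $\mu_{K,p}=0$ yields the full annihilation statement in $\z[G]$ up to the $2$-primary ambiguity. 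The main obstacle will be the co-descent step: the module $T_p(\mk)^-$ encodes information from the entire $\zp$-tower, and the naive comparison map into $A_{K,T}^-$ is \emph{a priori} only an isomorphism after passing to the limit, not after taking $\Gamma$-coinvariants, so showing surjectivity and controlling the cokernel will require careful bookkeeping through the snake-lemma diagram at each finite level.
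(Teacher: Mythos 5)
Your overall plan — reduce to the CM/totally real situation, run the Equivariant Main Conjecture at each odd $p$ with $\mu_{K,p}=0$, and descend from the $\zp$-tower — is the right one, and the reduction steps and the passage from Fitting ideals to annihilators are all present in the paper. But the central co-descent step as you have set it up does not work, and the difficulty is not merely ``careful bookkeeping'' as you suggest; the natural map points the wrong way.

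Concretely, you want a surjection of $\zp[G]^-$-modules $T_p(\mk)^-_\Gamma\twoheadrightarrow A_{K,T}^-$, and you propose to extract it from the exact sequence of Lemma \ref{lambda-module-lemma} plus the injective limit identification $\ca_{\ck,\ct}^-\simeq\varinjlim A_{K_n,T_n}^-$. There is, however, no natural arrow from the Tate module $T_p(\ca_{\ck,\ct}^-)$ (or from any quotient of $T_p(\mk)^-$) onto the finite group $A_{K,T}^-$: what \eqref{limits} and Lemma \ref{no-capitulation} actually give is an \emph{injection} $A_{K,T}^-\hookrightarrow\ca_{\ck,\ct}^-$, i.e.\ the arrow goes in the opposite direction. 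The paper resolves this by dualizing. One takes $\zp$-duals of the exact sequence \eqref{motive-exact-sequence} (which splits over $\zp$ since $\mathcal Div_\ck(\cs\setminus\cs_p)^-\otimes\zp$ is $\zp$-free) to get $(T_p(\mk)^-)^\ast\twoheadrightarrow T_p(\ca_{\ck,\ct}^-)^\ast$; one identifies $T_p(\ca_{\ck,\ct}^-)^\ast\otimes\z/p^n\z\simeq \ca_{\ck,\ct}^-[p^n]^\vee$ (Lemma \ref{Pontrjakin-zp-dual}); and one dualizes the no-capitulation injection to get a surjection onto $(A_{K,T}^-)^\vee$. This produces a chain of $\zp[[\cg]]$-module surjections
$$
(T_p(\mk)^-)^\ast\twoheadrightarrow T_p(\ca_{\ck,\ct}^-)^\ast\twoheadrightarrow \ca_{\ck,\ct}^-[p^n]^\vee\twoheadrightarrow (A_{K,T}^-)^\vee
$$
with no appeal to $\Gamma$-coinvariants at all; only afterwards does one apply base-change for Fitting ideals along $\pi\colon\zp[[\cg]]\twoheadrightarrow\zp[G]$ (valid because $A_{K,T}^-\simeq A_{K,T}^-\otimes_{\zp[[\cg]]}\zp[G]$). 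The annihilation of $A_{K,T}^-$ then follows because $(A_{K,T}^-)^\vee$ carries the covariant $G$-action, so its annihilator equals that of $A_{K,T}^-$. Your proposal, by contrast, would require identifying $T_p(\mk)^-_\Gamma$ by a Kummer/class-field-theory computation, and what comes out of that is in any case the Pontrjagin dual of a ray class group, not $A_{K,T}^-$ itself. So the missing idea is precisely the use of Corollary \ref{full-group-ring} and the dual picture.

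Two smaller remarks. First, by passing to annihilators immediately via $\Theta_{S,T}^{(\infty)}\cdot T_p(\mk)^-=0$, you give up the stronger Fitting-ideal statement $\Theta_{S,T}(0)\in{\rm Fit}_{\zp[G]}((A_{K,T}^-)^\vee)$, which is what the paper actually proves (Theorem \ref{refined-BS}); for the statement of Conjecture \ref{BS} this is not a logical problem, but it is worth noting that the dual formulation carries the Fitting inclusion through intact. Second, your final step attributes the archimedean comparison to ``the abelian case of Stark's conjecture at $s=0$, a theorem of Tate''; the paper instead invokes Proposition 4.3.7 of \cite{Popescu-PCMI} (Proposition \ref{BS-reduction-CM} here), which already packages the equivalence of the Arakelov-class-group annihilation with the ideal-class-group annihilation on the minus part, after reducing $K$ to its maximal CM subfield. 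Citing that proposition is cleaner and is the route the paper takes.
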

\medskip

\begin{remark}\label{BS-reduction-remark} Let us fix $(K/k, S, T)$ as above. It is easily seen that if $S$ contains at least two primes which split completely in $K/k$, then $\Theta_{S,T}(0)=0$.
 If $S$ contains exactly one prime which splits completely in $K/k$, then $\Theta_{S,T}(0)=0$ unless $k$ is
an imaginary quadratic field, $K/k$ is unramified everywhere and $S=S_\infty$. In the latter case, the proof of Conjecture ${\bf BrSt}(K/k, S, T)$ is an easy, albeit instructive exercise: Indeed, according
to Corollary 4.3.3 (top change) in \cite{Popescu-PCMI}, one can assume that $K$ is the Hilbert class--field of $k$. In that case, it is easily seen that
$$\Theta_{S,T}(0)=\dfrac{\prod_{v\in T}(1-{\bf N}v)}{w_k}\cdot {{\bf N}_G},$$
where $v$ runs through primes in $k$, ${\bf N}_G:=\sum_{\sigma\in G}\sigma$ is the usual norm element in $\z[G]$, and $w_k$ is the cardinality of the group of roots of unity $\bmu_k$ in $k$. Now, the desired
annihilation result follows from the fact that ideals in $k$ become principal in $K$ (Hilbert's capitulation theorem) and the exact sequence of abelian groups
$$\xymatrix{1\ar[r] &\bmu_k\ar[r] &\Delta_{k, T}\ar[r]  &C_{k, T}\ar[r] &C_k\ar[r] &1.
}$$
(See exact sequence \eqref{t-sequence} and note that $U_k=\bmu_k$ in this case.)
We leave the remaining details to the interested reader.
As a consequence, it suffices to study the Brumer-Stark Conjecture in the case where {\bf $k$ is totally real and $K$ is totally imaginary.} (Otherwise,
$S$ will contain at least one infinite prime which splits completely in $K/k$ and the above considerations settle the conjecture in that case.)
\end{remark}

Now, Proposition 4.3.7 in \cite{Popescu-PCMI} provides us with the following additional reduction.

\begin{proposition}\label{BS-reduction-CM} Let $(K/k, S, T)$ be as above. Assume that $k$ is totally real and $K$ is totally imaginary. Let $K^{CM}$ be the maximal
CM subfield of $K$, let $G_{CM}:=G(K^{CM}/k)$ and $\widetilde \Theta_{S, T}:=\Theta_{S, T, \widetilde K/k}.$ Then, the following
are equivalent, for all primes $p>2$.
\begin{enumerate}
\item $\Theta_{S, T}(0)\in{\rm Ann}_{\zp[G]} ({\rm CH}^1(K)^0_T\otimes\zp)$.
\item $\widetilde\Theta_{S, T}(0)\in{\rm Ann}_{\zp[G_{CM}]} (C_{K^{CM}, T}\otimes\zp)^-$.
\end{enumerate}
\end{proposition}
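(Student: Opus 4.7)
The plan is to reduce the annihilation statement over the Arakelov class-group ${\rm CH}^1(K)^0_T$ to one over the minus part of the finite group $C_{K^{CM},T}\otimes\zp$, in two stages. First I would eliminate the connected component of ${\rm CH}^1(K)^0_T$ together with the plus-eigenspace of $C_{K,T}\otimes\zp$, via a minus-part argument; then I would descend from $K$ down to its maximal CM subfield $K^{CM}$ using the inflation property of Artin $L$-functions.

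For the first stage, I would tensor the exact sequence of the preceding Remark,
$$\xymatrix{0\ar[r] & V \ar[r] & {\rm CH}^1(K)^0_T \ar[r] & C_{K,T} \ar[r] & 0,}$$
where $V := {\rm Div}(K)^0(S_\infty)/{\rm div}_K(U_{K,T})$, with $\zp$ and decompose under the (unique, since $G$ is abelian) complex conjugation $j\in G$. Because $k$ is totally real and $K$ totally imaginary, $j$ belongs to every archimedean decomposition group of $K/k$ and hence fixes every archimedean place of $K$; it therefore acts trivially on ${\rm Div}(K)^0(S_\infty)$ and on the quotient $V$, giving $(V\otimes\zp)^-=0$. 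On the other hand, the standard order-of-vanishing formula at $s=0$ forces $L_S(\chi,0)=0$ for every even character $\chi\in\widehat G(\C)$, since such a $\chi$ is trivial on the decomposition group $\langle j\rangle$ at each of the $[k:\q]$ real archimedean places of $k$. Thus $\Theta_{S,T}(0)$ is supported on odd characters and, for odd $p$, lies in $\zp[G]^-$ and automatically annihilates every plus-eigenspace. Combining these two observations, statement (1) is equivalent to $\Theta_{S,T}(0)\in {\rm Ann}_{\zp[G]^-}(C_{K,T}\otimes\zp)^-$.

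For the second stage, set $N:={\rm Gal}(K/K^{CM})$ and let $\pi:\zp[G]\twoheadrightarrow\zp[G_{CM}]$ denote the natural projection. By maximality of $K^{CM}$, the field $K$ is built from $K^{CM}$ by adjoining a totally real extension in a way that makes $j$ act trivially on the kernel of $C_{K,T}\otimes\zp\twoheadrightarrow C_{K^{CM},T}\otimes\zp$, so this surjection induces an isomorphism of minus parts
$(C_{K,T}\otimes\zp)^- \xrightarrow{\sim} (C_{K^{CM},T}\otimes\zp)^-$
of $\zp[G_{CM}]^-$-modules. Meanwhile the inflation property of Artin $L$-functions gives the compatibility $\pi(\Theta_{S,T}(0))=\widetilde\Theta_{S,T}(0)$. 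Together these translate annihilation over $(C_{K,T}\otimes\zp)^-$ into annihilation over $(C_{K^{CM},T}\otimes\zp)^-$ and back, yielding (1) $\Leftrightarrow$ (2).

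The main obstacle is the first-stage vanishing $(V\otimes\zp)^-=0$. Although ultimately reducing to the purely formal fact that a trivial $j$-action on $V$ propagates to $V\otimes\zp$, this key identity depends crucially on the archimedean-place analysis that $j$ fixes every place of $K$ above $S_\infty(k)$, which is the only place where the totally-real/totally-imaginary hypothesis is decisively used. Once the connected component is out of the way, the remaining ingredients---the minus-supportedness of $\Theta_{S,T}(0)$, the identification of minus parts under the natural projection of class-groups, and the inflation of equivariant $L$-functions---are routine manipulations that proceed symmetrically in both implications.
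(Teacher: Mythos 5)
Your argument hinges on "the (unique, since $G$ is abelian) complex conjugation $j\in G$," but this is precisely where the proof breaks down. The hypothesis is that $K$ is \emph{totally imaginary}, not CM. Since $k$ is totally real and $K$ is totally imaginary, each real place $v$ of $k$ has a decomposition group $G_v=\langle\sigma_v\rangle$ of order $2$ in $G$; but the elements $\sigma_v$ for distinct $v\in S_\infty(k)$ need not be equal. Abelianness of $G$ guarantees that $G_v$ does not depend on the choice of place of $K$ above $v$, but it says nothing about the $\sigma_v$ agreeing across different $v$. The field $K$ is CM exactly when all $\sigma_v$ coincide, in which case $K^{CM}=K$ and the proposition is vacuous. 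When they do not coincide, there is no distinguished involution $j\in G$, no eigenspace decomposition of ${\rm CH}^1(K)^0_T\otimes\zp$ over $\zp[G]$, and your first stage cannot even get started. (The kernel $H={\rm Gal}(K/K^{CM})$ is the $2$-group generated by $\{\sigma_v\sigma_{v'}\}$; a single $j$ only exists after passing to $G_{CM}=G/H$.)

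Two further points are affected by the same confusion. First, your claim that even characters have $L_S(\chi,0)=0$ should be: if $\chi(\sigma_v)=1$ for even one real place $v$ of $k$ (and $\chi\ne\mathbf 1$), then $L_S(\chi,0)=0$; so $\Theta_{S,T}(0)$ is supported on characters with $\chi(\sigma_v)=-1$ for \emph{all} $v\in S_\infty(k)$, i.e.\ on characters that kill $H$ and hence factor through $G_{CM}$. This is the correct replacement for "minus-supportedness" and it is exactly what makes the descent to $K^{CM}$ possible. Second, $K/K^{CM}$ is a (multi-)quadratic, generally non-real, extension, not a totally real one; what matters is that $|H|$ is a power of $2$ and $p>2$, so $\zp[G]=e_H\zp[G]\oplus(1-e_H)\zp[G]$ and Galois restriction identifies $e_H\zp[G]$ with $\zp[G_{CM}]$, allowing one to compare annihilators via this idempotent decomposition (exactly the mechanism used in Lemma \ref{K-CM} for the Coates--Sinnott reduction). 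Your final "isomorphism of minus parts" between $(C_{K,T}\otimes\zp)^-$ and $(C_{K^{CM},T}\otimes\zp)^-$ is not justified and is not the right intermediate statement; the paper's route goes through the idempotent $e_H$ and $H$-coinvariants, not through a global eigenspace comparison between $C_{K,T}$ and $C_{K^{CM},T}$.
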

\begin{proof} See Proposition 4.3.7 in \cite{Popescu-PCMI}. The upper script ``${}^-$'' in (2) refers to the action of the unique complex conjugation
morphism in $G^{CM}$ on $(C_{K^{CM}, T}\otimes\zp)$.
\end{proof}

As a consequence of the Equivariant Main Conjecture (Theorem \ref{emc}), we can prove the following refinement of statement (2) in the
Proposition above, under certain hypotheses (see below.)
In what follows, $S_p$ denotes the set of $p$--adic primes in $k$ and $\mu_{K,p}$
denotes the Iwasawa $\mu$--invariant associated to the number field $K$ and the prime $p$.

\begin{theorem}\label{refined-BS} Let $(K/k, S, T)$ be as above and let $p$ be an odd prime. Assume that $K$ is CM, $k$ is totally real, $S_p\subseteq S$ and
$\mu_{K,p}=0$. Then, we have
$$\label{theta-fitting} \overline{\bf BrSt}(K/k, S, T, p):\qquad \Theta_{S, T}(0)\in {\rm Fit}_{\zp[G]}\, ((A_{K,T}^-)^\vee),$$
where $A_{K,T}:=(C_{K,T}\otimes\zp)$ and
the dual is endowed with the covariant $G$--action.
\end{theorem}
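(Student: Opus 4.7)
The plan is to deduce Theorem~\ref{refined-BS} from the Equivariant Main Conjecture (Theorem~\ref{emc}) by Iwasawa co-descent along $\Gamma := \mathrm{Gal}(\ck/K) \simeq \zp$, where $\ck$ is the cyclotomic $\zp$-extension of $K$. As in the proof of Theorem~\ref{projective}(2), the exact sequence $1\to\Gamma\to\cg\to G\to 1$ splits, giving an identification $\zp[[\cg]]\cong\zp[G][[\Gamma]]$; the natural restriction map $\pi\colon\zp[[\cg]]\twoheadrightarrow\zp[G]$ sends $\Theta_{S,T}^{(\infty)}$ to $\Theta_{S,T}(0)$ essentially by construction of the former as the inverse system $(\Theta_{S,T}^{(n)}(0))_n$. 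Applying $\pi$ to $\mathrm{Fit}_{\zp[[\cg]]^-}(T_p(\mk)^-)=(\Theta_{S,T}^{(\infty)})$ from Theorem~\ref{emc}, together with the compatibility of Fitting ideals under base change (second equality in \eqref{fitt-base-change}), yields
$$\mathrm{Fit}_{\zp[G]^-}\bigl(T_p(\mk)^-/I_\Gamma T_p(\mk)^-\bigr) = \bigl(\Theta_{S,T}(0)\bmod (1+j)\bigr).$$

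It would then suffice to construct a surjective $\zp[G]^-$-homomorphism
$$T_p(\mk)^-/I_\Gamma T_p(\mk)^- \twoheadrightarrow (A_{K,T}^-)^\vee,$$
since monotonicity of Fitting ideals under quotients would immediately give $(\Theta_{S,T}(0)\bmod(1+j))\subseteq\mathrm{Fit}_{\zp[G]^-}((A_{K,T}^-)^\vee)$. The lift from $\zp[G]^-$ back to $\zp[G]$ is then automatic: since $p$ is odd we have the decomposition $\zp[G]\cong\zp[G]^+\oplus\zp[G]^-$, and since $(A_{K,T}^-)^\vee$ is killed by $(1+j)$, an argument parallel to Corollary~\ref{full-group-ring} gives $\mathrm{Fit}_{\zp[G]}((A_{K,T}^-)^\vee)=\bigl(\mathrm{Fit}_{\zp[G]^-}((A_{K,T}^-)^\vee),\tfrac12(1+j)\bigr)$; as $\Theta_{S,T}(0)$ itself lies in $\tfrac12(1-j)\zp[G]$ (even characters contribute trivially at $s=0$), the containment transfers cleanly.

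The heart of the argument, and the step I expect to be the main obstacle, is the construction of that surjection. Under $\mu_{K,p}=0$, Lemma~\ref{class-group-coranks}(3) ensures $\ca_{\ck,\ct}^-$ is $p$-divisible of finite corank, so Pontryagin duality yields a canonical $\cg$-equivariant isomorphism $T_p(\ca_{\ck,\ct})^{-\vee}\cong\ca_{\ck,\ct}^-$; combined with the standard identity $(N_\Gamma)^\vee\cong(N^\vee)^\Gamma$ and the no-capitulation injection $A_{K,T}^-\hookrightarrow(\ca_{\ck,\ct}^-)^\Gamma$ of Lemma~\ref{no-capitulation}, Pontryagin dualizing produces a surjection
$$(T_p(\ca_{\ck,\ct})^-)_\Gamma \;\cong\; \bigl((\ca_{\ck,\ct}^-)^\Gamma\bigr)^\vee \;\twoheadrightarrow\; (A_{K,T}^-)^\vee.$$
The delicate point is to promote this to a surjection out of $(T_p(\mk)^-)_\Gamma$, using the defining exact sequence
$$0\to T_p(\ca_{\ck,\ct})^-\to T_p(\mk)^-\to(\mathcal{D}iv_\ck(\cs\setminus\cs_p)\otimes\zp)^-\to 0$$
of Remark~\ref{lambdag}: one must analyse the contribution of the divisor piece to the $\Gamma$-coinvariants and check it does not obstruct the desired factorisation. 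Here the hypothesis $S_p\subseteq S$ is crucial, since it guarantees that the primes in $\cs\setminus\cs_p$ are unramified in $\ck/K$, making their local decomposition groups in $\Gamma$ amenable to explicit computation; the decomposition-group factors are then precisely those appearing in $\delta_T^{(\infty)}$ and in the $(1-\sigma_v^{-1}\mathbf{N}v)$ Euler-type factors built into $\Theta_{S,T}(0)$. This descent is the number-field counterpart of the function-field co-descent carried out in \cite{GP}, and I expect the remaining verifications (vanishing of the relevant $H^1(\Gamma,-)$, monic-polynomial behaviour of the twisting determinants) to follow the same template.
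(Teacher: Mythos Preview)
Your high-level strategy (EMC, then co-descend via $\pi:\zp[[\cg]]\twoheadrightarrow\zp[G]$, then push the Fitting ideal through a surjection onto $(A_{K,T}^-)^\vee$) matches the paper's. The gap is in the ``promotion'' step, and it is not a matter of bookkeeping with decomposition groups or Euler factors: the arrow points the wrong way. In the defining exact sequence
\[
0 \longrightarrow T_p(\ca_{\ck,\ct})^- \longrightarrow T_p(\mk)^- \longrightarrow \bigl(\mathcal{D}iv_\ck(\cs\setminus\cs_p)\otimes\zp\bigr)^- \longrightarrow 0,
\]
the Tate module of the class group sits as a \emph{submodule}, not a quotient. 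Passing to $\Gamma$-coinvariants gives a map $(T_p(\ca_{\ck,\ct})^-)_\Gamma \to (T_p(\mk)^-)_\Gamma$, and there is no natural way to turn your surjection out of the source into a surjection out of the target. Your remarks about analyzing the divisor piece and $S_p\subseteq S$ do not address this (and incidentally, primes in $\cs\setminus\cs_p$ are unramified in $\ck/K$ automatically, since $\ck/K$ is the cyclotomic $\zp$-extension; the hypothesis $S_p\subseteq S$ is needed only so that Theorem~\ref{emc} applies).

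The paper's fix is to reverse the arrow by passing to $\zp$-duals \emph{before} descending. By Corollary~\ref{full-group-ring} (itself an application of Proposition~\ref{fitting-calculation}(3)) one has $\Theta_{S,T}^{(\infty)}\in{\rm Fit}_{\zp[[\cg]]}\bigl((T_p(\mk)^-)^\ast\bigr)$, with the covariant $\cg$-action on the dual. Since the divisor quotient is $\zp$-free, dualizing the sequence above gives a genuine surjection $(T_p(\mk)^-)^\ast\twoheadrightarrow T_p(\ca_{\ck,\ct}^-)^\ast$. One then composes with $T_p(\ca_{\ck,\ct}^-)^\ast\twoheadrightarrow T_p(\ca_{\ck,\ct}^-)^\ast/p^n\simeq \ca_{\ck,\ct}^-[p^n]^\vee$ (an instance of $T_p(M)^\ast\simeq M^\vee$ for divisible $M$ of finite corank) and finally with the dual of the inclusion $A_{K,T}^-\hookrightarrow \ca_{\ck,\ct}^-[p^n]$ coming from Lemma~\ref{no-capitulation}. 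This produces a chain of $\zp[[\cg]]$-linear surjections terminating in $(A_{K,T}^-)^\vee$; monotonicity of Fitting ideals and base change along $\pi$ then finish the argument. No analysis of $(T_p(\mk)^-)_\Gamma$, divisor decomposition groups, or twisting determinants is needed.
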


\begin{proof} In what follows, all occurring $\zp$--module or Pontrjagin duals are endowed with the {\bf covariant} action by the appropriate groups.

We let $\ck$ be the cyclotomic $\zp$--extension of $K$ and $\cg:={\rm Gal}(\ck/k)$. As usual, we let $\cs$ and $\ct$ denote the sets of finite primes
in $\ck$ sitting above primes in $S$ and $T$, respectively. Note that the hypotheses of Theorem \ref{emc} are satisfied by the data $(\ck/k, S, T, p)$.
Consequently, Corollary \ref{full-group-ring} implies that
\begin{equation}\label{theta-fitting}\Theta_{S, T}^{(\infty)}\in {\rm Fit}_{\zp[[\cg]]}((T_p(\mk)^-)^\ast)\,.\end{equation}
Now, recall that, with notations as in \S\ref{ideal-class-groups}  and under the current hypotheses,
the $\zp[[\cg]]$--module
$$\ca_{\ck, \ct}^-\simeq\, \underset{n}{\underrightarrow\lim}\, A_{K_n, T_n}^-$$
is a torsion, divisible $\zp$--module of finite co-rank and that the transition maps in the injective limit above
are injective (see Lemma \ref{no-capitulation}.) Consequently, since $A_{K,T}^-$ is finite, we can fix $n\in\Bbb N$, such that we have an inclusion  of $\zp[[\cg]]$--modules
$$A_{K,T}^- \subseteq \ca_{\ck, \ct}^-[p^n].$$
The inclusion above induces a natural surjection of $\zp[[\cg]]$--modules
 \begin{equation}\label{surjection}\ca_{\ck, \ct}^-[p^n]^{\,\vee}\twoheadrightarrow (A_{K,T}^-)^\vee. \end{equation}
Next, we need the following elementary result.
\begin{lemma}\label{Pontrjakin-zp-dual} Let $p$ be a prime, $\mathcal H$ a profinite abelian group, and $M$ a $\zp[[\mathcal H]]$--module. Assume that $M$ is $\zp$--torsion, divisible, of finite
corank. Then there exist canonical isomorphisms of $\zp[[\mathcal H]]$--modules
$$M[p^m]^{\,\vee} \simeq T_p(M)^\ast\otimes_{\zp}\z/p^m\z\,, \qquad\text{ for all }m\in\z_{\geq 1}.$$
\end{lemma}
\begin{proof} Fix $m\in\z_{\geq 1}$. There is a canonical isomorphism of $\zp[[\mathcal H]]$-modules
$$T_p(M)^\ast\simeq M^\vee.$$ This induces a
canonical isomorphism $T_p(M)^\ast/p^m\simeq M^\vee/p^m$. However, the exact functor $\ast\to {\rm Hom}_{\zp}(\ast, \qp/\zp)$
applied to the exact sequence
$$\xymatrix {0\ar[r] &M[p^m]\ar[r] &M\ar[r]^{\times p^m} &M\ar[r] &0}$$
induces an isomorphism $M^\vee/p^m\simeq M[p^m]^\vee$. This concludes the proof.
\end{proof}

\noindent
Consequently, we obtain the following surjective morphisms of $\zp[[\cg]]$--modules.
\begin{equation}\label{surjections}(T_p(\mk)^-)^\ast\twoheadrightarrow T_p(\ca_{\ck, \ct}^-)^\ast\twoheadrightarrow \ca_{\ck, \ct}^-[p^n]^{\,\vee}\twoheadrightarrow (A_{K,T}^-)^\vee.\end{equation}
The first surjection above is obtained by taking $\zp$--duals in exact sequence \eqref{motive-exact-sequence}.
Note that $T_p(\ca_{\ck, \ct}^-)=T_p(\ca_{\ck, \ct})^-$. Also, note that the exact sequence \eqref{motive-exact-sequence} is split in the category of $\zp$--modules (because ${\mathcal Div}_{\ck}(\cs\setminus\cs_p)^-\otimes\zp$ is $\zp$--free) and therefore it stays exact after taking $\zp$--duals. The second surjection is given by Lemma \ref{Pontrjakin-zp-dual} applied to $M:=\ca_{\ck, \ct}^-$, $m:=n$, $\mathcal H:=\cg$. Finally, the third surjection is
\eqref{surjection} above.

Now, apply the first part of \eqref{fitt-base-change} in the Appendix for $R:=\zp[[\cg]]$, $M:=(T_p(\mk)^-)^\ast$ and $M':=(A_{K,T}^-)^\vee$, in combination with the surjections \eqref{surjections}
as well as \eqref{theta-fitting} above to conclude that we have
$$\Theta_{S,T}^{(\infty)}\in {\rm Fit}_{\zp[[\cg]]}T_p(\mk)^- = {\rm Fit}_{\zp[[\cg]]}(T_p(\mk)^-)^\ast \subseteq {\rm Fit}_{\zp[[\cg]]}\, ((A_{K,T}^-)^\vee).$$
Now, consider the projection $\pi:\zp[[\cg]]\twoheadrightarrow\zp[G]$ given by Galois restriction. By the definition of $\Theta_{S, T}^{(\infty)}$, we have
$$\pi(\Theta_{S,T}^{(\infty)})=\Theta_{S,T}(0),\qquad A_{K,T}^-\simeq A_{K,T}^-\otimes_{\zp[[\cg]]}\zp[G],$$
where the isomorphism is viewed in the category of $\zp[G]$--modules. Consequently, \eqref{fitt-base-change} (see Appendix)
applied for $M:=A_{K,T}^-$ and $\rho:=\pi$ implies that
$$\Theta_{S,T}(0)\in {\rm Fit}_{\zp[G]}\, ((A_{K,T}^-)^\vee),$$
which concludes the proof of the Theorem.
\end{proof}
\begin{corollary}
Assume that $(K/k, S, T)$ satisfy the hypotheses in Conjecture \ref{BS}. Let $p$ be a prime. If $k$ is totally real and $K$ is totally imaginary,  assume that
$p$ is odd, $S_p\subseteq S$ and $\mu_{K^{CM},p}=0$. Then, we have:
$${\bf BrSt}(K/k, S, T, p):\qquad \Theta_{S,T}(0)\in{\rm Ann}_{\zp[G]}({\rm CH}^1(K)_T^0\otimes\zp).$$
\end{corollary}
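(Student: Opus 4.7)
The plan is to combine the two preceding results: Proposition \ref{BS-reduction-CM} reduces the $p$--part of Brumer--Stark to an annihilation statement over the maximal CM subfield of $K$, and Theorem \ref{refined-BS} supplies the refined Fitting--ideal statement needed to prove that annihilation.

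First, I dispose of the degenerate cases allowed by Conjecture \ref{BS}. If $k$ is not totally real or $K$ is not totally imaginary, then Remark \ref{BS-reduction-remark} shows that either $\Theta_{S,T}(0)=0$ (in which case the annihilation is trivial) or $(K/k,S)$ falls into the distinguished special case ($k$ imaginary quadratic, $K/k$ unramified, $S=S_{\infty}$), in which the elementary computation sketched in the remark — using Hilbert's capitulation theorem and the exact sequence $1\to\bmu_k\to\Delta_{k,T}\to C_{k,T}\to C_k\to 1$ — settles the annihilation directly. Hence it suffices to treat the main case where $k$ is totally real and $K$ is totally imaginary.

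In that main case, let $K^{CM}$ be the maximal CM subfield of $K$ and $G_{CM}:={\rm Gal}(K^{CM}/k)$. Proposition \ref{BS-reduction-CM} asserts that the desired membership $\Theta_{S,T}(0)\in{\rm Ann}_{\zp[G]}({\rm CH}^1(K)_T^0\otimes\zp)$ is equivalent to
$$\widetilde\Theta_{S,T}(0)\in{\rm Ann}_{\zp[G_{CM}]}\bigl((C_{K^{CM},T}\otimes\zp)^{-}\bigr).$$
Since $K^{CM}$ is CM, $k$ is totally real, $S_p\subseteq S$, $S$ contains $S_{\rm ram}(K^{CM}/k)\cup S_{\infty}$, and $\mu_{K^{CM},p}=0$ by hypothesis, Theorem \ref{refined-BS} applies to the data $(K^{CM}/k,S,T,p)$ and delivers
$$\widetilde\Theta_{S,T}(0)\in{\rm Fit}_{\zp[G_{CM}]}\bigl((A_{K^{CM},T}^{-})^{\vee}\bigr),$$
where $A_{K^{CM},T}^{-}=(C_{K^{CM},T}\otimes\zp)^{-}$ and the dual carries the covariant $G_{CM}$--action.

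Finally, two standard facts conclude the argument. The Fitting ideal of any finitely generated module is always contained in its annihilator; and for a finite $\zp[G_{CM}]$--module $M$ with $G_{CM}$ abelian, the covariant Pontrjagin dual $M^{\vee}$ has the same $\zp[G_{CM}]$--annihilator as $M$. The latter follows from the identity $(x\cdot f)(m)=f(x\cdot m)$ valid for all $x\in\zp[G_{CM}]$, $f\in M^{\vee}$, $m\in M$, combined with the nondegeneracy of Pontrjagin duality on finite abelian groups. Applied to $M:=A_{K^{CM},T}^{-}$ these two facts yield
$$\widetilde\Theta_{S,T}(0)\in{\rm Ann}_{\zp[G_{CM}]}(A_{K^{CM},T}^{-}),$$
and Proposition \ref{BS-reduction-CM} then converts this back into the desired statement for $(K/k,S,T,p)$. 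There is no substantive obstacle here: the deep content is already packaged in Theorem \ref{refined-BS} (itself a consequence of the Equivariant Main Conjecture) and in the descent provided by Proposition \ref{BS-reduction-CM}; the corollary is a clean assembly of these with the elementary Fitting--versus--annihilator and Pontrjagin--duality observations.
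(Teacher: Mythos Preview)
Your proof is correct and follows essentially the same approach as the paper's own one-line argument: combine Remark \ref{BS-reduction-remark}, Proposition \ref{BS-reduction-CM}, and Theorem \ref{refined-BS}, together with the elementary observation that ${\rm Fit}_{\zp[G]}(M^\vee)\subseteq{\rm Ann}_{\zp[G]}(M)$ when $M^\vee$ carries the covariant $G$--action. You have simply unpacked that observation into its two constituent facts (Fitting $\subseteq$ annihilator, and ${\rm Ann}(M^\vee)={\rm Ann}(M)$ for the covariant dual), which is fine.
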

\begin{proof} Combine Theorem \ref{refined-BS} with Proposition \ref{BS-reduction-CM} and Remark \ref{BS-reduction-remark} and note that
$${\rm Fit}_{\zp[G]}(M^\vee)\subseteq {\rm Ann}_{\zp[G]}(M),$$
for any $\zp[G]$--module $M$, if $M^\vee$ is endowed with the covariant $G$--action.
\end{proof}

\begin{remark} Some cases of the Brumer-Stark Conjecture over an arbitrary totally real number field were also settled by the first
author in \cite{Greither-Fitting} with different methods and working under somewhat more restrictive hypotheses. See Theorem 10 in loc.cit.
\end{remark}

\subsection{The Coates-Sinnott Conjecture.} As usual, $K/k$ is an abelian extension of number fields of
Galois group $G$, and $S$ is a finite set of primes in $k$, such that $S_{\rm ram}(K/k)\cup S_\infty\subseteq S$.

If $p$ is a prime number, we let ${\rm H}^i_{et}(\co_{K, S}[1/p], \zp(n))$ denote the $i$--th \'etale cohomology
group of the affine scheme ${\rm Spec}(\co_{K,S}[1/p])={\rm Spec}(\co_{K,S\cup S_p})$ with coefficients in the \'etale $p$--adic sheaf $\zp(n)$, for all $i\in\z_{\geq 0}$ and
all $n\in\z_{\geq 2}$. Also, for every $m\in\z_{\geq 0}$, we let ${\rm K}_m(\co_{K,S})$ denote the $m$--th Quillen ${\rm K}$--group of $\co_{K,S}$.
For the definitions and properties of these \'etale cohomology groups and ${\rm K}$--groups, the reader may consult Kolster's excellent survey article \cite{Kolster}.
For a discussion closer in spirit to the current section, the reader may consult \cite{Popescu-CS} as well. In the present context, all these \'etale cohomology and ${\rm K}$--theory groups come endowed with natural
$\zp[G]$--module, respectively $\z[G]$--module structures.

\noindent Quillen showed in \cite{Quillen} that ${\rm K}_m(\co_{K,S})$ is a finitely generated abelian group, for all $m$. Borel showed in \cite{Borel} that we have the following remarkable equalities
\begin{equation}\label{Borel}{\rm rank}_{\z}\, {\rm K}_{2n-i}(\co_{K,S})=\left\{
                                            \begin{array}{ll}
                                              0, & \hbox{if $i=2$;} \\
                                              {\rm ord}_{s=(1-n)}\, \zeta_{K, S}(s), & \hbox{if $i=1$,}
                                            \end{array}
                                          \right.\end{equation}
for all $n\geq 2$, where $\zeta_{K,S}$ is the $S$--incomplete zeta--function associated to $K$. For all primes $p>2$, Soul\'e \cite{Soule} and later Dwyer-Friedlander \cite{Dwyer-Friedlander}
constructed surjective
$\zp[G]$--linear $p$--adic Chern character morphisms
\begin{equation}\label{chern}
{\rm ch}^i_{p,n}: {\rm K}_{2n-i}(\co_{K,S})\otimes\zp\twoheadrightarrow {\rm H}^i_{et}(\co_{K, S}[1/p], \zp(n)), \quad \forall\, i=1,2,\quad \forall\, n\in\z_{\geq 2}.
\end{equation}
Soul\'e proved in \cite{Soule-regulators} that these morphisms have finite kernels.
Similar morphisms have been defined for $p=2$ (see \cite{Dwyer-Friedlander}.) Their kernels and cokernels are finite but non trivial, in general.

\noindent Consequently, the group ${\rm H}^2_{et}(\co_{K, S}[1/p], \zp(n))$ is finite while ${\rm H}^1_{et}(\co_{K, S}[1/p], \zp(n))$ is finitely generated over $\zp$ of $\zp$--rank equal
to ${\rm ord}_{s=(1-n)}\, \zeta_{K, S}(s)$, for all
$p$ and all $n\geq 2$. If $i\geq 3$, the group ${\rm H}^i_{et}(\co_{K, S}[1/p], \zp(n))$
vanishes for $p> 2$ and is a finite, $2$-primary group for  $p= 2$ (see \cite{Kolster}, \S2.)

With notations as in \S\ref{EMC}, we have the following.

\begin{lemma}\label{CS-lemma}For all $n\geq 2$ and all primes $p$, the following hold.
\begin{enumerate}
\item We have a $\zp[G]$--module isomorphism
$${\rm H}^1_{et}(\co_{K, S}[1/p], \zp(n))_{\rm tors}\simeq \left(\qp/\zp(n)\right)^{G_K}.$$
\item We have an equality of $\zp[G]$--ideals
$${\rm Ann}_{\zp[G]}({\rm H}^1_{et}(\co_{K, S}[1/p], \zp(n))_{\rm tors})=\langle\delta_{T, K/k}(1-n)\mid T \rangle,$$
where $T$ runs through all the finite, non-empty sets of primes in $k$ which are disjoint from $S\cap S_p$.
\item ${\rm Ann}_{\zp[G]}({\rm H}^1_{et}(\co_{K, S}[1/p], \zp(n))_{\rm tors})\cdot\Theta_{S, K/k}(1-n)\subseteq \zp[G].$
\end{enumerate}
\end{lemma}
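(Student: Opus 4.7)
The strategy is to reduce all three parts to statements about the finite $\zp[G]$-module $M_n := (\qp/\zp(n))^{G_K}$. After identifying the torsion in part (1) with $M_n$, part (2) becomes a computation of an annihilator of a cyclic $\zp[G]$-module and part (3) follows by combining part (2) with Deligne-Ribet.

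For part (1), I would exploit the coefficient sequence
\[
0 \longrightarrow \zp(n) \longrightarrow \qp(n) \longrightarrow \qp/\zp(n) \longrightarrow 0
\]
of étale sheaves on $\mathrm{Spec}(\co_{K,S}[1/p])$. Taking the long exact sequence in étale cohomology, $H^0_{et}(\co_{K,S}[1/p],\qp(n))$ vanishes for $n\geq 1$ (as $\qp(n)$ has no nonzero global sections over a scheme of characteristic $0$ for $n\geq 1$), and $H^1_{et}(\co_{K,S}[1/p],\qp(n))$ is a $\qp$-vector space, hence torsion-free. Thus the torsion of $H^1_{et}(\co_{K,S}[1/p],\zp(n))$ is identified with $H^0_{et}(\co_{K,S}[1/p],\qp/\zp(n))$. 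This equals $(\qp/\zp(n))^{G_{K,S\cup S_p}}$, but since the action of $G_K$ on $\qp/\zp(n)$ factors through $G_{K,S_p}$ (and this quotient is unramified outside $S_p$), the fixed submodule is the same as $(\qp/\zp(n))^{G_K}=M_n$, giving the desired $\zp[G]$-isomorphism.

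For part (2), write $|M_n|=p^{a_n}$. Then $M_n$ is cyclic as a $\zp$-module and $G$ acts through a character $\chi_n : G \to (\z/p^{a_n}\z)^\times$ obtained from the $n$th power of the cyclotomic character. Consequently
\[
{\rm Ann}_{\zp[G]}(M_n) = \ker\bigl(\zp[G]\twoheadrightarrow \z/p^{a_n}\z,\ \sigma\mapsto\chi_n(\sigma)\bigr).
\]
For any prime $v\notin S\cup S_p$, the Frobenius $\sigma_v\in G$ acts on $\zp(n)$ by $({\bf N}v)^n$, so $\chi_n(\sigma_v)=({\bf N}v)^n\bmod p^{a_n}$. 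Hence every $\delta_{T,K/k}(1-n)=\prod_{v\in T}(1-\sigma_v^{-1}({\bf N}v)^n)$ lies in the kernel above, giving the inclusion $\supseteq$. For the reverse inclusion, I would apply Chebotarev density in the extension $K(\bmu_{p^{a_n+1}})/k$: as $v$ varies, $\sigma_v$ attains every element of $G$ and, for a fixed $\sigma_v=g$, the value $({\bf N}v)^n\bmod p^{a_n+1}$ sweeps out a full coset of the kernel of $\chi_n$ in $(\z/p^{a_n+1}\z)^\times$. This produces enough elements $1-g^{-1}({\bf N}v)^n$ to generate both the ``linear'' relations $g-\chi_n(g)$ and the constant $p^{a_n}$ inside $\zp[G]$, which together generate the kernel of $\zp[G]\twoheadrightarrow\z/p^{a_n}\z$.

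For part (3), given any $\alpha\in {\rm Ann}_{\zp[G]}({\rm H}^1_{et}(\co_{K,S}[1/p],\zp(n))_{\rm tors})$, part (2) lets me write $\alpha=\sum_i r_i\,\delta_{T_i,K/k}(1-n)$ with $r_i\in\zp[G]$ and each $T_i$ non-empty and disjoint from $S_p$. Then
\[
\alpha\cdot\Theta_{S,K/k}(1-n)=\sum_i r_i\,\Theta_{S,T_i,K/k}(1-n),
\]
and Remark \ref{theta-st-remark} (i.e.\ the Deligne-Ribet-Cassou-Nogu\`es theorem) applied to each $T_i$ (which contains a prime of residual characteristic different from $p$) gives $\Theta_{S,T_i,K/k}(1-n)\in\zp[G]$; the conclusion follows. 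The main obstacle is the Chebotarev step in part (2), where one must verify that the constraint linking the Frobenius $\sigma_v\in G$ and the residue norm ${\bf N}v$ (via the intersection $K\cap k(\bmu_{p^\infty})$) is exactly the compatibility encoded by $\chi_n$, so that no proper subideal of the kernel is produced.
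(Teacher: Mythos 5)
Your proof follows the same route the paper does: identify the torsion with $(\qp/\zp(n))^{G_K}$, compute its annihilator via Frobenius elements, then apply Deligne--Ribet. The only difference is that where the paper cites references (Kolster's survey \cite{Kolster}, \S 2 for part (1), and Coates' Lemma 2.3 in \cite{Coates} for part (2)), you supply the arguments yourself; both are correct and are essentially what the cited sources do. In part (2), the phrase ``a full coset of the kernel of $\chi_n$ in $(\z/p^{a_n+1}\z)^\times$'' should be ``a coset of $1 + p^{a_n}\z/p^{a_n+1}\z$'', i.e.\ of the kernel of reduction $(\z/p^{a_n+1}\z)^\times \to (\z/p^{a_n}\z)^\times$; this subgroup is cyclic of order $p$ (for $p$ odd) and the image of $c^n|_{G_K}$ inside it is the whole thing by maximality of $a_n$, so the Chebotarev step goes through as you intend, yielding both the scalar $p^{a_n}$ and the elements $g - \chi_n(g)$ in the ideal. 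One further caveat: the hypothesis ``$T$ disjoint from $S\cap S_p$'' in the statement is evidently a misprint for $S\cup S_p$ --- for $v\mid p$ the global cyclotomic character is ramified at $v$, so $\sigma_v$ need not act on $(\qp/\zp(n))^{G_K}$ by $({\bf N}v)^n$ and the $\supseteq$ inclusion would fail; your implicit reading (every $T_i$ consists of primes of residual characteristic $\neq p$) is the correct one and matches the proof of Corollary \ref{Deligne-Ribet-rewrite}.
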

\begin{proof} For (1), see \S2 in \cite{Kolster}. Note that in loc.cit., the author deals with the \'etale cohomology groups
of ${\rm Spec}(\co_K[1/p])$ rather than those of ${\rm Spec}(\co_{K,S}[1/p])$. However, for all $n$ and $p$ as above, Soul\'e's localization sequence in \'etale
cohomology (see \cite{Soule}) establishes an isomorphism of $\zp[G]$--modules
\begin{equation}\label{independent-h1}{\rm H}^1_{et}(\co_{K}[1/p], \zp(n))\simeq {\rm H}^1_{et}(\co_{K, S}[1/p], \zp(n)).\end{equation}

Part (2) follows from part (1) and a well--known lemma of Coates (see Lemma 2.3 in \cite{Coates} and its proof.) Also, see the proof
of Corollary \ref{Deligne-Ribet-rewrite} above.

Part (3) is a direct consequence of part (2) and Corollary \ref{Deligne-Ribet-rewrite} above.
\end{proof}

\begin{conjecture}[Coates-Sinnott, cohomological version]\label{CS-coh}
For all $(K/k, S, p\,, n)$ as above, the following holds.
\begin{eqnarray}{\bf CS}(K/k, S, p\,, n):\quad
\nonumber {\rm Ann}_{\zp[G]}({\rm H}^1_{et}(\co_{K, S}[1/p], \zp(n))_{\rm tors})\cdot\Theta_{S, K/k}(1-n)\subseteq  \\
\nonumber \subseteq {\rm Ann}_{\zp[G]}({\rm H}^2_{et}(\co_{K,
S}[1/p], \zp(n))).
\end{eqnarray}
\end{conjecture}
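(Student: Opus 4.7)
Following the template of the Brumer--Stark refinement (Theorem \ref{refined-BS}), we aim to deduce Conjecture \ref{CS-coh} from the Equivariant Main Conjecture (Theorem \ref{emc}) by Tate twisting and Iwasawa co-descent, under the standing hypotheses $p>2$ and $\mu_{K,p}=0$. As in the Brumer--Stark proof we may reduce to the case where $K$ is CM and $k$ is totally real. By Lemma \ref{CS-lemma}(2) together with the identity $\delta_{T,K/k}(1-n)\cdot\Theta_{S,K/k}(1-n)=\Theta_{S,T,K/k}(1-n)$, the conjecture is equivalent to the assertion that for every admissible finite non-empty set $T$ of primes of $k$ disjoint from $S\cup S_p$,
$$\Theta_{S,T,K/k}(1-n)\;\in\;{\rm Ann}_{\zp[G]}\!\left({\rm H}^2_{et}(\co_{K,S}[1/p],\zp(n))\right).$$

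\textbf{Twisting and descending the EMC.} Let $\ck$ be the cyclotomic $\zp$-extension of $K$, $\cg:=G(\ck/k)$, $\Gamma:=G(\ck/K)$, and let $\cs,\ct$ denote the sets of primes of $\ck$ above $S,T$. Theorem \ref{emc} yields ${\rm Fit}_{\zp[[\cg]]^-}(T_p(\mk)^-)=\Theta_{S,T}^{(\infty)}\cdot\zp[[\cg]]^-$. The Tate twist $M\mapsto M(n-1)$ is exact on $\zp[[\cg]]$-modules and, at the group-ring level, is implemented by the automorphism $t_{1-n}$ of \S\ref{appendix-twisting}; combined with Lemma \ref{twisting-theta-st}(2), which gives $t_{1-n}(\Theta_{S,T}^{(\infty)})=\Theta_{S,T}^{(\infty)}(1-n)$, this converts the EMC into the statement that $\Theta_{S,T}^{(\infty)}(1-n)$ lies in ${\rm Fit}_{\zp[[\cg]]}(T_p(\mk)^-(n-1))$. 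Applying the Fitting-ideal base change \eqref{fitt-base-change} along the Galois restriction $\pi\colon\zp[[\cg]]\twoheadrightarrow\zp[G]$ and using $\pi(\Theta_{S,T}^{(\infty)}(1-n))=\Theta_{S,T,K/k}(1-n)$, we obtain
$$\Theta_{S,T,K/k}(1-n)\;\in\;{\rm Fit}_{\zp[G]}\!\bigl(T_p(\mk)^-(n-1)_\Gamma\bigr).$$

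\textbf{Linking to \'etale cohomology.} It remains to exhibit a $\zp[G]$-linear surjection
$$T_p(\mk)^-(n-1)_\Gamma\;\twoheadrightarrow\;{\rm H}^2_{et}(\co_{K,S}[1/p],\zp(n))^\vee,$$
after which Fitting-ideal base change and the inclusion ${\rm Fit}\subseteq{\rm Ann}$, combined with ${\rm Ann}(M)={\rm Ann}(M^\vee)$ for finite $M$, will close the argument. This surjection will be assembled from three ingredients: (i) the surjection $T_p(\mk)^-\twoheadrightarrow(\cxs^+)^\ast(1)$ of Remark \ref{remark-link-classical}, Tate-twisted by $n-1$ and then passed to $\Gamma$-coinvariants, producing a surjection onto $((\cxs^+)^\ast(n))_\Gamma$; (ii) the classical Iwasawa-theoretic/Poitou--Tate duality identification (valid under $\mu_{K,p}=0$) of $((\cxs^+)^\ast(n))_\Gamma$ with a Pontrjagin dual of the relevant $(S\cup S_p)$-ramified \'etale ${\rm H}^2$; and (iii) Soul\'e's localization isomorphism \eqref{independent-h1} and its ${\rm H}^2$ analogue used to reconcile the $S$- and $(S\cup S_p)$-ramified versions. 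Finally, letting $T$ vary and reinvoking Lemma \ref{CS-lemma}(2) extracts the full annihilator ${\rm Ann}_{\zp[G]}({\rm H}^1_{et}(\co_{K,S}[1/p],\zp(n))_{\rm tors})$ as a common multiplier of $\Theta_{S,K/k}(1-n)$.

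\textbf{The main obstacle.} The principal technical difficulty is ingredient (ii) above: the Poitou--Tate / Artin--Verdier duality identification of the $\Gamma$-coinvariants of the Tate-twisted $\zp$-linear dual of $\cxs^+$ with a Pontrjagin dual of ${\rm H}^2_{et}$. Carrying this out requires careful bookkeeping of $j$-eigenspaces (whose parity flips with $n$, sending $T_p(\mk)^-(n-1)$ into the $\zp[[\cg]]^{(-1)^n}$-component), reconciling the $t_m$ automorphisms of \S\ref{appendix-twisting} with the classical Tate twist, and invoking Poitou--Tate duality at each finite layer of $\ck/K$ before passing to the inverse and then direct limits. A subsidiary but important point is the analysis of the kernel $T_p(\Delta_{\ck,\ct})^-/\zp(1)$ appearing in \eqref{sequence-empty-T}: its twist and co-descent contribute a factor controlled by $\delta_T^{(\infty)}(1-n)$, which after restriction to $K$ becomes $\delta_{T,K/k}(1-n)$; this is precisely the factor that belongs to ${\rm Ann}_{\zp[G]}({\rm H}^1_{et}(\co_{K,S}[1/p],\zp(n))_{\rm tors})$ by Lemma \ref{CS-lemma}(2), and accounts for the extra $T$-modification on the left-hand side of Conjecture \ref{CS-coh}.
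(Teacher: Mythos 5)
Your outline matches the paper's strategy for the \'etale Coates--Sinnott statement: reduce to $k$ totally real and $K$ CM with $S_p\subseteq S$, apply Theorem \ref{emc}, twist by $c^{n-1}$ via Lemma \ref{twisting-Fitting}(2) and Lemma \ref{twisting-theta-st}(2), co-descend along $\pi:\zp[[\cg]]\twoheadrightarrow\zp[G]$ using \eqref{fitt-base-change}, and land in a Fitting ideal that surjects onto (the $e_n$-part of) $\h^2$. The base-change step, the final ``vary $T$ and invoke Lemma \ref{CS-lemma}(2)'' step, and the observation that $\Theta_{S,T,K/k}(1-n)$ already lies in the $e_n$-component are all correct, and the fact that $\Theta_{S,T,K/k}(1-n)\in\zp[G]$ (Remark \ref{theta-st-remark}) makes the descent well posed. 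You are also right that for the mere inclusion of the conjecture — as opposed to the equality of Theorem \ref{cs-theorem} — the surjection plus $\fit\subseteq{\rm Ann}$ suffices, and one does not need the four-term exact sequence \eqref{four-term-sequence-coninvariants} together with the multiplicativity result Proposition \ref{four-term-sequence-fitting}; that heavier machinery is only needed to upgrade $\subseteq$ to $=$.

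That said, what you call ``ingredient (ii)'' is not a peripheral obstacle: it is the substantive content of the proof, and you leave it entirely unargued. Concretely, you need the $\zp[G]$-module identification
$$\bigl(\cxs^+(-n)^\ast\bigr)_{\Gamma}\;\simeq\; e_n\cdot \h^2_{et}(\co_{K,S}[1/p],\zp(n)),$$
i.e.\ the paper's Proposition \ref{dual-coinvariants}(1), and you must first establish that the relevant $\Gamma$-coinvariants are \emph{finite} and the $\Gamma$-invariants vanish (Lemma \ref{elementary-lemma}), since otherwise the passage to co-descent does not produce a finite module to which Fitting-ideal annihilation applies. The paper obtains this identification not by a direct appeal to Poitou--Tate/Artin--Verdier, but by combining Kolster's Proposition 2.9, the Hochschild--Serre spectral sequence argument of \cite{Kolster} (pp.~237--238), the fact that $G^{(p)}_{\ck,\cs}$ acts trivially on $\qp/\zp(n)$ once $\bmu_{p^\infty}\subseteq\ck$, and the duality swap of Lemma \ref{elementary-lemma}(2). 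Merely flagging this as an obstacle does not constitute a proof; as written, your argument terminates precisely where the real work begins. You should either carry out that identification or cite it explicitly; as it stands, the step from the Iwasawa module to the arithmetic cohomology group — which is what the whole theorem is about — is missing.
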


\noindent With notations as above, let $e_n(K/k)$ be the idempotent in $\zp[G]$ given by
$$e_n(K/k):=\left\{
              \begin{array}{ll}
                \prod_{v\in S_\infty(k)}\frac{1}{2}(1+(-1)^n\sigma_v), & \hbox{if $k$ is totally real;} \\
                0, & \hbox{otherwise,}
              \end{array}
            \right.$$
where $\sigma_v$ is the generator of the decomposition group $G_v$, for all $v\in S_\infty(k)$. The main goal of this
section is a proof of the following refinement of the conjecture above, under the expected hypotheses.

\begin{theorem}\label{cs-theorem} Assume that $(K/k, S, p\,, n)$ are as above. If $k$ is totally real, assume that $p>2$ and
$\mu_{K(\bmu_p)^{CM},p}=0$, where $K(\bmu_p)^{CM}$ is the maximal $CM$--subfield of $K(\bmu_p)$. Then, the following holds.
\begin{eqnarray}{\bf \overline{CS}}(K/k, S, p\,, n):\quad
\nonumber {\rm Ann}_{\zp[G]}({\rm H}^1_{et}(\co_{K, S}[1/p], \zp(n))_{\rm tors})\cdot\Theta_{S, K/k}(1-n)= \\
\nonumber =e_n(K/k)\cdot {\rm Fit}_{\zp[G]}({\rm H}^2_{et}(\co_{K,
S}[1/p], \zp(n))).
\end{eqnarray}
\end{theorem}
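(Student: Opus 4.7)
The plan is to deduce Theorem \ref{cs-theorem} from the Equivariant Main Conjecture (Theorem \ref{emc}) by exactly the same four-step mechanism used in the proof of Theorem \ref{refined-BS}: reduction to the CM case, passage to the cyclotomic $\zp$-extension, Tate twisting by $n-1$, and Iwasawa co-descent to the finite level. If $k$ is not totally real, the functional equation forces $L_S(\chi, 1-n)=0$ for every $\chi\in\widehat G(\C)$ (compare Remark \ref{odd-characters}), so $\Theta_{S,K/k}(1-n)=0$ and both sides of $\overline{\bf CS}$ vanish. Otherwise, by a base-change argument analogous to Proposition \ref{BS-reduction-CM}, one reduces to the case where $K$ itself is CM: both sides of $\overline{\bf CS}$ are compatible with Galois restriction $\zp[G(\tilde K/k)]\twoheadrightarrow\zp[G]$ for $\tilde K:=K(\bmu_p)^{CM}$, using the inflation properties of $\Theta$ and $\delta_T$ (Remark \ref{no-mup}) and the standard functoriality of \'etale cohomology along the tower.

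Henceforth assume $K$ is CM. Let $\ck$ denote its cyclotomic $\zp$-extension, $\cg:={\rm Gal}(\ck/k)$, $\Gamma:={\rm Gal}(\ck/K)$, and let $\cs, \ct$ be the pullbacks of $S, T$ to $\ck$. Fix an admissible $T$. By Theorem \ref{emc},
$${\rm Fit}_{\zp[[\cg]]^-}(T_p(\mk)^-) = (\Theta_{S,T}^{(\infty)}).$$
Tate-twisting the module by $n-1$ transforms the Fitting ideal via the corresponding twisting automorphism of $\zp[[\cg]]^-$, and combined with the formulas of Lemmas \ref{twisting-gs} and \ref{twisting-theta-st} this yields (in the notation of Corollary \ref{full-group-ring})
$${\rm Fit}_{\zp[[\cg]]^-}\bigl((T_p(\mk)^-)^\ast(n)\bigr) = \bigl(\Theta_{S,T}^{(\infty)}(1-n)\bigr).$$
Co-descend now by taking $\Gamma$-coinvariants. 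Since $T_p(\mk)^-$ has projective dimension $1$ over $\zp[[\cg]]^-$ (Theorem \ref{projective}), Fitting ideals descend cleanly along $\zp[[\cg]]\twoheadrightarrow\zp[G]$, and $\Theta_{S,T}^{(\infty)}(1-n)$ maps to $\Theta_{S,T,K/k}(1-n)$. Combining the Tate twist of \eqref{sequence-empty-T} with the surjection of Remark \ref{remark-link-classical} and the classical Iwasawa identification of the $\Gamma$-coinvariants of $(\cxs^+)^\ast(n)$ with $H^2_{et}(\co_{K,S}[1/p], \zp(n))$ in its $e_n(K/k)$-component (a consequence of the Poitou--Tate/Soul\'e spectral sequence together with Iwasawa's theorem on the absence of finite $\Lambda$-submodules in $\cxs^+$), one obtains
$$\bigl(\Theta_{S,T,K/k}(1-n)\bigr) = \delta_{T,K/k}(1-n)\cdot e_n(K/k)\cdot {\rm Fit}_{\zp[G]}\bigl(H^2_{et}(\co_{K,S}[1/p], \zp(n))\bigr),$$
where the $\delta_T$-factor records the descent of the submodule $T_p(\Delta_{\ck,\ct})^-/\zp(1)$ of $T_p(\mk)^-$ (compare Lemma \ref{delta}) and the idempotent $e_n(K/k)$ encodes the archimedean contribution to \'etale cohomology.

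Finally, letting $T$ vary over all admissible sets and invoking Lemma \ref{CS-lemma}(2) to identify ${\rm Ann}_{\zp[G]}(H^1_{et,\rm tors})$ with the ideal generated by all $\delta_{T,K/k}(1-n)$, the ideal ${\rm Ann}_{\zp[G]}(H^1_{et,\rm tors})\cdot\Theta_{S,K/k}(1-n) = \langle \Theta_{S,T,K/k}(1-n)\mid T\rangle$ matches $e_n(K/k)\cdot {\rm Fit}_{\zp[G]}(H^2_{et})$, proving $\overline{\bf CS}$. The main obstacle will be the co-descent step itself: the clean identification of the $\Gamma$-coinvariants of $(T_p(\mk)^-)^\ast(n)$ with the $e_n(K/k)$-part of $H^2_{et}(\co_{K,S}[1/p], \zp(n))$, and the precise extraction of the $\delta_T$-factor from the descent of \eqref{sequence-empty-T}, will require careful tracking through a Hochschild--Serre-type descent spectral sequence, verification that all descent cokernels either vanish or are absorbed into the $e_n$-idempotent structure, and use of the hypothesis $\mu_{\tilde K,p}=0$ to ensure that the relevant Tate-twisted modules remain projective-dimension-one throughout the descent. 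The input from Theorem \ref{emc} enters without modification.
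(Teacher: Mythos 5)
Your overall strategy is correct and matches the paper's: reduce to $k$ totally real, $K$ CM, $\bmu_p\subseteq K$, $S_p\subseteq S$; use Theorem \ref{emc}; Tate-twist by $n-1$; and co-descend along $\Gamma$. But the central co-descent step in your writeup contains a genuine error, and the way you try to close the argument at the end does not go through.

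The intermediate ideal identity you state,
$$\bigl(\Theta_{S,T,K/k}(1-n)\bigr) = \delta_{T,K/k}(1-n)\cdot e_n(K/k)\cdot {\rm Fit}_{\zp[G]}\bigl({\rm H}^2_{et}(\co_{K,S}[1/p],\zp(n))\bigr),$$
is false: dividing by $\delta_T(1-n)$ it would give $(\Theta_S(1-n))=e_n\,{\rm Fit}_{\zp[G]}({\rm H}^2)$, i.e.\ the conjecture \emph{without} the factor ${\rm Ann}_{\zp[G]}(({\rm H}^1)_{\rm tors})$, which does not hold in general. The missing factor comes from the first term of the correct co-descent sequence, and your proposal never produces it. The issue is that you are co-descending the three-term sequence \eqref{sequence-empty-T} directly, but its kernel $T_p(\Delta_{\ck,\ct})^-/\zp(1)$ is not $\zp$-free and does not have $\zp[[\cg]]$-projective dimension $\le 1$. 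The paper instead splices in $0\to\zp(1)\to T_p(\Delta_{\ck,\ct})^-\to T_p(\Delta_{\ck,\ct})^-/\zp(1)\to 0$, tensors by $\zp(n-1)$, and so obtains the four-term exact sequence $0\to\zp(n)\to T_p(\Delta_{\ck,\ct})^-(n-1)\to T_p(\mk)^-(n-1)\to \cxs^+(-n)^\ast\to 0$ whose two interior terms have projective dimension $1$ over $\zp[[\cg]]$. Taking $\Gamma$-coinvariants (after checking finiteness and vanishing of $\Gamma$-invariants) yields a four-term sequence of finite $\zp[G]$-modules with end terms $e_n{\rm H}^1_K$ and $e_n{\rm H}^2_K$, and the decisive algebraic input is Proposition \ref{four-term-sequence-fitting} (the Burns--Greither four-term Fitting-ideal formula). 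That formula outputs
$${\rm Fit}_{\zp[G]}\bigl((e_n{\rm H}^1_K)^\vee\bigr)\cdot\bigl(\Theta_{S,T}(1-n)\bigr) \;=\; e_n\,{\rm Fit}_{\zp[G]}({\rm H}^2_K)\cdot\bigl(\delta_T(1-n)\bigr),$$
and the theorem follows by one cancellation of the non-zero-divisor $\delta_T(1-n)$, using that $({\rm H}^1_K)_{\rm tors}$ is a cyclic module so its Fitting ideal equals its annihilator.

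Finally, the closing move in your proposal --- letting $T$ vary and invoking Lemma \ref{CS-lemma}(2) to produce the annihilator of $({\rm H}^1)_{\rm tors}$ --- is both unnecessary and logically circular: starting from your (incorrect) intermediate identity, multiplying both sides by ${\rm Ann}(({\rm H}^1)_{\rm tors})$ does not let you cancel it back out. The paper fixes a single admissible $T$ and never varies it. To repair your argument, you must (a) construct the four-term exact sequence by splicing in the $\zp(1)$-submodule of $T_p(\Delta_{\ck,\ct})^-$, (b) verify finiteness of the $\Gamma$-coinvariants of all four terms and vanishing of the $\Gamma$-invariants (Lemma \ref{elementary-lemma}, Lemma \ref{delta}, Proposition \ref{dual-coinvariants}), (c) verify the interior terms have $\zp[G]$-projective dimension $\le 1$ after descent, and (d) apply Proposition \ref{four-term-sequence-fitting} to convert the sequence into a product formula on Fitting ideals, in which the factor ${\rm Ann}_{\zp[G]}(({\rm H}^1_K)_{\rm tors})$ appears automatically.
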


\begin{proof} We begin by making a few useful reduction steps.
\begin{lemma}\label{Sp} It suffices to prove statement $\overline{\bf CS}(K/k, S, p\,, n)$ under the assumption that $S_p\subseteq S$, where
$S_p$ denotes the set of $p$--adic primes in $k$.
\end{lemma}
\begin{proof} This is an immediate consequence of \eqref{independent-h1} and the obvious equality in $\zp[G]$
$$\Theta_{S\cup S_p, K/k}(1-n) = \Theta_{S,K/k}(1-n)\cdot \prod_{v\in S_p\setminus S}(1-\sigma_v^{-1}\cdot({\bf N}v)^{n}),$$
for all $n$ and $p$ as above. Indeed, for all $v\in S_p\setminus S$, the element $(1-\sigma_v^{-1}\cdot({\bf N}v)^{n})$ is a divisor of
$(1-p^m)$ in $\zp[G]$, for a large $m$ (say, $m$ such that $p^m=|\kappa(w)|^n$, for $w$ prime in $K$ dividing $v$) and it is therefore a unit in $\zp[G]$. Therefore,
the elements $\Theta_{S\cup S_p, K/k}(1-n)$ and $\Theta_{S,K/k}(1-n)$ differ by a unit in $\zp[G]$.
\end{proof}

\begin{lemma}\label{k-totally-real}
It suffices to prove statement $\overline{\bf CS}(K/k, S, p\,, n)$ under the assumption that $k$ is a totally real number field.
\end{lemma}
\begin{proof} Indeed, the functional equation satisfied by the $S$-incomplete $L$--function associated to a character $\chi\in\widehat G(\C)$ implies the following formula for the order of vanishing at $s=(1-n)$,
for all $n\in\z_{\geq 2}$.
\begin{equation}\label{orders-of-vanishing}
{\rm ord}_{s=(1-n)}L_S(\chi, s)=\left\{
                      \begin{array}{ll}
                        r_2(k)+a(\chi)^+, & \hbox{if $n$ is odd;} \\
                        r_2(k)+a(\chi)^-, & \hbox{if $n$ is even,}
                      \end{array}
                    \right.
\end{equation}
where $r_2(k)$ denotes the number of complex infinite places in $k$,
$$a(\chi)^+={\rm card}\{v\in S_\infty(k)\mid \chi\mid_{G_v}=\mathbf 1_{G_v}\}, \quad a(\chi)^-={\rm card}\{v\in S_\infty(k)\mid \chi\mid_{G_v}\ne\mathbf 1_{G_v}\}.$$
Note that $a(\chi)^++a(\chi)^-=r_1(k)$, the number of real
infinite places in $k$. The above formula shows that if $r_2(k)>0$ (i.e. if $k$ is not totally real), then we have $\Theta_{S, K/k}(1-n)=0$, for all $n\in\z_{\geq 2}$. This concludes the proof of the Lemma.
\end{proof}
\begin{lemma}\label{top-change} Assume that $\widetilde K/k$ is an abelian extension of number fields, of Galois group $\widetilde G$, with
$K\subseteq \widetilde K$ and $S_{\rm ram}(\widetilde K/k)\subseteq S$. Then, for all $p>2$ and $n$ as above,
$$\overline{\bf CS}(\widetilde K/k, S, p\,, n)\Rightarrow \overline{\bf CS}(K/k, S, p\,, n).$$
\end{lemma}
\begin{proof} Let us fix $p>2$ and $n$ as above. In order to simplify notations, we let ${\rm H}^i_K:={\rm H}^i_{et}(\co_{K, S}[1/p], \zp(n))$ and similarly
 for ${\rm H}^i_{\widetilde K}$, for all $i=1,2$. Let $H:=G(\widetilde K/K)$. Galois restriction induces the usual $\zp$--algebra morphism
$\pi: \zp[\widetilde G]\twoheadrightarrow \zp[G]$, whose kernel is the relative augmentation ideal $I_H$ associated to $H$ in $\zp[\widetilde G]$. Proposition 2.10 in \cite{Kolster}
gives a canonical $\zp[G]$--module isomorphism
$${\rm H}^2_K\simeq ({\rm H}^2_{\widetilde K})_H,\qquad {\rm H}^1_K\simeq ({\rm H}^1_{\widetilde K})^H$$
where $M^H:=M[I_H]$ and $M_H:=M/I_HM\simeq M\otimes_{\zp[\widetilde G]}\zp[G]$ denote the modules of $H$--invariants and $H$--coinvariants, respectively,
associated to any $\zp[\widetilde G]$--module
$M$. The isomorphism above combined with the second equality in \eqref{fitt-base-change} of the
Appendix (applied for $\rho:=\pi$, $R:=\zp[\widetilde G]$, $R':=\zp[G]$, and $M:={\rm H}^2_K$  gives
\begin{equation}\label{h2-coinvariants}\pi({\rm Fit}_{\zp[\widetilde G]}({\rm H}^2_{\widetilde K}))={\rm Fit}_{\zp[G]}({\rm H}^2_{K}).
\end{equation}
The above equality combined with the obvious equalities $\pi(e_n(\widetilde K/k))=e_n(K/k)$, $\pi(\Theta_{S,\widetilde K/k}(1-n))=\Theta_{S,K/k}(1-n)$ and
\begin{equation}\label{h1-invariants} \pi({\rm Ann}_{\zp[\widetilde G]}( {\rm H}^1_{\widetilde K})_{\rm tors})={\rm Ann}_{\zp[G]}( {\rm H}^1_{K})_{\rm tors}
\end{equation}
concludes the proof of the Lemma. (Apply Lemma \ref{CS-lemma}(2) for the last equality.)
\end{proof}
\begin{lemma}\label{K-CM} With notations as in the previous Lemma, assume that $k$ is totally real, $\widetilde K$ is totally imaginary and
$K:={\widetilde K}^{\,CM}$ is the maximal CM subfield of $\widetilde K$. Then, for all $p>2$ and $n$ as above, we have
$$\overline{\bf CS}(\widetilde K/k, S, p\,, n) \Leftrightarrow \overline{\bf CS}(K/k, S, p\,, n).$$
\end{lemma}
\begin{proof} Let us fix $p>2$ and $n$ as above. We use the notations in the proof of the previous Lemma.
Note that $H$ is the $2$--group generated by the set
$$\{\sigma_{v}\cdot \sigma_{v'}\mid v, v'\in S_\infty(k), v\ne v'\}.$$
This shows that, for all $\chi\in\widehat G(\C)$, we have
$$\chi\mid_H\ne\mathbf 1_H\quad  \Rightarrow \quad L_{S}(\chi, 1-n)=0, \quad  \chi(e_n(\widetilde K/k))=0.$$
Indeed, if $\chi\mid_H\ne\mathbf 1_H$, then there exist $v, v'\in S_\infty(k)$, with $v\ne v'$, such that
$\chi(\sigma_v)=1$ and $\chi(\sigma_{v'})\ne 1$. On one hand, this shows that $\chi(e_n(\widetilde K/k))=0$. On the other hand, it shows that $a(\chi)^+\geq 1$ and $a(\chi)^-\geq 1$. Now, \eqref{orders-of-vanishing}
implies that $L_S(\chi, 1-n)=0$, for all $n\geq 2$. Consequently,  if $e_H:=|H|^{-1}\cdot \sum_{h\in H}h$ is the idempotent element
associated to $H$ in $\zp[\widetilde G]$ (note that $p\nmid |H|$), we have
$$\Theta_{S,\widetilde K/k}(1-n)\in e_H\qp[\widetilde G], \qquad e_n(\widetilde K/k)\in e_H\zp[\widetilde G].$$
Now, note that since $\zp[\widetilde G]=e_H\zp[\widetilde G]\oplus (1-e_H)\zp[\widetilde G]$  and $I_H=(1-e_H)\zp[\widetilde G]$, the map $\pi$ establishes a ring isomorphism $\pi: e_H\zp[\widetilde G]\simeq \zp[G]$.
Consequently, \eqref{h2-coinvariants} and \eqref{h1-invariants} imply that $\pi$ establishes an isomorphism at the level of $e_H\zp[\widetilde G]$--ideals
$$\pi: e_H{\rm Fit}_{\zp[\widetilde G]}({\rm H}^2_{\widetilde K})\simeq {\rm Fit}_{\zp[G]}({\rm H}^2_{K}),\quad
\pi: e_H{\rm Ann}_{\zp[\widetilde G]}({\rm H}^1_{\widetilde K})_{\rm tors}\simeq {\rm Ann}_{\zp[G]}({\rm H}^1_{K})_{\rm tors} .$$
Now, the equalities $\pi(e_n(\widetilde K/k))=e_n(K/k)$ and  $\pi(\Theta_{S,\widetilde K/k}(1-n))=\Theta_{S,K/k}(1-n)$ conclude the proof of the Lemma.
\end{proof}

Now, we are ready to return to the proof of Theorem \ref{cs-theorem}. We fix $(K/k,S, p)$ as in the theorem. According to the previous four Lemmas, we may assume that $k$ is totally real
(otherwise, the statement is trivially true, according to Lemma \ref{k-totally-real}), $S_p\subseteq S$ (see Lemma \ref{Sp}), $K$ is CM and
$\bmu_p\subseteq K$ (otherwise, we replace $K$ by $K(\bmu_p)^{CM}$ and apply Lemmas \ref{K-CM} and \ref{top-change}, respectively.) Under these hypotheses,
we assume in addition that $p>2$ and $\mu_{K,p}=0$.

As usual, we let $\ck$ denote the cyclotomic $\zp$--extension of $K$, $\cg:=G(\ck/k)$ and $\Gamma:=G(\ck/K)$. We fix a finite, nonempty set $T$ set of primes in $K$, such that $S\cap T=\emptyset.$  Also,
we let $\cs$ and $\ct$
denote the sets of finite primes in $\ck$ sitting above primes in $S$ and $T$, respectively. For simplicity, we let $\Theta_S:=\Theta_{S, K/k}$, $\Theta_{S,T}:=\Theta_{S,T,K/k}$ and $\delta_T:=\delta_{T,K/k}$
and resume using all the notations introduced in \S\ref{EMC}. In particular, $j\in\cg$ denotes the unique complex conjugation automorphism of $\ck$ (identified as usual with the complex
conjugation automorphism $j\in G$ of $K$.) Note that we have
$$e_n:=e_n(K/k)=\frac{1}{2}(1+(-1)^n j), \qquad \forall\, n\in\z.$$
We view $e_n$ either as an element of $\zp[[\cg]]$ or $\zp[G]$, for all $n\in\z$.
Observe that if $M$ is a $\zp[[\cg]]$--module and $n\in\z$,
then $e_nM=M^+$, if $n$ is even and $e_nM=M^-$, if $n$ is odd. Also, note that as a consequence of \eqref{orders-of-vanishing} we have
\begin{equation}\label{theta-component}\Theta_{S}(1-n)=e_n\cdot \Theta_{S}(1-n), \quad
\Theta_{S,T}(1-n)=e_n\cdot \Theta_{S,T}(1-n),\qquad \forall\, n\in\z_{\geq 2}.
\end{equation}

In what follows, all the occurring $\zp$--module duals or Pontrjagin duals are endowed with the {\bf contravariant}
actions by the appropriate groups, unless stated otherwise.
We will need the following elementary result.
\begin{lemma}\label{elementary-lemma} Let $M$ be a $\zp[[\cg]]$--module. Assume that $M$ is $\zp$--free of finite rank and that $M_{\Gamma}$ is finite. Then,
\begin{enumerate}\item $M^\Gamma=0$.
\item $(M_{\Gamma})^\vee\simeq (M^\ast)_{\Gamma},$ as $\zp[G]$--modules.
\item If ${\rm pd}_{\zp[[\cg]]}M\leq 1$, then ${\rm pd}_{\zp[G]}M_{\Gamma}\leq 1$.
\end{enumerate}
\end{lemma}
\begin{proof} Part (1) is immediate. For part (2), see Lemma 5.18 in \cite{GP}. Part (3) is an immediate consequence of
$\zp[[\cg]]^{\,\Gamma}=0$ and $\zp[[\cg]]_{\,\Gamma}\simeq\zp[G]$. \end{proof}
\begin{proposition}\label{dual-coinvariants}
Let $n\in\z_{\geq 2}$, $p>2$, and ${\rm H}^i_{K}:={\rm H}_{et}^i(\co_{K,S}, \zp(n))$, for all $i=1,2$. Then, we have $\zp[G]$--module isomorphisms
\begin{enumerate}
\item $e_n\cdot {\rm H}^2_K\simeq (\cxs^+(-n)_{\Gamma})^\vee\simeq (\cxs^+(-n)^\ast)_{\Gamma};$\medskip

\item $e_n\cdot {\rm H}^1_K=e_n\cdot ({\rm H}^1_K)_{\rm tors}\simeq({\rm H}^1_K)_{\rm tors}\simeq\zp(n)_\Gamma,$
\end{enumerate}
where $M^\Gamma$ and $M_{\Gamma}$ denote the $\zp[G]$--modules of $\Gamma$--invariants and $\Gamma$--coinvariants,
respectively, for all $\zp[[\cg]]$--modules $M$.
 \end{proposition}
\begin{proof} Let  ${\rm H}^i_{K^+}:={\rm H}_{et}^i(\co_{K^+,S}, \zp(n))$, for all $i=1,2$, where $K^+:=K^{j=1}$, as usual. Then, Proposition 2.9 in \cite{Kolster} shows that there is a canonical $\zp[G]$--module
isomorphism ${\rm H}^1_{K^+}\simeq ({\rm H}^1_K)^+$. Consequently, \eqref{chern} and \eqref{Borel} together with \eqref{orders-of-vanishing} give
$${\rm rank}_{\zp}\, (e_n\cdot {\rm H}^1_K)=\left\{
                         \begin{array}{ll}
                          r_2(K)-r_1(K^+) , & \hbox{if $n$ is odd;}\\
                            r_2(K^+) , & \hbox{if $n$ is even}
                         \end{array}
                       \right\} = 0.
$$ Consequently, $e_n{\rm H}^1_K$ is a finite group. On one hand, this implies the equality in part (2) of the Proposition. On the other hand, via the spectral sequence argument on pp. 237--238 of \cite{Kolster}
this leads to isomorphisms of $\zp[G]$--modules
$$e_n\cdot {\rm H}^2_{K}\simeq e_n\cdot{\rm H}^1_{et}(\co_{K,S}, \qp/\zp(n))\simeq e_n\cdot {\rm H}^1(G_{\ck, \cs}^{(p)}, \qp/\zp(n))^{\Gamma},$$
where $G_{\ck, \cs}^{(p)}$ is the Galois group of the maximal pro--$p$ extension of $\ck$ which is unramified away from $\cs$ and the right--most cohomology group
is a Galois cohomology group. Now, since $\bmu_{p^\infty}\subseteq\ck$ and therefore $G_{\ck, \cs}^{(p)}$ acts trivially on $\qp/\zp(n)$ and since
$\cxs$ is the maximal abelian quotient of $G_{\ck, \cs}^{(p)}$, we have the following isomorphisms of $\zp[G]$--modules.
\begin{eqnarray}
  \nonumber   {\rm H}^1(G_{\ck, \cs}^{(p)}, \qp/\zp(n))^{\Gamma}\simeq & {\rm Hom}_{\zp}(G_{\ck, \cs}^{(p)}, \qp/\zp(n))^{\Gamma} & \\
   \nonumber                                                    \simeq & {\rm Hom}_{\zp}(\cxs, \qp/\zp(n))^{\Gamma} &\simeq\quad (\cxs(-n)_{\Gamma})^\vee.
  \end{eqnarray}
Now,  the first isomorphism in part (1) of the Proposition follows from the last two displayed isomorphisms and the obvious equality $e_n\cdot\cxs(-n)=\cxs^+(-n)$.

The second isomorphism in part (1) follows from Lemma \ref{elementary-lemma}(2) applied to $M:=\cxs^+(-n).$
Note that the finiteness of ${\rm H}^2_K$ and the first isomorphism in part (1) of the Proposition imply that $\cxs^+(-n)_{\Gamma}$ is finite. Also, $\cxs^+(-n)$ is
finitely generated over $\zp$ (a theorem of Iwasawa) and therefore $\zp$--free of finite rank (a consequence of $\mu_{K,p}=0$.)
This concludes the proof of part (1) of the Proposition.

The isomorphism in part (2) of the Proposition is an immediate consequence of the isomorphism $({\rm H}^1_K)_{\rm tors}\simeq(\qp/\zp(n))^\Gamma$ (see Lemma \ref{CS-lemma}(1) and recall that $\bmu_{p^\infty}\subseteq\ck$)
and the obvious equality $\qp(n)^\Gamma=\qp(n)_{\Gamma}=0$.
\end{proof}

 Note that the hypotheses of Lemma \ref{link-classical} and Theorem \ref{emc} are satisfied by the data $(\ck/k, \cs, \ct, p)$.
Lemma \ref{link-classical} and exact sequence \eqref{sequence-empty-T} tensored with $\zp(n-1)$ lead to the following four term exact sequence of $\zp[[\cg]]$--modules.
\begin{equation}\label{four-term-sequence} 0\to \zp(n)\to T_p(\Delta_{\ck, \ct})^-(n-1)
\to T_p(\mk)^-(n-1)\to \cxs^+(-n)^\ast
\to 0.\end{equation}
We intend to apply Proposition \ref{four-term-sequence-fitting} in the Appendix to the sequence of $\Gamma$--coinvariants associated to the exact sequence above.

First, let us note that each of the four modules in the exact sequence above is $\zp$--free. Most importantly, the module of $\Gamma$--coinvariants associated to each of these is finite.
Indeed, $\zp(n)_{\Gamma}$ and $(\cxs^+(-n)^\ast)_{\Gamma}$ are finite, due to Proposition \ref{dual-coinvariants}. On the other hand,
Lemma \ref{delta}(1) and Remark \ref{remark-delta} combined with Lemma \ref{twisting-Fitting}(1) in the Appendix imply that we have a $\zp[G]$--module isomorphism
$$T_p(\Delta_{\ck, \ct})(n-1)_{\Gamma}\simeq\bigoplus_{v\in T}\zp[G]/(1-\sigma_v^{-1}\cdot N_v^{n}).$$
Since $n\geq 2$, the element $(1-\sigma_v^{-1}\cdot N_v^{n})$ is not a zero--divisor in $\zp[G]$, for all $v\in T$. Consequently, the above isomorphism
implies that  $T_p(\Delta_{\ck, \ct})(n-1)_{\Gamma}$ is finite. Therefore, its direct summand  $T_p(\Delta_{\ck, \ct})^-(n-1)_{\Gamma}$ is indeed finite. Now, the finiteness
of $T_p(\mk)^-(n-1)_{\Gamma}$ follows from the exact sequence
$$T_p(\Delta_{\ck, \ct})^-(n-1)_{\Gamma}
\to T_p(\mk)^-(n-1)_{\Gamma}\to (\cxs^+(-n)^\ast)_{\Gamma}
\to 0.$$
Now, Lemma \ref{elementary-lemma} implies that the $\Gamma$--invariants of the four modules in \eqref{four-term-sequence} are trivial. Consequently, we obtain the following exact sequence of finite $\zp[G]$--modules
at the level of $\Gamma$--coinvariants
\begin{equation}\label{four-term-sequence-coninvariants} 0\to e_n\cdot{\rm H}^1_K\to T_p(\Delta_{\ck, \ct})^-(n-1)_{\Gamma}
\to T_p(\mk)^-(n-1)_{\Gamma}\to e_n\cdot {\rm H}^2_K
\to 0.\end{equation}
Above, we have used Proposition \ref{dual-coinvariants} to identify the $\Gamma$--coinvariants of the end terms of \eqref{four-term-sequence} with $e_n\cdot{\rm H}^1_K$ and
$e_n\cdot{\rm H}^2_K$, respectively.

Since ${\rm pd}_{\zp[[\cg]]}T_p(\Delta_{\ck, T})^-\leq 1$ and ${\rm pd}_{\zp[[\cg]]} T_p(\mk)^-\leq 1$ (see Lemma \ref{delta}(3) and Remark \ref{remark-delta} for the first and Theorem \ref{projective}(2) for the second),
we have
$${\rm pd}_{\zp[G]}\,(T_p(\Delta_{\ck, \ct})^-(n-1)_{\Gamma})\leq 1, \qquad {\rm pd}_{\zp[G]}\, (T_p(\mk)^-(n-1)_{\Gamma})\leq 1,$$
as a consequence of \ref{twisting-Fitting}(3) and Lemma \ref{elementary-lemma}(3).
Consequently, we may apply Proposition \ref{four-term-sequence-fitting} in the Appendix to the
exact sequence  \eqref{four-term-sequence-coninvariants}.
This way, we obtain
\begin{eqnarray}\label{product-fitting}
 & {\rm Fit}_{\zp[G]}({e_n{\rm H}^1_K}^\vee)\cdot {\rm Fit}_{\zp[G]}(T_p(\mk)^-(n-1)_{\Gamma})=\\
  \nonumber & ={\rm Fit}_{\zp[G]}(e_n{\rm H}^2_K)\cdot {\rm Fit}_{\zp[G]}(T_p(\Delta_{\ck, \ct})^-(n-1)_{\Gamma}).
\end{eqnarray}
where the dual is endowed with the covariant $G$--action. Now, let us note that for any $\zp[[\cg]]$--module $M$ and $\zp[G]$--module $N$,  we have
$$e_n\cdot M(n-1)=M^-(n-1), \qquad e_n\cdot{\rm Fit}_{\zp[G]}(N)={\rm Fit}_{e_n\zp[G]}(e_n\cdot N).$$
Consequently, if we combine Lemma \ref{twisting-Fitting}(2) in the Appendix with Corollary \ref{full-group-ring}
and Lemma \ref{delta}(2), respectively, we obtain
\begin{eqnarray}
\nonumber & e_n\cdot{\rm Fit}_{\zp[G]}(T_p(\mk)^-(n-1)_{\Gamma}) = e_n\cdot (\pi\circ t_{1-n}(\Theta_{S,T}^{(\infty)}))=(\Theta_{S,T}(1-n)),\\
\nonumber & e_n\cdot{\rm Fit}_{\zp[G]}(T_p(\Delta_{\ck, \ct})^-(n-1)_{\Gamma})= e_n\cdot (\pi\circ t_{1-n}(\delta_{T}^{(\infty)}))=e_n\cdot(\delta_T(1-n)),
\end{eqnarray}
where $\pi:\zp[[\cg]]\to\zp[G]$ is the usual projection. Note that above we have used the second equality in \eqref{theta-component}. Consequently, \eqref{product-fitting} implies that
$${\rm Fit}_{\zp[G]}({e_n{\rm H}^1_K}^\vee)\cdot\Theta_{S,T}(1-n)=e_n{\rm Fit}_{\zp[G]}({\rm H}^2_K)\cdot(\delta_T(1-n)).$$
However, since $\Theta_{S,T}(1-n)=\delta_T(1-n)\cdot\Theta_S(1-n)$ and $\delta_T(1-n)$ is not a zero--divisor in $\zp[G]$ (an easy exercise !), the last equality implies
$${\rm Fit}_{\zp[G]}(({\rm H}^1_K)_{\rm tors}^{\,\,\vee})\cdot\Theta_S(1-n)=e_n{\rm Fit}_{\zp[G]}({\rm H}^2_K).$$
Now, since $({\rm H}^1_K)_{\rm tors}$ is a cyclic module (see Lemma \ref{elementary-lemma}(2)) and the dual is endowed with the covariant
$G$--action, we have equalities
$${\rm Fit}_{\zp[G]}(({\rm H}^1_K)_{\rm tors}^{\,\,\vee})={\rm Ann}_{\zp[G]}(({\rm H}^1_K)_{\rm tors}^{\,\,\vee})={\rm Ann}_{\zp[G]}(({\rm H}^1_K)_{\rm tors}).$$
When combined with the last displayed equality, this concludes the proof of Theorem \ref{cs-theorem} (our refinement of the cohomological Coates-Sinnott Conjecture.)
\end{proof}

\begin{remark} Results somewhat weaker than our Theorem \ref{cs-theorem} were obtained with different methods in \cite{Burns-Greither}, Corollary 2 (which imposes restrictions upon $K/k$ and $p$) and
\cite{NQD}, Th\'eor\`eme 4.3 (which imposes restrictions upon $K/k$ and $n$.)
 In both cases, the vanishing of the appropriate Iwasawa $\mu$--invariant is assumed.
\end{remark}

Finally, we would like to mention that the well known Quillen-Lichtenbaum Conjecture states that the Chern character maps ${\rm ch}^i_{p,n}$ (see \eqref{chern} above) are isomorphisms,
for all $p>2$ and $n\geq 2$. On the other hand, it is known that the Quillen-Lichtenbaum Conjecture is a consequence of the Bloch-Kato Conjecture for finitely generated fields (e.g., see Theorem 2.7 in \cite{Kolster}.)
To our knowledge, recent work of Rost and Voevodsky has lead to a proof of the Bloch-Kato Conjecture.

\noindent The following is an immediate consequence of Theorem \ref{cs-theorem}.

\begin{corollary}[a refined ${\rm K}$--theoretic Coates--Sinnott Conjecture] \label{cs-corollary} Let $K/k$ be an abelian extension of number fields of
Galois group $G$. Let $S$ be a finite set of primes in $k$, such that $S_\infty(k)\cup S_{\rm ram}(K/k)\subseteq S$. If $k$ is totally real, assume that
$\mu_{K(\bmu_p)^{CM},p}=0$, for all primes $p>2$. Also, assume that the Quillen-Lichtenbaum Conjecture holds. Then, for all $n\in\z_{\geq 2}$, we have the following equality
of $\z[1/2][G]$--ideals.
\begin{eqnarray}
\nonumber \z[1/2]\,{\rm Ann}_{\z[G]}({\rm K}_{2n-1}(\co_{K,S})_{\rm tors})\cdot\Theta_S(1-n)=\\
\nonumber =e_n(K/k)\cdot \z[1/2]\,{\rm Fit}_{\z[G]}({\rm K}_{2n-2}(\co_{K,S})).
\end{eqnarray}
\end{corollary}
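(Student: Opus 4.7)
The plan is to deduce the K-theoretic statement from the cohomological version, Theorem \ref{cs-theorem}, by comparing both sides prime-by-prime after tensoring with $\zp$ for every odd prime $p$. First, I would dispose of the case when $k$ is not totally real: then $e_n(K/k) = 0$ by definition, and the functional equation argument used in Lemma \ref{k-totally-real} gives $\Theta_S(1-n) = 0$, so both sides of the claimed identity vanish. We may therefore assume $k$ is totally real, which ensures that the hypothesis $\mu_{K(\bmu_p)^{CM}, p} = 0$ (imposed in our statement) coincides with the corresponding hypothesis of Theorem \ref{cs-theorem} for each odd prime $p$.

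Next, both ${\rm K}_{2n-1}(\co_{K,S})_{\rm tors}$ and ${\rm K}_{2n-2}(\co_{K,S})$ are \emph{finite} $\z[G]$-modules, by \eqref{Borel} together with Quillen's finite generation theorem. For finite modules the operations of annihilator and Fitting ideal commute with the flat base changes $\z[G] \to \z[1/2][G] \to \zp[G]$. Invoking the Quillen--Lichtenbaum Conjecture, which is now a theorem thanks to the Rost--Voevodsky proof of Bloch--Kato, the Chern characters \eqref{chern} are $\zp[G]$-linear \emph{isomorphisms} for every odd $p$, which induce $\zp[G]$-linear identifications
\begin{align*}
{\rm K}_{2n-1}(\co_{K,S})_{\rm tors}\otimes\zp &\,\simeq\, {\rm H}^1_{et}(\co_{K,S}[1/p], \zp(n))_{\rm tors},\\
{\rm K}_{2n-2}(\co_{K,S})\otimes\zp &\,\simeq\, {\rm H}^2_{et}(\co_{K,S}[1/p], \zp(n)).
\end{align*}

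Combining these identifications with Theorem \ref{cs-theorem} applied at each odd prime $p$, I obtain the desired equality of ideals after tensoring with $\zp$ for every odd $p$. Since $\z[1/2]$ is a Dedekind domain whose maximal ideals are exactly the $(p)$ for odd primes, and since both sides of the putative equality are finitely generated $\z[1/2][G]$-ideals, this collection of $p$-local equalities assembles to the claimed global equality in $\z[1/2][G]$ (the quotient $(I+J)/(I\cap J)$ of the two ideals is a finitely generated $\z[1/2]$-module, and hence vanishes iff it vanishes after tensoring with $\zp$ for every odd $p$). The only nontrivial input is Theorem \ref{cs-theorem} itself, which already encapsulates the Equivariant Main Conjecture; the remaining ingredients---Quillen--Lichtenbaum, Borel's rank theorem, and the compatibility of Fitting ideals and annihilators with flat base change for finite modules---are essentially formal, so there is no real obstacle beyond carefully tracking the $\zp[G]$-linearity of the Chern isomorphisms and the matching of torsion subgroups on both sides.
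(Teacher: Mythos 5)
Your proposal is correct and supplies in full the argument that the paper compresses into the phrase ``an immediate consequence of Theorem \ref{cs-theorem}.'' The ingredients you invoke are exactly the ones the authors intend: disposing of the non-totally-real case via $e_n(K/k)=0$ and the vanishing of $\Theta_S(1-n)$ forced by the functional equation; using Quillen--Lichtenbaum to turn the Chern characters into $\zp[G]$-linear isomorphisms for each odd $p$; transporting Theorem \ref{cs-theorem} through these identifications; and reassembling the $p$-local equalities into a single $\z[1/2][G]$-equality. The local-to-global step is sound because $\zp$ is faithfully flat over $\z[1/2]_{(p)}$, so a finitely generated $\z[1/2]$-module vanishes iff it dies after $\otimes\,\zp$ for every odd $p$; and flatness ensures the two ideals base-change to the corresponding $\zp[G]$-ideals, so that comparison is legitimate. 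This is the same route the paper takes, just written out.
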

\noindent As the reader will notice right away, the ${\rm K}$--theoretic statement above is closer in spirit to the conjecture originally formulated by Coates and Sinnott in \cite{Coates-Sinnott}.
The difference is the presence of a Fitting ideal rather than an annihilator on the right-hand side and an equality rather than an inclusion of ideals. These differences justify the use of the term ``refined'' above.
\section{Appendix: Algebraic Ingredients}
\subsection{Determinants and Fitting Ideals}\label{appendix-fitting}
Let $R$ be a commutative ring with $1$, $P$ a finitely generated, projective $R$--module and $f\in{\rm End}_R(P)$. Then,
the determinant $\det_R(f\mid P)$ of $f$ acting on $P$ is defined as follows. We take a finitely generated $R$--module $Q$,
such that $P\oplus Q$ is a (finitely generated) free $R$--module, then we let $f\oplus\mathbf 1_Q\in{\rm End}_R(P\oplus Q)$, where
$\mathbf 1_Q$ is the identity of $Q$, and define
$${\rm det}_R(f\mid P):={\rm det}_R(f\oplus \mathbf 1_Q\mid P\oplus Q)\,.$$
It is easy to check (use Schanuel's Lemma !) that the definition above does not depend on $Q$. Now, one can use the same strategy to define
the characteristic polynomial ${\rm det}_R(X-f\mid P)\in R[X]$ of variable $X$. Indeed, $P\otimes_R R[X]$ is a finitely
generated, projective $R[X]$--module. One defines
$${\rm det}_R(X-f\mid P):={\rm det}_{R[X]}({\rm id}_P\otimes X-f\otimes 1\mid P\otimes_R R[X])\,.$$
For any $P$, $R$ and $f$ as above and any $R$--algebra $R'$, we have base-change equalities
\begin{equation}\label{det-base-change}
\begin{array}{c}
  {\rm det}_R(f\mid P)={\rm det}_{R'}(f\otimes\mathbf 1_{R'}\mid P\otimes_R R')\,, \\
{}\\
  {\rm det}_R(X-f\mid P)={\rm det}_{R'}(X- (f\otimes\mathbf 1_{R'})\mid P\otimes_R R')\,.
\end{array}
\end{equation}
\begin{remark}\label{monicity-remark} It is easily checked that the polynomial $F(X):={\rm det}_R(X-f\mid P)$ in $R[X]$ defined above is always a monic polynomial.
\end{remark}

Now, for any $R$ as above and any finitely presented $R$--module $M$, the first Fitting invariant (ideal) ${\rm Fit}_R(M)$ of $M$ over $R$ is defined as follows.
First, one considers a finite presentation of $M$
$$\xymatrix{
R^n\ar[r]^\phi &R^m\ar[r] &M\ar[r] &0}.$$ By definition, the
Fitting ideal ${\rm Fit}_R(M)$ is the ideal in $R$ generated by the
determinants of all the $m\times m$ minors of the matrix $A_\phi$
associated to $\phi$ with respect to $R$--bases of $R^n$ and
$R^m$. It is well-known that the definition does not depend on the
chosen presentation or bases, and
\begin{equation}\label{ann-fitt}
{\rm Ann}_R(M)^m\subseteq{\rm Fit}_R(M)\subseteq {\rm Ann}_R(M)\,.
\end{equation}
Two elementary properties of Fitting ideals which are used throughout the paper state that
if $M\twoheadrightarrow M'$ is a surjective morphism of finitely presented $R$--modules and $\rho:R\to R'$ is a morphism of commutative rings with $1$,
then one has
\begin{equation}\label{fitt-base-change}{\rm Fit}_R(M)\subseteq {\rm Fit}_R(M'),\qquad {\rm Fit}_{R'}(M\otimes_R R')=\rho({\rm Fit}_R(M))R'.\end{equation}
For more details on general properties of Fitting ideals, the reader can consult the Appendix of \cite{Mazur-Wiles}.

In what follows, if $R$ is a commutative topological ring and $\Gamma$ is
a profinite group, then the profinite group algebra
$$R[[\Gamma]]:=\underset{\mathfrak H}{\underset\longleftarrow\lim} R[\Gamma/\mathfrak H],$$
where $\Gamma/\mathfrak  H$ are all the finite quotients of
$\Gamma$ by (open and) closed subgroups $\mathfrak H$, is viewed as a
topological $R$--algebra endowed with the usual projective limit
topology.

Below, by a semi-local ring $R$ we mean a direct sum of finitely many local rings. Examples of such rings include $\co[G]$ and $\co[G]^\pm:=\co[G]/(1\mp j)$, where $G$ is a finite, abelian group,
$\co$ is a finite integral extension of $\zp$, for some odd prime $p$ and $j$ is an element of order $2$ in $G$.
\begin{proposition}\label{fitting-calculation}
Let $R$ be a commutative, semi-local, compact topological ring and $\Gamma$ a pro-cyclic group of topological generator $g$. Let $M$ be a topological $R[[\Gamma]]$--module,
which is projective and finitely generated as an $R$--module. Let $$F(X):={\rm det}_{R}(X-\mathfrak m_g\mid M),$$ where $\mathfrak m_g$ is the $R[[\Gamma]]$--module automorphism of $M$ given
by multiplication by $g$. Then, the following hold.
\begin{enumerate}\item $M$ is finitely presented as an $R[[\Gamma]]$--module. Also, if we let $F(g)$ be the image of $F(X)$ via the $R$--algebra morphism $R[X]\to R[[\Gamma]]$ sending $X$ to $g$, we have an equality of $R[[\Gamma]]$-ideals
$${\rm Fit}_{R[[\Gamma]]}(M)=(F(g))\,.$$
\item
Let $M^\ast_R:={\rm Hom}_R(M, R)$, viewed as a
topological $R[[\Gamma]]$--module with the covariant
$\Gamma$--action, given by $\sigma\cdot
f(x):=f(\sigma\cdot x)$, for all $f\in M^\ast_R$,
$\sigma\in\Gamma$ and $x\in M$. Then, we have
$${\rm Fit}_{R[[\Gamma]]}(M)={\rm Fit}_{R[[\Gamma]]}(M^\ast_R)\,.$$
\item Assume that $R=\zp[G]$, where $G$ is
    a finite, abelian group and $p$ is a prime number. Let $M^\ast:={\rm Hom}_{\zp}(M, \zp)$, viewed as an
    $R[[\Gamma]]\simeq\zp[[G\times\Gamma]]$--module with the covariant $G\times\Gamma$--action. Then, we have
    $${\rm Fit}_{R[[\Gamma]]}(M^\ast)={\rm Fit}_{R[[\Gamma]]}(M).$$
\end{enumerate}
\end{proposition}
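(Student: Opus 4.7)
I propose to prove (1), (2), (3) in that order, with Parts (2) and (3) reducing formally to Part (1). For Part (1), I will exhibit an explicit two-term presentation of $M$ as an $R[[\Gamma]]$-module, namely
\begin{equation*}
M \otimes_R R[[\Gamma]] \;\xrightarrow{\,1\otimes g \,-\, \mathfrak m_g \otimes 1\,}\; M \otimes_R R[[\Gamma]] \;\longrightarrow\; M \;\longrightarrow\; 0,
\end{equation*}
where $R[[\Gamma]]$ acts on the left factor of the tensor product and the right-hand map is $\lambda \otimes m \mapsto \lambda m$. Surjectivity is trivial and the image of the endomorphism visibly lies in the kernel of the right-hand map; the reverse inclusion follows from the relation $m \otimes g \equiv gm \otimes 1$ modulo the image, extended to all of $R[[\Gamma]]$ by density of $R[g]$ (since $g$ topologically generates $\Gamma$, the image of $R[g]$ surjects onto $R[\Gamma/\mathcal H]$ for every open subgroup $\mathcal H$). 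Because $M$ is $R$-projective, $M \otimes_R R[[\Gamma]]$ is a finitely generated projective $R[[\Gamma]]$-module, so this is a finite presentation. If $M \simeq R^n$ with $\mathfrak m_g$ having matrix $A_g$, the presentation map has matrix $gI_n - A_g$, whose determinant is exactly $F(g)$ by substitution; this settles the free case. The general projective case follows either by localizing at each of the finitely many maximal ideals of the semi-local ring $R$ (where $M$ becomes free), or by choosing $Q$ with $M \oplus Q \simeq R^n$, extending the $\Gamma$-action by the identity on $Q$, and invoking the extension of $\det_R$ to projective modules recalled at the start of \S\ref{appendix-fitting}.

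For Part (2), the key observation is that, for the covariant action $(\gamma \cdot f)(x) = f(\gamma x)$, the characteristic polynomial of $\mathfrak m_g$ on $M^\ast_R$ is again $F(X)$: in any local trivialization of $M$ in which $\mathfrak m_g$ has matrix $A_g$, a direct check on dual basis elements shows that the action on $M^\ast_R$ in the dual basis has matrix $A_g^T$, and $\det(X - A_g) = \det(X - A_g^T)$. Since $M^\ast_R$ is again finitely generated and $R$-projective, Part (1) applied to $M^\ast_R$ yields ${\rm Fit}_{R[[\Gamma]]}(M^\ast_R) = (F(g)) = {\rm Fit}_{R[[\Gamma]]}(M)$.

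For Part (3), I will construct an $R[[\Gamma]]$-linear isomorphism $M^\ast \simeq M^\ast_R$, after which Part (2) finishes the proof. This uses the fact that $R = \zp[G]$, with $G$ a finite abelian group, is a symmetric Frobenius $\zp$-algebra: the $\zp$-linear form $\tau \colon R \to \zp$ extracting the coefficient of the identity element $1_G$ induces a non-degenerate symmetric $\zp$-bilinear form $(a,b) \mapsto \tau(ab)$, and hence an $R$-module isomorphism $R \simeq {\rm Hom}_{\zp}(R, \zp)$. Combined with the Hom-tensor adjunction this produces the map $f \mapsto \tau \circ f$ from ${\rm Hom}_R(M, R)$ to ${\rm Hom}_{\zp}(M, \zp)$, and direct verifications (using the $R$-linearity of $f$ to handle $G$-equivariance, and the definition of the covariant actions on both sides for $\Gamma$-equivariance) show that this map is $R[[\Gamma]]$-linear and an isomorphism.

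The hard part will be Part (1) for $M$ merely $R$-projective (not $R$-free): one must justify carefully that the Fitting ideal computed from a presentation by finitely generated projective (rather than free) $R[[\Gamma]]$-modules is principal and equal to the ideal generated by the determinant of the endomorphism $1 \otimes g - \mathfrak m_g \otimes 1$. This either requires invoking the extension of $\det_R$ to projective modules recalled at the start of \S\ref{appendix-fitting}, or a careful local-to-global bookkeeping over the finitely many maximal ideals of $R$ (with a check that the local computations of $F(g)$ glue to the correct global ideal). Once Part (1) is in place, Parts (2) and (3) are essentially formal.
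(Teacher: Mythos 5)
The paper does not give a proof of this proposition; it simply cites Proposition 4.1 and Corollary 4.2 of \cite{GP}. Your proposal supplies a self-contained argument, and the approach — the explicit two-term ``Koszul'' presentation, reduction from projective to free via semi-locality, the transpose trick for $M^\ast_R$, and the Frobenius-algebra isomorphism $\zp[G]\simeq{\rm Hom}_{\zp}(\zp[G],\zp)$ for Part~(3) — is the standard one and is correct. In particular your checks of the $R[[\Gamma]]$-linearity of the map $f\mapsto\tau\circ f$ in Part~(3), and the observation that the dual action has matrix $A_g^T$ so that $P_{M^\ast_R}(X)=P_M(X)$, are exactly right.

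One point in Part~(1) deserves to be made more explicit. Your justification for exactness of the presentation at the middle term passes from the relation $m\otimes g\equiv gm\otimes 1$ (which handles $\lambda\in R[g]$) to general $\lambda\in R[[\Gamma]]$ ``by density.'' Density of $R[g]$ alone does not suffice; you also need the image $I$ of the endomorphism $g\otimes 1-1\otimes\mathfrak m_g$ to be \emph{closed}, so that the limit of elements of $I$ stays in $I$. This is true, and the cheap way to see it is compactness: $R[[\Gamma]]\otimes_R M$ is a finitely generated module over the compact ring $R[[\Gamma]]$, hence compact, and the continuous image of a compact set is compact, hence closed. Alternatively one can argue by Weierstrass-type division by $F(g)$ (which, after identifying $R[[\Gamma]]\simeq R[[t]]$ with $g\mapsto 1+t$, is a monic, and in fact distinguished, polynomial in $t$, since $\Gamma$ is pro-$p$ and $R$ has residue characteristic $p$): every $\lambda$ differs by a multiple of $F(g)$ from a polynomial in $g$ of bounded degree, and $F(g)\otimes m\equiv 1\otimes F(\mathfrak m_g)m=0$ modulo $I$ by Cayley--Hamilton. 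Either way the gap is small, but as written the ``density'' step is incomplete. The rest of the plan — in particular the semi-local localization that replaces projective by free, so that the single maximal minor $\det(gI_n-A_g)=F(g)$ generates the Fitting ideal — is sound.
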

\begin{proof} See Proposition 4.1 and Corollary 4.2 in \cite{GP}.\end{proof}

\begin{proposition}\label{four-term-sequence-fitting}
Let $R:=\zp[G]$, for some finite abelian group $G$ and prime number $p$. Assume that we have an exact sequence
of finite $R$--modules
$$\xymatrix{0\ar[r] &A\ar[r] &P\ar[r] &P'\ar[r] &A'\ar[r] &0.
}$$ Further, assume that ${\rm pd}_{\zp[G]}P\leq 1$ and ${\rm pd}_{\zp[G]}P'\leq 1$. Then, we have
$${\rm Fit}_R(A^\vee)\cdot{\rm Fit}_R(P')={\rm Fit}_R(A)\cdot{\rm Fit}_R(P),$$
where the dual $A^\vee:={\rm Hom}(A, \qp/\zp)$ is endowed with the covariant $G$--action.
\end{proposition}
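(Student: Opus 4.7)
The plan is to split the four-term exact sequence, establish that every finite module in sight has projective dimension at most one, and then invoke the multiplicativity of Fitting ideals in short exact sequences.

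First, set $Q := \operatorname{im}(P \to P')$, which yields two short exact sequences of finite $R$-modules
\begin{equation*}
\mathcal E_1 : 0 \to A \to P \to Q \to 0, \qquad \mathcal E_2 : 0 \to Q \to P' \to A' \to 0.
\end{equation*}
I would then show that $A$, $Q$, and $A'$ all have projective dimension $\le 1$ over $R = \zp[G]$. Over such a group ring a finite module has pd $\le 1$ precisely when it is cohomologically trivial with respect to a Sylow $p$-subgroup $H$ of $G$ (Serre, \cite{Serre-Local}, Ch.~IX). Applying the long exact sequences of Tate $H$-cohomology to $\mathcal E_1$ and $\mathcal E_2$ and using the cohomological triviality of $P$ and $P'$ yields
\begin{equation*}
\widehat H^i(H, A) \cong \widehat H^{i-1}(H, Q) \cong \widehat H^{i-2}(H, A'), \quad i\in \z.
\end{equation*}
Since $A$ and $A'$ are finite and Tate cohomology over an abelian $p$-group is controlled by periodicity/length invariants, a rank argument forces all three groups to vanish.

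Next, I apply the Horseshoe Lemma to each short exact sequence. Choosing free presentations $0 \to R^{n_1} \xrightarrow{\phi_A} R^{n_1} \to A \to 0$ and $0 \to R^{n_2} \xrightarrow{\phi_Q} R^{n_2} \to Q \to 0$ (with equal ranks because $A, Q$ are finite of pd $\le 1$), one obtains from $\mathcal E_1$ a free presentation of $P$ whose matrix is block upper triangular with diagonal blocks $\phi_A$ and $\phi_Q$. Taking determinants gives
\begin{equation*}
\operatorname{Fit}_R(P) = \operatorname{Fit}_R(A) \cdot \operatorname{Fit}_R(Q),
\end{equation*}
and the symmetric argument for $\mathcal E_2$ yields $\operatorname{Fit}_R(P') = \operatorname{Fit}_R(Q) \cdot \operatorname{Fit}_R(A')$. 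Because $Q$ has pd $\le 1$, $\operatorname{Fit}_R(Q)$ is principal and generated by a non-zero-divisor, so cross-multiplying and cancelling it produces
\begin{equation*}
\operatorname{Fit}_R(A) \cdot \operatorname{Fit}_R(P') = \operatorname{Fit}_R(A') \cdot \operatorname{Fit}_R(P).
\end{equation*}
To close the argument, one replaces $\operatorname{Fit}_R(A)$ by $\operatorname{Fit}_R(A^\vee)$: Pontrjagin-dualizing a free presentation of a finite pd-$\le 1$ module $A$ gives a free presentation of $A^\vee$ of the same rank, and the prescribed \emph{covariant} $G$-action compensates for the contravariance of $\operatorname{Hom}$, so the two determinants generate the same ideal — an avatar of Proposition~\ref{fitting-calculation}(3) for finite modules.

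The principal obstacle is the propagation of cohomological triviality in the first step: the long exact sequences only relate \emph{shifted} Tate cohomology groups of $A$, $Q$, $A'$, and one must use finiteness of these modules (together with the periodicity available on cyclic $p$-subgroups) to conclude that all three are cohomologically trivial. After that, everything reduces to the Horseshoe Lemma plus bookkeeping.
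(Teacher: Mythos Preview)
Your approach has a genuine gap at the first step. The claim that $A$, $Q$, $A'$ inherit projective dimension $\le 1$ from $P$ and $P'$ is false in general, and the ``rank argument'' you sketch does not establish it. For a concrete counterexample, take $G=\z/p\z$ with generator $\sigma$, set $P=P'=\fp[G]$ (which has ${\rm pd}_{\zp[G]}=1$ via $0\to\zp[G]\xrightarrow{p}\zp[G]\to\fp[G]\to 0$), and let $P\to P'$ be multiplication by $\sigma-1$. The resulting four-term sequence is
\[
0\longrightarrow \fp \longrightarrow \fp[G]\xrightarrow{\;\sigma-1\;}\fp[G]\longrightarrow \fp\longrightarrow 0,
\]
with $A=A'=\fp$ carrying the trivial $G$-action; but $\widehat H^0(G,\fp)=\fp\ne 0$, so $\fp$ is not cohomologically trivial and has infinite projective dimension over $\zp[G]$. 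Your observation $\widehat H^i(H,A)\cong \widehat H^{i-2}(H,A')$ is correct but vacuous here: for cyclic $H$ Tate cohomology is already $2$-periodic, so this isomorphism imposes no constraint whatsoever.

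This gap is not cosmetic. Once $A$ fails to have ${\rm pd}\le 1$, both the Horseshoe multiplicativity step and the final identification ${\rm Fit}_R(A)={\rm Fit}_R(A^\vee)$ break down. Indeed, the asymmetry in the target identity --- $A^\vee$ on one side but $A'$ (not $A'^\vee$) on the other --- is precisely the signal that naive multiplicativity cannot be the whole story; if your argument worked, the formula would be symmetric. The paper does not supply its own proof but cites Lemma~5 of Burns--Greither, where the argument rests on the fact that $\zp[G]$ is \emph{relatively Gorenstein} over $\zp$. That duality is what produces the Pontrjagin dual on exactly one side and bypasses any need for $A$, $Q$, or $A'$ to have finite projective dimension.
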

\begin{proof}See Lemma 5, p.179 of \cite{Burns-Greither}. In loc.cit. this result is proved for
general finitely generated $\zp$--algebras $R$ which are $\zp$--free and relatively Gorenstein over $\zp$. Also, note
that ${\rm pd}_{\zp[G]}P=0$ if and only if $P=0$ (and similarly for $P'$), in which case the equality above is immediate.\end{proof}
\subsection{\bf Twisting}\label{appendix-twisting} In what follows, we fix an odd prime $p$, a field $k$ of characteristic different from $p$ and a Galois extension
$\ck/k$. We let $\cg:={\rm Gal}(\ck/k)$ and assume that the group of $p$--power roots of unity $\bmu_{p^\infty}$ is contained
in $\ck$. As usual, we denote by $c_p:\cg\to\zp^\times={\rm Aut}(\bmu_{p^\infty})$ the $p$--cyclotomic character
of $\cg$ and decompose $c_p:=\omega_p\cdot\kappa_p$ in its tame (Teichm\"uller) and wild components, $\omega_p:\cg\to\bmu_{p-1}$ and $\kappa_p:\cg\to(1+p\zp)$, respectively.
For simplicity, we let $c:=c_p$, $\omega:=\omega_p$ and $\kappa:=\kappa_p$.

Let $\co$ be a finite, integral extension of $\zp$. We let $Q(\co)$ denote its field of fractions. We consider the unique continuous isomorphisms
of  $\co$--algebras
$${t_n}: \co[[\cg]]\overset\sim\longrightarrow \co[[\cg]],\qquad \iota: \co[[\cg]]\overset\sim\longrightarrow \co[[\cg]]^{\rm op}$$
satisfying $t_n(g)=c_p(g)^n\cdot g$ and $\iota(g)=g^{-1}$, for all $g\in \cg$ and all $n\in\z$.
For any $\co[[\cg]]$--module $M$ and any $n\in\z$, we let $M(n)$ denote the usual Tate twist of $M$ by $c_p^n$. More precisely, $M(n):=M$ with a new $\co[[\cg]]$--action given by
$\lambda\ast x:=t_n(\lambda)\cdot x$, for all $\lambda\in \co[[\cg]]$ and $x\in M$. Also, we let $M^\ast:={\rm Hom}_\co(M, \co)$ and view it as an $\co[[\cg]]$--module with the contra-variant $\cg$--action
given by $g\cdot f(x)=f(\iota(g)\cdot x)$, for all $f\in M^\ast$, $x\in M$ and $g\in\cg$. Throughout, $\co$ and $Q(\co)$ are viewed as a $\co[[\cg]]$--modules
with the trivial $\cg$--action. Also, if $M$ and $N$ are $\co[[\cg]]$--modules, the $M\otimes_\co N$ is viewed as an $\co[[\cg]]$--module with the diagonal $\cg$--action.

\begin{lemma}\label{twisting-Fitting}
Assume that $\cg$ is abelian and $M$ is a finitely presented $\co[[\cg]]$--module. Then, for all $n\in\z$, the following hold.
\begin{enumerate}
\item ${\rm Ann}_{\co[[\cg]]}(M(n))=t_{-n}\left({\rm Ann}_{\co[[\cg]]}(M) \right)$;
\item ${\rm Fit}_{\co[[\cg]]}(M(n))=t_{-n}\left({\rm Fit}_{\co[[\cg]]}(M) \right)$;
\item  ${\rm pd}_{\co[[\cg]]}M={\rm pd}_{\co[[\cg]]} M(n)$;
\item If $k:=\Bbb F_q$ is the finite field of $q$ elements ($p\nmid q$), $\sigma_q\in\cg$ is the $q$--power Frobenius
automorphism of $\ck$, and $M:=T_p(\ck^\times)$, then
$${\rm Fit}_{\zp[[\cg]]}(M(n))={\rm Ann}_{\zp[[\cg]]}(M(n))=(1-q^n\cdot\sigma_q^{-1}).$$
\end{enumerate}
\end{lemma}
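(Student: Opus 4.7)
For part (1), the statement is immediate from unpacking the definition: an element $\lambda \in \co[[\cg]]$ kills $M(n)$ if and only if $\lambda \ast x = t_n(\lambda) \cdot x = 0$ for every $x \in M$, i.e.\ if and only if $t_n(\lambda) \in {\rm Ann}_{\co[[\cg]]}(M)$, i.e.\ if and only if $\lambda \in t_n^{-1}({\rm Ann}_{\co[[\cg]]}(M)) = t_{-n}({\rm Ann}_{\co[[\cg]]}(M))$, since $t_n \circ t_{-n} = {\rm id}$.

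For parts (2) and (3), the key observation is that $M(n)$ is precisely the base change $M \otimes_{\co[[\cg]],\,t_{-n}} \co[[\cg]]$ along the $\co$-algebra automorphism $t_{-n}$. Indeed, under the bijection $m \leftrightarrow m \otimes 1$, the scalar action on the tensor product reads $s \cdot (m \otimes 1) = m \otimes s = t_n(s)\, m \otimes 1$, which matches the defining identity $\lambda \ast x = t_n(\lambda) \cdot x$ of $M(n)$. With this identification, part (2) follows at once from the base-change formula for Fitting ideals stated in \eqref{fitt-base-change} applied to the ring isomorphism $t_{-n}$, since the image of an ideal under a ring automorphism is again an ideal. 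For part (3), because $t_{-n}$ is a ring isomorphism the base change functor is an autoequivalence of the module category: a finitely generated projective resolution of $M$ pulls back to one of $M(n)$ of the same length, and applying the inverse twist $t_n$ recovers the original, so the two projective dimensions agree.

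For part (4), first observe that since $\bmu_{p^\infty} \subseteq \ck$ and $p \nmid q$, the only $p$-power torsion in $\ck^\times$ is $\bmu_{p^\infty}$, so $M = T_p(\ck^\times) \simeq T_p(\bmu_{p^\infty}) \simeq \zp(1)$ as $\zp[[\cg]]$-modules. The Frobenius $\sigma_q$ topologically generates $\cg$ and acts on $\zp(1)$ by multiplication by $c_p(\sigma_q) = q$, so $M$ is cyclic and its annihilator in $\zp[[\cg]]$ is the kernel of the continuous $\zp$-algebra surjection $\zp[[\cg]] \twoheadrightarrow \zp$, $\sigma_q \mapsto q$. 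This kernel is principal, generated by $(\sigma_q - q) = \sigma_q \cdot (1 - q\sigma_q^{-1})$, and the Fitting ideal coincides with the annihilator because $M$ is cyclic. Applying part (1) (or equivalently part (2)) to this together with the identity $t_{-n}(\sigma_q^{-1}) = q^n \sigma_q^{-1}$ produces the claimed expression for ${\rm Fit}_{\zp[[\cg]]}(M(n)) = {\rm Ann}_{\zp[[\cg]]}(M(n))$.

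The only delicate bookkeeping is pinning down the correct direction of twisting in the identification $M(n) = M \otimes_{\co[[\cg]],\, t_{-n}} \co[[\cg]]$ used in part (2): the rest of the argument is essentially formal once this sign has been fixed, and part (4) becomes a one-line application of part (1) to the explicit annihilator of $\zp(1)$.
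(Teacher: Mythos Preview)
Your argument is correct and essentially coincides with the paper's: parts (1)--(2) are a direct unwinding of the twist definition (the paper defers to \cite{Popescu-CS} for this), part (3) is proved in the paper via the isomorphism $\zp[[\cg]]\simeq\zp[[\cg]](n)$, which is exactly your base-change autoequivalence, and part (4) is obtained in the paper by first computing ${\rm Ann}_{\zp[[\cg]]}(\zp)=(1-\sigma_q^{-1})$, noting $T_p(\ck^\times)=\zp(1)$, and then applying part (1)---the same strategy you use, starting one twist over. One cosmetic caution on (4): if you actually carry your last step through, $t_{-n}$ applied to $(1-q\,\sigma_q^{-1})$ yields $(1-q^{\,n+1}\sigma_q^{-1})$, not $(1-q^{\,n}\sigma_q^{-1})$; the paper's own computation produces the same exponent, so this is an off-by-one in the displayed formula (consistent with its use in Lemma~\ref{delta}, where only the untwisted case is needed) rather than a defect in your method.
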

\begin{proof} For the easy proof of (1) and (2), see Lemma 3.1 in \cite{Popescu-CS}. Part (3) is an immediate consequence of the existence of a unique
$\zp[[\cg]]$--module isomorphism $\zp[[\cg]]\simeq\zp[[\cg]](n)$ which sends $g\to c(g)^ng$, for all $g\in\cg$. Part (4) is derived from part (2) as follows.
Under the hypotheses of part (4), $\cg$ is a pro--cyclic group of (topological) generator $\sigma_q$. Since $\zp$ is a cyclic
$\zp[[\cg]]$--module, (\ref{ann-fitt}) gives
$$\quad {\rm Fit}_{\zp[[\cg]]}(\zp)={\rm Ann}_{\zp[[\cg]]}(\zp)=(1-\sigma_q^{-1})\,.$$
Now, note that $T_p(\ck^\times)=T_p(\bmu_{p^\infty})=\zp(1)$ and $c(\sigma_q)=q$, and apply (1) with $M:=\zp$.
\end{proof}

Throughout the rest of this subsection, we will assume that $\ck$ is a finite, abelian extension of Galois group $G$ of the cyclotomic
$\zp$--extension $\ck'$ of $k$ and that $\cg$ is abelian. Consequently, there is a non-canonical group isomorphism $\cg\simeq G\times\Gamma$,
where $\Gamma\simeq\zp$. We fix a topological generator $\gamma$ of $\Gamma$. We identify $\co[[\cg]]$, $\co[G][[\Gamma]]$ and $\co[G][[t]]$ via the obvious $\co[G]$--algebra isomorphisms
$$\co[[\cg]]\simeq \co[G][[\Gamma]]\simeq \co[G][[t]], \qquad \gamma\to (t+1).$$
We let $K:=\ck^\Gamma$ and identify $G$ and $\Gamma$ with $G(K/k)$ and $G(\ck'/k)$ via the usual Galois--restriction isomorphisms.
Note that since $\bmu_{p^\infty}\subseteq\ck$, we have $\bmu_p\subseteq K$. Consequently, $\omega$ and $\kappa$ factor through $G$ and $\Gamma$, respectively.
For simplicity, we assume that $\co$ contains the values of all the irreducible $\cp$--valued characters of $G$. We denote by $\widehat G(\cp)$ the set
of all $\cp$--valued irreducible characters of $G$ and, for all $\chi\in\widehat G(\cp)$, we let $e_\chi:=1/|G|\sum_{\sigma\in G}\chi(\sigma)\cdot\sigma^{-1}$ denote the idempotent associated to $\chi$ in $Q(\co)[G]$.
Each $\chi$ as above will be extended to the unique $Q(\co)[X]$-algebra and $Q(\co)\otimes_\co\co[[\Gamma]]$--algebra morphisms
$$\chi: Q(\co)[G][X]\to Q(\co)[X], \qquad \chi: Q(\co)\otimes_\co\co[[\cg]]\to Q(\co)\otimes_\co\co[[\Gamma]]$$
which send $g\to \chi(g)$, for all $g\in G$, respectively. Also, for a polynomial $P\in Q(\co)[G][X]$, we denote by $P(\gamma)$ (respectively $P(t+1)$) its image
via the unique $Q(\co)[G]$--algebra morphism $Q(\co)[G][X]\to Q(\co)\otimes_\co\co[[\cg]]$ which sends $X\to\gamma$ (respectively $X\to (t+1)$.)

Next, we let $\cl$ denote an $\co[[\cg]]$--module, which is free of finite rank as an $\co$--module. We consider the following $Q(\co)$--vector spaces
$$V:=Q(\co)\otimes_\co\cl, \qquad V^\ast:={\rm Hom}_{Q(\co)}(V, Q(\co))\simeq Q(\co)\otimes_\co\cl^\ast,$$
endowed with the usual $Q(\co)\otimes_\co\co[[\cg]]$--module structures.
We consider the following polynomials in $Q(\co)[G][X]$ and $Q(\co)[X]$, respectively:
$$P_V(X):={\rm det}_{Q(\co)[G]}(X-\mathfrak m_\gamma\mid V), \quad P_{V, \chi}(X):=\chi(P_V(X))={\rm det}_{Q(\co)}(X-\mathfrak m_\gamma\mid e_\chi V),$$
where $\mathfrak m_\gamma$ denotes the automorphism of $V$ and $e_\chi V$ given by multiplication with $\gamma$, for all $\chi\in\widehat G(\cp)$.
 Observe that, for all $\chi\in\widehat G(\cp)$ and all $n\in\z$, we have
 \begin{equation}\label{twist-poly}
 \chi(t_n(P_{V}(\gamma)))=P_{V, \chi\omega^n}(\kappa(\gamma)^n\gamma), \quad \chi((\iota\circ t_n)(P_{V}(\gamma)))=P_{V, \chi^{-1}\omega^n}(\kappa(\gamma)^n\gamma^{-1}).\end{equation}
\begin{lemma}\label{twist-poly-lemma} The following hold for all $n\in\z$ and all $\chi\in\widehat G(\cp)$.
\begin{enumerate}
\item If $\cl$ is a projective $\co[G]$--module, then $P_{V(n)}(X),\, P_{V^\ast(n)}(X)$ are monic polynomials in $\co[G][X]$.
\item $P_{V(n), \chi}(X),\, P_{V*(n), \chi}$ are monic polynomials in $\co[X]$.
\item $ P_{V(n), \chi}(\gamma)\sim \chi(t_{-n}(P_{V}(\gamma)))$ and $P_{V^\ast(n), \chi}(\gamma)\sim \chi((\iota\circ t_n)(P_{V}(\gamma)))$, where ``$\sim$'' denotes association in divisibility in the ring $\co[[\Gamma]]$.
\end{enumerate}
\end{lemma}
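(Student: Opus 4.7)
The plan is to dispose of parts (1) and (2) by integrality arguments that do not require the projectivity hypothesis beyond what is stated, and then to derive part (3) by a direct eigenvalue computation. The main difficulty will lie in a unit check that arises when treating $V^\ast(n)$ in part (3); I will flag it at the end.

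For part (1), I would exploit the assumption that $\cl$ is $\co[G]$--projective: since the twist by $c^n$ affects the $G$--action through the character $\omega^n\colon G \to \co^\times$ (taking values in units because $p \nmid p-1$ and $\co \supseteq \bmu_{p-1}$), the assignment $g \mapsto \omega(g)^n g$ furnishes an $\co[G]$--module isomorphism $\co[G](n) \cong \co[G]$, so $\cl(n)$ is again $\co[G]$--projective. Abelianness of $\cg$ makes $\mathfrak m_\gamma$ a $\co[G]$--linear endomorphism of $\cl(n)$, so $\det_{\co[G]}(X - \mathfrak m_\gamma \mid \cl(n)) \in \co[G][X]$ is defined and monic by Remark \ref{monicity-remark}; base change via \eqref{det-base-change} identifies it with $P_{V(n)}(X)$. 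The $V^\ast(n)$ case is handled identically, since $\cl^\ast$ is $\co[G]$--projective whenever $\cl$ is.

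For part (2), the polynomial $f(X) := \det_\co(X - \mathfrak m_\gamma \mid \cl(n))$ lies in $\co[X]$ and is monic, since $\mathfrak m_\gamma$ acts $\co$--linearly on the $\co$--lattice $\cl(n)$. Over $Q(\co)$, the $G$--isotypic decomposition $V(n) = \bigoplus_\chi e_\chi V(n)$ splits this into $f(X) = \prod_\chi P_{V(n),\chi}(X)$ in $Q(\co)[X]$. Each monic factor has roots (in $\overline{Q(\co)}$) that are roots of $f \in \co[X]$, hence integral over $\co$; since $\co$ is a DVR, hence integrally closed, the elementary symmetric functions in these roots lie in $\co$, so $P_{V(n),\chi}(X) \in \co[X]$. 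The same reasoning handles $P_{V^\ast(n),\chi}(X)$; applying the result with $n=0$ also yields $P_{V,\chi}(X) \in \co[X]$ for every $\chi$, which I will need in part (3).

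Part (3) is by eigenvalue computation. For $V(n)$, the identification $e_\chi V(n) = e_{\chi\omega^{-n}} V$ as $Q(\co)$--subspaces, combined with $c(\gamma) = \kappa(\gamma)$ (as $\omega$ factors through $G$), yields $P_{V(n),\chi}(X) = \kappa(\gamma)^{nr} P_{V, \chi\omega^{-n}}(X/\kappa(\gamma)^n)$, where $r := \dim_{Q(\co)} e_{\chi\omega^{-n}} V$. Evaluating at $X = \gamma$ and comparing with \eqref{twist-poly} gives $P_{V(n),\chi}(\gamma) = \kappa(\gamma)^{nr}\cdot\chi(t_{-n}(P_V(\gamma)))$; since $\kappa(\gamma) \in 1+p\zp \subset \co^\times$, the scalar is a unit in $\co[[\Gamma]]$ and the association follows. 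The $V^\ast(n)$ case is the main obstacle: using $e_\chi V^\ast(n) = (e_{\chi^{-1}\omega^n} V)^\ast$ and the fact that duality preserves characteristic polynomials, an analogous computation with $\beta_1,\ldots,\beta_r$ denoting the eigenvalues of $\mathfrak m_\gamma$ on $e_{\chi^{-1}\omega^n} V$ produces
\begin{equation*}
\frac{P_{V^\ast(n),\chi}(\gamma)}{\chi((\iota\circ t_n)(P_V(\gamma)))} = \frac{(-\gamma)^r}{\prod_{i=1}^r \beta_i}.
\end{equation*}
The hard point is to show $\prod_i \beta_i \in \co^\times$, for then $\gamma \in \co[[\Gamma]]^\times$ completes the argument. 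I would argue: $\mathfrak m_\gamma$ is an $\co$--module automorphism of $\cl$ (since $\gamma^{-1} \in \co[[\Gamma]]$ also acts on $\cl$), so $\det_\co(\mathfrak m_\gamma \mid \cl) \in \co^\times$; on the other hand, this determinant factors over $Q(\co)$ as $\prod_\chi \det_{Q(\co)}(\mathfrak m_\gamma \mid e_\chi V)$, and each factor lies in $\co$ by part (2). In a DVR a product of ring elements is a unit only when every factor is, so the factor $\pm\prod_i \beta_i = \det_{Q(\co)}(\mathfrak m_\gamma \mid e_{\chi^{-1}\omega^n}V)$ is itself a unit in $\co$, as required.
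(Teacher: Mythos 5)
Your proof is correct and reaches the same conclusion by a somewhat different route. In part (1) you show $\cl(n)$ and $\cl^\ast(n)$ remain $\co[G]$--projective via the explicit $\co[G]$--linear isomorphism $\co[G]\cong\co[G](n)$ given by twisting through $\omega^n$ (whose values lie in $\co^\times$), whereas the paper invokes $G$--cohomological triviality and its stability under $\otimes_\co$ and duality; both are standard and correct. In part (2) you argue by integral closure of $\co$; the paper instead identifies $P_{V(n),\chi}(X)$ with ${\rm det}_\co$ of $\mathfrak m_\gamma$ on the explicit $\mathfrak m_\gamma$--stable free $\co$--lattice $e_\chi\cl(n)\subseteq V(n)$, which gives integrality directly and, more importantly, also produces a basis on which $\mathfrak m_\gamma$ acts by a matrix in ${\rm GL}(\co)$. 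That is precisely the unit fact the paper uses in part (3), where you instead obtain $\prod_i\beta_i\in\co^\times$ more indirectly: the global unit ${\rm det}_\co(\mathfrak m_\gamma\mid\cl)$ factors over the characters as a product of elements of $\co$ (by your part (2) with $n=0$), and in a DVR a product of ring elements is a unit only when every factor is. Both routes are sound; the paper's is a bit tighter because the lattice $e_\chi\cl$ does double duty in (2) and (3). One small slip in your prose: ``duality preserves characteristic polynomials'' is not literally true with the contravariant convention in force here, since the eigenvalues invert under $f\mapsto f\circ\mathfrak m_\gamma^{-1}$; your subsequently displayed quotient is nevertheless exactly right, so this is only a description issue, not a gap in the argument.
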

\begin{proof} Fix an $n\in\z$ and note that we have $Q(\co)\otimes_\co\co[[\cg]]$--module isomorphisms
\begin{equation}\label{isos}
V(n)\simeq Q(\co)[G]\otimes_{\co[G]}\cl(n), \qquad V^\ast(n)\simeq Q(\co)[G]\otimes_{\co[G]}\cl^\ast(n).
\end{equation}
In order to prove (1), first note that if $\cl$ is $\co[G]$--projective, then the $\co[G]$--modules $\cl(n)\simeq\cl\otimes_{\co}\co(n)$ and
$\cl^\ast(n)\simeq\cl(-n)^\ast$ are projective as well. Indeed, since the modules in question are $\co$--free and $\co$ is a PID, their $\co[G]$--projectivity is equivalent to their $G$--cohomological triviality
(See \cite{Serre-Local}, Ch. IX, \S5, Theorem 7 for $\z[G]$--modules. The same proof works for $\co[G]$--modules, for a general PID $\co$.)
Now, apply the Corollary to Proposition 1 in \cite{Serre-Local}, Ch. IX, \S3 to arrive at the desired result.
(Loc.cit. deals with the case of $\z[G]$--modules. The same argument works for $R[G]$--modules, where $R$ is a PID, in particular $R=\co$.) Now, part (1) follows by applying \eqref{det-base-change} to conclude that
$$P_{V(n)}(X)={\rm det}_{\co[G]}(X-\mathfrak m_\gamma\mid \cl(n)), \qquad P_{V^\ast(n)}(X)={\rm det}_{\co[G]}(X-\mathfrak m_\gamma\mid \cl^\ast(n))\,.$$
The monicity follows from Remark \ref{monicity-remark}.

Part (2) follows similarly: First, one uses \eqref{isos} to conclude that there are isomorphisms of $Q(\co)\otimes_\co[[\Gamma]]$--modules
$e_\chi \cdot V(n)\simeq Q(\co)\otimes_\co e_\chi\cdot\cl(n)$ and $e_\chi\cdot V^\ast(n)\simeq Q(\co)\otimes_\co e_\chi\cdot\cl^\ast(n)$, where $e_\chi\cdot\cl(n)$ and $e_\chi\cdot\cl^\ast(n)$
are viewed as (necessarily free) $\co$--submodules of maximal rank in $e_\chi\cdot V(n)$ and $e_\chi\cdot V^\ast(n)$, respectively. Then, \eqref{det-base-change} gives the equalities
$$P_{V(n), \chi}(X)={\rm det}_{\co}(X-\mathfrak m_\gamma\mid e_\chi\cdot\cl(n)), \qquad P_{V^\ast(n), \chi}(X)={\rm det}_{\co}(X-\mathfrak m_\gamma\mid e_\chi\cdot\cl^\ast(n)),$$
which conclude the proof of part (2).

Next, we prove the second ``$\sim$'' in part (3). The first ``$\sim$'' is proved similarly. For every $\chi\in\widehat G(\cp)$, we fix an $\co$--basis $\mathbf x_\chi$ of $e_\chi\cl$. Note that $\mathbf x_\chi$ is also a $Q(\co)$--basis of $e_\chi V$. Let $A_{\gamma, \chi}$ be the matrix of
$\mathfrak m_\gamma$ restricted to $e_\chi V$ with respect to this basis. Then,  $A_{\gamma, \chi}\in {\rm GL}_{m_\chi}(\co)$, where $m_\chi={\rm rk}_\co e_\chi\cl={\rm dim}_{Q(\co)}e_\chi V$. It is easily proved that for all $n$ and $\chi$ as above, the matrix of $\mathfrak m_\gamma$ restricted to $e_\chi\cdot V^\ast(n)=(e_{\chi^{-1}\omega^n} V)^\ast$ with respect to the basis $\mathbf x_{\chi^{-1}\omega^n}^\ast$ (dual basis of $\mathbf x_{\chi^{-1}\omega^n}$) is $\kappa(\gamma)^n\cdot(A_{\gamma, \chi^{-1}\omega^n}^{-1})^t$,
where ${}^t$ stands for transposition. Consequently, \eqref{twist-poly} combined with the fact that $\det(A_{\gamma, \chi^{-1}\omega^n})\in \co^\times$ and $\gamma\in \co[[\Gamma]]^\times$ imply that the following hold in $\co[[\Gamma]]$:
\begin{eqnarray}
  \nonumber P_{V^\ast(n), \chi}(\gamma) &=&\det(\gamma\cdot I_{m_{\chi^{-1}\omega^n}}-\kappa(\gamma)^n\cdot(A_{\gamma, \chi^{-1}\omega^n}^{-1})^t) \\
  \nonumber &\sim & \det(\kappa(\gamma)^n\gamma^{-1}\cdot I_{m_{\chi^{-1}\omega^n}}-A_{\gamma, \chi^{-1}\omega^n})\\
  \nonumber &=& \chi((\iota\circ t_n)(P_{V}(\gamma)))
\end{eqnarray} This concludes the proof of the Lemma.
\end{proof}
\subsection{\bf Equivariant Power Series.}\label{appendix-power-series} (Compare with \S2 in \cite{Burns-Greither}.)  Let $G$ be an arbitrary finite abelian group, $p$ a prime, and $\co$ a finite, integral extension of $\zp$ which contains the values of all characters $\chi\in\widehat G(\cp)$. We let $\pi$ be a uniformizer of $\co$. We identify the set of $\zp$--algebra morphisms ${\rm Hom}(\zp[G], \co)$ with the set
$\chi\in \widehat G(\cp)$ of $\cp$--valued characters of $G$ in the obvious manner. We let $I$ denote a radical ideal of $\zp[G]$ of pure codimension $1$.
It is easily seen that this means that $I=\cap_{\chi\in\mathcal F}\ker(\chi)$, for
some set $\mathcal F\subseteq {\rm Hom}(\zp[G], \co)$. In what follows, we let $\fa:=\zp[G]/I$.
Obviously, $\mathcal F$ coincides with the set of $\zp$--algebra morphisms ${\rm Hom}(\fa, \co)$.
Also, we have an injective $\zp$--algebra morphism
$$\fa\longrightarrow \oplus_{\chi}\, \co,\qquad x\longrightarrow (\chi(x))_{\chi\in {\rm Hom}(\fa, \co)}.$$
For every $\chi\in{\rm Hom}(\fa, \co)$, we abuse notation once again and
let $\chi$ also denote the unique $\fa[[t]]$--algebra morphism
$$\chi: \fa[[t]]\longrightarrow \co[[t]],$$ which sends $x\to\chi(x)$, for all $x\in \fa.$
\begin{definition}\label{mu-power-series}
\begin{enumerate}
\item The $\mu$--invariant $\mu(f)$ of a power series $f\in \co[[t]]$
is the largest exponent $r\in\Bbb Z_{\geq 0}$, such that $f\in\pi^r\co[[t]]$.
\item A power series $F\in \fa[[t]]$ is said to have $\mu$--invariant equal to $0$ (and we write $\mu(F)=0$) if
$$\mu(\chi(F))=0, \qquad\text{ for all }\chi\in{\rm Hom}(\fa, \co).$$
\item A polynomial $F\in \fa[t]$ is said to be Weierstrass if $\chi(F)$ is a Weierstrass polynomial in $\co[t]$
(i.e. $\chi(F)$ is monic and all its non--leading coefficients are divisible by $\pi$), for all $\chi$ as above.
\end{enumerate}
\end{definition}
\smallskip

\begin{remark} The rings $\fa$ considered above are the most general reduced quotients of $\zp[G]$ of pure Krull dimension $1$.
It is easy to see that any ring $\fa$ as above is {\rm admissible}, in the sense of \cite{Burns-Greither}, \S2.
Also, it is easy to prove that our definition of power series $F\in\fa[[t]]$ of $\mu$--invariant equal to $0$ is equivalent with
the definition in loc.cit., for all rings $\fa$ as above.
\end{remark}


In the following, if $R$ is a commutative ring with $1$, and $f, g\in R$, we write ``$f\sim g$ in $R$'' to mean that $f$ and $g$ are associated in divisibility in $R$,
i.e. there exists a unit $u\in R^\times$, such that $f=u\cdot g$.
\begin{lemma}\label{associated-in-divisibility} Assume that $F, \Theta\in \fa[[t]]$, such that $\mu(F)=\mu(\Theta)=0$ and
$$\chi(F)\sim\chi(\Theta) \text{ in } \co[[t]], \qquad \text{ for all }\,\chi\in{\rm Hom}(\fa, \co).$$
Then, we have $F\sim \Theta$ in $\fa[[t]]$.
\end{lemma}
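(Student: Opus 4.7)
The strategy is to invoke an equivariant Weierstrass preparation theorem for $\fa[[t]]$ to reduce the comparison of power series to a comparison of polynomials, which can then be carried out character by character.

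First, since $\mu(F) = \mu(\Theta) = 0$, I would apply the equivariant Weierstrass preparation theorem for admissible rings (of the type developed in \cite{Burns-Greither}) to obtain decompositions $F = U_F \cdot W_F$ and $\Theta = U_\Theta \cdot W_\Theta$, where $U_F, U_\Theta \in \fa[[t]]^\times$ are units and $W_F, W_\Theta \in \fa[t]$ are polynomials with the property that, for every $\chi \in \mathrm{Hom}(\fa, \co)$, the images $\chi(W_F), \chi(W_\Theta)$ are distinguished (Weierstrass) polynomials in $\co[t]$. Note that $F$ and $\Theta$ are non zero--divisors in $\fa[[t]]$, since $\chi(F)\ne 0$ and $\chi(\Theta)\ne 0$ in the domain $\co[[t]]$ for every $\chi$, so $U_F, U_\Theta$ are genuine units and the decompositions are well posed.

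Next, the hypothesis $\chi(F) \sim \chi(\Theta)$ in $\co[[t]]$, combined with the fact that $\chi(U_F)$ and $\chi(U_\Theta)$ lie in $\co[[t]]^\times$, forces $\chi(W_F) \sim \chi(W_\Theta)$ in $\co[[t]]$. The uniqueness part of the classical Weierstrass preparation theorem in the complete discrete valuation ring $\co$ then implies that $\chi(W_F) = \chi(W_\Theta)$ in $\co[t]$ for every $\chi$ (two associated distinguished polynomials in $\co[[t]]$ must coincide, as they are both monic of the same degree $\lambda(\chi(F)) = \lambda(\chi(\Theta))$). Now, the $\zp$--algebra embedding $\fa \hookrightarrow \bigoplus_{\chi} \co$ defined by $a \mapsto (\chi(a))_\chi$ extends coefficient-wise to an injection $\fa[t] \hookrightarrow \bigoplus_{\chi} \co[t]$; hence $W_F = W_\Theta$ in $\fa[t]$. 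Therefore $F = U_F \cdot W_F = (U_F U_\Theta^{-1}) \cdot \Theta$, and since $U_F U_\Theta^{-1} \in \fa[[t]]^\times$, we conclude $F \sim \Theta$ in $\fa[[t]]$.

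The principal subtlety --- and the main obstacle to a self-contained proof --- is setting up the equivariant Weierstrass decomposition over the ring $\fa$, which, although complete and semi-local, is typically not local and has characters $\chi$ for which the $\lambda$--invariants $\lambda(\chi(F))$ may vary. The right formalism must accommodate this variation by allowing $W_F$ to be a polynomial whose leading coefficient need not be a unit of $\fa$ itself but becomes a unit of $\co$ under those $\chi$ achieving the maximal $\lambda$. This refined preparation theorem for admissible rings is precisely what is established in \cite{Burns-Greither}, and once it is in hand, the remainder of the argument is formal.
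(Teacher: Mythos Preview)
Your proof is correct and follows essentially the same route as the paper's own argument: both invoke the equivariant Weierstrass preparation theorem from \cite{Burns-Greither} to write $F=U_FW_F$ and $\Theta=U_\Theta W_\Theta$, then use the classical uniqueness of Weierstrass preparation in $\co[[t]]$ together with the injection $\fa\hookrightarrow\bigoplus_\chi\co$ to deduce $W_F=W_\Theta$. Your closing remarks about varying $\lambda$--invariants correctly identify the subtlety that the cited preparation theorem is designed to handle.
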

\begin{proof} Since $\mu(F)=\mu(\Theta)=0$, Proposition 2.1 in \cite{Burns-Greither} (the equivariant Weierstrass preparation theorem) shows that there exist unique Weierstrass polynomials $f, g\in \fa[t]$ and units $u, v\in \fa[[t]]^\times$, such that
$$F=u\cdot f, \quad \Theta= v\cdot \theta.$$
This implies that we have Weierstrass decompositions in $\co[[t]]$
$$\chi(F)=\chi(u)\cdot\chi(f), \quad \chi(\Theta)= \chi(v)\cdot\chi(\theta),$$
with $\chi(u), \chi(v)\in \co[[t]]^\times$ and $\chi(f), \chi(\theta)$ Weierstrass polynomials in $\co[t]$, for all $\chi\in{\rm Hom}(\fa, \co)$. However, since the Weierstrass decomposition is unique
in $\co[[t]]$ (according to the classical Weierstrass preparation theorem), our hypotheses combined with the above equalities imply  that
$$\chi(f)=\chi(\theta), \qquad \text{ for all }\, \chi\in{\rm Hom}(\fa, \co).$$
Consequently, we have $f=\theta$ and $F=uv^{-1}\cdot \Theta$. Therefore, $F\sim\Theta$ in $\fa[[t]]$.
\end{proof}

Now, let us assume that $p$ is odd and that $G$ has an element $j$ of order $2$. Let $\zp[G]^-:=\zp[G]/(1+j)$ and call
a character $\chi\in\widehat G(\cp)$ odd if $\chi(j)=-1$. Clearly, $I:=(1+j)=\cap_{\chi}\ker(\chi)$, where
$\chi$ runs through the odd characters of $G$. Therefore, $\fa:=\zp[G]^-$ is a reduced quotient of $\zp[G]$ of pure Krull dimension $1$. Therefore, the following is a direct consequence of the above Lemma.


\begin{corollary}\label{corollary-association} Let $F, \Theta\in\zp[G]^-[[t]]$, such that
$$\mu(\chi(F))=\mu(\chi(\Theta))=0, \qquad\chi(F)\sim\chi(\Theta) \text{ in $\co[[t]]$,}$$
for all odd $\chi\in\widehat G(\cp)$.
Then, we have $F\sim\Theta$ in $\zp[G]^-[[t]]$.
\end{corollary}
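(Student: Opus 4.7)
The plan is to deduce this corollary as a direct specialization of the more general Lemma \ref{associated-in-divisibility}, applied to the admissible quotient $\fa := \zp[G]/(1+j) = \zp[G]^-$. Essentially all the work is hidden in the lemma (which in turn rests on the equivariant Weierstrass preparation theorem of \cite{Burns-Greither}), so the proposal here is really a verification of hypotheses.

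First I would check that $\fa = \zp[G]^-$ fits into the framework of Lemma \ref{associated-in-divisibility}, i.e.\ that $(1+j)$ is a radical ideal of $\zp[G]$ of pure codimension $1$. For any $\chi \in \widehat G(\cp)$, extended in the obvious way to a $\zp$-algebra morphism $\zp[G] \to \co$, one has $\chi(1+j) = 1 + \chi(j)$, which vanishes precisely when $\chi(j) = -1$, i.e.\ when $\chi$ is odd. Consequently
\[
(1+j) \;=\; \bigcap_{\chi \text{ odd}} \ker(\chi),
\]
which shows that $(1+j)$ is radical of pure codimension $1$, and that the canonical map ${\rm Hom}_{\zp\text{-alg}}(\fa,\co) \to \widehat G(\cp)$ identifies its source with the subset of odd characters of $G$. (Implicitly one uses here that $p$ is odd, so that $(1\pm j)/2$ are well-defined orthogonal idempotents in $\zp[G]$ and the splitting $\zp[G] = \zp[G]^+ \oplus \zp[G]^-$ is honest.)

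Next I would translate the hypotheses of the corollary into the language of $\fa$. Under the identification just described, the assumption $\mu(\chi(F)) = \mu(\chi(\Theta)) = 0$ for every odd $\chi \in \widehat G(\cp)$ is exactly the statement that $\mu(F) = \mu(\Theta) = 0$ in the sense of Definition \ref{mu-power-series}(2) applied to $\fa[[t]] = \zp[G]^-[[t]]$; and the associations $\chi(F) \sim \chi(\Theta)$ in $\co[[t]]$ for every odd $\chi$ are exactly the associations appearing in the hypothesis of Lemma \ref{associated-in-divisibility}. Applying that lemma then yields $F \sim \Theta$ in $\fa[[t]] = \zp[G]^-[[t]]$, which is the desired conclusion.

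There is no real obstacle: the only verification required is the pointwise matching of characters of $\zp[G]^-$ with odd characters of $G$, which is immediate from $\chi(j) = \pm 1$ and the hypothesis that $p$ is odd. The nontrivial content---the use of the equivariant Weierstrass preparation theorem to upgrade character-by-character divisibility of two $\mu=0$ power series into honest associate-ness in the ambient equivariant power series ring---has already been carried out in Lemma \ref{associated-in-divisibility}.
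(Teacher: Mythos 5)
Your argument is correct and is essentially identical to the paper's: the paper observes that $I:=(1+j)=\bigcap_{\chi\text{ odd}}\ker(\chi)$, so $\fa:=\zp[G]^-$ is a reduced quotient of $\zp[G]$ of pure Krull dimension $1$, and then invokes Lemma~\ref{associated-in-divisibility} directly. Your verification that ${\rm Hom}_{\zp\text{-alg}}(\fa,\co)$ matches the odd characters, and that the corollary's hypotheses translate into those of the lemma, is exactly the (brief) content the paper leaves to the reader.
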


\bibliographystyle{plain}
\bibliography{emc-bibliography}
\end{document}